\documentclass[10pt,twoside,a4paper]{article}

\usepackage{authblk}

\usepackage{geometry}
 \geometry{
 a4paper,
 total={150mm,247mm},
 left=25mm,
 top=25mm,
 }
 
\usepackage[utf8]{inputenc}
\usepackage{hyperref}

\usepackage{todonotes}

\usepackage[toc,title,page]{appendix}

\usepackage{amsmath,amsfonts,amssymb,amsthm}
\usepackage{bbm}

\usepackage{enumitem}

\usepackage{algorithm}
\usepackage{algpseudocode}

\newtheorem{Theorem}{Theorem}[section]

\newtheorem{Definition}[Theorem]{Definition}
\newtheorem{Proposition}[Theorem]{Proposition}
\newtheorem{Lemma}[Theorem]{Lemma}
\newtheorem{Assumption}[]{Assumption}

\newtheorem{Remark}[Theorem]{Remark}

\usepackage[a-1b]{pdfx}

\usepackage{graphicx}
\usepackage{subcaption}

\usepackage[maxbibnames=50]{biblatex}
\addbibresource{biblio.bib}

\definecolor{green(colorwheel)(x11green)}{rgb}{0.16, 0.5, 0.0}
\usepackage[normalem]{ulem}
\definecolor{electricultramarine}{rgb}{0.25, 0.0, 1.0}

\newcommand*{\fzcst}[1]{\relax\ifmmode\text{\textcolor{green(colorwheel)(x11green)}{\sout{\ensuremath{#1}}}}\else\textcolor{green(colorwheel)(x11green)}{\sout{#1}}\fi}

\newcommand*{\ykcst}[1]{\relax\ifmmode\text{\textcolor{blue}{\sout{\ensuremath{#1}}}}\else\textcolor{blue}{\sout{#1}}\fi}

\title{Numerical Methods for Dynamical Low-Rank Approximations of Stochastic Differential Equations \\
Part I: Time discretization}
\author[1]{Yoshihito Kazashi}
\author[2]{Fabio Nobile}
\author[2]{Fabio Zoccolan}

\affil[1]{Department of Mathematics,
	The University of Manchester, Oxford Road, Manchester, M13 9PL, UK. email: y.kazashi@manchester.ac.uk}
\affil[2]{Institut de Mathématiques, École Polytechnique Fédérale de Lausanne, 1015 Lausanne, Switzerland. email: fabio.nobile@epfl.ch, fabio.zoccolan@epfl.ch}
\date{}

\begin{document}

\maketitle

\begin{abstract}
	\noindent 
	In this work (Part I), we study three time-discretization schemes for the Dynamical Low-Rank Approximation (DLRA) of high-dimensional stochastic differential equations (SDEs). Specifically, we consider the Dynamically Orthogonal (DO) method for DLRA proposed and analyzed in \cite{kazashi2025dynamical}, which approximates the true solution by a linear combination of few products between deterministic orthonormal modes and stochastic modes, both time-dependent. 
	The first scheme considered consists in a forward discretization in time of both deterministic and stochastic components, in a Euler-Maruyama style.
	Its convergence is 
	proven subject 
	 to a time-step restriction dependent on the smallest singular value of the Gram matrix associated to the stochastic modes, which, on its turn, is shown to be always positive, provided that the SDE under study is driven by a non-degenerate noise.
	The second and the third schemes, on the other hand, are staggered ones, alternating updates of the deterministic and the stochastic modes in half steps, and have a projector splitting nature. We show stability of the second scheme and prove convergence with constants independent of the smallest singular value. The third scheme works better in practice, although our theoretical convergence bounds are worse than those for the second one. 
	Computational experiments support our theoretical results. 
	In this work we do not consider the discretization in probability, which will be the topic of Part II.
\end{abstract}	

\section{Introduction}
Many models used in engineering applications to model natural phenomena are described by stochastic differential equations (SDEs)
 \cite[Chapter 5]{allen2007modeling}, \cite[Chapter 7]{kloeden1992stochastic}, \cite[Chapter 5]{shreve2004stochastic}, \cite{majda1999models}.
When these are high dimensional, their numerical simulations become quickly computationally prohibitive. Examples of high dimensional SDEs include machine learning problems, such as simulations of particle dynamics for adaptive sampling of concentrated measures \cite{wen2024coupling}, climate modeling \cite{franzke2005low}, stochastic models in finance, for example considering high dimensional asset price structures described by an Heston model \cite{redmann2021low}, data assimilation and filtering procedures \cite{law2015data}, or, more generally, time integration of spatially-discretized stochastic partial differential equations \cite{lord2014introduction}.

A possible approach to lower the computational burden  
is 
to exploit model order reduction (MOR) techniques, which aim to build a surrogate system of low dimensionality that is numerically inexpensive to solve while keeping good agreement with the original one. MOR methods have witnessed great success when applied to high dimensional problems that feature some intrisic low-dimensional structures
\cite{franzke2005low,grafke2017non,hesthaven2016certified,redmann2021low,sapsis2013interaction,sapsis2013blending,tyranowski2024data}.
However, in certain cases, such low dimensional structure may evolve considerably over time, limiting the effectiveness of MOR techniques that project the dynamics on pre-defined fixed subspace, although carefully chosen.

To address these cases, in this work we consider a Dynamical Low-Rank Approximation (DLRA) framework for SDEs \cite{kazashi2025dynamical}. 
In contrast to standard reduced order models for uncertainty quantification, DLRA aims to construct a surrogate which is spanned by time-varying basis, in both the deterministic and stochastic domains. The first rigorous formalization of DLRA was proposed in \cite{koch2007dynamical} for deterministic matrix evolution equations, as a constrained dynamics onto the manifold of low (fixed) rank matrices.
DLRA has been intensively studied for ordinary differential equations \cite{carrel2025interpolatory,koch2007dynamical,lubich2014projector,kieri2016discretized,kieri2019projection}, deterministic \cite{bachmayr2021existence,einkemmer2018low,einkemmer2023robust,ramezanian2021fly}, and random partial differential equations (rPDEs) \cite{kazashi2021existence,kazashi2021stability,kusch2022dynamical}, while its application to SDEs is still in its infancy and its potential has not been fully exploited yet \cite{kazashi2025dynamical}. 

There are several ways to instantiate DLRA equations for SDEs. One possibility is through the Dynamically Orthogonal (DO) framework, which was first proposed in \cite{sapsis2009dynamically} for rPDEs and was used in \cite{kazashi2025dynamical} to obtain a well-posed mathematical setting in the SDEs framework.

The DO approach consists in building an approximation composed by two components, a deterministic basis and a stochastic one, both allowed to evolve in time and computed on the fly. More precisely, we seek a solution $X$ of the following form
\begin{equation}\label{eq: DO}
	X(t,\omega) = \sum_{i = 1}^{k} U^{i}(t)Y^{i}(t,\omega), \quad t \geq 0,
\end{equation}
where $\{U^{i}\}_{i=1,\dots, k}$ are the time-dependent deterministic modes, orthonormal according to a fixed scalar product, and $\{Y^{i}\}_{i=1,\dots, k}$ are the stochastic modes. In \cite{kazashi2025dynamical}, evolution equations have been derived for the deterministic and stochastic modes and well posedness of the resulting coupled system has been shown under suitable assumptions, which we assume to hold in this work as well. From the evolution of the modes one can reconstruct the DLRA via \eqref{eq: DO}.

The focus of this paper is the time discretization of DLRA for SDEs.
We consider three time-discretization algorithms and establish error bounds with respect to the true solution of the SDE involving a time discretization error and a low-rank approximation error. 
A critical issue in the analysis is the potential rank deficiency of the exact/numerical DLRA solution (i.e.\ some of the stochastic modes become linearly dependent). We discuss this issue and provide suitable conditions on the SDE and the time step for the discrete Gramian $C_{Y_n}$ to be invertible at any time step $t_n$, provided the true Gramian $C_{Y(t)}$ remains of rank $k$ for all $t$. This contrasts with deterministic equations and rPDEs, where both the continuous and the discretized DLRA may lose full rankness at final time.

The first numerical algorithm that we propose is the \textit{DLR Euler-Maruyama} scheme (DLR EM). It consists in discretizing the equation for the stochastic modes with an Euler-Maruyama scheme and the one for the deterministic modes with a Forward Euler method. To preserve the orthogonality of the deterministic basis, we perform a QR decomposition at each time iteration. The presence of this orthogonalization step makes the error analysis non standard.
Nevertheless, the DLR-Euler Maruyama turns out to be convergent in time with the same order of the standard Euler-Maruyama for SDEs. However, due to its explicit nature, a condition on the time step $\Delta t$, depending on the smallest singular value of $C_{Y_{n}}$, is needed to reach this goal.

The second and third algorithms proposed in this work adopt a staggered approach where first the stochastic modes are updated, using a Euler-Maruyama scheme as in the first algorithm, and then the deterministic modes are updated using the new stochastic modes. These two algorithms differ in the way the deterministic modes are updated.

More precisely. the second algorithm uses only the drift term to update the deterministic basis and is consistent with the DLRA equations derived in \cite{kazashi2025dynamical}. It can be interpreted as a projector splitting scheme, in the same spirit of the popular projector splitting methods for DLR approximation of matrix evolution equations \cite{lubich2014projector} where the drift term is projected onto the tangent space to rank-$k$ random vectors at an intermediate configuration obtained by combining the updated stochastic modes with the old deterministic ones, whereas the diffusion term is only projected on the old deterministic modes. This different treatment of the drift and diffusion terms was advocated in \cite{kazashi2025dynamical} because of the difficulty of rigorously defining a tangent space projection of the diffusion term on the manifold of rank-$k$ random vectors. We name this algorithm \textit{``DLR Projector Splitting for SDEs"} (DLR PS SDE). For this scheme we are able to obtain error bounds whose constants do not depend on the smallest singular value of $C_{Y_n}$ and do not require any restriction on the time step. We also analyze its asymptotically mean square stability for a linear SDE with multiplicative noise showing stability under the same conditions of the EM method applied to the original SDE, without any dependence on the smallest singular value.

The third algorithm uses, instead, both the drift and the diffusion to update the deterministic modes. It can be interpreted as a ``standard" projector splitting method, as presented e.g.\ in \cite{kazashi2021stability}, applied to the Euler-Maruyama approximation of the original SDE. In this respect, in the limit $\Delta t\to 0$, it is not consistent with the DLRA equations proposed in \cite{kazashi2025dynamical}, rather to those proposed in \cite{cao2018stochastic} where an extra diffusion-type term appears in the equation of the deterministic modes. We name this algorithm \textit{``DLR Projector Splitting for Euler-Maruyama"} (DLR PS EM). For this algorithm, we obtain analogous asymptotic mean square stability results as for the second algorithm. Concerning its accuracy, our numerical results show that it performs even better than the second algorithm. However, we were not able to obtain error bounds, with respect to the true solution of the SDE, that are independent of the smallest singular value, although this is not observed in our numerical experiments.

The outline of the paper is the following.
In Section \ref{sec: general setting}, we illustrate the general framework of DLRA for SDEs from \cite{kazashi2025dynamical}. In Section \ref{sec: discretization procedure}, the three discretization algorithms will be introduced. Sections \ref{sec: DLR EM}, \ref{sec: convergence PS SDE}, and \ref{sec: convergence PS EM} are centred at demonstrating properties of convergence in time of the three schemes; more specifically, mean-square stability, non-degeneracy of the Gramian under non-degenerate diffusion, and convergence in time. These results will be confirmed via numerical experiments in Section \ref{sec: numerical experiments}. Finally, we will draw some conclusions in Section \ref{sec: conclusion}.  For the sake of readability, several technical proofs and auxiliary lemmata have been postponed to the Appendix at the end of this document.

The discretization of randomness, such as the computation of expectations arising in the $L^2$-projection, will be treated in the second part of this paper \cite{kazashi2025dynamicalpartII}.

\section{General Setting of DLRA for SDEs}\label{sec: general setting}
In this section, we will briefly introduce the general theoretical framework for continuous DLRA for SDEs. More precisely, we will state the DO equations for DLRA of SDEs as derived in \cite{kazashi2025dynamical} and recall sufficient conditions that ensure the existence and uniqueness of a DO solution. Under these constraints, it is worth studying numerical time discretization schemes to find approximate solutions of the DO equations.

Let $\left( \Omega, \mathcal{F}, \mathbb{P}, (\mathcal{F}_t)_{t \geq 0} \right)$ be a filtered complete probability space with the usual conditions \cite[Remark 6.24]{schilling2021brownian}.
We denote by $W$ a real $m$-dimensional $(\mathcal{F}_t)$-Brownian motion,
i.e., $W(t) = \left(W_1(t), \ldots , W_m(t) \right)^{\top}.$
We want to approximate SDEs taking values in $\mathbb{R}^d$ of the following type
\begin{equation}\label{eq:SDE-diff}
	\mathrm{d}X^{\mathrm{true}}(t) = a(t,X^{\mathrm{true}}(t))\mathrm{d}t+b(t,X^{\mathrm{true}}(t))\mathrm{d}W_t, \quad \text{ for all } t \in [0, T], \quad X^{\mathrm{true}}(0) = X^{\mathrm{true}}_0,
\end{equation}
where $T>0$. The solution of \eqref{eq:SDE-diff} is a random vector $X^{\mathrm{true}}(t)=\left(X^{\mathrm{true}}_1(t), \ldots, X^{\mathrm{true}}_d(t)\right)^{\top}$. 
We work under the following assumptions.
\begin{Assumption}[{Lipschitz coefficients}]\label{lipschitz}
	The drift $a\colon[0,\infty)\times\mathbb{R}^{d}\to\mathbb{R}^{d}$ 
	and the diffusion $b\colon[0,\infty)\times\mathbb{R}^{d}\to\mathbb{R}^{d\times m}$
	are measurable between the Borel fields $\left([0,\infty)\times\mathbb{R}^{d};\mathcal{B}([0,\infty)\times\mathbb{R}^{d})\right)$ and $\left(\mathbb{R}^{d};\mathcal{B}(\mathbb{R}^{d})\right)$, $\left(\mathbb{R}^{d\times m};\mathcal{B}(\mathbb{R}^{d \times m})\right)$, respectively, and Lipschitz continuous with respect to the second variable, uniformly in the first one:
	\begin{equation}
	\begin{cases}
	\!\!\!\! & |a(s,x)-a(s,y)|\leq C_{\mathrm{Lip}}|x-y|, \quad \text{ for all } x,y \in \mathbb{R}^d, \ s \in [0,\infty),\\
	\!\!\!\! & \|b(s,x)-b(s,y)\|_{\mathrm{F}}\leq C_{\mathrm{Lip}}|x-y|, \quad \text{ for all } x,y \in \mathbb{R}^d, \ s \in [0,\infty),
	\end{cases}\label{eq:lip}
	\end{equation}
	for some constant $C_{\mathrm{Lip}}>0$, where $|\cdot|$ and $\|\cdot\|_{\mathrm{F}}$ are the usual Euclidean norm on $\mathbb{R}^{d}$ and the Frobenius one on $\mathbb{R}^{d \times m}$, respectively.
\end{Assumption}
\begin{Assumption}[{Linear growth bound}]\label{linear-growth-bound}
	The drift $a$
	and the diffusion $b$ fulfil the following linear-growth bound condition:
	\begin{equation}
	|a(s,x)|^{2}+\|b(s,x)\|_{\mathrm{F}}^{2}\leq C_{\mathrm{lgb}}(1+|x|^{2}), \quad \text{ for all } x \in \mathbb{R}^d, \ s \in [0,\infty)\label{eq:lin-growth}
	\end{equation}
	for some constant $C_{\mathrm{lgb}}>0$.
\end{Assumption}
Furthermore, we assume that the initial condition $X_{0}^{\mathrm{true}}$ in \eqref{eq:SDE-diff} satisfies the following:
\begin{Assumption}[{Square integrable initial condition}]\label{eq:initial value}
	\begin{equation}
	X^{\mathrm{true}}_0 \mbox{ is } \mathcal{F}_0\mbox{-measurable and satisfies } \mathbb{E}[|X^{\mathrm{true}}_0|^2] < +\infty.
	\end{equation}
\end{Assumption}
Under Assumptions 1-3, equation \eqref{eq:SDE-diff} has a unique strong solution; see for example \cite[Theorem 21.13]{schilling2021brownian}.  

In what follows, we say that a process $\{X(t)\}_{t \in [0,T]}$ is of rank $k$ if the matrix $\mathbb{E}[X(t)X(t)^{\top}]$ is of rank $k$ for all $t \in [0,T]$. In order to ensure that the DLRA provides an effective way of approximating the stochastic dynamics described by \eqref{eq:SDE-diff}, we suppose further that \eqref{eq:SDE-diff} exhibits a nearly low-rank structure at each time instant. 

	As discussed in \cite{kazashi2025dynamical}, approximating $X^{\mathrm{true}}$ with a rank-$k$ DO approximation $X$ consists in finding
	a pair $(U, Y)$ satisfying the following conditions. 
	The function $U : [0,T] \to \mathbb{R}^{k\times d}$ is a deterministic absolutely continuous matrix-valued function such that $U_0U_0^{\top} = I_{k \times k}$ and $U(t)\dot{U}(t)^{\top} = 0$ for all $t$ (which implies that $U(t)U(t)^{\top} = I_{k \times k}$ for all $t$), 
 and $Y: [0,T] \to L^{2}(\Omega,\mathbb{R}^{k})$ is an adapted stochastic process with almost surely continuous paths and $Y^1(t), \dots Y^k(t)$ linearly independent in $L^{2}(\Omega)$. The rank-$k$ DO approximation is then $X(t)= U(t)^{\top}Y(t)$ for all $t$. 
The $(U,Y)$ pair satisfies the following system, called the DO equations \cite{kazashi2025dynamical,sapsis2009dynamically},
\begin{equation}\label{DLR-conditions}
	\begin{aligned}
		C_{Y(t)}\dot{U}(t) & = \mathbb{E}\left[Y(t) a(t,U(t)^{\top}Y(t))^{\top}\right]\left(I_{d \times d} - P^{\mathrm{row} }_{U(t)} \right), \\
		Y(t)  & =Y_0 + \int_{0}^{t}U(s) a(s,U(s)^{\top}Y(s)) \mathrm{d}s + \int_{0}^{t}  U(s) b(s,U(s)^{\top}Y(s))\mathrm{d}W_s,
	\end{aligned}
\end{equation}
where $C_{Y(t)}:= \mathbb{E}[Y(t)Y(t)^{\top}]$ is the Gramian associated to $Y^{1},\dots,Y^{k}$, $P^{\mathrm{row}}_{U(t)}:= U(t)^{\top}U(t)$ is the projector-matrix onto the vector space $\operatorname{span}\{U^{1}(t), \ldots, U^{k}(t)\} \subset \mathbb{R}^{d}$, with $U^{i}(t)$ the $i$-th row of  $U(t)$, and $U_0^{\top}Y_0$ is a suitable rank-$k$ approximation of $X_0^{\text{true}}$. 
Under Assumptions 1--3, equation \eqref{DLR-conditions} admits a unique strong DO solution $(U,Y)$, at least for short times, if $X_0^{\text{true}}$ has rank greater than or equal to $k$ \cite{kazashi2025dynamical}. Furthermore, there exists a positive constant $K_2(T)$ which bounds the second moment of $Y(t)$ \cite[Lemma 2.7]{kazashi2025dynamical}. See \cite{kazashi2025dynamical} for more details on such DLR approximation.   

\subsection{General Notation}
For the sake of clarity, we list here the most used notation throughout the document.
Consider a vector $v \in \mathbb{R}^{m}$, then $|v|$ denotes the Euclidean norm of $v$. Consider a matrix $A\in \mathbb{R}^{m \times n}$, then $| A |$ and $\| A \|_{\mathrm{F}}$ denote its spectral and Frobenius norm, respectively.
Henceforth, the notation $A \succ B$ (respectively  $A \succeq B$) with $A,B$ square matrices means that $A-B$ is positive definite (respectively positive semi-definite). 
Moreover, $\{\sigma^i(A)\}_{i\geq 1}^{\min \{n,m\}}$ denote the singular values of the matrix $A$, $\{ \lambda^i(A)\}_{i\geq 1}$ denote the eigenvalues of $A$, whenever $A$ is a squared matrix, both listed in decreasing order, $\lambda_{\max}(A)$, $\sigma_{\max}(A)$ denote the eigenvalue with largest real part and the largest singular value, respectively.

Furthermore, consider a random variable $X \in L^{2}(\Omega,\mathbb{R}^{d})$ such that $X=U^{\top}Y$, where $U \in \mathbb{R}^{k \times d}$ has orthonormal rows and $Y \in L^{2}(\Omega,\mathbb{R}^{k})$ has linearly independent components (we call such  $X$ a ``rank-k" random vector). Then, we define the orthogonal projectors $P^{\mathrm{row} }_{U}[\ \cdot \ ] : \mathbb{R}^{d} \to \mathbb{R}^{d}$ and $P_{Y}[ \ \cdot \ ] : L^{2}(\Omega,\mathbb{R}^{d}) \to L^{2}(\Omega,\mathbb{R}^{d})$ as, respectively,
\begin{equation}\label{eq: P_U and P_y}
P^{\mathrm{row} }_{U}[\ \cdot \ ]= U^{\top}U[\ \cdot \ ] \cdot \text{ and } P_{Y}[ \ \cdot \ ] = \mathbb{E}\left[\ \cdot \ Y^{\top}\right]C^{-1}_{Y}Y,
\end{equation}
which project, respectively, onto the span of the rows of $U$, i.e.\ the orthogonal projector onto the range of $X$, and onto the span of the components of $Y$, i.e.\ the orthogonal projector onto the corange of $X$.
Then we can define the operator $P_{U^{\top}Y}: L^{2}(\Omega,\mathbb{R}^{d}) \to L^{2}(\Omega,\mathbb{R}^{d})$, i.e.\ the projector onto the tangent space of rank $k$ processes in $L^{2}(\Omega,\mathbb{R}^{d})$ computed in the point $X=U^{\top}Y$, as the following: 
\begin{equation}\label{eq: proj X}
	P_{U^{\top}Y}[\ \cdot \ ]= \left(I_{d \times d} - P^{\mathrm{row} }_{U} \right) P_{Y}[\ \cdot \ ] + P^{\mathrm{row} }_{U}[ \ \cdot \ ].
\end{equation}
It is easy to see that \eqref{eq: proj X} is a linear, idempotent, and symmetric operator, as such $P_{{U}^{\top}Y}$ is an orthogonal projector. For the sake of readability, from now on we will drop the superscript $\mathrm{row}$ and, hence, $P_{U} = P^{\mathrm{row} }_{U}$.

\section{Time integration schemes}
\label{sec: discretization procedure}
In this section, we introduce three time-integration schemes to compute the DLRA solution $X(t) = U^{\top}(t)Y(t)$.
Concretely, considering a partition $\Delta := \left\{t_n \ : \ 0= t_0 < t_1 < \ldots < t_{N-1} < t_{N} = T\right\}$ of $[0,T]$,
we seek suitable approximations $(U_n)_n$, $(Y_n)_n$ of $(U(t_n))_n$ and $(Y(t_n))_n$, respectively, and 
reconstruct afterwards 
an approximate DLRA solution $X_n = U_n^{\top}Y_n \approx X(t_n)$. 

In Section \ref{sec: DLR EM alg} we present the DLR Euler--Maruyama algorithm, which employs a forward discretization of both $U$ and $Y$, whereas in Section \ref{sec: PS SDE} and in Section \ref{sec: PS EM} we propose two extensions to the SDE setting of the DLR projector splitting scheme \cite{lubich2014projector,kazashi2021stability}, which updates $U$ and $Y$ in a staggered way.
\subsection{DLR Euler--Maruyama discretization}\label{sec: DLR EM alg}
The DO equations \eqref{DLR-conditions} are a system of stochastic differential equations. In this section, we consider the most basic discretization method for such equations: the Euler--Maruyama scheme, which reads: given $(U_n, Y_n)$,
\begin{align}
	 \text{find } \quad	C_{Y_n} \tilde{U}_{n+1}&  = C_{Y_n} U_n + \mathbb{E}\left[Y_n a(t_n,U_{n}^{\top}Y_n)^{\top}\right]\left(I_{d \times d} - P_{U_n} \right) \Delta t_n, \label{eq: DLR EM 1v}\\
		\tilde{Y}_{n+1} &= Y_n + U_n a(t_n,U_{n}^{\top}Y_n) \Delta t_n +  U_n b(t_n,U_{n}^{\top}Y_n) \Delta W_n, \label{eq: DLR EM 2v} \\
		\text{ compute }(U_{n+1},Y_{n+1}):& \quad \tilde{U}_{n+1}^{\top}\tilde{Y}_{n+1} = U_{n+1}^{\top}Y_{n+1} , \quad U_{n+1}U_{n+1}^{\top} = I_{d \times d}. \label{eq: DLR EM 3v}
\end{align}
with $C_{Y_n} := \mathbb{E}[Y_nY_n^{\top}]$, $\Delta t_n:= t_{n+1} -t_n$, $\Delta W_n: = W(t_{n+1})-W(t_{n}) \sim \mathcal{N}(0,\Delta t_n I_{m \times m})$, and $\mathbb{E}[\Delta W_n | \mathcal{F}_{t_n}]=0$ for all $n$.
In the continuous DO framework of equations \eqref{DLR-conditions} the deterministic modes $U(t)$ have orthogonal rows for all $t \in [0,T]$.
To be consistent with this continuous setting, in \eqref{eq: DLR EM 3v} we orthonormalize the computed deterministic modes $\tilde{U}_{n+1}$ through a \texttt{QR} decomposition. We summarize the whole discretization procedure in Algorithm~\ref{alg: DLR EM SDE algorithm}. We point out that in Algorithm~\ref{alg: DLR EM SDE algorithm}, as well as in the other algorithms of this paper, the QR decomposition works as follows: for all rectangular tall matrices $A \in \mathbb{R}^{n \times k}$ one has $(Q,R) =\texttt{QR}(A)$ with $Q \in \mathbb{R}^{n \times k}$ with orthogonal columns and $R \in \mathbb{R}^{k \times k}$. For the purpose of theoretical analysis, in Algorithm~\ref{alg: DLR EM SDE algorithm} we will always consider a QR decomposition based either on Cholesky factorization or on Gram-Schmidt orthonormalization \cite{golub2013matrix}, whereas in the other algorithms we do not impose such a restriction.

\begin{algorithm}
	\caption{DLR Euler--Maruyama approximation for SDEs}\label{alg: DLR EM SDE algorithm}
	
	\begin{flushleft}
		\textbf{Input}: initial data $U_0$, $Y_0$.
		
		\textbf{Output:} approximation $\{X_{n} = U_n^{\top}Y_n\}_{n=0,\ldots, N}$.
	\end{flushleft}
	
	\begin{algorithmic}[1]
		
		\ForAll {$n \in \{0, \ldots N-1\}$}
		
		\State Generate a Brownian increment $\Delta W_n \sim \mathcal{N}(0,\Delta t_n I_{m \times m})$ 
		
		\State Assemble $C_{Y_n} = \mathbb{E}[Y_nY_n^{\top}]$ 
		
		\State Compute $ \tilde{Y}_{n+1} = Y_n + U_n a(t_n,U_{n}^{\top}Y_n) \Delta t_n +  U_n b(t_n,U_{n}^{\top}Y_n) \Delta W_n$
		
		\State Compute $ C_{Y_n} \tilde{U}_{n+1} =  C_{Y_n} U_n +\mathbb{E}\left[Y_n a(t_n,U_{n}^{\top}Y_n)^{\top}\right]\left(I_{d \times d} - P_{U_n} \right) \Delta t_n$
		\State Reorthonormalize the deterministic modes: find  $(U_{n+1}, Y_{n+1})$ such that:
		\begin{equation*}
			U_{n+1}^{\top} Y_{n+1} = \tilde{U}_{n+1}^{\top} \tilde{Y}_{n+1}, \quad U_{n+1}U_{n+1}^{\top} = I_{d\times d}.
		\end{equation*}
		with $(U_{n+1}^{\top}, R) =\texttt{QR}(\tilde{U}_{n+1}^{\top})$ and $Y_{n+1} = R \tilde{Y}_{n+1}$.
		\EndFor
	\end{algorithmic}
\end{algorithm}

We assume hereafter that $C_{Y_n}$ remains of full rank for all $n$. Sufficient conditions for this assumption are discussed in Section \ref{sec: DLR EM}. Then, the scheme \eqref{eq: DLR EM 1v} can be rewritten as
\begin{equation}\label{Euler--Maruyama eq}
	\begin{aligned}
		\tilde{U}_{n+1} & = U_n +C_{Y_n}^{-1}\mathbb{E}\left[Y_n a(t_n,U_{n}^{\top}Y_n)^{\top}\right]\left(I_{d \times d} - P_{U_n} \right) \Delta t_n, \\
		\tilde{Y}_{n+1} &= Y_n + U_n a(t_n,U_{n}^{\top}Y_n) \Delta t_n +  U_n b(t_n,U_{n}^{\top}Y_n) \Delta W_n.
	\end{aligned}
\end{equation}

\begin{Remark}[Well posedness of equation \eqref{eq: DLR EM 1v}]\label{rem: wellposed U}
	Observe that the system \eqref{eq: DLR EM 1v} has a solution even if $C_{Y_n}$ is singular, since the right-hand side is always in the range of $C_{Y_n}$, which includes the $\mathrm{span}\{Y_n(\omega), \ \omega \in \Omega\} \subset \mathbb{R}^{k}$. 
	Indeed, the stochastic basis can be rewritten as $Y_n(\omega) = C_{Y_n}^{\frac{1}{2}}\check{Y}_n(\omega)$, with  $\mathbb{E}[\check{Y}_n\check{Y}_n^{\top}]=\mathrm{diag}(\check{\sigma}_n^i)$ with $\check{\sigma}_n^i=1$ or $0$, and, hence, $\mathrm{span}\{Y_n(\omega), \ \omega \in \Omega\}) \subseteq \mathrm{Ran}(C_{Y_n})$. 
\end{Remark}
In view of Remark \eqref{rem: wellposed U}, a numerical solution, for instance that of minimal norm, can be defined even 
if $C_{Y_n}$ is singular. However, our error analysis indicates that constants in the error estimate may deteriorate when the smallest eigenvalue of $C_{Y_n}$ tends to zero. For non-degenerate noise we will show that a stringent condition on the time step assures $C_{Y_n}$ to remain full-rank for all time steps, although, the constant in the error estimate may still be very large for small singular values. 
More detailed discussions are provided in Section~\ref{sec: DLR EM}.

The main advantages
of this method are the ease of implementation and the low computational cost. For instance, in the case of a linear deterministic drift $a(t,x) = A(t)x$, which appears in various practical applications including the Kalman-Bucy Filter \cite{law2015data} or the Heston model \cite{redmann2021low}, the algorithm significantly simplifies. 
Indeed, since in this case the inverse Gramian matrix $C_{Y_n}^{-1}$ can be simplified in \eqref{Euler--Maruyama eq}, the update of the deterministic modes becomes simply 
\begin{equation}\label{eq: linear drift}
	\tilde{U}_{n+1} = U_n + U_{n}^{\top}A^{\top}(t_n)\left(I_{d \times d} - P_{U_n} \right) \Delta t_n.
\end{equation}
This also means that a possible ill-conditioning of $C_{Y_n}$ does not affect the numerical scheme. 
Moreover, the deterministic modes can be evolved completely independently of the stochastic modes, leading to parallelization opportunities.

\subsection{DLR Projector Splitting for SDEs}\label{sec: PS SDE}
Here we present a numerical method for DLRA that is as cheap to compute as Algorithm~\ref{alg: DLR EM SDE algorithm}, yet whose accuracy and stability are less sensitive to the smallest singular value of the Gramian of $Y_n$ than Algorithm~\ref{alg: DLR EM SDE algorithm}. 
This algorithm (as well as Algorithm~\ref{alg: Eva Proj Splitt SDE algorithm}) can be interpreted as a projector splitting approximation, similar to the one proposed for rPDEs (see \cite[Section 4.4]{kazashi2021stability} for more details), which itself was inspired by the procedure presented in~\cite{lubich2014projector}.

Here we first update the stochastic modes using the deterministic modes $U_n$, and then we build the new deterministic modes using the newly computed stochastic modes. 
In this staggered scheme, the expectation appearing in the equation of the deterministic modes involves only the drift term consistently with relation \eqref{DLR-conditions}. In both steps, we still approximate the drift and diffusion in a forward-Euler way. We remark that in \cite{kazashi2021stability} other approximations of implicit or semi-implicit type are considered as well. 

This strategy applied to \eqref{DLR-conditions} leads to the scheme 
\begin{equation}
\begin{aligned}\label{eq: stoc proj}
	\tilde{Y}_{n+1} &= Y_n + U_n\, a(t_n,U_{n}^{\top}Y_n) \Delta t_n +  U_n\, b(t_n,U_{n}^{\top}Y_n) \Delta W_n  \\
	C_{\tilde{Y}_{n+1}} \tilde{U}_{n+1} &= C_{\tilde{Y}_{n+1}} U_n + \mathbb{E}\left[\tilde{Y}_{n+1}\, a(t_n,U_{n}^{\top}Y_{n})^{\top}\Delta t_n \right]\left(I_{d \times d} - P_{U_n}\right)  
\end{aligned}
\end{equation}
Then the final update $(U_{n+1}, Y_{n+1})$ at the $n$-th step is obtained by QR orthonormalization. 

Relation \eqref{eq: stoc proj} indeed yields a projected scheme. To see this, first let us recall that $P_{{U}_n}[ \ \cdot \ ]= U_n^{\top}U_n \cdot$ and $P_{\tilde{Y}_{n+1}}[ \ \cdot \ ] = \mathbb{E}\left[ \ \cdot \ \tilde{Y}_{n+1}^{\top}\right]C^{-1}_{\tilde{Y}_{n+1}}\tilde{Y}_{n+1}$ are orthogonal projectors,
	which project, respectively, onto the span of the rows of $U_n$ and onto the span of the components of $\tilde{Y}_{n+1}$. 
	Then, the operator $P_{{U}^{\top}_n\tilde{Y}_{n+1}}: L^{2}(\Omega,\mathbb{R}^{d}) \to L^{2}(\Omega,\mathbb{R}^{d})$, defined as in \eqref{eq: proj X} 
	\begin{equation}\label{eq: proj interm}
		P_{{U}^{\top}_n\tilde{Y}_{n+1}}[\ \cdot \ ]= \left(I_{d \times d} - P_{{U}_n} \right) P_{\tilde{Y}_{n+1}}[\ \cdot \ ] + P_{{U}_n}[ \ \cdot \ ]
	\end{equation}
	is also an orthogonal projector and \eqref{eq: stoc proj} can be interpreted as a projected method:
\begin{equation}
\begin{aligned}
		X_{n+1} =& {U}_{n+1}^{\top}{Y}_{n+1}=\tilde{U}_{n+1}^{\top}\tilde{Y}_{n+1} \\
		= & \left( U_n + C^{-1}_{\tilde{Y}_{n+1}}\mathbb{E}\left[\tilde{Y}_{n+1} (a(t_n,U_{n}^{\top}Y_n)^{\top}\Delta t_n) \right]\left(I_{d \times d} - P_{U_n} \right) \right)^{\top} \cdot \\
		&  \left( Y_n + U_n a(t_n,U_{n}^{\top}Y_n) \Delta t_n+ b(t_n,U_{n}^{\top}Y_n) \Delta W_n\right)\\
		= & X_n + \left(I_{d \times d} - P_{U_n} \right) \mathbb{E}\left[a(t_n,U_{n}^{\top}Y_n)  \tilde{Y}_{n+1} ^{\top}\right]C^{-1}_{\tilde{Y}_{n+1}}\tilde{Y}_{n+1} \Delta t_n \\
		& + U^{\top}_n U_n a(t_n,U_{n}^{\top}Y_n)  \Delta t_n + U^{\top}_n U_n b(t_n,U_{n}^{\top}Y_n) \Delta W_n \\
		=& X_n + \underbrace{\left(\left(I_{d \times d} - P_{{U}_n} \right) P_{\tilde{Y}_{n+1}} + P_{{U}_n}\right)}_{P_{{U}^{\top}_n\tilde{Y}_{n+1}}} [ a_n] \Delta t_n+ P_{{U}_n}[b_n] \Delta W_n\\
	  = & X_n + P_{{U}^{\top}_n\tilde{Y}_{n+1}}[ a_n] \Delta t_n+ P_{{U}_n}[b_n] \Delta W_n,\label{eq: stoc proj 2}
\end{aligned} 
\end{equation}
where we have used the shorthand notation $a_n = a(t_n,U_n^{\top}Y_n)$, $b_n = b(t_n,U_n^{\top}Y_n)$.
This form mirrors the continuous DLRA formulation for SDEs (see equation \eqref{DLR-conditions} and \cite[Equation (2.17)]{kazashi2025dynamical}), where, the diffusion term $b$ is projected only onto the subspace spanned by the deterministic modes.
Algorithm \ref{alg: Stoch DLR Proj algorithm} summarizes the whole procedure.
\begin{algorithm}
	\caption{DLR Projector Splitting for SDEs}\label{alg: Stoch DLR Proj algorithm}
	\begin{flushleft}
		\textbf{Input}: initial data $U_0$, $Y_0$.
		
		\textbf{Output:} approximation $\{X_{n}=U_n^{\top}Y_n\}_{n=0,\ldots, N}$. 
	\end{flushleft}
	\begin{algorithmic}[1]
		
		\ForAll {$n \in \{0, \ldots N-1\}$} 
		
		\State Generate a Brownian increment $\Delta W_n \sim \mathcal{N}(0,\Delta t_n I_{m \times m})$.
		
		\State Compute $ \tilde{Y}_{n+1} = Y_n + U_n a(t_n,U_{n}^{\top}Y_n) \Delta t_n +  U_n b(t_n,U_{n}^{\top}Y_n)  \Delta W_n$
		
		\State Assemble $C_{\tilde{Y}_{n+1}} = \mathbb{E}[\tilde{Y}_{n+1}\tilde{Y}_{n+1}^{\top}]$
		\State Compute $\tilde{U}_{n+1}$: 
		$$C_{\tilde{Y}_{n+1}} \tilde{U}_{n+1} =  C_{\tilde{Y}_{n+1}} U_n + \mathbb{E}\left[\tilde{Y}_{n+1} \left(a(t_n,U_{n}^{\top}Y_{n})^{\top}\right) \right]\left(I_{d \times d} - P_{U_n} \right) \Delta t_n$$
		\State Reorthonormalize the deterministic modes: find  $({U}_{n+1}, {Y}_{n+1})$ such that:
		\begin{equation*}
			U_{n+1}^{\top} Y_{n+1} = \tilde{U}_{n+1}^{\top} \tilde{Y}_{n+1}, \quad {U}_{n+1}{U}_{n+1}^{\top} = I_{d\times d}.
		\end{equation*}
		with $(U_{n+1}^{\top}, R) =\texttt{QR}(\tilde{U}_{n+1}^{\top})$ and $Y_{n+1} = R \tilde{Y}_{n+1}$.
		\EndFor
	\end{algorithmic}
\end{algorithm}

This nature of projected scheme will turn out to be favorable for numerical properties, as we will see in Section \ref{sec: convergence PS SDE}. 
More precisely, the operator $P_{{U}^{\top}_n\tilde{Y}_{n+1}}$ corresponds to the orthogonal projector onto the tangent space of the manifold of rank-$k$ random vectors at the intermediate point ${U}^{\top}_n\tilde{Y}_{n+1}$, see e.g. \cite{falco2019dirac,musharbash2015error} for more details on the topic. Thanks to its projected structure, the error estimates do not depend on the smallest singular values of the Gramian, unlike the DLR Euler-Maruyama.

\subsection{DLR Projector Splitting for Euler-Maruyama (EM)}\label{sec: PS EM}
We now present another staggered method that shows nice numerical properties. This procedure mirrors the construction of the projector splitting scheme in \cite{kazashi2021stability} for rPDEs, where the stochastic modes are first updated at step $n$ projecting the equations on the deterministic modes $U_n$, and then the deterministic modes are updated projecting the \emph{whole} equations on the newly computed stochastic modes. 

This procedure leads to the same update as in Algorithms~\ref{alg: DLR EM SDE algorithm} and \ref{alg: Stoch DLR Proj algorithm} for the stochastic modes  
\begin{equation}\label{eq: Y_tilde KNV}
	\begin{aligned}
		\tilde{Y}_{n+1} &= Y_n + U_n a(t_n,U_{n}^{\top}Y_n) \Delta t_n +  U_n b(t_n,U_{n}^{\top}Y_n) \Delta W_n,
	\end{aligned}
\end{equation}
while providing an alternative update for the deterministic modes $\tilde{U}_{n+1}$:	
\begin{equation}
	C_{\tilde{Y}_{n+1}} \tilde{U}_{n+1} = C_{\tilde{Y}_{n+1}} U_n + \mathbb{E}\left[\tilde{Y}_{n+1} \left(a(t_n,U_{n}^{\top}Y_{n})^{\top}\Delta t_n+ (\Delta W_n)^{\top}b(t_n,U_{n}^{\top}Y_{n})^{\top}\right) \right]\left(I_{d \times d} - P_{U_n} \right)\label{eq: U_tilde KNV v2}.
\end{equation}
Finally, $(U_{n+1}, Y_{n+1})$ at the $n$-th step is obtained by QR orthonormalization. 
Notice the presence of the diffusion term inside the expectation in \eqref{eq: U_tilde KNV v2} which does not appear in Algorithm~\ref{alg: Stoch DLR Proj algorithm}, nor in Algorithm~\ref{alg: DLR EM SDE algorithm} where the projection is done with respect to the old modes $Y_n$, causing that term to vanish. 
The final projector-splitting scheme that we consider is summarized in Algorithm \ref{alg: Eva Proj Splitt SDE algorithm}.
\begin{algorithm}
	\caption{DLR Projector Splitting for Euler-Maruyama approximation for SDEs}\label{alg: Eva Proj Splitt SDE algorithm}
	\begin{flushleft}
		\textbf{Input}: initial data $U_0$, $Y_0$.
		
		\textbf{Output:} approximation $\{X_{n}=U_n^{\top}Y_n\}_{n=0,\ldots, N}$. 
	\end{flushleft}
	\begin{algorithmic}[1]
		
		\ForAll {$n \in \{0, \ldots N-1\}$}
		
		\State Generate a Brownian increment $\Delta W_n \sim \mathcal{N}(0,\Delta t_n I_{m \times m})$.
		
		\State Compute $ \tilde{Y}_{n+1} = Y_n + U_n a(t_n,U_{n}^{\top}Y_n) \Delta t_n +  U_n b(t_n,U_{n}^{\top}Y_n)  \Delta W_n$
		
		\State Assemble $C_{\tilde{Y}_{n+1}} = \mathbb{E}[\tilde{Y}_{n+1}\tilde{Y}_{n+1}^{\top}]$
		\State Compute $\tilde{U}_{n+1}$: 
		$$C_{\tilde{Y}_{n+1}} \tilde{U}_{n+1} =  C_{\tilde{Y}_{n+1}} U_n + \mathbb{E}\left[\tilde{Y}_{n+1} \left(a(t_n,U_{n}^{\top}Y_{n})^{\top}\Delta t_n+ (\Delta W_n)^{\top}b(t_n,U_{n}^{\top}Y_{n})^{\top}\right) \right]\left(I_{d \times d} - P_{U_n} \right) $$
		\State Reorthonormalize the deterministic modes: find  $({U}_{n+1}, {Y}_{n+1})$ such that:
		\begin{equation*}
			U_{n+1}^{\top} Y_{n+1} = \tilde{U}_{n+1}^{\top} \tilde{Y}_{n+1}, \quad {U}_{n+1}{U}_{n+1}^{\top} = I_{d\times d}.
		\end{equation*}
		with $(U_{n+1}^{\top}, R) =\texttt{QR}(\tilde{U}_{n+1}^{\top})$ and $Y_{n+1} = R \tilde{Y}_{n+1}$.
		\EndFor
	\end{algorithmic}
\end{algorithm}

If we multiply \eqref{eq: U_tilde KNV v2} by \eqref{eq: Y_tilde KNV}, we obtain 
\begin{equation}
	\begin{aligned}
		X_{n+1} =& U_{n+1}^{\top}Y_{n+1} = \tilde{U}_{n+1}^{\top}\tilde{Y}_{n+1}\\
		= & X_n + \left(I_{d \times d} - P_{U_n} \right) \mathbb{E}\left[( a_n\Delta t_n + b_n \Delta W_n )\tilde{Y}_{n+1}^{\top}\right]C^{-1}_{\tilde{Y}_{n+1}}\tilde{Y}_{n+1} +P_{{U}_n} a_n\Delta t_n + P_{{U}_n} b_n \Delta W_n \\
		= & X_n + \left(\left(I_{d \times d} - P_{{U}_n} \right) P_{\tilde{Y}_{n+1}} + P_{{U}_n}\right) [ a_n \Delta t_n+ b_n \Delta W_n] \\
		=& X_n + P_{{U}^{\top}_n\tilde{Y}_{n+1}}[ a_n \Delta t_n+ b_n \Delta W_n].\label{eq:projector-DLREM}
	\end{aligned}
\end{equation}  
Now, consider the forward Euler approximation of the original SDE
\begin{equation}\label{eq: EM}
	X_{n+1}^{\textrm{FE}} = X_{n}^{\textrm{FE}} + a(t_n,X_{n}^{\textrm{FE}})\Delta t_n +b(t_n,X_{n}^{\textrm{FE}})\Delta W_n.
\end{equation}
Then, the update of the projector splitting method in Algorithm \ref{alg: Eva Proj Splitt SDE algorithm} is given by projecting the Euler-Maruyama increment $a(t_n,X_{n})\Delta t_n +b(t_n,X_{n})\Delta W_n$ of the studied SDE, i.e.\ equation \eqref{eq: EM}, using the projector $P_{{U}^{\top}_n\tilde{Y}_{n+1}}$ and adding it to the initial point $X_n$, resembling the rPDE scheme in \cite{kazashi2021stability}.

Unlike in Algorithm \ref{alg: Stoch DLR Proj algorithm}, where $b$ is projected only onto the subspace spanned by the deterministic modes, here the diffusion is projected in all the complements of the tangent space. The relation between these two projections in highlighted by the next identity:
\begin{equation}\label{eq: DLR proj b diff}
	\begin{aligned}
	P_{{U}^{\top}_n\tilde{Y}_{n+1}} [b_n \Delta W_n]= & P_{{U}_n}^{\perp}\mathbb{E}[b_n \Delta W_n \tilde{Y}_{n+1}^{\top}]C_{\tilde{Y}_{n+1}}^{-1}\tilde{Y}_{n+1}  + P_{{U}_n}[b_n] \Delta W_n\\
	= & P_{{U}_n}^{\perp}\mathbb{E}[b_n \Delta W_n (Y_n + U_n a_n \Delta t_n +  U_n b_n  \Delta W_n)^{\top}]C_{\tilde{Y}_{n+1}}^{-1}\tilde{Y}_{n+1}  + P_{{U}_n}[b_n] \Delta W_n\\
	= & P_{{U}_n}^{\perp}\mathbb{E}[b_n \Delta W_n (U_n b_n  \Delta W_n)^{\top}]C_{\tilde{Y}_{n+1}}^{-1}\tilde{Y}_{n+1}  + P_{{U}_n}[b_n] \Delta W_n\\
	= & P_{{U}_n}^{\perp}\mathbb{E}[b_n b_n^{\top}]U_n^{\top}C_{\tilde{Y}_{n+1}}^{-1}\tilde{Y}_{n+1} \Delta t + P_{{U}_n}[b_n] \Delta W_n,
	\end{aligned}
\end{equation}
where in the last lines we used the independence of the Brownian increments. The appearance of the term $P_{{U}_n}^{\perp}\mathbb{E}[b_n b_n^{\top} U_n]C_{\tilde{Y}_{n+1}}^{-1}\tilde{Y}_{n+1} \Delta t$ in \eqref{eq: DLR proj b diff} may suggest that Algorithm \ref{alg: Eva Proj Splitt SDE algorithm} is a good candidate approximation of the DLRA-type equations proposed in \cite{cao2018stochastic}. Therein the dynamically-orthogonal-type relations were derived through a measure-based strategy and the difference with respect to \eqref{DLR-conditions} lies in the addition of a diffusion-based term in the definition of the deterministic basis, i.e.
\begin{equation}\label{eq: DO cao-lu}
	\begin{aligned}
	\dot{U}(t) =& C_{Y(t)}^{-1}\mathbb{E}\left[Y(t) a(t,U(t)^{\top}Y(t))^{\top}\right]\left(I_{d \times d} - P^{\mathrm{row} }_{U(t)} \right) \\
	&+  C_{Y(t)}^{-1}U_t \mathbb{E}\left[b( t,U(t)^{\top}Y(t)) b(t,U(t)^{\top}Y(t))^{\top}\right]\left(I_{d \times d} - P^{\mathrm{row} }_{U(t)} \right)\\
	Y(t)  =&Y_0 + \int_{0}^{t}U(s) a(s,U(s)^{\top}Y(s)) \mathrm{d}s + \int_{0}^{t}  U(s) b(s,U(s)^{\top}Y(s))\mathrm{d}W_s.
	\end{aligned}
\end{equation}
Well-posedness of the system \eqref{eq: DO cao-lu} can be derived similarly to \cite{kazashi2025dynamical}.

Algorithm \ref{alg: Eva Proj Splitt SDE algorithm} shows nice numerical stability properties and we will provide a convergence result to the DLRA solution defined by \eqref{eq: DO cao-lu} (however, with constants that depend on the smallest singular value of the Gramian of the stochastic basis).

\begin{Remark}[Independence of the choice of solution for Projector Splitting schemes]
	As previously pointed out, the equation for the deterministic basis $\tilde{U}_{n+1}$ in equations \eqref{eq: DLR EM 2v}, \eqref{eq: stoc proj 2}, and \eqref{eq: U_tilde KNV v2} does always have a solution, even when the Gramian matrix is singular. In this case, the solution is not unique and we may choose the one of minimal norm. When considering the two Projector Splitting algorithms, this choice does not affect the ambient solution $X_n$. Indeed, in \eqref{eq: stoc proj 2} and \eqref{eq: U_tilde KNV v2}, any possible deterministic basis solution is of the type
	$$\widehat{\tilde{U}}_{n+1} = \tilde{U}_{n+1} + N,$$
	where $\tilde{U}_{n+1}$ is the minimal norm solution obtained, for the DLR PS SDE, through the equation
	$$\tilde{U}_{n+1} =  U_n + C_{\tilde{Y}_{n+1}}^{\dag}\mathbb{E}\left[\tilde{Y}_{n+1} \left(a(t_n,U_{n}^{\top}Y_{n})^{\top}\right) \right]\left(I_{d \times d} - P_{U_n} \right) \Delta t_n,$$
	where $ C_{\tilde{Y}_{n+1}}^{\dag}$ is the pseudoinverse of $C_{\tilde{Y}_{n+1}}$, and
	$N \in \mathbb{R}^{k \times d}$ is a matrix whose range belongs to the null space of $C_{\tilde{Y}_{n+1}}$. Since, $C_{\tilde{Y}_{n+1}}$ is symmetric, then one has $$\widehat{\tilde{U}}_{n+1}^{\top}\tilde{Y}_{n+1} = \tilde{U}_{n+1}^{\top}\tilde{Y}_{n+1} + N^{\top} \tilde{Y}_{n+1} = \tilde{U}_{n+1}^{\top}\tilde{Y}_{n+1} = X_{n+1} ,$$
	which implies that $X_{n+1}$ is independent of the choice of $N$. This is not the case for Algorithm \ref{alg: DLR EM SDE algorithm}, as any deterministic basis is of the type $\widehat{\tilde{U}}_{n+1} = \tilde{U}_{n+1} + N$, with $N \in \mathbb{R}^{k \times d}$ in the null space of $C_{{Y}_{n}}$, and,  
	$$\widehat{\tilde{U}}_{n+1}^{\top}\tilde{Y}_{n+1} = \tilde{U}_{n+1}^{\top}\tilde{Y}_{n+1} + N^{\top} \tilde{Y}_{n+1} \neq \tilde{U}_{n+1}^{\top}\tilde{Y}_{n+1}.$$
\end{Remark}
\section{Convergence of the DLR Euler-Maruyama}\label{sec: DLR EM}
We now derive error estimates. 
We begin with the DLR Euler-Maruyama scheme described in Algorithm \ref{alg: DLR EM SDE algorithm} and defer the analysis of the DLR Projector Splitting for SDEs (Algorithm \ref{alg: Stoch DLR Proj algorithm}) and of the DLR Projector Splitting for EM scheme (Algorithm \ref{alg: Eva Proj Splitt SDE algorithm}), to Section \ref{sec: convergence PS SDE} and \ref{sec: convergence PS EM}, respectively. 
The main results of this section are Theorems~\ref{thm: convergence of DLR Euler-Maruyama} and \ref{thm: Approx real sol}.

Our convergence analysis relies on the formulation \eqref{Euler--Maruyama eq} for the update of the deterministic modes, for which we need to ensure that the Gramian of the stochastic modes $C_{Y_n}$ is invertible at each time step. For this purpose, we first assume invertibility of the Gramian associated to the initial condition.
\begin{Assumption}[Invertibility of $C_{Y_0}$]\label{ass: C_Y_0}
	Given $Y_0 \in L^2(\Omega, \mathbb{R}^k)$, we have that $\sigma_{0} := \min\limits_{i=1,\dots,k} \sigma^i(C_{Y_0})>0$, where $C_{Y_0} = \mathbb{E}[Y_0 Y_0^{\top}]$.
	\end{Assumption} 
Assumption \eqref{ass: C_Y_0} will be considered to hold from now on. In this case, we trivially have $C_{Y_0}= \mathbb{E}[Y_0Y_0^{\top}] \succeq \sigma_{0} \cdot I_{k \times k} \succ 0$.

A first ingredient we need for convergence analysis is given by Proposition \ref{prop: 1 discrete covariance full-rank - EM}, which shows invertibility of $C_{Y_n}$ for all $n$ without any condition on the time step, provided the noise satisfies uniform ellipticity (see Assumption \ref{ass: diff} below). Such an assumption is often considered in literature and, for instance, it guarantees the existence of a transition density of the associated SDE \cite{pavliotis2014stochastic,schilling2021brownian}.

Another ingredient in proving convergence is a bound on the second moment of the DLR Euler–Maruyama solution.
We can establish such a bound, even though the algorithm includes a QR decomposition at the end of each time step $n$, provided that a certain condition on $\Delta t_n = t_{n+1}-t_n$ is satisfied; see Lemma \ref{lem: sup n yn}.  
This second-moment bound also yields a lower bound on $C_{Y_n}$, as shown in Proposition \ref{prop: 2 discrete covariance full-rank - EM}. This bound is identical in form to the one holding in the continuous case for uniformly elliptic noise, as shown in \cite[Proposition 4.5]{kazashi2025dynamical}.

\begin{Assumption}[Non-degenerate diffusion]\label{ass: diff}
	There exists a positive constant $\sigma_{B}$ such that $$b(t, x)b(t,x)^{\top}\succeq\sigma_{B} \cdot I_{d \times d} \succ 0,$$
	for all $t \in \mathbb{R}$ and for all $ x \in \mathbb{R}^d$. 
\end{Assumption}

\begin{Proposition} \label{prop: 1 discrete covariance full-rank - EM}
	If Assumptions \ref{ass: C_Y_0} and \ref{ass: diff} hold, then for any sequence $\{\Delta t_n\}_n$ with $\Delta t_n = t_{n+1}-t_n$, the Gramian of the DLR-EM solution $\{U_n, Y_n\}_n$ of Algorithm \ref{alg: DLR EM SDE algorithm} satisfies
	\begin{equation}\label{eq: cov 1 EM}
		C_{Y_{n+1}} \succeq \sigma_{B} \Delta t_n\cdot I_{k \times k}.
	\end{equation}
	\begin{proof}	
		We prove the proposition by induction on $n$.
		 For the sake of notation, hereafter we write $a_n :=a(t_n, U_n^{\top}Y_n)$ and $b_n := b(t_n, U_n^{\top}Y_n)$. 
		
		By hypothesis, $C_{Y_{0}}$ has full-rank and therefore the solution $(U_1, Y_1)$ is well (and uniquely) defined through \eqref{Euler--Maruyama eq}. 
		The calculations below show that $C_{Y_{1}}$ has full rank. We assume then that $C_{Y_{n}}$ is full-rank and $\{U_{\ell}, Y_{\ell}\}_{\ell=1}^n$ exist up to $n$, so that $\{U_{n+1}, Y_{n+1}\}$ is well-defined through \eqref{Euler--Maruyama eq}, and we derive a lower bound on the smallest singular eigenvalue of $C_{Y_{n+1}}$.
		Using \eqref{Euler--Maruyama eq}, we get
		\begin{equation}\label{eq:intermed covar}
			\begin{aligned}
				C_{Y_{n+1}}=\mathbb{E}[Y_{n+1}Y_{n+1}^{\top}]  = & R \mathbb{E}[\tilde{Y}_{n+1}\tilde{Y}_{n+1}^{\top}] R^{\top} \\
				= & R \mathbb{E}[Y_{n}Y_{n}^{\top}]R^{\top} + R\mathbb{E}[Y_{n}(U_n a_n\Delta t_n)^{\top}]R^{\top} +R \mathbb{E}[Y_{n}(U_n b_n\Delta W_n)^{\top}]R^{\top} \\
				& + R\mathbb{E}[(U_n a_n\Delta t_n)Y_{n}^{\top}] R^{\top} + R\mathbb{E}[(U_n b_n\Delta W_n)Y_{n}^{\top}]R^{\top} \\
				&+ R\mathbb{E}[(U_n a_n\Delta t_n)(U_n a_n\Delta t_n)^{\top}]R^{\top}  
				+ R\mathbb{E}[(U_n a_n\Delta t_n)(U_n b_n\Delta W_n)^{\top}]R^{\top} \\
				& +
				R\mathbb{E}[(U_n b_n\Delta W_n)(U_n a_n\Delta t_n)^{\top}]R^{\top}
				 + R\mathbb{E}[(U_n b_n\Delta W_n)(U_n b_n\Delta W_n)^{\top}]R^{\top} \\
				=& R\mathbb{E}[Y_{n}Y_{n}^{\top}]R^{\top} + R\mathbb{E}[Y_{n}(U_n a_n\Delta t_n)^{\top}] R^{\top} + R\mathbb{E}[(U_n a_n\Delta t_n)Y_{n}^{\top}]R^{\top} \\
				& + R\mathbb{E}[(U_n a_n\Delta t_n)(U_n a_n\Delta t_n)^{\top}]R^{\top} + R\mathbb{E}[(U_n b_n\Delta W_n)(U_n b_n\Delta W_n)^{\top}]R^{\top},
			\end{aligned}
		\end{equation}
	where $R$ is the square matrix obtained by the QR decomposition, denoted by  \texttt{QR} in Algorithm~\ref{alg: DLR EM SDE algorithm}.
	To derive the last equality, we have exploited the fact that all the terms linear in $\Delta W_n$ vanish since $\Delta W_n$ is independent of $\{U_{\ell},Y_{\ell}\}_{\ell=0}^n$ and has zero-mean.  
  
		We can now find an estimate on the smallest eigenvalue of $\mathbb{E}[Y_{n+1}Y_{n+1}^{\top}]$ through the Rayleigh-quotient. 
		Let us take a unit vector $v \in \mathbb{R}^k$
		and consider the following:
		\begin{equation}\label{rayleigh quo discr}
			\begin{aligned}
				v^{\top}\mathbb{E}[Y_{n+1}Y_{n+1}^{\top}]v  = & v^{\top}R\mathbb{E}[Y_{n}Y_{n}^{\top}]R^{\top}v + 2v^{\top}R\mathbb{E}[Y_{n}(U_n a_n\Delta t_n)^{\top}]R^{\top}v \\
				& + v^{\top}R\mathbb{E}[(U_n a_n\Delta t_n)(U_n a_n\Delta t_n)^{\top}]R^{\top}v + v^{\top}R\mathbb{E}[(U_n b_n\Delta W_n)(U_n b_n\Delta W_n)^{\top}]R^{\top}v.
			\end{aligned}
		\end{equation}
		Now the goal is to take the infimum over $v$ on both sides of \eqref{rayleigh quo discr} and to lower bound the right-hand side by a positive quantity. This will imply that the Rayleigh quotient $\inf\limits_{\|v\|=1, \ v \in \mathbb{R}^k}v^{\top}\mathbb{E}[Y_{n+1}Y_{n+1}^{\top}]v$ is always greater than a positive constant providing a lower bound on the smallest eigenvalue of $C_{Y_n}$. 
	    To achieve this goal,
		 it is necessary to also show that $R$ is of full rank, so that $R^{\top}v$ is never the null vector. For this we show that $\tilde{U}_{n+1}$ is full rank. 
		
		Indeed, given the QR decomposition $\tilde{U}_{n+1}^{\top} = QR$, where $Q$ has orthonormal columns, one has
		\begin{equation}\label{eq: norm and rank k}
			\| \tilde{U}_{n+1} \|_{\mathrm{F}} = 	\| \tilde{U}_{n+1}^{\top}\|_{\mathrm{F}} =	\| Q R \|_{\mathrm{F}} = 	\|R\|_{\mathrm{F}}
		\end{equation}
		and $\mathrm{rank}(\tilde{U}_{n+1}\tilde{U}^{\top}_{n+1}) \leq \mathrm{rank}(\tilde{U}_{n+1}) = \mathrm{rank}(Q R) = \mathrm{rank}(R)$. Let us compute $\tilde{U}_{n+1}\tilde{U}^{\top}_{n+1}$:
		\begin{equation}\label{eq: tilde U tilde U.T}
			\begin{aligned}
				\tilde{U}_{n+1}\tilde{U}^{\top}_{n+1}=&  U_nU_n^{\top} + U_n \left( C^{-1}_{Y_n}\mathbb{E}\left[Y_n a_n^{\top}\right]\left(I_{d \times d} - P_{U_n} \right) \Delta t_n\right)^{\top}  + \left( C^{-1}_{Y_n}\mathbb{E}\left[Y_n a_n^{\top}\right]\left(I_{d \times d} - P_{U_n} \right) \Delta t_n\right) U_n^{\top} \\
				& + \left(C^{-1}_{Y_n}\mathbb{E}\left[Y_n a_n^{\top}\right]\left(I_{d \times d} - P_{U_n} \right) \right) \left(C^{-1}_{Y_n}\mathbb{E}\left[Y_n a_n^{\top}\right]\left(I_{d \times d} - P_{U_n} \right) \right)^{\top}  (\Delta t_n)^2 \\
				= & I_{k \times k} +  \underbrace{\left(C^{-1}_{Y_n}\mathbb{E}\left[Y_n a_n^{\top}\right]\left(I_{d \times d} - P_{U_n} \right) \right)}_{C}  \cdot \left(C^{-1}_{Y_n}\mathbb{E}\left[Y_n a_n^{\top}\right]\left(I_{d \times d} - P_{U_n} \right) \right)^{\top} (\Delta t_n)^2\\
				= & I_{k \times k} +CC^{\top}(\Delta t_n)^2.
			\end{aligned}
		\end{equation}
		As $I_{k \times k}$ is symmetric positive definite with all eigenvalues equal to $1$, $(\Delta t_n)^2$ is a positive scalar, and $CC^{T}$ is symmetric positive semi-definite matrix, via Weyl's inequality \cite{horn2012matrix} all the eigenvalues of $\tilde{U}_{n+1}\tilde{U}^{\top}_{n+1}$ are greater than or equal to $1$.
		 This implies that $\tilde{U}_{n+1}\tilde{U}^{\top}_{n+1}$ is full rank and, therefore, the same holds for $\tilde{U}_{n+1}$ and $R$, which have rank equal to $k$.
		Moreover, one has that
		\begin{equation}\label{eq: R rel}
			\begin{aligned}
				R^{T}R & = R^{T}Q^{T}QR  = \tilde{U}_{n+1}\tilde{U}^{\top}_{n+1} 
				= I_{k \times k} + C^{T}C (\Delta t_n)^2,
			\end{aligned}
		\end{equation}
		hence via Weyl's inequality all eigenvalues of $R^{T}R$, and also the ones of $RR^{T}$, are greater than or equal to $1$.
		
		We now provide bounds for the terms on the right-hand side of \eqref{eq:intermed covar}.			
			For the last term in \eqref{eq:intermed covar}, since $\mathbb{E}[\Delta W_n \Delta W_n^{\top}] = \Delta t_n I_{m \times m}$ we have
			\begin{equation}\label{eq: bn}
				\begin{aligned}
					\mathbb{E}[(U_n b_n\Delta W_n)(U_n b_n\Delta W_n)^{\top}]  = & U_n \mathbb{E}[b_n\Delta W_n \Delta W_n^{\top} b_n^{\top}] U_n^{\top}  \\
					= & U_n \mathbb{E}[b_n  \mathbb{E}[ \Delta W_n \Delta W_n^{\top} \mid \mathcal{F}_{t_n}]· b_n^{\top}] U_n^{\top} \\
				 = & U_n \mathbb{E}[b_n b_n^{\top}]  U_n^{\top} \Delta t_n \succeq \sigma_{B} \cdot I_{k \times k},
				\end{aligned}	
			\end{equation}     
			by the orthogonality of $U_n$ and Assumption \ref{ass: diff}.
		
		Let us analyze the term $v^{\top}R\mathbb{E}[Y_{n}(U_n a_n\Delta t_n)^{\top}]R^{\top}v = \mathbb{E}[v^{\top}RY_{n} a_n ^{\top}U_n^{\top}R^{\top}v] \Delta t_n.$
		One has
		\begin{equation*}
			\begin{aligned}
				|\mathbb{E}[v^{\top}RY_{n} a_n ^{\top}U_n^{\top}R^{\top}v] | & \leq \mathbb{E}[|v^{\top}RY_{n} a_n ^{\top}U_n^{\top}R^{\top}v|]   \\
				& = \mathbb{E}[|v^{\top}RY_{n}| | a_n ^{\top}U_n^{\top}R^{\top}v|] \leq \frac{1}{2\varepsilon}\mathbb{E}[|v^{\top}RY_{n}|^{2}] + \frac{\varepsilon}{2} \mathbb{E}[|a_n ^{\top}U_n^{\top}R^{\top}v|^2] \\
				& = \frac{1}{2\varepsilon}  v^{\top}R\mathbb{E}[Y_{n}Y_{n}^{\top}]R^{\top}v + \frac{\varepsilon}{2}  v^{\top}R\mathbb{E}[U_n a_n a_n ^{\top}U_n^{\top}]R^{\top}v.  \\
			\end{aligned}
		\end{equation*}		
		
		Putting everything together, we get
		\begin{equation}\label{eq: intermediate rayleigh}
			\begin{aligned}
				v^{\top}\mathbb{E}[Y_{n+1}Y_{n+1}^{\top}]v \geq & v^{\top}R\mathbb{E}[Y_{n}Y_{n}^{\top}]R^{\top}v - \frac{1}{\varepsilon} v^{\top}R\mathbb{E}[Y_{n}Y_{n}^{\top}]R^{\top}v \Delta t_n- \varepsilon v^{\top}R\mathbb{E}[U_n a_n a_n ^{\top}U_n^{\top}]R^{\top}v \Delta t_n\\
				& +v^{\top}R\mathbb{E}[U_n a_n a_n ^{\top}U_n^{\top}]R^{\top}v (\Delta t_n)^2 + v^{\top}R\mathbb{E}[U_n b_nb_n^{\top}U_n^{\top}]R^{\top}v \Delta t_n.\\
			\end{aligned}
		\end{equation}
		Dividing everything by $|R^{\top}v|^2$ and using \eqref{eq: bn}, one finds that 
		\begin{equation}\label{eq: intermediate rayleigh 2}
			\begin{aligned}
				v^{\top}\frac{\mathbb{E}[Y_{n+1}Y_{n+1}^{\top}]}{|R^{\top}v|^2} v
				\geq & \left(1 - \frac{\Delta t_n}{\varepsilon}\right) v^{\top}R\frac{\mathbb{E}[Y_{n}Y_{n}^{\top}]}{|R^{\top}v|^2}R^{\top}v  + v^{\top}R\frac{\mathbb{E}[U_n a_n a_n ^{\top}U_n^{\top}]}{|R^{\top}v|^2}R^{\top}v  \left( (\Delta t_n)^2 - \varepsilon\Delta t_n\right) \\
				&+ \sigma_{B} \Delta t_n.
			\end{aligned}
		\end{equation}
		Taking $\varepsilon = \Delta t_n$, we have
		\begin{equation*}	
			\begin{aligned}
				v^{\top}\frac{\mathbb{E}[Y_{n+1}Y_{n+1}^{\top}]}{|R^{T}v|^2} v & \geq  \sigma_{B} \Delta t_n.
			\end{aligned}
		\end{equation*}
		Therefore,
		\begin{equation}	\label{eq: ray rhs}
			\begin{aligned}
				v^{\top}\mathbb{E}[Y_{n+1}Y_{n+1}^{\top}] v & \geq  \sigma_{B} |R^{T}v|^2 \Delta t_n, \\
				& \geq  \sigma_{B} ( v^{\top}RR^{T}v) \Delta t_n, \\
				& \geq \sigma_{B}  \Delta t_n,
			\end{aligned}
		\end{equation}
		where on the last inequality we have used that all the eigenvalues of $RR^{\top}$ are greater or equal to $1$ and $v$ is unitary. Now the right-hand side is independent on $v$ and, hence, passing to the infimum on the left-hand side we find
		\begin{equation}	
			\begin{aligned}
				\sigma_{n+1}^k \geq \sigma_{B} \Delta t_n,
			\end{aligned}
		\end{equation}
	where $\sigma_{n+1}^k$ is the smallest eigenvalue of $C_{Y_{n+1}} = \mathbb{E}[Y_{n+1}Y_{n+1}^{\top}]$.
	
	As these computations are independent of $n$, by induction $\sigma_{n}^k>0$ and $C_{Y_n}$ is full rank for all $n$.
	\end{proof}
\end{Proposition}

We now provide a uniform bound in $\Delta t$ on the $L^2$-sup-in-time norm of the DLR approximate solution under a restrictive condition on the time step, involving the smallest singular value.  This result will be crucial for the proofs of Proposition \ref{prop: 2 discrete covariance full-rank - EM} and Theorem \ref{thm: convergence of DLR Euler-Maruyama}.

\begin{Lemma}[Boundedness of the stochastic modes]\label{lem: sup n yn}
	Let us consider a DLR Euler-Maruyama discretizazion with a uniform time step $\Delta t:= T/N$, with $N \in \mathbb{N}$. Suppose that 
	\begin{equation}\label{eq: dt sup n cond}
		\Delta t \leq \frac{ \sqrt{ \sigma_{n}^k} }{ \sqrt{ C_{\mathrm{lgb}}} \sqrt{ 5(1+\mathbb{E}\left[|X_0|^2 \right]) \exp\left(  C_{\mathrm{lgb}} (2T^2+8T) \right)} },
	\end{equation}
	and Assumptions \ref{eq:initial value}-\ref{eq:lin-growth} hold,
	then the following $L^2$-sup-in-time bound holds for all $n =\{1,\dots, N\}$
	\begin{equation}\label{eq: sup n Y_n}
		\begin{aligned}
			\mathbb{E}[\max\limits_{0 \leq k \leq n}|\tilde{Y}_{k}|^{2}] \leq&	\mathbb{E}[\max\limits_{0 \leq k \leq n}|Y_{k}|^{2}]\\
			=& \mathbb{E}[\max\limits_{0 \leq k \leq n}|X_{k}|^{2}] \leq 5(1+\mathbb{E}\left[|X_0|^2 \right]) \exp\left(  C_{\mathrm{lgb}} (2T^2+8T) \right)=:K_1(T).
		\end{aligned}
	\end{equation}
	\begin{proof} The proof can be found in Appendix \ref{app:DLR EM}.
	\end{proof}
\end{Lemma}

\begin{Remark}
If the diffusion coefficient is non-degenerate, then the stability bound \eqref{eq: sup n Y_n} holds under a time-step condition that can be made independent of $n$. Indeed, under Assumption \ref{ass: diff}, 
	Proposition~\ref{prop: 1 discrete covariance full-rank - EM} implies that condition \eqref{eq: dt sup n cond} in Lemma~\ref{lem: sup n yn} can be replaced by
\begin{equation}\label{eq: delta t stab bis}	
	\Delta t_n \leq  \frac{ \sigma_B}{ C_{\mathrm{lgb}} 5(1+\mathbb{E}[|X_{0}|^2] ) \exp\left(  C_{\mathrm{lgb}} (2T^2+8T) \right)}, \quad \text{ for all } n \in  \{1,\dots,N\}.
\end{equation}
\end{Remark}

The norm bound of Lemma \ref{lem: sup n yn} on the discretized DLR solution $X_n$ helps us obtaining a better estimate on the lower bound of the Gramian $C_{Y_n}= \mathbb{E}[Y_nY_n^{\top}]$ than the one given in Proposition \ref{prop: 1 discrete covariance full-rank - EM}.

\begin{Proposition} \label{prop: 2 discrete covariance full-rank - EM}
	Suppose that Assumptions \ref{linear-growth-bound}, \ref{ass: C_Y_0}, and \ref{ass: diff} hold, and consider a DLR EM discretizazion with a uniform time step $\Delta t:= T/N$, with $N \in \mathbb{N}$. Moreover, assume that $\Delta t$ satisfies \eqref{eq: dt sup n cond} for any $n=0,\dots, N$.
 Then, one has
	 \begin{equation}\label{eq: cov 2 EM}
	 	\begin{aligned}
	 			C_{Y_{n+1}}
	 			\succeq
	 			 & \min \{\sigma_{0}, \frac{\sigma_{B}^2}{4C_{\mathrm{lgb}}(1 + K_1(T))}\},
	 	\end{aligned}
	 \end{equation}
where $K_1(T)$ is defined in \eqref{eq: sup n Y_n}.
	\begin{proof}
We proceed similarly to the proof of Proposition \ref{prop: 1 discrete covariance full-rank - EM}. The result follows by using relation \eqref{eq: intermediate rayleigh 2}, which we recall here
\begin{equation*}
		v^{\top}\frac{\mathbb{E}[Y_{n+1}Y_{n+1}^{\top}]}{|R^{\top}v|^2} v 
		 \geq \left(1 - \frac{\Delta t}{\varepsilon}\right) v^{\top}R\frac{\mathbb{E}[Y_{n}Y_{n}^{\top}]}{|R^{\top}v|^2}R^{\top}v  + v^{\top}R\frac{\mathbb{E}[U_n a_n a_n ^{\top}U_n^{\top}]}{|R^{\top}v|^2}R^{\top}v  \left( (\Delta t)^2 - \varepsilon\Delta t \right) + \sigma_{B} \Delta t \\
\end{equation*}
and the norm bound obtained in Lemma \ref{lem: sup n yn}. Since $R$ is obtained by the reduced \texttt{QR} applied to $\tilde{U}_{n+1}$, we have
\begin{equation}
	\begin{aligned}
		 v^{\top}R\mathbb{E}[U_n a_n a_n ^{\top}U_n^{\top}]R^{\top}v 
		\geq & - |v^{\top}R\mathbb{E}[U_n a_n a_n ^{\top}U_n^{\top}]R^{\top}v |\\
	\geq & - |R^{\top}v|^2 \mathbb{E}[|U_n a_n |^2]\\
		\geq & - |R^{\top}v|^2 \mathbb{E}[|a_n |^2]\\
			\geq & - |R^{\top}v|^2 C_{\mathrm{lgb}}(1+K_1(T)).\\
	\end{aligned}
\end{equation}
Therefore,
	\begin{equation}
			\begin{aligned}
				v^{\top}\frac{\mathbb{E}[Y_{n+1}Y_{n+1}^{\top}]}{|R^{\top}v|^2} v 
			& \geq \left(1 - \frac{\Delta t}{\varepsilon}\right) v^{\top}R\frac{\mathbb{E}[Y_{n}Y_{n}^{\top}]}{|R^{\top}v|^2}R^{\top}v  - C_{\mathrm{lgb}}(1 + K_1(T))  \left| (\Delta t)^2 - \varepsilon\Delta t \right| + \sigma_{B} \Delta t .
			\end{aligned}
		\end{equation}
Choosing
$$\varepsilon = \Delta t + \frac{\sigma_B}{2C_{\mathrm{lgb}}(1 + K_1(T))},$$
one obtains
\begin{equation*}
	\begin{aligned}
		v^{\top}\frac{\mathbb{E}[Y_{n+1}Y_{n+1}^{\top}]}{|R^{\top}v|^2} v 
		 \geq & \left(1 - \frac{\Delta t}{\Delta t + \frac{\sigma_B}{2C_{\mathrm{lgb}}(1 + K_1(T))}}\right) v^{\top}R\frac{\mathbb{E}[Y_{n}Y_{n}^{\top}]}{|R^{\top}v|^2}R^{\top}v \\
		& - C_{\mathrm{lgb}}(1 + K_1(T))  \left| (\Delta t)^2 - (\Delta t + \frac{\sigma_B}{2C_{\mathrm{lgb}}(1 + K_1(T))})\Delta t \right| + \sigma_{B} \Delta t \\
		= & \left(1 - \frac{\Delta t}{\Delta t + \frac{\sigma_B}{2C_{\mathrm{lgb}}(1 + K_1(T))}}\right) \sigma_{n}^k + \frac{\sigma_{B}}{2}\Delta t.
	\end{aligned}
\end{equation*}
By taking the infimum over $v$ on 
	the left-hand side and setting $A:=\frac{\sigma_B}{2C_{\mathrm{lgb}}(1 + K_1(T))}$, we obtain
\begin{equation*}
	\begin{aligned}
			\sigma_{n+1}^k \geq & \left(1 - \frac{\Delta t}{\Delta t + A}\right) \sigma_{n}^k + \frac{\sigma_{B}}{2}\Delta t \\
	                  \geq & \left(1 - \frac{1}{1+ \frac{A}{\Delta t}}\right)^{n+1} \sigma_{0} + \frac{\sigma_{B}}{2}\Delta t \sum_{k=0}^{n} \left(1 - \frac{1}{1+ \frac{A}{\Delta t}}\right)^{k}\\
	                 = & \left(1 - \frac{1}{1+ \frac{A}{\Delta t}}\right)^{n+1}\sigma_{0} + \frac{\sigma_{B}}{2}\Delta t \frac{1 - \left(1 - \frac{1}{1+ \frac{A}{\Delta t}}\right)^{n+1}}{1 -  \left(1 - \frac{1}{1+ \frac{A}{\Delta t}}\right)} \\
	                 = & \left(1 - \frac{1}{1+ \frac{A}{\Delta t}}\right)^{n+1}\sigma_{0} + \frac{\sigma_{B}}{2}(1+ \frac{A}{\Delta t})\Delta t \left[1 - \left(1 - \frac{1}{1+ \frac{A}{\Delta t}}\right)^{n+1}\right]\\
	                 = & \left(1 - \frac{1}{1+ \frac{A}{\Delta t}}\right)^{n+1}\sigma_{0} + \frac{\sigma_{B}}{2}A \left[1 - \left(1 - \frac{1}{1+ \frac{A}{\Delta t}}\right)^{n+1}\right]  + \frac{\sigma_{B}}{2}\Delta t \left[1 - \left(1 - \frac{1}{1+ \frac{A}{\Delta t}}\right)^{n+1}\right] \\
	                 \geq & \min \{\sigma_{0}, \frac{\sigma_{B}^2}{4C_{\mathrm{lgb}}(1 + K_1(T))}\} + \frac{\sigma_{B}}{2}\Delta t \left[1 - \left(1 - \frac{1}{1+ \frac{A}{\Delta t}}\right)^{n+1}\right].
    \end{aligned}
\end{equation*}
	\end{proof}
\end{Proposition}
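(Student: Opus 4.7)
The plan is to refine the Rayleigh-quotient estimate used in the proof of Proposition \ref{prop: 1 discrete covariance full-rank - EM} by exploiting the uniform second-moment bound on $X_n$ provided by Lemma \ref{lem: discrete gronwall-bound}, and then to unroll the resulting linear recursion. The inequality \eqref{eq: intermediate rayleigh 2} was derived without fixing $\varepsilon$, so I would start directly from it rather than from the specific choice $\varepsilon = \Delta t$ made there. Dividing through by $|R^{\top}v|^2$, the right-hand side has three contributions: a contraction of the previous Gramian with factor $(1-\Delta t/\varepsilon)$, a remainder involving $\mathbb{E}[|U_n a_n|^2]$ multiplied by $(\Delta t)^2 - \varepsilon \Delta t$ (negative when $\varepsilon > \Delta t$), and the non-degeneracy bonus $\sigma_B \Delta t$ from Assumption \ref{ass: diff}.

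Next I would bound the middle term: by orthonormality of $U_n$, the linear-growth Assumption \ref{linear-growth-bound}, and Lemma \ref{lem: discrete gronwall-bound}, one has $\mathbb{E}[|U_n a_n|^2] \leq \mathbb{E}[|a_n|^2] \leq C_{\mathrm{lgb}}(1+K_1(T))$. The crucial move is to choose
\[
\varepsilon = \Delta t + \frac{\sigma_B}{2 C_{\mathrm{lgb}}(1+K_1(T))},
\]
so that the negative contribution coming from $-(\varepsilon-\Delta t)\Delta t \cdot C_{\mathrm{lgb}}(1+K_1(T))$ consumes exactly half of the $\sigma_B \Delta t$ term. After taking the infimum over unit vectors $v$ and using that all eigenvalues of $RR^{\top}$ are at least $1$ (already established in Proposition \ref{prop: 1 discrete covariance full-rank - EM}), this yields the scalar recursion
\[
\sigma_{n+1}^{k} \geq \left(1 - \frac{\Delta t}{\Delta t + A}\right) \sigma_{n}^{k} + \frac{\sigma_B}{2}\Delta t, \qquad A := \frac{\sigma_B}{2 C_{\mathrm{lgb}}(1+K_1(T))}.
\]

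Finally I would solve this linear recursion explicitly. Setting $r := 1 - 1/(1 + A/\Delta t) \in (0,1)$ and iterating from $\sigma_{0}^{k} \geq \sigma_{0}$, a geometric-sum identity gives
\[
\sigma_{n+1}^{k} \geq r^{n+1}\sigma_0 + \frac{\sigma_B}{2}\Delta t \cdot \frac{1 - r^{n+1}}{1-r}.
\]
Because $1-r = \Delta t/(\Delta t + A)$, the factor $\Delta t/(1-r)$ equals $\Delta t + A$, so the second summand splits into $(\sigma_B A/2)(1-r^{n+1}) + (\sigma_B/2)\Delta t(1-r^{n+1})$. Recognising that $\sigma_B A/2 = \sigma_B^2/(4C_{\mathrm{lgb}}(1+K_1(T)))$, the convex combination $r^{n+1}\sigma_0 + (1-r^{n+1})(\sigma_B A/2)$ is bounded below by $\min\{\sigma_0, \sigma_B^2/(4C_{\mathrm{lgb}}(1+K_1(T)))\}$, which produces the first term of \eqref{eq: cov 2 EM}; the remainder $(\sigma_B/2)\Delta t(1-r^{n+1})$ is exactly the second term.

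I expect the main obstacle to be bookkeeping rather than conceptual: the delicate tuning of $\varepsilon$ so that the negative correction absorbs precisely half of the diffusion contribution, together with the algebraic repackaging of the geometric sum as a convex combination between the initial datum $\sigma_0$ and the asymptotic level $\sigma_B A/2$, must be executed carefully to expose the correct dependence on $\Delta t$ and to ensure that the lower bound remains uniformly positive (and bounded away from zero) as $n$ grows.
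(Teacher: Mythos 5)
Your proposal is correct and follows essentially the same route as the paper's own proof: starting from the Rayleigh-quotient estimate \eqref{eq: intermediate rayleigh 2} with the same choice $\varepsilon = \Delta t + \sigma_B/(2C_{\mathrm{lgb}}(1+K_1(T)))$, obtaining the identical scalar recursion for $\sigma_n^k$, and unrolling the geometric sum into a convex combination of $\sigma_0$ and $\sigma_B A/2$ plus the residual $(\sigma_B/2)\Delta t(1-r^{n+1})$. No gaps.
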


\begin{Remark}
	If a uniform time step $\Delta t$ satisfies \eqref{eq: dt sup n cond}, then the bound \eqref{eq: cov 2 EM} turns out to be identical to the one obtained in \cite{kazashi2025dynamical} for the smallest eigenvalue of the Gramian $C_{Y(t)}$ for the continuous DLRA \eqref{DLR-conditions}. 
	Hence, for such $\Delta t$, the same upper bound
	\[|C^{-1}_{Y(s)}|,|C^{-1}_{Y_{n}}| \leq \frac{1}{\gamma}\text{ for all s }\in [0,T]\text{ and for all }n,\]
	for $\gamma>0$,
	holds for the inverse Gramians in both the continuous DLRA and DLR Euler--Maruyama scheme.
\end{Remark}

Unlike the standard Euler--Maruyama scheme, Algorithm \ref{alg: DLR EM SDE algorithm} requires a non-linear operation at every time step due to the rotation of the modes $U_n$ via \texttt{QR} decomposition. Despite this, we are able to establish in the next Theorem \ref{thm: convergence of DLR Euler-Maruyama} that the DLR Euler--Maruyama scheme converges at the usual strong rate. 

We are now ready to state and prove the first main convergence theorem of this section. To this purpose, it is useful defining the following notation: for $s\in [0,T]$ we define
\begin{equation}\label{eq: n_s}
	n_s := \max\{n = 0,1, \dots, N : t_n \leq s\}.
\end{equation}
\begin{Theorem}[Strong Convergence of DLR EM to the DO solution]\label{thm: convergence of DLR Euler-Maruyama}
	Let Assumptions~$1$-$3$ hold. Consider a uniform partition $\Delta := \left\{t_n = t_0 + n \Delta t, \ n=0,\dots, N\right\}$ of $[0,T]$ with time-step $\Delta t$ and let $\{X_n\}_{0 \leq n \leq N}$ be the DLR Euler-Maruyama solution built on this partition.
	
	Let us suppose that there exists a constant $\gamma>0$ such that $|C^{-1}_{Y(s)}|,|C^{-1}_{Y_{n_s}}| \leq \frac{1}{\gamma}$ for all $s \in [0,T]$ and $\Delta t$ satisfies \eqref{eq: dt sup n cond} for all $n=0, \dots, N-1$. 
	Moreover, 
	if the SDE  \eqref{eq:SDE-diff} is non-autonomous, we assume that
	the drift $a$ and the diffusion $b$ satisfy the following Hölder condition in $t$: there exist $\alpha>0$ and $C_{\mathrm{na}} >0$ such that
	\begin{equation}\label{polynomial growth in t}
	|a(t,x) - a(s,x) | +|b(t,x) - b(s,x)| \leq C_{\mathrm{na}} (1+|x|) |t-s|^{\alpha} \quad \text{ for all } \ t,s \in [0,T], \ x \in \mathbb{R}^d.
	\end{equation}	

	Then, the DLR Euler--Maruyama method with constant time step $\Delta t$ has strong order of convergence equal to ${\min\{ \frac{1}{2}, \alpha\} }$, i.e.\ there exists a constant $C:=C(\gamma, k,T)$ independent of $\Delta t$ such that:
	\begin{equation}\label{eq:conv DLR EM}
	  \sqrt{\mathbb{E}[\max\limits_{0 \leq n \leq N} |X_{n} - X(t_n)|^2]}  \leq C (\Delta t)^{\min\{ \frac{1}{2}, \alpha\} }.
	\end{equation}
	\begin{proof}
		
		To prove the statement, we proceed as follows. First, we bound the error between the DLRA solution and the DLR EM surrogate in terms of the errors concerning the stochastic and deterministic bases. We then analyze these two errors separately, expressing then as sums of local error contributions and jump terms between the DLR-EM bases before and after re-orthonormalization by QR. Finally, we combine the error estimates on the deterministic and stochastic bases and conclude via Gronwall's lemma.
		
		We start by finding a first bound on the error $\mathbb{E}[\max\limits_{0 \leq n \leq N} |X_{n} - X(t_n)|^2]$. Fix $n$, and suppose that we have just completed Step 6 of Algorithm \ref{alg: DLR EM SDE algorithm}; then we know
		\begin{equation*}
		X_{n} = U_{n}^{\top} Y_{n} = \tilde{U}_{n}^{\top} \tilde{Y}_{n},
		\end{equation*}
		where $\tilde{U}_{n},\tilde{Y}_{n}$ are the updates of the deterministic and stochastic modes before the \texttt{QR} decomposition in Step 6 of Algorithm \ref{alg: DLR EM SDE algorithm}. 
		They satisfy \eqref{Euler--Maruyama eq}, and $\tilde{U}_{n}$ is not necessarily orthogonal.
					
	We notice that the error $|X(t_{n})-X_{n}|^2$  can be bounded by the error on the deterministic and stochastic modes:
		\begin{equation*}
		\begin{aligned}
		|X(t_{n})-X_{n}|^2 & = |U(t_{n})^{\top}Y(t_{n})-U_{n}^{\top}Y_{n}|^2 = |U(t_{n})^{\top}Y(t_{n})-\tilde{U}_{n}^{\top}\tilde{Y}_{n}|^2\\
		& =    |U(t_{n})^{\top}(Y(t_{n})-\tilde{Y}_{n}) + (U(t_{n})^{\top}-\tilde{U}_{n}^{\top})\tilde{Y}_{n}|^2 \\
		   & \leq 2 |U(t_{n})|^2|Y(t_{n})-\tilde{Y}_{n}|^2 + 2|U(t_{n})-\tilde{U}_{n}|^2|\tilde{Y}_{n}|^2  \\
		   & \leq 2 |Y(t_{n})-\tilde{Y}_{n}|^2 + 2\|U(t_{n})-\tilde{U}_{n}\|_{\mathrm{F}}^2|\tilde{Y}_{n}|^2 .
		\end{aligned}
		\end{equation*}	
	
	  Taking the supremum over all the time-grid points leads to
	    \begin{equation*}
	    	\begin{aligned}
	    		\max\limits_{0 \leq n \leq N} |X(t_{n})-X_{n}|^2 
	    		\leq & 2 \max\limits_{0 \leq n \leq N}  |Y(t_{n})-\tilde{Y}_{n}|^2 +  2 \max\limits_{0 \leq n \leq N}\|U(t_{n})-\tilde{U}_{n}\|_{\mathrm{F}}^2 \max\limits_{0 \leq n \leq N} |\tilde{Y}_{n}|^2,\\
	    	\end{aligned}
	    \end{equation*}		
hence
        \begin{equation}\label{X-error estimate}
        	\begin{aligned}
        		\mathbb{E}[\max\limits_{0 \leq n \leq N} |X(t_{n})-X_{n}|^2 ]
        		& \leq 2 \mathbb{E}[ \max\limits_{0 \leq n \leq N}  |Y(t_{n})-\tilde{Y}_{n}|^2] +  2 \max\limits_{0 \leq n \leq N} \|U(t_{n})-\tilde{U}_{n}\|_{\mathrm{F}}^2 \mathbb{E}[\max\limits_{0 \leq n \leq N} |\tilde{Y}_{n}|^2], \\   
        			& \leq C_1^2 \left( \mathbb{E}[ \max\limits_{0 \leq n \leq N}  |Y(t_{n})-\tilde{Y}_{n}|^2] + \max\limits_{0 \leq n \leq N} \|U(t_{n})-\tilde{U}_{n}\|_{\mathrm{F}}^2\right),
        	\end{aligned}
        \end{equation}	
       where we have used Lemma \ref{lem: sup n yn}, and we define $C_1:= \sqrt{\max\{2,2K_1(T)\}}$.
		Therefore we just need to show convergence of the deterministic and stochastic components $\tilde{U}_n$ and $\tilde{Y}_n$ to prove convergence of $X_{n}$ to $X(t_{n})$. 
		
		We first look at the error on the stochastic basis. It is favorable to notice that we have the following recursive relation with respect to the increment of the stochastic basis before the QR decomposition:
		\begin{equation}\label{eq update Y_n+1}
		    \begin{aligned}
		    	\tilde{Y}_{n+1} =& Y_{n} + U_n a(t_n, U_n^{\top} Y_n)\Delta t
		    	+ U_n b(t_n, U_n^{\top} Y_n)\Delta W_n \\
		    	=& Y_{n} - \tilde{Y}_{n} + \tilde{Y}_{n} +  \int_{t_n}^{t_{n+1}} U_n a(t_n, U_n^{\top} Y_n) \mathrm{d} r
		    	+  \int_{t_n}^{t_{n+1}} U_n b(t_n, U_n^{\top} Y_n) \mathrm{d} W_r \\
		    	= & \sum_{\ell=1}^{n} \left( Y_{\ell} - \tilde{Y}_\ell \right)+    \sum_{\ell=0}^{n} \left( \int_{t_\ell}^{t_{\ell+1}} U_\ell a(t_\ell, U_\ell^{\top} Y_\ell) \mathrm{d} r
		    	+  \int_{t_\ell}^{t_{\ell+1}} U_\ell b(t_\ell, U_\ell^{\top} Y_\ell) \mathrm{d} W_r \right) + Y_0.
		    \end{aligned}
		\end{equation}
		From this relation, we obtain the following decomposition for the error of the stochastic basis
		\begin{equation}\label{eq error y in EM}
			\begin{aligned}
				\mathbb{E}\left[ \max_{0 \le n \le N} \left| \tilde{Y}_{n} - Y(t_n) \right|^2 \right]
				=&  \mathbb{E}\Bigg[ \max_{0 \le n \le N} \Bigg| \sum_{\ell=1}^{n-1} \left( Y_{\ell} - \tilde{Y}_\ell \right)+  \sum_{\ell=0}^{n-1} \Big(  \int_{t_\ell}^{t_{\ell+1}} U_\ell a(t_\ell, U_\ell^{\top} Y_\ell) \mathrm{d} r \\
				&+    \int_{t_\ell}^{t_{\ell+1}} U_\ell b(t_\ell, U_\ell^{\top} Y_\ell) \mathrm{d} W_r \Big) + Y_0 - Y(t_n) \Bigg|^2 \Bigg] \\
				=&  \mathbb{E}\Bigg[ \max_{0 \le n \le N} \Bigg|  \sum_{\ell=1}^{n-1} (Y_\ell - \tilde{Y}_\ell) \\
				&+ \Bigg( \sum_{\ell=0}^{n-1} \int_{t_\ell}^{t_{\ell+1}} \left( U_\ell a(t_\ell, U_\ell^{\top} Y_\ell)
				- U(r)a(r, U(r)^{\top} Y(r)) \right) \mathrm{d}r \\
				&+ \int_{t_\ell}^{t_{\ell+1}} \left( U_\ell b(t_\ell, U_\ell^{\top} Y_\ell)
				- U(r)b(r, U(r)^{\top} Y(r)) \right) \mathrm{d}W_r \Bigg)
				+ Y_0 - Y(0)\Bigg|^2 \Bigg] \\
				=& 3 \Bigg( \underbrace{ \mathbb{E}\left[ \max_{0 \le n \le N} \left| \left( \sum_{\ell=1}^{n-1} (Y_\ell - \tilde{Y}_\ell ) \right) \right|^2 \right]}_{\mathrm{T}_0}\\
				&+ \underbrace{\mathbb{E}\left[ \max_{0 \le n \le N} \left| \int_{0}^{t_{n}} \left( U_{n_r} a(t_{n_r}, U_{n_r}^{\top} Y_{n_r})
				- U(r)a(r, U(r)^{\top} Y(r)) \right) \mathrm{d}r  \right|^2 \right]}_{\mathrm{T}_1} \\
			&+ \underbrace{\mathbb{E}\left[ \max_{0 \le n \le N} \left| \int_{0}^{t_{n}} \left( U_{n_r} b(t_{n_r}, U_{n_r}^{\top} Y_{n_r})
				- U(r)b(r, U(r)^{\top} Y(r)) \right) \mathrm{d}W_r \right|^2 \right] }_{\mathrm{T}_2}\Bigg), \\
			\end{aligned}
		\end{equation}
	where we have assumed $Y_0 = Y(0)$.
		We proceed by bounding each term $\mathrm{T}_0$-$\mathrm{T}_2$ in a suitable way in order to obtain a Gronwall-type bound. 
		
		Let us start from finding a bound for $\mathrm{T}_0$. Notice that for all $n$ one has that $Y_n = R_n \tilde{Y}_n$ with $\tilde{U}_n = R_n^{\top} U_n$,
	  and hence it holds that
	  	\begin{equation*}
	  	\begin{aligned}
	  		| Y_n - \tilde{Y}_n | = | R_n \tilde{Y}_n - \tilde{Y}_n| \leq |R_n -  I_{k \times k}| |  \tilde{Y}_n|.
	  	\end{aligned}
	  \end{equation*}
  We now define 
  \begin{equation*}
  	U(s) = U_{n-1} + A P_{U_{n-1}}^{\perp}s, \quad A= C_{Y_{n-1}}^{-1}\mathbb{E}[Y_{n-1}a(t_{n-1}, U_{n-1}^{\top}Y_{n-1})^{\top}],
  \end{equation*}
so that $U(\Delta t)= \widetilde{U}_n$, and invoke Lemma \ref{lem: analyticity of U} to conclude that the QR decomposition $s \mapsto (Q(s),R(s))$ is analytic in $s$, hence from \eqref{eq: error in R} $R_n$ admits the following error bound 
		\begin{equation}\label{eq: R-I}
			\begin{aligned}
				|R_n -  I_{k \times k}|
				\leq & | R_n -  I_{k \times k}| \\
				\leq &\frac{(\Delta t)^2 }{2 } \sqrt{2(1+T^2|C_{Y_{n-1}}^{-1}\mathbb{E}\left[Y_{n-1} a(t_{n-1},U_{n-1}^{\top}Y_{n-1})^{\top}\right]|^2)}\\
				& \quad  \cdot  \bigl(\|C_{Y_{n-1}}^{-1}\mathbb{E}\left[Y_{n-1} a(t_{n-1},U_{n-1}^{\top}Y_{n-1})^{\top}\right]\|^2_{\mathrm{F}}+2T^2\|C_{Y_{n-1}}^{-1}\mathbb{E}\left[Y_{n-1} a(t_{n-1},U_{n-1}^{\top}Y_{n-1})^{\top}\right]\|_{\mathrm{F}}^4\bigr)  \\
				\leq &\frac{(\Delta t)^2 }{2 }  \sqrt{2(1+T^2 \frac{1}{\gamma} C_{\mathrm{lgb}}(1 + K_1(T)) )} \bigl( \frac{1}{\gamma} C_{\mathrm{lgb}}(1 + K_1(T)) )+2T^2 \frac{1}{\gamma}^2 C_{\mathrm{lgb}}^2(1 + K_1(T))^2\bigr) \\
				= & (\Delta t)^2 \tilde{C}(k, \gamma, T),
			\end{aligned}
		\end{equation}
	where we employ Cauchy-Schwarz inequality and linear-growth bound property, and $\tilde{C}(k, \gamma, T):=  \frac{1}{2}\sqrt{2(1+T^2 \frac{1}{\gamma} C_{\mathrm{lgb}}(1 + K_1(T)) )} \bigl( \frac{1}{\gamma} C_{\mathrm{lgb}}(1 + K_1(T)) )+2T^2 \frac{1}{\gamma}^2 C_{\mathrm{lgb}}^2(1 + K_1(T))^2\bigr)$.

	Now, via using \eqref{eq: R-I} we can bound $\mathrm{T}_0$ in the following fashion
		\begin{equation*}
			\begin{aligned}
			\mathrm{T}_0 = \mathbb{E}\left[  \max_{0 \le n \le N} \left| \sum_{\ell=1}^{n-1} (Y_\ell - \tilde{Y}_\ell) \right|^2\right] &\leq  \mathbb{E}\left[\max_{0 \le n \le N} \left( \sum_{\ell=1}^{n-1} |Y_\ell - \tilde{Y}_\ell| \right)^2 \right] \\	
				&\le \mathbb{E}\left[ \max_{0 \le n \le N} \left( \sum_{\ell=1}^{n-1}|R_\ell - I_{k\times k}| |  \tilde{Y}_\ell| \right)^2 \right]\\
				&\le \mathbb{E}\left[ \max_{0 \le n \le N} n \sum_{\ell=1}^{n-1} \left( \tilde{C}(k, \gamma, T) \Delta t^2 \right)^2 | \tilde{Y}_\ell|^2 \right] \\
					&\le \mathbb{E}\left[ N \sum_{\ell=1}^{N-1} \left(  \tilde{C}(k, \gamma, T) \Delta t^2 \right)^2 \left(\max_{0 \le n \le N} | \tilde{Y}_n|^2 \right) \right] \\
				&\le \ \tilde{C}(k, \gamma, T)
				\mathbb{E}\left[\left( \max\limits_{0 \leq n \leq N} |\tilde{Y}_n|^2 \right)\right] \Delta t^2 \\
				&\le  \tilde{C}(k, \gamma, T)
			 K_1(T) \Delta t^2.
			\end{aligned}
		\end{equation*}

		Concerning $\mathrm{T}_1$, through the orthogonality hypothesis on $U$, Jensen's inequality, Assumption \ref{lipschitz} and relation \eqref{polynomial growth in t}, Lemma \ref{lem: sup n yn}, and \eqref{eq: sup n Y_n}, we obtain:
		\begin{equation*}
		\begin{aligned}
		\mathrm{T}_1 = & \mathbb{E}\big[\max_{0 \le n \le N} |\int_{0}^{t_{n}} U_{n_r}a(t_{n_r}, U_{n_r}^{\top}Y_{n_r}) - U(r) a(r,U(r)^{\top}Y(r))\mathrm{d}r|^2\big] \\
		\leq & T \mathbb{E}\big[ \int_{0}^{T} |U_{n_r}a(t_{n_r}, U_{n_r}^{\top}Y_{n_r}) - U(r) a(r,U(r)^{\top}Y(r))|^2\mathrm{d}r\big]\\
		= & T \mathbb{E}\big[ \int_{0}^{T} |U_{n_r}a(t_{n_r}, U_{n_r}^{\top}Y_{n_r}) - U(r) a(r,U(r)^{\top}Y(r))|^2\mathrm{d}r\big] \\
		\leq & T \mathbb{E}\big[ \int_{0}^{T}| U_{n_r}a(t_{n_r}, U_{n_r}^{\top}Y_{n_r}) - U(r)a(t_{n_r}, U_{n_r}^{\top}Y_{n_r}) + U(r)a(t_{n_r}, U_{n_r}^{\top}Y_{n_r}) 
		- U(r)a(r, U_{n_r}^{\top}Y_{n_r}) \\
		& + U(r)a(r, U_{n_r}^{\top}Y_{n_r})  - U(r)a(r, U(r)^{\top}Y_{n_r}) +
		U(r)a(r, U(r)^{\top}Y_{n_r}) - U(r) a(r,U(r)^{\top}Y(r))|^2\mathrm{d}r\big] \\
		\leq & 4T \big[ \int_{0}^{T} C_{\mathrm{lgb}}(1+K_1(T)) \|U_{n_r} - U(r)\|^{2}_{\mathrm{F}} + {C_{\mathrm{na}}(1+K_1(T))}  |t_{n_r}-r|^{2\alpha} \\
		& +  C_{\mathrm{Lip}}K_1(T)\|U_{n_r} - U(r)\|^{2}_{\mathrm{F}} + C_{\mathrm{Lip}}\mathbb{E}[|Y_{n_r}-Y(r)|^2] \mathrm{d}r\big]  \\
		\leq & 4T \big[ \int_{0}^{T}(C_{\mathrm{lgb}}+ C_{\mathrm{Lip}})(1+K_1(T)) \|U_{n_r} - U(r)\|^{2}_{\mathrm{F}} + {C_{\mathrm{na}}(1+K_1(T))}  |t_{n_r}-r|^{2\alpha} \\
		&+ C_{\mathrm{Lip}}\mathbb{E}[|Y_{n_r}-Y(r)|^2] \mathrm{d}r\big] \\
		\end{aligned}
		\end{equation*}	
		Notice that via Jensen's inequality, Cauchy-Schwarz inequality, and linear-growth bound, we can bound the local error with respect to the deterministic increment as
		\begin{equation}\label{eq: local U error}
			\begin{aligned}
				 \|U_{n_r} - U(r)\|^{2}_{\mathrm{F}} &=  \|U_{n_r} - U(t_{n_r}) - \int_{t_{n_r}}^{r} C_{Y(s)}^{-1}\mathbb{E}\left[Y(s) a(s,U(s)^{\top}Y(s))^{\top}\right]\left(I_{d \times d} - P^{\mathrm{row} }_{U(s)} \right) ds \|^{2}_{\mathrm{F}} \\
				 & \leq 2 \|U_{n_r} - U(t_{n_r}) \|^{2}_{\mathrm{F}} +2  \|\int_{t_{n_r}}^{r} C_{Y(s)}^{-1}\mathbb{E}\left[Y(s) a(s,U(s)^{\top}Y(s))^{\top}\right]\left(I_{d \times d} - P^{\mathrm{row} }_{U(s)} \right) ds \|^{2}_{\mathrm{F}} \\
				 & \leq 2 \|U_{n_r} - U(t_{n_r}) \|^{2}_{\mathrm{F}} +2  \frac{1}{\gamma} C_{\mathrm{lgb}}(1 + K_2(T))(\Delta t)^2,
			\end{aligned}
		\end{equation}
			where $K_2(T)$ is a positive constant bounding the second moment of $Y(t)$ \cite[Lemma 2.7]{kazashi2025dynamical} and we employed  Cauchy-Schwarz inequality and linear-growth bound. With a similar procedure, using the Itô's isometry and independence property of the Itô's integral, one finds that
			\begin{equation}\label{eq: local Y error}
			\begin{aligned}
				\mathbb{E}[|Y_{n_r}-Y(r)|^2] &=  \mathbb{E}[|Y_{n_r}-Y(t_{n_r}) - \int_{t_{n_r}}^{r}U(s) a(s,U(s)^{\top}Y(s)) \mathrm{d}s - \int_{t_{n_r}}^{r}  U(s) b(s,U(s)^{\top}Y(s))\mathrm{d}W_s |^2] \\
				& \leq 3 \mathbb{E}[|Y_{n_r}-Y(t_{n_r})  |^2] + 3 C_{\mathrm{lgb}}(1 + K_2(T)) ( (\Delta t)^2 + \Delta t)
				\end{aligned}
		\end{equation}
		From these inequalities we obtain the following bound for $T_1$
			\begin{equation*}
			\begin{aligned}
				\mathrm{T}_1 \leq & 4T \big[ \int_{0}^{T}2(C_{\mathrm{lgb}}+ C_{\mathrm{Lip}})(1+K_1(T)) \left( \|U_{n_r} - U(t_{n_r}) \|^{2}_{\mathrm{F}} +  \frac{1}{\gamma} C_{\mathrm{lgb}}(1 + K_2(T)) (\Delta t)^2\right) \\
				&+ {C_{\mathrm{na}}(1+K_1(T))}  |t_{n_r}-r|^{2\alpha} 
				+ 3 C_{\mathrm{Lip}} \left(\mathbb{E}[|Y_{n_r}-Y(t_{n_r})  |^2] +  C_{\mathrm{lgb}}(1 + K_2(T)) ( (\Delta t)^2 + \Delta t)\right) \mathrm{d}r\big] \\
				\leq & 4T \big[ \int_{0}^{T} 2(C_{\mathrm{lgb}}+ C_{\mathrm{Lip}})(1+K_1(T)) \left(\max_{0 \le p \le r} \|U_{n_p} - U(t_{n_p})\|^{2}_{\mathrm{F}} +  \frac{1}{\gamma} C_{\mathrm{lgb}}(1 + K_2(T)) (\Delta t)^2 \right) \\
				& + {C_{\mathrm{na}}(1+K_1(T))}  (\Delta t)^{2\alpha} + 3C_{\mathrm{Lip}}\mathbb{E}[\max_{0 \le p \le r}|Y_{n_p} - Y(t_{n_p})|^2] + 3C_{\mathrm{Lip}} C_{\mathrm{lgb}}(1 + K_2(T)) ( (\Delta t)^2 + \Delta t) \mathrm{d}r\big] \\
				\leq & 8T(C_{\mathrm{lgb}}+ C_{\mathrm{Lip}})(1+K_1(T)) \int_{0}^{T} \max_{0 \le p \le r} \|U_{n_p} - U(t_{n_p})\|^{2}_{\mathrm{F}}\mathrm{d}r \\
				&+ \left( \frac{8T^2(C_{\mathrm{lgb}}+ C_{\mathrm{Lip}})}{\gamma} C_{\mathrm{lgb}}(1 + K_1(T)) (1 + K_2(T))  (\Delta t)^2 \right) \\
				&+ 4T^2{C_{\mathrm{na}}(1+K_1(T))} (\Delta t)^{2\alpha} + 12TC_{\mathrm{Lip}} \int_{0}^{T} {\mathbb{E}[\max_{0 \le p \le r}|Y_{n_p} - Y(t_{n_p})|^2]\mathrm{d}r}  \\
				&·+ 12T^2C_{\mathrm{Lip}} C_{\mathrm{lgb}}(1 + K_2(T)) ( (\Delta t)^2 + \Delta t).\\
			\end{aligned}
		\end{equation*}

		We have similar estimates on $\mathrm{T}_2$, using \cite[Theorem 4.4.4]{durrett2019probability}, Itô's isometry, relations \eqref{eq: local U error} and \eqref{eq: local Y error} and $\Delta t \leq 1$:
		\begin{equation*}
		\begin{aligned}
		\mathrm{T}_2 = & \mathbb{E}[\max\limits_{0 \leq n \leq N}|\sum\limits_{\ell=0}^{n-1}\int_{t_{\ell}}^{t_{\ell+1}} U_{\ell}b(t_{\ell}, U_{\ell}^{\top}Y_{\ell}) - U(r) b(r,U(r)^{\top}Y(r))\mathrm{d}W_r |^2] \\
		\leq & 4\mathbb{E}[| \int_{0}^{T} U_{n_r}b(t_{n_r}, U_{n_r}^{\top}Y_{n_r}) - U(r) b(r,U(r)^{\top}Y(r))\mathrm{d}W_r |^2] \\
		\leq &4\mathbb{E}[\int_{0}^{T} |U_{n_r}b(t_{n_r}, U_{n_r}^{\top}Y_{n_r}) - U(r) b(r,U(r)^{\top}Y(r))|^2\mathrm{d}r] \\
		\leq &4\int_{0}^{T} \mathbb{E}[|U_{n_r}b(t_{n_r}, U_{n_r}^{\top}Y_{n_r}) - U(r) b(r,U(r)^{\top}Y(r))|^2]\mathrm{d}r \\
		 \leq & 32(C_{\mathrm{lgb}}+ C_{\mathrm{Lip}})(1+K_1(T)) \int_{0}^{T} \max_{0 \le p \le r} \|U_{n_p} - U(t_{n_p})\|^{2}_{\mathrm{F}}\mathrm{d}r \\
		 &  + \left( \frac{32T(C_{\mathrm{lgb}}+ C_{\mathrm{Lip}})}{\gamma} C_{\mathrm{lgb}}(1 + K_1(T))(1 + K_2(T)) (\Delta t)^2 \right) \\
		 & + 16T{C_{\mathrm{na}}(1+K_1(T))} (\Delta t)^{2\alpha} + 48C_{\mathrm{Lip}} \int_{0}^{T} \mathbb{E}[\max_{0 \le p \le r}|Y_{n_p} - Y(t_{n_p})|^2]\mathrm{d}r \\
		 &+ 48TC_{\mathrm{Lip}} C_{\mathrm{lgb}}(1 + K_2(T)) ( (\Delta t)^2 + \Delta t).\\
		\end{aligned}
		\end{equation*}

		Finally, putting all the bounds of $\mathrm{T}_0$-$\mathrm{T}_2$ together, we deduce from~\eqref{eq error y in EM} that
		\begin{equation*}
		\begin{aligned}
		\mathbb{E}[\max_{0 \le n \le N} |\tilde{Y}_{n} - Y(t_n)|^2] 
		\leq & 3 \tilde{C}(k, \gamma, T)
		K_1(T) \Delta t^2\\
		&+ 24(C_{\mathrm{lgb}}+ C_{\mathrm{Lip}})(1+K_1(T))(T+4) \int_{0}^{T} \max_{0 \le p \le r} \|U_{n_p} - U(t_{n_p})\|^{2}_{\mathrm{F}}\mathrm{d}r \\
		& +  \frac{24(T^2+4T)(C_{\mathrm{lgb}}+ C_{\mathrm{Lip}})}{\gamma} C_{\mathrm{lgb}}(1 + K_1(T))(1 + K_2(T)) (\Delta t)^2 \\
		& + 12{C_{\mathrm{na}}(1+K_1(T))}(T^2+4T) (\Delta t)^{2\alpha} \\
		&+ 24C_{\mathrm{Lip}}(T+4) \int_{0}^{T} \mathbb{E}[\max_{0 \le p \le r}|Y_{n_p} - Y(t_{n_p})|^2]\mathrm{d}r \\
		& + 24(T^2+4T)C_{\mathrm{Lip}} C_{\mathrm{lgb}}(1 + K_2(T)) ( (\Delta t)^2 + \Delta t) \\
		 \leq & A_y \Delta t \sum_{\ell=0}^{N-1} \max_{0 \le j \le \ell} \|U_{j} - U(t_{j})\|^{2}_{\mathrm{F}}  + B_y \Delta t \sum_{\ell=0}^{N-1} \mathbb{E}[\max_{0 \le 
		 			j \le \ell}|Y_{j} - Y(t_{j})|^2] \\
		& + C_y (\Delta t)^{2\alpha} + D_y  \Delta t + O(\Delta t^2), \\
		\end{aligned}
		\end{equation*}
		where $A_y=24(C_{\mathrm{lgb}}+ C_{\mathrm{Lip}})(1+K_1(T))(T+4)$, $B_y=24C_{\mathrm{Lip}}(T+4)$, $D_y= 48(T^2+4T)C_{\mathrm{Lip}} C_{\mathrm{lgb}}(1 + K_2(T)) $ are positive constants, and $C_y=12{C_{\mathrm{na}}(1+K_1(T))}(T^2+4T)$ is a non-negative constant, equal to zero if $a$ and $b$ are autonomous.
		
		We now turn to the error on the deterministic modes.
		The extra difficulty with respect to the stochastic case is the presence of the inverse Gramians $C^{-1}_{Y(s)}$, $C^{-1}_{Y_{n_s}}$ in the equation for the deterministic modes. We will use \cite[Lemma 3.5]{kazashi2021existence} which says that, if  $|C^{-1}_{Y(s)}|,|C^{-1}_{Y_{n_s}}|  \leq \frac{1}{\gamma}$ holds,  then there exists a constant $K_{\gamma}>0$ such that $\|C^{-1}_{Y_{n_s}} - C^{-1}_{Y(s)}\|^2_{\mathrm{F}} \leq K_{\gamma} \mathbb{E}[ |Y_{n_s}-Y(s)|^2]$ for all $s \in [0,T]$.
	
		Therefore, similarly to the strategy adopted for the stochastic increments we have
		\begin{equation*}
		\begin{aligned}
		&\max_{0 \le n \le N}   \|\tilde{U}_{n}-U(t_n)\|^{2}_{\mathrm{F}} = \max_{0 \le n \le N}  \| \left( \sum_{\ell=0}^{n-1} \tilde{U}_{\ell+1}-\tilde{U}_\ell \right) - U(t_n) + \tilde{U}_0\|^{2}_{\mathrm{F}} \\
		=& \max_{0 \le n \le N}   \| \left(\sum_{\ell=1}^{n-1}  U_{\ell}-\tilde{U}_\ell \right)+ \Big( \sum_{\ell=0}^{n-1}  \int_{t_\ell}^{t_{\ell+1}} C^{-1}_{Y_{\ell}} \mathbb{E}[Y_{\ell} a(t_{\ell},U_{\ell}^{\top}Y_{\ell})^{\top}][I_{d \times d}- U_{\ell}^{\top}U_{\ell}] \\
		& \qquad \qquad - C^{-1}_{Y(r)} \mathbb{E}[Y(r) a(r,U(r)^{\top}Y(r))^{\top}] [I_{d \times d}- U(r)^{\top}U(r)]\mathrm{d}r  \Big)\|_{\mathrm{F}}^{2} \\
		=& \max_{0 \le n \le N}   \|  \left( \sum_{\ell=1}^{n-1}  U_{\ell}-\tilde{U}_\ell \right) + \Big(\int_{0}^{t_n} C^{-1}_{Y_{n_r}} \mathbb{E}[Y_{n_r} a(t_{n_r},U_{n_r}^{\top}Y_{n_r})^{\top}][I_{d \times d}- U_{n_r}^{\top}U_{n_r}]  \\
		& \qquad \qquad - C^{-1}_{Y(r)} \mathbb{E}[Y(r) a(r,U(r)^{\top}Y(r))^{\top}] [I_{d \times d}- U(r)^{\top}U(r)]\mathrm{d}r \Big) \|_{\mathrm{F}}^{2} \\
		 \leq & 3\underbrace{\max_{0 \le n \le N}  \|  \left( \sum_{\ell=1}^{n-1}  U_{\ell}-\tilde{U}_\ell \right) \|_{\mathrm{F}}^2}_{\mathrm{T}_4} \\
	&+	3  T \underbrace{\int_{0}^{T} \|C^{-1}_{Y_{n_r}} \mathbb{E}[Y_{n_r} a(t_{n_r},U_{n_r}^{\top}Y_{n_r})^{\top}]- C^{-1}_{Y(r)} \mathbb{E}[Y(r) a(r,U(r)^{\top}Y(r))^{\top}] \|_{\mathrm{F}}^{2} \mathrm{d}r}_{\mathrm{T}_5}\\
		& + 3    T \underbrace{\int_{0}^{T} \|C^{-1}_{Y_{n_r}} \mathbb{E}[Y_{n_r} a(t_{n_r},U_{n_r}^{\top}Y_{n_r})^{\top}]U_{n_r}^{\top}U_{n_r} - C^{-1}_{Y(r)} \mathbb{E}[Y(r) a(r,U(r)^{\top}Y(r))^{\top}]U(r)^{\top}U(r) \|_{\mathrm{F}}^{2} \mathrm{d}r}_{\mathrm{T}_6} \\
		\end{aligned}
		\end{equation*}
	where in the last term we employ Jensen's inequality and basic norm inequalities.
	
	Similarly to the treatment of $\mathrm{T}_0$, we can bound $\mathrm{T}_4$ in the following way
	\begin{equation*}
		\begin{aligned}
			\max_{0 \le n \le N}  \|  \left( \sum_{\ell=1}^{n-1}  U_{\ell}-\tilde{U}_\ell \right) \|_{\mathrm{F}}^2 & \leq \max_{0 \le n \le N}   \left( \sum_{\ell=1}^{n-1}  \| U_{\ell}-\tilde{U}_\ell  \|_{\mathrm{F}} \right)^2 \\
			& \leq 	\max_{0 \le n \le N}   \left( \sum_{\ell=1}^{n-1}  \| U_\ell - U_\ell R_\ell \|_{\mathrm{F}} \right)^2 \\
	     	& \leq 	\max_{0 \le n \le N}  \left( \sum_{\ell=1}^{n-1}  \|I_{k \times k} - R_{\ell}\|_{\mathrm{F}} \right)^2 \leq \tilde{C}(k, \gamma, T)
	     	K_1(T) \Delta t^2\\
		\end{aligned}
	\end{equation*}

 Concerning $\mathrm{T}_5$, it holds that
		\begin{equation*}
		\begin{aligned}
		\mathrm{T}_5 
		\leq & \int_{0}^{T} \|C^{-1}_{Y_{n_r}} \mathbb{E}[Y_{n_r} a(t_{n_r},U_{n_r}^{\top}Y_{n_r})^{\top}]- 
		C^{-1}_{Y(r)} \mathbb{E}[Y_{n_r} a(t_{n_r},U_{n_r}^{\top}Y_{n_r})^{\top}]+
		C^{-1}_{Y(r)} \mathbb{E}[Y_{n_r} a(t_{n_r},U_{n_r}^{\top}Y_{n_r})^{\top}]\\
		& - C^{-1}_{Y(r)} \mathbb{E}[Y(r) a(t_{n_r},U_{n_r}^{\top}Y_{n_r})^{\top}]+
		C^{-1}_{Y(r)} \mathbb{E}[Y(r) a(t_{n_r},U_{n_r}^{\top}Y_{n_r})^{\top}]-
		C^{-1}_{Y(r)} \mathbb{E}[Y(r) a(r,U_{n_r}^{\top}Y_{n_r})^{\top}] \\
		&+ C^{-1}_{Y(r)} \mathbb{E}[Y(r) a(r,U_{n_r}^{\top}Y_{n_r})^{\top}] 
		- C^{-1}_{Y(r)} \mathbb{E}[Y(r) a(r,U(r)^{\top}Y_{n_r})^{\top}] \\
		&+
		C^{-1}_{Y(r)} \mathbb{E}[Y(r) a(r,U(r)^{\top}Y_{n_r})^{\top}]-
		C^{-1}_{Y(r)} \mathbb{E}[Y(r) a(r,U(r)^{\top}Y(r))^{\top}] \|_{\mathrm{F}}^{2} \mathrm{d}r \\
		\leq & \int_{0}^{T} 5K_{\gamma} K_1(T)C_{\mathrm{lgb}}(1+K_1(T)) \mathbb{E}[ |Y_{n_r}-Y(r)|^2] + 5\frac{1}{\gamma^2} C_{\mathrm{lgb}}(1+K_1(T)) \mathbb{E}[ |Y_{n_r}-Y(r)|^2] \\
		&+ 5\frac{1}{\gamma}{C_{\mathrm{na}}(1+K_1(T))} (\Delta t)^{2\alpha} + 5\frac{K_1(T)}{\gamma} C_{\mathrm{Lip}} \| U_{n_r}-U(r)\|^2_{\mathrm{F}} \\
		&+ 5\frac{1}{\gamma}C_{\mathrm{Lip}} \mathbb{E}[ |Y_{n_r}-Y(r)|^2] \mathrm{d}r \\
		\leq & \int_{0}^{T} 10[(K_{\gamma}K_1(T) + \frac{1}{\gamma^2} )C_{\mathrm{lgb}}(1+K_1(T))+ \frac{1}{\gamma} C_{\mathrm{Lip}} ] \\
		& \cdot \left( \mathbb{E}[ \max_{0 \le p \le r} |Y_{n_p} - Y(t_{n_p})|^2] + C_{\mathrm{lgb}}(1 + K_2(T)) ( (\Delta t)^2 + \Delta t) \right) \mathrm{d}r \\
		&+ 5\frac{1}{\gamma} T {C_{\mathrm{na}}(1+K_1(T))} (\Delta t)^{2\alpha} \\
		& + \int_{0}^{T}  10\frac{K_1(T)}{\gamma} C_{\mathrm{Lip}} \left(\max_{0 \le p \le r}\|U_{n_p} - U(t_{n_p}) \|^{2}_{\mathrm{F}} + \frac{1}{\gamma} C_{\mathrm{lgb}}(1 + K_2(T)) (\Delta t)^2 \right) \mathrm{d}r,\\
		\end{aligned}
		\end{equation*}	
		where we exploited Cauchy-Schwarz inequality and in the last line we used \eqref{eq: local U error} and \eqref{eq: local Y error}.
		For $\mathrm{T}_6$, we have similarly
		\begin{equation*}
		\begin{aligned}
		\mathrm{T}_6  
		= & \int_{0}^{T} \|C^{-1}_{Y(r)} \mathbb{E}[Y(r) a(r,U(r)^{\top}Y(r))^{\top}]U(r)^{\top}U(r) - C^{-1}_{Y(r)} \mathbb{E}[Y(r) a(r,U(r)^{\top}Y(r))^{\top}]U(r)^{\top}U_{n_r}\\
		& + C^{-1}_{Y(r)} \mathbb{E}[Y(r) a(r,U(r)^{\top}Y(r))^{\top}]U(r)^{\top}U_{n_r} \\
		& - C^{-1}_{Y(r)} \mathbb{E}[Y(r) a(r,U(r)^{\top}Y(r))^{\top}]U_{n_r}^{\top}U_{n_r} + C^{-1}_{Y(r)} \mathbb{E}[Y(r) a(r,U(r)^{\top}Y(r))^{\top}]U_{n_r}^{\top}U_{n_r} \\
		& - C^{-1}_{Y_{n_r}} \mathbb{E}[Y_{n_r} a(t_{n_r},U_{n_r}^{\top}Y_{n_r})^{\top}]U_{n_r}^{\top}U_{n_r} \|_{\mathrm{F}}^{2} \mathrm{d}r \\
		\leq & \int_{0}^{T} 12 \frac{1}{\gamma} C_{\mathrm{lgb}}(1+K_2(T))  \left( \max_{0 \le p \le r} \|U_{n_p} - U(t_{n_p}) \|^{2}_{\mathrm{F}} +   \frac{1}{\gamma} C_{\mathrm{lgb}}(1 + K_2(T)) (\Delta t)^2 \right)\mathrm{d}r \\
		& + 30 \int_{0}^{T} [(K_{\gamma}K_1(T) +\frac{1}{\gamma^2} )C_{\mathrm{lgb}}(1+K_1(T))+ \frac{1}{\gamma} C_{\mathrm{Lip}} ] \\
		& \cdot\left(\mathbb{E}[\max_{0 \le p \le r}  |Y_{n_p} - Y(t_{n_p})  |^2] + C_{\mathrm{lgb}}(1 + K_2(T)) ( (\Delta t)^2 + \Delta t) \right)\mathrm{d}r \\
		&+ 15\frac{1}{\gamma} T {C_{\mathrm{na}}(1+K_1(T))} (\Delta t)^{2\alpha}  \\
		&+ 30\int_{0}^{T} \frac{K_1(T)}{\gamma} C_{\mathrm{Lip}} \left( \max_{0 \le p \le r} \|U_{n_p} - U(t_{n_p}) \|^{2}_{\mathrm{F}} +  \frac{1}{\gamma} C_{\mathrm{lgb}}(1 + K_2(T)) (\Delta t)^2  \right)\mathrm{d}r .\\
		\end{aligned}
		\end{equation*}	
		
		This leads us to:
		\begin{equation} 
		\begin{aligned}\label{computations in U - convergence}
		\max\limits_{0 \leq n \leq N}  \|\tilde{U}_{n}-U(t_n)\|^{2}_{\mathrm{F}} 
		\leq &  3 \tilde{C}(k, \gamma, T)
		K_1(T) \Delta t^2 \\
		& + 3T  [12 \frac{1}{\gamma} C_{\mathrm{lgb}}(1+K_2(T)) + 40\frac{K_1(T)}{\gamma} C_{\mathrm{Lip}} ] \\
		& \quad \cdot \left( \int_{0}^{T} \max_{0 \le p \le r} \|U_{n_p} - U(t_{n_p})\|^2_{\mathrm{F}} \mathrm{d}r + \frac{1}{\gamma} C_{\mathrm{lgb}}(1 + K_2(T)) (\Delta t)^2 \right) \\
		& + 120T  [(K_{\gamma}K_1(T) +  \frac{1}{\gamma^2} )C_{\mathrm{lgb}}(1+K_1(T))+ \frac{1}{\gamma}C_{\mathrm{Lip}} ]\\
		& \cdot  \left( \int_{0}^{T}\mathbb{E}[\max_{0 \le p \le r}  |Y_{n_p} - Y(t_{n_p})  |^2] \mathrm{d}r + C_{\mathrm{lgb}}(1 + K_2(T)) ( (\Delta t)^2 + \Delta t) \right)\\
		&+ 60\frac{1}{\gamma} T^2 {C_{\mathrm{na}}(1+K_1(T))} (\Delta t)^{2\alpha} \\
		\leq & A_u \sum_{\ell=0}^{N-1} \max_{0 \le j \le \ell}\|U_{j} - U(t_{j}) \|^{2}_{\mathrm{F}} \Delta t + B_u \sum_{\ell=0}^{N-1}  \mathbb{E}[\max_{0 \le j \le \ell}  |Y_{j} - Y(t_{j})  |^2]\Delta t \\
		& + C_u (\Delta t)^{2\alpha} + D_u \Delta t + O(\Delta t^2), \\
		\end{aligned}
		\end{equation}
		where $A_u= 3T [12 \frac{1}{\gamma} C_{\mathrm{lgb}}(1+K_2(T)) + 40\frac{K_1(T)}{\gamma} C_{\mathrm{Lip}} ]$, $B_u= 120T[(K_{\gamma}K_1(T)+  \frac{1}{\gamma^2})C_{\mathrm{lgb}}(1+K_1(T)) + \frac{1}{\gamma} C_{\mathrm{Lip}} ]$, $D_u=120T^2  [(K_{\gamma}K_1(T) +  \frac{1}{\gamma} )C_{\mathrm{lgb}}(1+K_1(T))+ \frac{1}{\gamma} C_{\mathrm{Lip}} ] C_{\mathrm{lgb}}(1 + K_2(T)) $ are positive constants and $C_u=60\frac{1}{\gamma} T^2 {C_{\mathrm{na}}(1+K_1(T))}$ is non-negative and equal to zero if the drift $a$ and the diffusion $b$ are autonomous.
		
		Finally, using Gronwall's lemma and $\Delta t\leq 1$ we find:
		\begin{equation}\label{final error on Y Gronwall - Convergence}
		\begin{aligned}
		\mathbb{E}[\max_{0 \le n \le N} |\tilde{Y}_{n} - Y(t_n)|^2]  + & \max_{0 \le n \le N}  \|\tilde{U}_{n}-U(t_n)\|^{2}_{\mathrm{F}}\\
		\leq& A \Delta t \sum_{\ell=0}^{N-1} \max_{0 \le j \le \ell} \|U_{j} - U(t_{j})\|^2_{\mathrm{F}} \\
		&+ B \Delta t \sum_{\ell=0}^{N-1} \max_{0 \le j \le \ell} | Y_{j} - Y(t_{j})|^2+ C (\Delta t)^{2\alpha} + D (\Delta t) + O(\Delta t^2) \\
		\leq & \Delta t \max\{A,B\} \left(\sum_{\ell=0}^{N-1} \max_{0 \le j \le \ell} \|U_{j} - U(t_{j})\|^2_{\mathrm{F}} +  \mathbb{E}[\max_{0 \le j \le  \ell} | Y_{j} - Y(t_{j})|^2]\right) \\
		&+ C (\Delta t)^{2\alpha} + D ( \Delta t) + O(\Delta t^2)\\
		\leq &\exp^{\max(A,B)T} \cdot 2(C+D)((\Delta t )^{\min\{ 1, 2\alpha\} })) + O(\Delta t^2),
		\end{aligned}
		\end{equation}
		where $A= A_u + A_y$, $B=B_u + B_y$, $D=D_u+D_y$ are positive constants and $C= C_u+C_y$ is non-negative and equal to zero if  $a$ and $b$ are autonomous. The thesis  therefore is obtained from \eqref{X-error estimate} and \eqref{final error on Y Gronwall - Convergence}:
		\begin{equation*}
	    \sqrt{\mathbb{E}[\max\limits_{0 \leq n \leq N} |X_{n} - X(t_n)|^2]} \leq  C_1\sqrt{(C+D)}  \exp^{\max(A,B)\frac{T}{2}} \cdot ((\Delta t)^{\min\{\frac{1}{2}, \alpha \} }) + O(\Delta t).
		\end{equation*}
	\end{proof}
\end{Theorem}
	
\begin{Remark}
	The convergence result of Theorem \ref{thm: convergence of DLR Euler-Maruyama} holds under the assumption that the Gramian (of both the continuous and discrete dynamics) is lower bounded by a positive constant in a matrix sense. Lower bounds on the Gramian are given in Propositions \ref{prop: 1 discrete covariance full-rank - EM} and \ref{prop: 2 discrete covariance full-rank - EM} under suitable assumptions.
\end{Remark}
\begin{Remark}
    A closer inspection on the constant $C:=C(\gamma,k,T)$ in \eqref{eq:conv DLR EM} shows that it depends exponentially on $\frac{1}{\gamma}$ (see inequality \eqref{computations in U - convergence} in the proof of Theorem \ref{thm: convergence of DLR Euler-Maruyama}), i.e.\ it blows up exponentially fast when the smallest singular values of the DLR solution and its DLR EM approximation goes to zero. This shows that the Euler-Maruyama discretization is not robust with respect to the smallest singular value. We verify this statement numerically in Section \ref{sec: numerical experiments}.
\end{Remark}

As a final result of
this subsection, we study the error between the exact solution $X^{\mathrm{true}}$ of the SDE and its DLR Euler-Maruyama approximation. 
\begin{Theorem}[Error estimate of DLR-EM]\label{thm: Approx real sol}
 Denote by $X(t)$ and $\{X_{n}\}_n$ the DLR solution of chosen rank $k$ and the DLR approximation via DLR EM method, respectively. Under the same assumptions of Theorem \ref{thm: convergence of DLR Euler-Maruyama}, let $\varepsilon:=\varepsilon(k) \geq0$  be defined as
\begin{equation}\label{eps bound projection}
	\varepsilon^2 := \sup_{\substack{t \in [0,T] \\ Z \in L^2(\Omega,\mathbb{R}^d) \text{ of rank } k \\
			\text{ with } ·\mathbb{E}[|Z|^2] \leq \tilde{K}}} \max\{\mathbb{E}[|a(t,Z) -  P_{Z}a(t, Z) |^2], \ \mathbb{E}[\| b(t, Z) - \mathcal{P}_{\mathcal{U}(Z)}b(t, Z)\|^2_{\mathrm{F}} ] \},
\end{equation}
	 where $\tilde{K}:=\min \{ K > 0 \text{ such that } \max\{\mathbb{E}[\sup\limits_{0 \leq t \leq T} |X(t)|^2], \mathbb{E}[\max\limits_{0 \leq n \leq N} |X_n|^2] \} \leq K \}$, $P_{Z}[ \ \cdot\ ]:= (I_{d\times d}-\mathcal{P}_{\mathcal{U}(Z)})[\ \cdot \ ]\mathcal{P}_{\mathcal{Y}(Z)}+\mathcal{P}_{\mathcal{U}(Z)}[\ \cdot \ ]$ is an orthogonal projector, whereas $P_\mathcal{U(Z)}$ and $\mathcal{P}_{\mathcal{Y}(Z)}$ denote the projection onto range and corange of $Z$, respectively. 
	Then, for all $n=1, \dots, N$
	\begin{equation}\label{approx real equation}
		\begin{aligned}
	\sqrt{\mathbb{E}[\max\limits_{0 \leq n \leq N} |X^{\mathrm{true}}(t_n) - X_{n} |^2]} \leq &c_1(T) \sqrt{\mathbb{E}[|X^{\mathrm{true}}_0 - X_0 |^2]} + c_2(k,T) \varepsilon \\
	&+ c_3(\gamma,k,T) (\Delta t)^{\min\{\frac{1}{2}, \alpha \} },
	\end{aligned}
	\end{equation}
	where $c_1:=c_1(T)$, $c_2:=c_2(k,T)$ are positive constants dependent on the final time $T$ and the rank $k$, and $c_3:=c_3(\gamma, k,T)$ is the positive constant $C$ appearing in Theorem \ref{thm: convergence of DLR Euler-Maruyama}, which depends on the final time $T$, the rank $k$, and $\frac{1}{\gamma}$.
	\begin{proof}
		In order to prove the statement, we will bound the studied error with an error between the continuous DLRA and the true solution plus an error between the continuous DLRA and the DLR EM algorithm. Then, we will look for reasonable bounds for the first term in order to conclude via Gronwall's lemma and Theorem \ref{thm: convergence of DLR Euler-Maruyama}.
		
		For the solution $X^{\mathrm{true}}(t)$ of \eqref{eq:SDE-diff} over the interval $[0,T]$, there exists a constant $K(T)$, depending on the final time $T$, such that
		 $\mathbb{E}[\|X^{\mathrm{true}}(t)\|^2] \leq K(T)$ for all $t \in  [0,T]$ \cite[Theorem 4.5.4]{kloeden1992stochastic}. 
		 
		Now, notice that the following relation holds
		 \begin{equation}\label{eq: sum of error}
			\begin{aligned}
				\sqrt{\mathbb{E}[\max\limits_{0 \leq n \leq N} |X^{\mathrm{true}}(t_n) - X_{n} |^2]} =& \sqrt{\mathbb{E}[\big( \max\limits_{0 \leq n \leq N} |X^{\mathrm{true}}(t_n) - X_{n} | \big)^2]}\\
	    		\leq &\sqrt{ \mathbb{E}[ \sup\limits_{0 \leq s \leq t} |X^{\mathrm{true}}(s) - X(s) |^2]} + \sqrt{  \mathbb{E}[\max\limits_{0 \leq n \leq N} |X(t_n) - X_{n} |^2 ]}.
	    	\end{aligned}
	    \end{equation}
       
       In order to bound the right-hand side in \eqref{eq: sum of error}, we need a convenient expression of the DLR solution. So recall that from the Itô's formula one has
		\begin{equation*}
		\begin{aligned}
		\mathrm{d}X(t)  = (\mathrm{d}U(t)^{\top})Y(t)+ U(t)^{\top} \mathrm{d}Y(t)+ \sum\limits_{i=1}^{k} \underbrace{ \mathrm{d} \langle U^{i}(t),Y^{i}(t)\rangle }_{=0},
	\end{aligned}
     \end{equation*}
 which is equivalent to
      \begin{equation*}
      	\begin{aligned}
  X(t)  =& X_0 + \int_{0}^{t} \left(I_{d \times d} - P_{U(s)} \right) \mathbb{E}\left[a(s,X(s))Y(s)^{\top}\right] C^{-1}_{Y(s)}Y(s) \mathrm{d}s \\
  &+ \int_{0}^{t} U(s)^{\top} U(s) a(s,X(s)) \mathrm{d}s + \int_{0}^{t} U(s)^{\top}  U(s) b(s,X(s))\mathrm{d}W_s.
        	\end{aligned}
		\end{equation*}
		
		Let us analyze the difference $X^{\mathrm{true}}(s) - X(s)$: we have
		\begin{equation*}
		\begin{aligned}
		&\mathbb{E}[\sup\limits_{0 \leq s \leq t} | X^{\mathrm{true}}(s) - X(s) |^2]\\
		  \leq &  3 \mathbb{E}[|X^{\mathrm{true}}_0 - X_0 |^2] + 3 \underbrace{\mathbb{E}[\sup\limits_{0 \leq s \leq t} |\int_{0}^{s}b(r,X^{\mathrm{true}}(r))-U(r)^{\top}  U(r) b(r,X(r))\mathrm{d}W_r|^2]}_{:=\mathrm{T}_2} \\
		& + 3 \underbrace{\mathbb{E}[\sup\limits_{0 \leq s \leq t} |\int_{0}^{s}a(r,X^{\mathrm{true}}(r)) -  P_{X(r)}[a(r,X(r))]  \mathrm{d}r|^2]}_{:=\mathrm{T}_1} .
		\end{aligned}
		\end{equation*}
		We want to find a specific bound for the terms $\mathrm{T}_1$ and $\mathrm{T}_2$.
		Regarding $\mathrm{T}_1$, we exploit condition~\eqref{eps bound projection}
			\begin{equation*}
			\begin{aligned}
			\mathrm{T}_1 \leq & \mathbb{E}[t \int_{0}^{t}| a(r,X^{\mathrm{true}}(r)) - P_{X(r)}[a(r,X(r))]|^2 \mathrm{d}r] \\
			\leq  & t \int_{0}^{t}\mathbb{E}[| a(r,X^{\mathrm{true}}(r))  - a(r,X(r)) + a(r,X(r)) - P_{X(r)} a(r, X(r)) |^2] \mathrm{d}r \\
			\leq & 2T \int_{0}^{t}\mathbb{E}[\sup\limits_{0 \leq s \leq t}  | a(r,X^{\mathrm{true}}(r))  - a(r,X(r))|^2]\mathrm{d}r + 2T \int_{0}^{t}\mathbb{E}[|  a(r,X(r)) - P_{X(r)} a(r, X(r)) |^2] \mathrm{d}r .\\
			\leq & 2T C_{\mathrm{Lip}}  \int_{0}^{t}\mathbb{E}[\sup\limits_{0 \leq s \leq t}  |X^{\mathrm{true}}(r) -X(r)|^2]\mathrm{d}r + 2T \int_{0}^{t}\mathbb{E}[|  a(r,X(r)) - P_{X(r)} a(r, X(r)) |^2] \mathrm{d}r .\\
			\end{aligned}
			\end{equation*}
		Whereas for $\mathrm{T}_2$, we have
			\begin{equation*}
			\begin{aligned}
			T_2 &= \mathbb{E}[\sup\limits_{0 \leq s \leq t} |\int_{0}^{s}b(r,X^{\mathrm{true}}(r))- b(r,X(r)) + b(r,X(r))-U(r)^{\top}  U(r) b(r,X(r))\mathrm{d}W_r|^2 ]\\
			 & \leq 4 \mathbb{E}[\int_{0}^{t}|b(r,X^{\mathrm{true}}(r))- b(r,X(r)) + b(r,X(r))-U(r)^{\top}  U(r) b(r,X(r))|^2\mathrm{d}r] \\
			& \leq 8 \mathbb{E}[\int_{0}^{t}  C_{\mathrm{Lip}} | X^{\mathrm{true}}(r) - X(r) |^2 \mathrm{d}r]+ 8 \mathbb{E}[\int_{0}^{t} | b(r,X(r))-U(r)^{\top}  U(r) b(r,X(r)) |^2 \mathrm{d}r], \\
			\end{aligned}
			\end{equation*}
		where in the second line we employ the Itô's isometry and the Doob's inequality.
		Putting all together we find by Gronwall's lemma and \eqref{eps bound projection}
		\begin{equation*}
		\begin{aligned}
		 \mathbb{E}[\sup\limits_{0 \leq s \leq t} | X^{\mathrm{true}}(s) - X(s) |^2]  \leq & 3 \mathbb{E}[|X^{\mathrm{true}}_0 - X_0 |^2] + 6(T+4)T \varepsilon^2 \\
		 &+ 6C_{\mathrm{Lip}}(T+4)T \left(\int_{0}^{t} \mathbb{E}[\sup\limits_{0 \leq s \leq r} |X^{\mathrm{true}}(s) - X(s)|^2] \mathrm{d}r \right) \\
		\leq & \left[3 \mathbb{E}[|X^{\mathrm{true}}_0 - X_0 |^2] + 6(T+4)T \varepsilon^2 \right]e^{6C_{\mathrm{Lip}}(T+4)T }.
		\end{aligned}
		\end{equation*}
		
		Finally, using Theorem \ref{thm: convergence of DLR Euler-Maruyama} and properties of square root yields the thesis:
		\begin{equation*}
		\begin{aligned}
		 \sqrt{\mathbb{E}[\max\limits_{0 \leq n \leq N}|X^{\mathrm{true}}(t_n) - X_{n} |^2]} 
		 \leq & \sqrt{ \mathbb{E}[\sup\limits_{0 \leq s \leq t} |X^{\mathrm{true}}(s) - X(s) |^2]} +  \sqrt {   \mathbb{E}[\max\limits_{0 \leq n \leq N}|X(t_n) - X_{n} |^2]} \\
		 \leq & \sqrt{ 3 }e^{6C_{\mathrm{Lip}}(T+4)\frac{T}{2}} \sqrt{\mathbb{E}[|X^{\mathrm{true}}_0 - X_0 |^2]}  \\
		& + \sqrt{6(T+4)T} e^{6C_{\mathrm{Lip}}(T+4)\frac{T}{2}}\varepsilon + C(\gamma, k,T) (\Delta t)^{\min\{\frac{1}{2}, \alpha \} }.
		\end{aligned}
		\end{equation*}
	\end{proof}
\end{Theorem}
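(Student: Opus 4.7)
The plan is to decompose the total error through the continuous DLR solution $X(t)$ as a triangle-type split
\[
\sup_{0\leq s\leq t}|X^{\mathrm{true}}(s)-X_{n_s}|^2 \leq 2\sup_{0\leq s\leq t}|X^{\mathrm{true}}(s)-X(s)|^2 + 2\sup_{0\leq s\leq t}|X(s)-X_{n_s}|^2,
\]
so that after taking expectation and square root, the second summand is directly controlled by Theorem \ref{thm: convergence of DLR Euler-Maruyama}, contributing the $c_3(\gamma,k,T)(\Delta t)^{\min\{1/2,\alpha\}}$ term. What remains is to bound the first summand, i.e.\ the modeling/projection error between $X^{\mathrm{true}}$ and its continuous DLR surrogate $X$, by $c_1(T)\sqrt{\mathbb{E}[|X^{\mathrm{true}}_0-X_0|^2]}+c_2(T)\varepsilon$.

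To bound $\mathbb{E}[\sup_{0\leq s\leq t}|X^{\mathrm{true}}(s)-X(s)|^2]$, I would first express $X(t)$ in integral form by applying Itô's formula to $X(t)=U(t)^{\top}Y(t)$, using $U(t)\dot U(t)^{\top}=0$ and the DO equations \eqref{DLR-conditions} to get
\[
X(t)=X_0+\int_0^t \!\bigl[(I-P^{\mathrm{row}}_{U(r)})\mathbb{E}[a(r,X(r))Y(r)^{\top}]C_{Y(r)}^{-1}Y(r)+U(r)^{\top}U(r)a(r,X(r))\bigr]\mathrm{d}r+\int_0^t U(r)^{\top}U(r)b(r,X(r))\mathrm{d}W_r.
\]
Subtracting from the Itô formulation of $X^{\mathrm{true}}$, I would insert and subtract $a(r,X(r))$ in the drift and $b(r,X(r))$ in the diffusion. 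In the drift, the combination $(I-P^{\mathrm{row}}_{U(r)})\mathbb{E}[a(r,X(r))Y(r)^{\top}]C_{Y(r)}^{-1}Y(r)+U(r)^{\top}U(r)a(r,X(r))$ is exactly $P_{X(r)}a(r,X(r))$ with $P_{X(r)}$ the orthogonal projector from \eqref{eps bound projection}, while the diffusion reduces to $\mathcal{P}_{\mathcal{U}(X(r))}b(r,X(r))$. The residuals $a(r,X(r))-P_{X(r)}a(r,X(r))$ and $b(r,X(r))-\mathcal{P}_{\mathcal{U}(X(r))}b(r,X(r))$ are controlled pointwise by $\varepsilon$, thanks to the definition \eqref{eps bound projection} and the a priori boundedness $\mathbb{E}[\sup_t|X(t)|^2]\leq \tilde K$.

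Next, I would apply Doob's maximal inequality and Itô's isometry to the stochastic-integral term, and Cauchy--Schwarz to the Lebesgue one, obtaining
\[
\mathbb{E}[\sup_{0\leq s\leq t}|X^{\mathrm{true}}(s)-X(s)|^2]\leq 3\mathbb{E}[|X^{\mathrm{true}}_0-X_0|^2]+6C_{\mathrm{Lip}}(T+4)\!\!\int_0^t\!\!\mathbb{E}[\sup_{0\leq r\leq s}|X^{\mathrm{true}}(r)-X(r)|^2]\mathrm{d}s+6(T+4)T\varepsilon^2,
\]
where Lipschitz continuity (Assumption \ref{lipschitz}) accounts for the $|X^{\mathrm{true}}-X|$ factors. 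Gronwall's lemma then yields $\mathbb{E}[\sup_s|X^{\mathrm{true}}(s)-X(s)|^2]\leq[3\mathbb{E}[|X^{\mathrm{true}}_0-X_0|^2]+6C_{\mathrm{Lip}}(T+4)T\varepsilon^2]\exp(6C_{\mathrm{Lip}}(T+4)T)$, and combining with the Theorem~\ref{thm: convergence of DLR Euler-Maruyama} bound via $\sqrt{a+b}\leq \sqrt{a}+\sqrt{b}$ gives \eqref{approx real equation} with the explicit constants $c_1(T),c_2(T)$ coming out of the exponential factor and $c_3=C(\gamma,k,T)$ inherited from Theorem \ref{thm: convergence of DLR Euler-Maruyama}.

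The main obstacle here is the algebraic identification of the DLR drift/diffusion with the projected SDE right-hand side: the drift term $(I-P^{\mathrm{row}}_{U})\mathbb{E}[aY^{\top}]C_{Y}^{-1}Y+U^{\top}Ua$ must be recognized as $P_X a$ in the sense of \eqref{eps bound projection}, which requires care since the ``stochastic projector'' $\mathcal{P}_{\mathcal{Y}(X)}$ coincides with $\mathbb{E}[\,\cdot\, Y^{\top}]C_Y^{-1}Y$ only when the components of $Y$ form a basis for the corange of $X$; this follows from $X=U^{\top}Y$ with $U$ orthonormal and $C_Y\succ 0$. Once this identification is made, the remainder of the proof is a standard SDE stability argument using Lipschitz bounds, Itô isometry, Doob's inequality and Gronwall, combined with Theorem \ref{thm: convergence of DLR Euler-Maruyama} for the numerical error.
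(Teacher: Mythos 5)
Your proposal is correct and follows essentially the same path as the paper's proof: the same triangle split through the continuous DLR solution $X(t)$, the same Itô-formula representation of $X(t)$ identifying the DO right-hand side as a projected drift/diffusion, the same $\varepsilon$-control of the residuals via \eqref{eps bound projection}, and the same combination of Doob/Itô isometry, Lipschitz bounds, and Gronwall, followed by Theorem~\ref{thm: convergence of DLR Euler-Maruyama} for the discretization error. The only differences are cosmetic (Cauchy--Schwarz versus Jensen for the drift integral, and slightly different bookkeeping of the explicit constants, where in fact your constant $6C_{\mathrm{Lip}}(T+4)$ in front of the Gronwall integral matches the computation more cleanly than the paper's stated $6C_{\mathrm{Lip}}(T+4)T$).
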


\begin{Remark}[On the modelling error \eqref{eps bound projection}]
The quantity in \eqref{eps bound projection} defines the maximal projection error of the drift and diffusion terms, when evaluated on a rank-$k$ function onto the tangent space to the manifold of rank-$k$ processes. Roughly speaking, this condition estimates the low-rankness of our SDE full system \eqref{eq:SDE-diff} through time: indeed, the smaller the $\varepsilon,$ the better the DLRA of rank-$k$ surrogate approximate \eqref{eq:SDE-diff} over time.
This is similar to standard assumptions in the DLRA literature (see e.g. \cite{koch2007dynamical,kieri2016discretized}).
In light of this, it is reasonable to expect that the rank has an effect on $\varepsilon$: the larger the rank $k$, the smaller $\varepsilon$.
\end{Remark}
\begin{Remark}[Comments on the constants of Theorem \ref{thm: Approx real sol}]
	We observe that the constant $c_1$ on the right-hand side of \eqref{approx real equation} does not depend on the rank $k$. This is expected, as the only source of rank dependence in the error associated with the rank-$k$ approximation of the initial datum. 
	On the other hand, although the constant $c_2$ in the second term does not explicitly depend on the rank $k$, this term still implicitly depends on $k$ through  $\varepsilon$, which is given in \eqref{eps bound projection}. 
\end{Remark}
\begin{Remark}[Rank error in the initial condition $X^{\mathrm{true}}_0$]
		One can also go deeper in analyzing the error of $\mathrm{T}_1$ in the proof of Theorem \ref{thm: Approx real sol}. For instance consider the covariance $\mathbb{E}[ X^{\mathrm{true}}_0(X^{\mathrm{true}}_0)^{\top}] \in \mathbb{R}^{d \times d}$, which is a symmetric and semi-positive definite matrix, and, hence, admits the following decomposition
		$$
		\mathbb{E}[ X^{\mathrm{true}}_0(X^{\mathrm{true}}_0)^{\top}] = Q \mathrm{diag}(\lambda^1,\dots,\lambda^d)Q^{\top},$$ 
		where $Q = [q_i]_i \in \mathbb{R}^{d \times d}$ is an orthogonal matrix with $q_i$ the $i$-th orthonormal column and $\lambda^i$ is the non-negative eigenvalue associated to $q_i$, with $(\lambda^i)_i$ ordered decreasingly.
		Moreover, there exists an orthonormal system $(\zeta_i)_{i=1,\dots,d}$, with $\zeta_i \in L^2\left(\Omega, \mathbb{R}\right)$, such that one has
		\begin{equation*}
			X^{\mathrm{true}}_0 =  \sum_{i = 1 }^{d} \sqrt{\lambda^i} q_i\zeta_i ;
		\end{equation*}
		for details see e.g. \cite[Section 2]{kazashi2025dynamical}. Then, if one chooses $X_0 = \sum\limits_{i = 1}^{k} \sqrt{\lambda^i} q_i \zeta_i$, the error is
		\begin{equation*}
			\mathbb{E}[\|X^{\mathrm{true}}_0 - X_0 \|^2_{\mathrm{F}}] = \sum_{i = k+1}^{d} \lambda^i.
		\end{equation*}
\end{Remark}

\section{Analysis of the DLR Projector Splitting for SDEs}\label{sec: convergence PS SDE}
We now study the discretization scheme defined by Algorithm~\ref{alg: Stoch DLR Proj algorithm}.
As we saw in Section~\ref{sec: PS SDE}, this algorithm can be viewed as a projector-splitting scheme.
Importantly, we show in this section that it converges independently of the smallest singular value of the Gram matrix. This is in stark contrast to the DLR Euler–Maruyama method, for which  the constants in the error bound ---at least under our proof strategy--- may blow up exponentially fast with respect to the inverse of the smallest singular value and the method may require a time-step restriction proportional to the same singular value. 
To establish such singular-value-independent results, we first present the following lemma on an $L^2$-norm bound. Notably, we do not impose any condition on $\Delta t_n$, in contrast to the analogous result in Lemma~\ref{lem: sup n yn} for the DLR EM scheme.
\begin{Lemma}[$L^2$-norm bound of 
	DLR Projector Splitting for SDEs]\label{lem: bound stoch dlr proj}
	Let $X_n= U_n^{\top}Y_n$, $n=0,\dots,N$, be the solution produced by
	Algorithm \ref{alg: Stoch DLR Proj algorithm} with an arbitrary sequence $\{\Delta t_n\}_n$ of step-sizes such that $\sum_{n=0}^{N-1} \Delta t_n=  T$. Under Assumptions \ref{linear-growth-bound}-\ref{eq:initial value}, the following bound on the mean square norm of $X_n$ holds:
	\begin{equation}\label{Stoch DLR Proj-bound 1}
		\mathbb{E}[|X_{n}|^2] \leq \left( \mathbb{E}[|X_0|^2] + 1\right) e^{( 1+ C_{\mathrm{lgb}}(2+ T))T} - 1 =K_3(T).
	\end{equation}
	\begin{proof}
		The proof is deferred to Appendix \ref{app:DLR PS SDEs}.
	\end{proof}
\end{Lemma}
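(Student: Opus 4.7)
The plan is to mirror the argument of Lemma~\ref{lem: discrete gronwall PS EM} (the analogous bound for Algorithm~\ref{alg: Eva Proj Splitt SDE algorithm}), since Algorithm~\ref{alg: Stoch DLR Proj algorithm} is also a projector-splitting scheme and differs only in the treatment of the diffusion. The first step is to establish the one-step inequality
\begin{equation*}
\mathbb{E}[|X_{n+1}|^2] \leq \mathbb{E}[|X_n|^2] + 2\mathbb{E}[X_n^\top a_n]\Delta t_n + \mathbb{E}[|a_n|^2](\Delta t_n)^2 + \mathbb{E}[\|b_n\|_{\mathrm{F}}^2]\Delta t_n,
\end{equation*}
which is the Algorithm~\ref{alg: Stoch DLR Proj algorithm} counterpart of Lemma~\ref{lem: technical PS EM}. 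Starting from the projected form \eqref{eq: stoc proj 2}, namely $X_{n+1} = X_n + P_{U_n^\top \tilde{Y}_{n+1}}[a_n]\Delta t_n + P^{\mathrm{row}}_{U_n}[b_n]\Delta W_n$, I would expand $|X_{n+1}|^2$ and take expectations.

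For the drift--$X_n$ cross term I use that $X_n \in \mathrm{range}(P^{\mathrm{row}}_{U_n}) \subseteq \mathrm{range}(P_{U_n^\top \tilde{Y}_{n+1}})$, which follows because $P^{\mathrm{row}}_{U_n}$ and $P_{\tilde{Y}_{n+1}}$ commute as operators on $L^2(\Omega;\mathbb{R}^d)$; combined with the $L^2$-symmetry of the orthogonal projector this gives $\mathbb{E}[X_n^\top P_{U_n^\top \tilde{Y}_{n+1}}[a_n]] = \mathbb{E}[X_n^\top a_n]$. The diffusion--$X_n$ cross term vanishes directly by $\mathcal{F}_{t_n}$-independence of $\Delta W_n$. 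For the two squared contributions, non-expansiveness of orthogonal projectors yields $\mathbb{E}[|P_{U_n^\top \tilde{Y}_{n+1}}[a_n]|^2] \leq \mathbb{E}[|a_n|^2]$ and $\mathbb{E}[|P^{\mathrm{row}}_{U_n}[b_n]\Delta W_n|^2] \leq \Delta t_n\, \mathbb{E}[\|b_n\|_{\mathrm{F}}^2]$. The delicate drift--diffusion cross term $\mathbb{E}[P_{U_n^\top \tilde{Y}_{n+1}}[a_n]^\top P^{\mathrm{row}}_{U_n}[b_n]\Delta W_n]\Delta t_n$ is the main obstacle, because $P_{U_n^\top \tilde{Y}_{n+1}}$ depends on $\Delta W_n$ through $\tilde{Y}_{n+1}$, so the martingale property does not kill it directly. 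To handle it, I would observe that $P^{\mathrm{row}}_{U_n}[b_n]\Delta W_n$ again lies in $\mathrm{range}(P_{U_n^\top \tilde{Y}_{n+1}})$, so that symmetry and idempotence of the projector allow me to rewrite the term as $\mathbb{E}[a_n^\top P^{\mathrm{row}}_{U_n}[b_n]\Delta W_n]\Delta t_n$, which vanishes by independence of $\Delta W_n$ from $\mathcal{F}_{t_n}$.

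Once the one-step estimate is in hand, the remainder reproduces the Gronwall-type induction of Lemma~\ref{lem: discrete gronwall PS EM} essentially verbatim. Using Assumption~\ref{linear-growth-bound} together with $2X_n^\top a_n \leq |X_n|^2 + |a_n|^2$ turns the bound into
\begin{equation*}
\mathbb{E}[|X_{n+1}|^2] \leq \mathbb{E}[|X_n|^2] + \mathbb{E}[|X_n|^2]\Delta t_n + C_{\mathrm{lgb}}(2+T)(1+\mathbb{E}[|X_n|^2])\Delta t_n,
\end{equation*}
after using $\Delta t_n \leq T$ to absorb the $(\Delta t_n)^2$ term. Setting $Z_n := 1+\mathbb{E}[|X_n|^2]$, the recurrence reduces to $Z_{n+1} \leq Z_n\bigl(1 + (1+C_{\mathrm{lgb}}(2+T))\Delta t_n\bigr)$, and induction together with $1+x \leq e^x$ yields $Z_n \leq Z_0\, \exp\bigl((1+C_{\mathrm{lgb}}(2+T))t_n\bigr)$, which is exactly $K_4(t_n)+1$ and gives the claim.
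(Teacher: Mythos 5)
Your proof is correct and follows exactly the route the paper gestures at (mirroring the argument of Lemma~\ref{lem: discrete gronwall PS EM} via a one-step estimate and then the Gronwall-type induction); the paper in fact omits the proof of this lemma, pointing to Section~\ref{sec: convergence PS EM} for the analogous argument. The one genuinely new point relative to Lemma~\ref{lem: technical PS EM} --- that the drift--diffusion cross term $\mathbb{E}[(P_{U_n^\top\tilde{Y}_{n+1}}[a_n])^\top P^{\mathrm{row}}_{U_n}[b_n]\Delta W_n]$ does not vanish by adaptedness alone, since $P_{U_n^\top\tilde{Y}_{n+1}}$ depends on $\Delta W_n$ --- you handle correctly by noting $\mathrm{range}(P^{\mathrm{row}}_{U_n})\subseteq\mathrm{range}(P_{U_n^\top\tilde{Y}_{n+1}})$ (using commutativity of $P^{\mathrm{row}}_{U_n}$ and $P_{\tilde{Y}_{n+1}}$) together with $L^2$-symmetry of the projector.
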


In Lemma \ref{lem: sup n yn}, we derived a mean-squared bound of the DLR EM solution, under a time-step restriction dependent on the smallest singular value of the Gramian of $Y_n$. 
This result is subsequently applied in Proposition \ref{prop: 2 discrete covariance full-rank - EM} to derive a lower bound on the smallest singular value of the Gramian under assumption of uniformly elliptic noise. 

In contrast, for the DLR PS SDE method, Lemma \ref{lem: bound stoch dlr proj} imposes no restrictions on 
$\Delta t_n$. 
Consequently, the following Proposition, analogous to Propositions~\ref{prop: 1 discrete covariance full-rank - EM} and~\ref{prop: 2 discrete covariance full-rank - EM}, derives a lower-bound on the smallest singular value of the Gramian under no time step-restrictions if the noise is uniformly elliptic.

\begin{Proposition}[Lower bound of the Gramian of the stochastic modes] \label{prop: discrete covariance full-rank - Stoch DLR Proj}
	Suppose Assumption \ref{ass: diff} is satisfied. Then for any sequence of step-sizes $\{\Delta t_n\}_n$ with $\Delta t_n = t_{n+1} - t_n$ we have:
	\begin{equation}\label{eq: cov 1 Stoch DLR Proj}
		C_{Y_{n+1}} \succ \sigma_{B} \Delta t_n \cdot I_{k \times k}.
	\end{equation}
	If Assumption \ref{ass: C_Y_0} holds as well, then for a uniform step-size $\Delta t$ one has
	\begin{equation}\label{eq: cov 2 Stoch DLR Proj}
		\begin{aligned}
			C_{Y_{n+1}} \succeq  & \min \{\sigma_{0}, \frac{\sigma_{B}^2}{4C_{\mathrm{lgb}}(1 + K_3(T))}\}.
		\end{aligned}
	\end{equation}
		\begin{proof}
		The proof closely follows those of Propositions \ref{prop: 1 discrete covariance full-rank - EM} and \ref{prop: 2 discrete covariance full-rank - EM}.
	\end{proof}
\end{Proposition}
Unlike the DLR EM, Algorithm \ref{alg: Stoch DLR Proj algorithm} converges to the true solution without any dependence on the smallest singular value of the involved Gramians. To prove this results, in contrast to Theorem \ref{thm: convergence of DLR Euler-Maruyama}, we do not compare our discretized modes with the continuous ones, rather we compare the DLRA numerical solution $X_n$ with the Euler-Maruyama solution $X^{\mathrm{EM}}_n$ of \eqref{eq:SDE-diff}. This strategy better exploits the projector-configuration of the algorithm to result in an error made by 3 components: the approximation of the initial condition, the Euler-Maruyama time discretization error, and the modelling error \eqref{eps bound projection}, without involving any inverse of the Gramians.
	\begin{Theorem}[Numerical Convergence of Algorithm \ref{alg: Stoch DLR Proj algorithm} to the Euler-Maruyama approximation of the True solution] \label{thm: Numerical Convergence Stoch DLR Proj - DLRA}
		Consider a uniform partition $\Delta := \left\{t_n = t_0 + n \Delta t, \ n=0,\dots, N\right\}$ of $[0,T]$ with time-step $\Delta t=\frac{T}{N}$. Let us denote by $X^{\mathrm{EM}}_n$ the numerical approximation of \eqref{eq:SDE-diff} obtained by a standard Euler-Maruyama method and by $X_n$ the numerical DLR solution obtained with Algorithm \ref{alg: Stoch DLR Proj algorithm}. Assume \eqref{polynomial growth in t} to be valid. Furthermore, let us suppose that there exists a constant $\gamma>0$ such that $|C^{-1}_{Y(s)}|,|C^{-1}_{Y_{n}}|  \leq \frac{1}{\gamma}$ for all $s \in [0,T]$ and for all $n=0,\dots,N-1$. For $\varepsilon$ defined as in \eqref{eps bound projection}, there exists a positive constant $C$ independent of $\gamma$ such that it holds 
		\begin{equation*}
			\sqrt{\mathbb{E}[\max\limits_{0 \leq n \leq N}|X^{\mathrm{EM}}_n-X_{n}|^2]} 
			\leq C \left( \sqrt{\mathbb{E}[|X^{\mathrm{true}}_0 - X_0 |^2]} + (\Delta t)^{\frac{1}{2}} + \varepsilon\right).
		\end{equation*}
		\begin{proof}
			Recall that $a_\ell = a(t_{\ell},U_{\ell}^{\top}Y_{\ell})$, $b_\ell = b(t_{\ell},U_{\ell}^{\top}Y_{\ell})$. We want to compare the numerical DLRA solution and the Euler-Maruyama approximation of \eqref{eq:SDE-diff}. Then we can write:
			\begin{equation*}
				\begin{aligned}
					\mathbb{E}\left[\max\limits_{0 \leq n \leq N} |X_{n}-X^{\mathrm{EM}}_n|^2\right]=& \mathbb{E}\left[ \max\limits_{0 \leq n \leq N} |X_0 - X_0^{\mathrm{true}} +  \sum_{\ell=0}^{n - 1} (X_{n+1}-X_n-(X^{\mathrm{EM}}_{n+1}-X^{\mathrm{EM}}_n))|^2\right] \\
					\leq & \Big( \mathbb{E}\bigg[ \max\limits_{0 \leq n \leq N} \bigg|X_0 -X^{\mathrm{true}}_0 + \sum_{\ell=0}^{n - 1} \Big(P_{U_\ell^{\top}\tilde{Y}_{\ell+1} } \left[ a_\ell \right]\Delta t + P_{U_\ell}\left[b_\ell \Delta W_\ell \right] \\
					& - \left( a(t_{\ell},X^{\mathrm{EM}}_{\ell}) \Delta t + b(t_{\ell},X^{\mathrm{EM}}_{\ell})\Delta W_\ell \right)\Big)\bigg|^2 \bigg] \Big)\\
					\leq &3 \Big(\mathbb{E}[|X_0 -X^{\mathrm{true}}_0|^2]  + \mathbb{E}\left[\max\limits_{0 \leq n \leq N}\left| \sum_{\ell=0}^{n - 1}   \left(P_{U_{\ell}^{\top}\tilde{Y}_{\ell+1} }  a_{\ell} - a(t_{\ell},X^{\mathrm{EM}}_{\ell})\right) \Delta t \right|^2\right] \\
					&+ \mathbb{E}\left[\max\limits_{0 \leq n \leq N}\left| \sum_{\ell=0}^{n - 1} \left( P_{U_\ell} \left[  b_\ell \right]-  b(t_\ell,X^{\mathrm{EM}}_\ell)\right)\Delta W_\ell \right|^2\right] \Big) \\
					\leq &3 \mathbb{E}[|X_0 -X^{\mathrm{true}}_0|^2]  + 3\mathbb{E}\left[\max\limits_{0 \leq n \leq N} n \sum_{\ell=0}^{n - 1}  \left| \left(P_{U_{\ell}^{\top}\tilde{Y}_{\ell+1} }  a_{\ell} - a(t_{\ell},X^{\mathrm{EM}}_{\ell})\right) \Delta t \right|^2\right] \\
					&+ 3\mathbb{E}\left[\max\limits_{0 \leq n \leq N}\left| \sum_{\ell=0}^{n - 1} \left( P_{U_\ell} \left[  b_\ell \Delta W_\ell \right]-  b(t_\ell,X^{\mathrm{EM}}_\ell)\Delta W_\ell \right)\right|^2\right]  \\
					\leq &3 \mathbb{E}[|X_0 -X^{\mathrm{true}}_0|^2]  + 3  T  \underbrace{\sum_{\ell=0}^{N- 1}  \mathbb{E}\left[ \left| \left(P_{U_{\ell}^{\top}\tilde{Y}_{\ell+1} }  a_{\ell} - a(t_{\ell},X^{\mathrm{EM}}_{\ell})\right)  \right|^2 \right]\Delta t  }_{:=\mathrm{T}_1}\\
					&+ 3  \underbrace{\mathbb{E}\left[\max\limits_{0 \leq n \leq N}\left| \left(\sum_{\ell=0}^{n - 1} \left(P_{U_\ell}  \left[ b_\ell \right]- b(t_\ell,X^{\mathrm{EM}}_\ell) \right)\Delta W_\ell \right)\right|^2\right] }_{:=\mathrm{T}_2}, \\
				\end{aligned}
			\end{equation*}
			where in the last line we employ the Jensen's inequality.
			Concerning $\mathrm{T}_1$ we have by \eqref{polynomial growth in t}, \eqref{eps bound projection}, \eqref{Stoch DLR Proj-bound 1} and Assumptions \ref{lipschitz} and \ref{linear-growth-bound} that
			\begin{equation*}
				\begin{aligned}
					\mathbb{E}[|P_{U_{\ell}^{\top}\tilde{Y}_{\ell+1} } a_{\ell}  -  a(t_{\ell},X^{\mathrm{EM}}_{\ell})]|^2 
					= &\mathbb{E}[|- P^{\perp}_{U_{\ell}^{\top}\tilde{Y}_{{\ell}+1} } a_{\ell} + P^{\perp}_{U_{\ell}^{\top}\tilde{Y}_{{\ell}+1} } a_{\ell} + P_{U_{\ell}^{\top}\tilde{Y}_{{\ell}+1} } a_{\ell}   - a(t_{\ell},X^{\mathrm{EM}}_{\ell})|^2]\\
					= &\mathbb{E}[| -P^{\perp}_{U_{\ell}^{\top}\tilde{Y}_{{\ell}+1} } a_{\ell}  + a_{\ell} - a(t_{\ell},X^{\mathrm{EM}}_{\ell})|^2]\\
					= & \mathbb{E}[| P^{\perp}_{U_{\ell}^{\top}\tilde{Y}_{{\ell}+1} } [-a_{\ell} - a(t_{\ell},U_{\ell}^{\top}\tilde{Y}_{{\ell}+1}) + a(t_{\ell},U_{\ell}^{\top}\tilde{Y}_{{\ell}+1}) ] + a_{\ell} - a(t_{\ell},X^{\mathrm{EM}}_{\ell})|^2]\\
					\leq & 3\mathbb{E}[| P^{\perp}_{U_{\ell}^{\top}\tilde{Y}_{{\ell}+1} } [a_{\ell} - a(t_{\ell},U_{\ell}^{\top}\tilde{Y}_{{\ell}+1})]|^2] \\
					&+ 3\mathbb{E}[| P^{\perp}_{U_{\ell}^{\top}\tilde{Y}_{{\ell}+1} }a(t_{\ell},U_{\ell}^{\top}\tilde{Y}_{{\ell}+1}) |^2]  + 3\mathbb{E}[| a_{\ell} - a(t_{\ell},X^{\mathrm{EM}}_{\ell})|^2]\\
					\leq & 3\mathbb{E}[| P^{\perp}_{U_{\ell}^{\top}\tilde{Y}_{{\ell}+1} } [a(t_{\ell},U_{\ell}^{\top}Y_{\ell}) - a(t_{\ell},U_{\ell}^{\top}\tilde{Y}_{{\ell}+1})]|^2] + 3\varepsilon^2 + 3C_{\mathrm{Lip}}\mathbb{E}[| X_{\ell}-X^{\mathrm{EM}}_{\ell}|^2]\\
					\leq & 3 C_{\mathrm{Lip}} \mathbb{E}[| Y_{\ell} - \tilde{Y}_{{\ell}+1}|^2] + 3\varepsilon^2 + 3C_{\mathrm{Lip}}\mathbb{E}[| X_{\ell}-X^{\mathrm{EM}}_{\ell}|^2]\\
					\leq & 3 C_{\mathrm{Lip}} \mathbb{E}[| U_{\ell} a_{\ell} \Delta t + U_{\ell} b_{\ell} \Delta W_{\ell} |^2] + 3\varepsilon^2 + 3C_{\mathrm{Lip}}\mathbb{E}[| X_{\ell}-X^{\mathrm{EM}}_{\ell}|^2]\\
					\leq & 6 C_{\mathrm{Lip}}  C_{\mathrm{lgb}} (1+K_3(T))  \left((\Delta t)^2+ \Delta t\right) + 3\varepsilon^2 + 3C_{\mathrm{Lip}}\mathbb{E}\left[\max\limits_{0 \leq j \leq {\ell}} |X_{j}-X^{\mathrm{EM}}_j|^2\right].
				\end{aligned}
			\end{equation*}
			
			On the other hand, for $\mathrm{T}_2$ we have
			\begin{equation*}
				\begin{aligned}
					P_{U_\ell}  \left[ b_\ell \right]- b(t_\ell,X^{\mathrm{EM}}_\ell) = &\left(P_{U_\ell}\right)^{\perp}  \left[ b_\ell  \right]-  \left(P_{U_\ell}\right)^{\perp}  \left[ b_\ell \right]  +  P_{U_\ell} b_\ell  - b(t_\ell,X^{\mathrm{EM}}_\ell) \\
					= &-  \left(P_{U_\ell}\right)^{\perp}  \left[ b_\ell  \right] 
					+ \left(b_\ell -  b(t_\ell,X^{\mathrm{EM}}_\ell) \right) . \\
				\end{aligned}
			\end{equation*}
			and, hence,
			\begin{equation*}
				\begin{aligned}
					\mathrm{T}_2 \leq & 2 \mathbb{E}[\max\limits_{0 \leq n \leq N}| \sum_{\ell=0}^{n - 1}  \left(P_{U_\ell}\right)^{\perp}  \left[ b_\ell \right] \Delta W_\ell |^2] + 2 \mathbb{E}[ \max\limits_{0 \leq n \leq N} |\sum_{\ell=0}^{n - 1} \left( b_{\ell} -  b(t_{\ell},X^{\mathrm{EM}}_{\ell}) \right) \Delta W_\ell |^2]. \\
				\end{aligned}
			\end{equation*}
			Using Doob's maximal inequality \cite[Theorem 4.4.4]{durrett2019probability}, the independence of the Brownian increments, \eqref{eps bound projection}, and Lipschitz continuity one obtains that
			\begin{equation*}
				\begin{aligned}
					\mathrm{T}_2  \leq  & 8 \mathbb{E}[ | \sum_{\ell=0}^{N - 1}  \left(P_{U_\ell}\right)^{\perp}  \left[ b_\ell \right] \Delta W_\ell |^2] + 8 \mathbb{E}[  |  \sum_{\ell=0}^{N - 1} \left( b_{\ell} -  b(t_{\ell},X^{\mathrm{EM}}_{\ell}) \right) \Delta W_\ell |^2] \\
					 =  & 8 \sum_{\ell=0}^{N - 1} \mathbb{E}[  |  \left(P_{U_\ell}\right)^{\perp}  \left[ b_\ell \right] \Delta W_\ell |^2] + 8 \sum_{\ell=0}^{N - 1} \mathbb{E}[  | \left( b_{\ell} -  b(t_{\ell},X^{\mathrm{EM}}_{\ell}) \right) \Delta W_\ell |^2] \\
					\leq  & 8 \sum_{\ell=0}^{N - 1} \mathbb{E}[\|\left(P_{U_\ell}\right)^{\perp}b_{\ell}\|_{\mathrm{F}}]^2 \Delta t + 8 \sum_{\ell=0}^{N - 1} \mathbb{E}[  | \left( b_{\ell} -  b(t_{\ell},X^{\mathrm{EM}}_{\ell}) \right) |^2] \Delta t \\
					\leq  & 8  T\varepsilon^2 + 8 \sum_{\ell=0}^{N - 1} C_{\mathrm{Lip}} \mathbb{E}[| X_{\ell} - X^{\mathrm{EM}}_{\ell}|^2] \Delta t \\
					\leq  & 8  T\varepsilon^2 + 8 \sum_{\ell=0}^{N - 1} C_{\mathrm{Lip}} \mathbb{E}[ \max\limits_{0 \leq j \leq {\ell}}  | X_{j} - X^{\mathrm{EM}}_{j}|^2] \Delta t. \\
				\end{aligned}
			\end{equation*}
			
			Finally, by Gronwall's lemma we have
			\begin{equation*}
				\begin{aligned}
					\mathbb{E}\left[ \max\limits_{0 \leq n \leq N} |X_{n}-X^{\mathrm{EM}}_n|^2\right] \leq & 3 \mathbb{E}[| X_0-X^{\mathrm{EM}}_0 |^2] + 24T\varepsilon^2  \\
					&+ T \big( 18 C_{\mathrm{Lip}}C_{\mathrm{lgb}} (1+K_3(T))  \left((\Delta t)^2+ \Delta t\right) + 9T^2\varepsilon^2 \\
					&+ 3C_{\mathrm{Lip}}  (3T+8 ) \sum_{\ell=0}^{N - 1} \mathbb{E}[ \max\limits_{0 \leq j \leq {\ell}}  | X_{j} - X^{\mathrm{EM}}_{j}|^2] \Delta t  \\
					\leq & \Big[3 \mathbb{E}[|X^{\mathrm{true}}_0 - X_0 |^2] + \left((\Delta t)^2+ \Delta t\right) \\
					& \cdot \left[ 18 C_{\mathrm{Lip}} C_{\mathrm{lgb}} (1+K_3(T))T \right] + 3T(8+3T)\varepsilon^2\Big] e^{3C_{\mathrm{Lip}}  (3T+8 ) T}.
				\end{aligned}
			\end{equation*}
			These computations yield the result.
		\end{proof}
	\end{Theorem}
	
	\begin{Theorem}[Numerical Convergence of the DLR Projector Splitting for SDEs] \label{thm: Numerical Convergence Stoch DLR Proj - True}
		Denote by $X^{\mathrm{true}}(t)$ the strong solution of \eqref{eq:SDE-diff} and by
		$X_n$ the numerical solution obtained by Algorithm \ref{alg: Stoch DLR Proj algorithm}. Suppose Assumptions \eqref{lipschitz}-\eqref{linear-growth-bound} hold as well as \eqref{polynomial growth in t} and \eqref{eps bound projection}. Then, there exists a constant $C>0$ independent of the smallest singular value of the covariance matrices $\mathbb{E}[Y(t)Y^{\top}(t)]$ for all $t \in [0,T]$ and $\mathbb{E}[Y_nY^{\top}_n]$ for all $n \in \{0,1, \dots, N\}$ such that
		\begin{equation*}
			\sqrt{\mathbb{E}[\max\limits_{0 \leq n \leq N}|X^{\mathrm{true}}(t_n)-X_{n}|^2]} 
			\leq C \left( \sqrt{\mathbb{E}[|X^{\mathrm{true}}_0 - X_0 |^2]} + (\Delta t)^{\min\{\frac{1}{2},\alpha\}} + \varepsilon\right).
		\end{equation*}
			\begin{proof}
			Let us denote by $X^{\mathrm{EM}}_n$ the Euler-Maruyama numerical solution of \eqref{eq:SDE-diff}. It holds
			\begin{equation*}
				\sqrt{\mathbb{E}[\max\limits_{0 \leq n \leq N}|X_{n} - X^{\mathrm{true}}(t_n)|^2]} \leq \sqrt{\mathbb{E}[\max\limits_{0 \leq n \leq N}|X_{n} - X^{\mathrm{EM}}_n|^2]} + \sqrt{ \mathbb{E}[\max\limits_{0 \leq n \leq N}| X^{\mathrm{EM}}_n - X^{\mathrm{true}}(t_n)|^2]}.
			\end{equation*}
			Considering the convergence of Euler--Maruyama method \cite[Theorem 10.22]{kloeden1992stochastic}, one has
			\begin{equation*}
				\sqrt{\mathbb{E}[\max\limits_{0 \leq n \leq N}|X^{\mathrm{EM}}_n - X^{\mathrm{true}}(t_n)|^2]} \leq C (\Delta t)^{\min\{ \frac{1}{2}, \alpha\} }.
			\end{equation*}
			Using Theorem \ref{thm: Numerical Convergence Stoch DLR Proj - DLRA}, the thesis follows. 		
		\end{proof}
	\end{Theorem}
	We finally state a last result comparing the DLR PS SDE solution to the DLRA one. This result will be needed in Part II when analyzing the stochastic discretization error. Unfortunately, the error bound given in the next lemma does have constants that blow up when the smallest singular value of either the continuous DLRA or its numerical approximation goes to zero.
	\begin{Theorem}[Strong Convergence of DLR Projector Splitting for SDEs with respect to the DO solution]\label{thm: convergence of Stoch DLR Proj}
		Let Assumptions \eqref{lipschitz} and \eqref{linear-growth-bound}, as well as \eqref{polynomial growth in t} hold. 
		Consider a uniform partition $\Delta := \left\{t_n \ : \ 0= t_0 < t_1 < \ldots < t_{N-1} < t_{N} = T\right\}$ of $[0,T]$ and the time step $\Delta t = t_{n+1}-t_n$ for all $n$.
		Furthermore, let us suppose that there exists a constant $\gamma>0$ such that $|C^{-1}_{Y(s)}|,|C^{-1}_{Y_{n_s}}|  \leq \frac{1}{\gamma}$ for all $s \in [0,T]$. 
		
		Then, the DLR PS SDE method with constant time step $\Delta t$ has strong order of convergence equal to ${\min\{\frac{1}{2}, \alpha \} }$, i.e.\ there exists a constant $C:=C(\gamma, k,T)$ independent of $\Delta t$ such that for all $0 \leq t \leq T$:
		\begin{equation*}
			\sqrt{\mathbb{E}[\max\limits_{0 \leq n \leq N} |X_{n} - X(t_n)|^2]} \leq C (\Delta t)^{\min\{ \frac{1}{2}, \alpha\} } + o((\Delta t)^{\min\{ \frac{1}{2}, \alpha\} }).
		\end{equation*}
		\begin{proof} The proof is deferred to Appendix \ref{app:DLR PS SDEs}.
		\end{proof}
	\end{Theorem}

	\subsection{Mean Square Stability of the DLR Projector Splitting for SDEs}\label{sec: Stoc Proj Stab}
	In this section, we study the mean-square stability of the proposed DLR Projector Splitting for SDEs. Its projected nature turns out to be extremely beneficial to establish an easy-to-verify sufficient condition for mean-square stability. 
	Assuming that
	\begin{equation*}
		a(t,0) \equiv 0, \qquad b(t,0) \equiv 0, \quad t \in [0,T],
	\end{equation*}
	we will consider the following definition of $p$-stability \cite{khasminskii2011stochastic, mao2007stochastic} throughout this work.
	\begin{Definition}[$p$-stability and asymptotical $p$-stability]
		Let us denote with $X^{s,x}$ the solution of \eqref{eq:SDE-diff} with initial condition $X^{\mathrm{true}}(s) = x \in \mathbb{R}^d$, with $s>0$. A solution $X^{\mathrm{true}}(t,\omega) \equiv 0$ of \eqref{eq:SDE-diff} is said to be $p$-stable for $t \geq 0$ if
		$$\lim\limits_{\delta \to 0} \sup\limits_{\{|x|\leq\delta, t \geq s\}} \mathbb{E}[|X^{s,x}(t)|^p]  = 0.$$
		Moreover, if there exists $\delta := \delta(s)>0$ such that
		$$\lim\limits_{t \to +\infty} \mathbb{E}[|X^{s,x}(t)|^p]  = 0$$
		also holds for all $s >0$ and $|x|<\delta$, then $X(t,\omega) \equiv 0$ is said to be asymptotically $p$-stable. 
	\end{Definition}
	We will focus, in particular, to the case $p=2$, which is the so-called \textit{mean-square stability}. A mean-square stable system is also mean stable ($p=1$). For the sake of completeness, we recall that for autonomous linear equations (asymptotical) $p$-stability implies (asymptotical) stability in probability \cite{arnold1974stochastic,khasminskii2011stochastic}, namely
	for all $s\geq0$, and for all $\varepsilon>0$
	$\lim\limits_{x \to 0} \mathbb{P}\{\sup\limits_{ t \geq s} | X^{s,x} (t)| > \varepsilon\} = 0$,
	and the asymptotical $p$-stability implies the asymptotical stability in probability, i.e.\ $\lim\limits_{x \to 0} \mathbb{P}\{\lim\limits_{ t \to \infty} | X^{s,x} (t)| > \varepsilon\} = 0$.
	
	To study the stability of the DLR Projector splitting scheme for SDEs, we restrict to the case of a linear SDE \cite{saito1996stability,higham2000stability,saito2002mean}, as it is common in the literature.
	More precisely, given time-dependent deterministic matrices $A(t), B_{k}(t) \in \mathbb{R}^{d \times d}$, for $k = 1, \dots, m$, we consider the SDE:
	\begin{equation}\label{eq: moltiplic sdes}
		\begin{aligned}
			\mathrm{d}X^{\mathrm{true}}(t) & = A(t)X^{\mathrm{true}}(t)\mathrm{d}t+ \sum_{k=1}^{m} B_k(t) X^{\mathrm{true}}(t) \mathrm{d}W^{k}_t, \quad \text{ for all } t \in [0, +\infty),
			X_0 = c \in \mathbb{R}^d,
		\end{aligned} 
	\end{equation}
	where the initial datum $X_0$ can be deterministic or random, as it does not have any influence on the stability analysis \cite{mao2007stochastic}. Via an It\^o's formula argument we find the following inequality
	\begin{equation}\label{eq: ams bound}
		\begin{aligned}		
			\mathrm{d}\mathbb{E}[(X^{\mathrm{true}}(t))^{\top}X^{\mathrm{true}}(t)]
			\leq & \left(\lambda_{\max}(A(t)+ A^{\top}(t)) + \sum\limits_{k=1}^{m}  |B_k(t)|^2 \right) \mathbb{E}[|X^{\mathrm{true}}(t)|^2] \mathrm{d}t,
		\end{aligned}
	\end{equation}
	which implies that \eqref{eq: moltiplic sdes} is asymptotically mean-square (AMS) stable if 
	\begin{equation}\label{eq: cond stab sde}
		\lambda_{\max}(A(t)+ A^{\top}(t)) < - \sum\limits_{k=1}^{m} |B_k(t)|^2.
	\end{equation}
	As the next proposition shows, a sufficient condition for AMS stability of the DLR approximation of \eqref{eq: moltiplic sdes} is still \eqref{eq: cond stab sde}, that is, under \eqref{eq: cond stab sde}, both the SDE and its continuous DLRA are AMS stable.
	\begin{Proposition}[Mean-square stability of the continuous DLRA]\label{prop: stab DLRA}
		Under condition  \eqref{eq: cond stab sde}, the DLR approximation of \eqref{eq: moltiplic sdes} is AMS stable.
		\begin{proof}
			The proof is deferred to Appendix \ref{app:DLR PS SDEs}.
		\end{proof}
	\end{Proposition}
	One can notice that no dependence on the smallest singular value is present in \eqref{eq: ams bound}, as the deterministic modes evolve independently of the stochastic ones for this simple linear model. 
	
	Let us now turn to the numerical discretization. The usual Euler-Maruyama method for equation \eqref{eq: moltiplic sdes} reads as follows:
	\begin{equation*}
		X_{n+1} = X_n + A(t_n) X_n \Delta t_n + \sum_{k=i}^{m} B_k(t_n) X_n \Delta W^{k}_n,
	\end{equation*}
	and, its mean-square norm can be bounded as
	\begin{equation*}
		\begin{aligned}
			\mathbb{E}[|X_{n+1}|^2] = &\mathbb{E}[|X_{n}|^2] + \mathbb{E}[X_n^{\top}A(t_n)X_n ]\Delta t_n + \mathbb{E}[X_n^{\top}A^{\top}(t_n) X_n] \Delta t_n +  \mathbb{E}[X_n^{\top}A^{\top}(t_n) A(t_n)X_n] (\Delta t_n)^2\\
			& + \mathbb{E}[X_n^{\top}\sum_{k=1}^{m} B_k^{\top}(t_n) B_k(t_n)  X_n] (\Delta t_n) \\
			\leq & \mathbb{E}[|X_{n}|^2] \left(1 + \lambda_{\max}(A(t_n) + A^{\top}(t_n) )\Delta t_n + \sigma_{\max}(A(t_n))^2 (\Delta t_n)^2 + \sum_{k=1}^{m}|B_k(t_n)|^2 (\Delta t_n)\right).
		\end{aligned}
	\end{equation*}
	Therefore, a sufficient condition for AMS stability for the Euler-Maruyama scheme is
	\begin{equation}\label{eq: stab em cond}
		\left|1 + \lambda_{\max}(A(t_n) + A^{\top}(t_n) )\Delta t_n + \sigma_{\max}(A(t_n))^2 (\Delta t_n)^2 + \sum_{k=1}^{m} |B_k(t_n)|^2 \Delta t_n\right| < 1.
	\end{equation}
	
	The next proposition shows that the DLR Projector Splitting for SDEs (Algorithm \ref{alg: Stoch DLR Proj algorithm}) is AMS stable under the same condition \eqref{eq: stab em cond} as the standard Euler-Maruyama scheme. In particular the stability condition \eqref{eq: stab em cond} is independent of the smallest singular value of the Gramians $C_{Y_n}$.
	\begin{Proposition}[Mean-square stability of the DLR Projector Splitting for SDEs]\label{prop: PS SDE stab}
		Under condition  \eqref{eq: stab em cond}, the DLR Projector Splitting for SDEs applied to \eqref{eq: moltiplic sdes} is AMS stable.
	\begin{proof}
		The DLR PS SDE scheme applied to equation \eqref{eq: moltiplic sdes} reads as follows:
		\begin{equation*}
			X_{n+1} = X_n + \underbrace{\left(\left(I_{d \times d} - P_{{U}_n} \right) P_{\tilde{Y}_{n+1}} + P_{{U}_n}\right)}_{P_{{U}^{\top}_n\tilde{Y}_{n+1}}} [ A(t_n) X_n ] \Delta t_n + P_{{U}_n} [\left(\sum_{k=1}^{m} B_k(t_n) X_n \Delta W^{k}_n\right)].
		\end{equation*}
		It follows that
		\begin{equation*}
			\mathbb{E}[X_n^{\top}\left( X_{n+1}-X_n\right)] = \mathbb{E}[X_n^{\top} A(t_n) X_n]  \Delta t_n,
		\end{equation*}
		and, via using Lemma \ref{lem: technical PS}, that
		\begin{equation*}
			\begin{aligned}
				\mathbb{E}[\left( X_{n+1}-X_n\right)^{\top}\left( X_{n+1}-X_n\right)]
				& \leq \mathbb{E}[|A(t_n) X_n|^2] (\Delta t_n)^2 + \sum_{k=1}^{m} \mathbb{E}[|B_k(t_n)  X_n|^2] (\Delta t_n) \\
				& \leq \left(\sigma_{\max}(A(t_n))^2 (\Delta t_n)^2 + \sum_{k=1}^{m}|B_k(t_n)|^2 (\Delta t_n)\right)\mathbb{E}[|X_n|^2].
			\end{aligned}
		\end{equation*}
		Exploiting the relation
		\begin{equation}\label{eq: cross product relation}
			\mathbb{E}[X_n^{\top}\left( X_{n+1}-X_n\right)] + \mathbb{E}[\left( X_{n+1}-X_n\right)^{\top} X_n] = \mathbb{E}[|X_{n+1}|^2]  - \mathbb{E}[|X_{n}|^2] -\mathbb{E}[\left( X_{n+1}-X_n\right)^{\top}\left( X_{n+1}-X_n\right)],
		\end{equation}
		we obtain
		\begin{equation}\label{eq: KNV mean square stab}
			\begin{aligned}
				\mathbb{E}[|X_{n+1}|^2]  \leq & \mathbb{E}[|X_{n}|^2] +  \mathbb{E}[X_n^{\top} (A(t_n) + A^{\top}(t_n) ) X_n] \Delta t_n +  \Big(\sigma_{\max}(A(t_n))^2 (\Delta t_n)^2 \\
				&+ \sum_{k=1}^{m}|B_k(t_n)|^2 (\Delta t_n)\Big)\mathbb{E}[|X_n|^2] \\
				\leq & \mathbb{E}[|X_{n}|^2] \left(1 + \lambda_{\max}(A(t_n) + A^{\top}(t_n) )\Delta t_n + \sigma_{\max}(A(t_n))^2 (\Delta t_n)^2 + \sum_{k=1}^{m}|B_k(t_n)|^2 (\Delta t_n)\right),
			\end{aligned}
		\end{equation}
		hence the scheme is AMS stable under \eqref{eq: stab em cond}.
	\end{proof}
	\end{Proposition}

\section{Analysis of the DLR Projector Splitting for EM}\label{sec: convergence PS EM}
In this section, {convergence and mean-square stability analyses} of Algorithm \ref{alg: Eva Proj Splitt SDE algorithm} are presented. By being a projected scheme, one can retrieve computational results akin to the ones of Section \ref{sec: convergence PS SDE} via similar proofs. The main difference with the previous two schemes, is that Algorithm \ref{alg: Eva Proj Splitt SDE algorithm} converges to the solution of the modified DLR equations \eqref{eq: DO cao-lu}, rather than \eqref{DLR-conditions}. Moreover, we have been unable to obtain convergence results independent of the smallest singular value, although, numerically, this Algorithm seem to perform even better than Algorithm \ref{alg: Stoch DLR Proj algorithm}.

 \begin{Lemma}[$L^2$-norm bound of DLR Projector Splitting for EM]\label{lem: discrete gronwall PS EM}
 	Let $X_n= U_n^{\top}Y_n$, $n=0,\dots,N$ be the solution produced by
 	Algorithm \ref{alg: Eva Proj Splitt SDE algorithm} with an arbitrary sequence $\{\Delta t_n\}_n$ of (non-zero) step-sizes such that $\sum_{n=0}^{N-1} \Delta t_n=  T$. Under Assumptions \ref{linear-growth-bound}-\ref{eq:initial value}, the following bound on the mean square norm of $X_n$ holds
 	\begin{equation}\label{eva gronwall-bound 1}
 		\mathbb{E}[|X_{n}|^2] \leq \left( \mathbb{E}[|X_0|^2] + 1\right) e^{( 1+ C_{\mathrm{lgb}}(2+ T))T} - 1 :=K_3(T).
 	\end{equation}
 	\begin{proof}
 		The proof follows closely to the one of Lemma \ref{lem: bound stoch dlr proj}.
 	\end{proof}
 \end{Lemma}

\begin{Proposition} \label{prop: discrete covariance full-rank - KNV}
		Suppose Assumption \ref{ass: diff} is satisfied. Then for any sequence of step-sizes $\{\Delta t_n\}_n$ we have:
		\begin{equation}\label{eq: cov 1 KNV}
			C_{Y_{n+1}} \succ \sigma_{B} \Delta t_n \cdot I_{k \times k}.
		\end{equation}
		If, moreover, Assumption \ref{ass: C_Y_0} holds as well, then for a uniform step-size $\Delta t$ one has
		\begin{equation}\label{eq: cov 2 KNV}
			\begin{aligned}
					C_{Y_{n+1}} \succeq  & \min \{\sigma_{0}, \frac{\sigma_{B}^2}{4C_{\mathrm{lgb}}(1 + K_3(T))}\}.
			\end{aligned}
		\end{equation}
			\begin{proof}
			The proof follows closely to the one of Propositions \ref{prop: 1 discrete covariance full-rank - EM} and \ref{prop: 2 discrete covariance full-rank - EM}.
		\end{proof}
\end{Proposition}
 
The following result establishes the accuracy of the DLR solution defined by \eqref{eq: DO cao-lu} and its numerical approximation given by Algorithm \ref{alg: Eva Proj Splitt SDE algorithm}.

\begin{Theorem}[Numerical Convergence of Algorithm \ref{alg: Eva Proj Splitt SDE algorithm} to the true solution] \label{thm: Numerical Convergence Eva}
	Consider a uniform partition $\Delta := \left\{t_n = t_0 + n \Delta t, \ n=0,\dots, N\right\}$ of $[0,T]$ with time-step $\Delta t=\frac{T}{N}$. Let us denote by $X^{\mathrm{true}}(t)$ the solution of \eqref{eq:SDE-diff} and by $X_n$ the numerical DLR solution obtained with Algorithm \ref{alg: Eva Proj Splitt SDE algorithm}. Assume \eqref{polynomial growth in t} to be valid. Furthermore, let us suppose that there exists a constant $\gamma>0$ such that $|C^{-1}_{Y(s)}|,|C^{-1}_{Y_{n}}|  \leq \frac{1}{\gamma}$ for all $s \in [0,T]$ and for all $n=0,\dots,N-1$, where $Y(s)$ is the stochastic basis in \eqref{eq: DO cao-lu} and $Y_n$ is the numerical approximation of the stochastic basis. Then, given $\varepsilon$ defined as in \eqref{eps bound projection} it holds 
	\begin{equation*}
		\sqrt{\mathbb{E}\left[ \max\limits_{0 \leq n \leq N}|X_{n} - X^{\mathrm{true}}(t_n)|^2\right]} \leq C( \sqrt{\mathbb{E}[| X_0 - X^{\mathrm{true}}_0 |^2]} + (\Delta t)^{\min\{\frac{1}{2},\alpha\}}  + \varepsilon),
	\end{equation*}
	for a positive constant $C:=C(\gamma,k,T)$.
	\begin{proof}
	To obtain the thesis, we bound the sought error with the one between the true solution and the continuous DLRA $X(t)$ given by \eqref{eq: DO cao-lu} and the one between the DLRA and the numerical solution $X_n$. The latter can be obtained following closely the proof of Theorem \ref{thm: convergence of DLR Euler-Maruyama}, i.e.\ via bounding the error between the bases.  Due to its similarity to the proof of Theorem \ref{thm: convergence of DLR Euler-Maruyama}, we streamline the argument and we only highlight the main differences.
		
		First, let us define the DLRA $X(t)=U(t)^{\top}Y(t)$ as the product of the bases defined by \eqref{eq: DO cao-lu}, i.e.\ via Itô's formula
		  \begin{equation}\label{eq: DLRA cao}
			\begin{aligned}
				X(t)  =& X_0 + \int_{0}^{t} \left(P_{X(s)} \left[a(s,X(s)) \right] + P_{U(s)}^{\perp}\mathbb{E} \left[b(s,X(s)) b(s,X(s))^{\top}\right] U(s)^{\top}C^{-1}_{Y(s)}Y(s) \right) \mathrm{d}s \\
				&+  \int_{0}^{t} P_{U(s)} b(s,X(s))\mathrm{d}W_s.
			\end{aligned}
		\end{equation}
		Furthermore, we can assume that there exists a positive constant $K_4(T)$ such that $\mathbb{E}[ \sup\limits_{0 \leq s \leq t} |X(s) |^2] \leq K_4(T)$ (see \cite{kazashi2025dynamical}).
	Then, similarly to \eqref{eq: sum of error}, one has
		 \begin{equation*}
			\begin{aligned}
				\sqrt{\mathbb{E}[\max\limits_{0 \leq n \leq N} |X^{\mathrm{true}}(t_n) - X_{n} |^2]}
				\leq &\sqrt{ \mathbb{E}[ \sup\limits_{0 \leq s \leq t} |X^{\mathrm{true}}(s) - X(s) |^2]} + \sqrt{  \mathbb{E}[\max\limits_{0 \leq n \leq N} |X(t_n) - X_{n} |^2 ]}.
			\end{aligned}
		\end{equation*}
		For the error between $X^{\mathrm{true}}(t)$ and $X(t)$ one has
			\begin{equation*}
			\begin{aligned}
				&\mathbb{E}[\sup\limits_{0 \leq s \leq t} | X^{\mathrm{true}}(s) - X(s) |^2]\\
				\leq &  4 \mathbb{E}[|X^{\mathrm{true}}_0 - X_0 |^2]  + 4 \mathbb{E}[\sup\limits_{0 \leq s \leq t} |\int_{0}^{s}a(r,X^{\mathrm{true}}(r)) -  P_{X(r)}[a(r,X(r))]  \mathrm{d}r|^2] \\
				& + 4 \mathbb{E}[\sup\limits_{0 \leq s \leq t} |\int_{0}^{s}P_{U(r)}^{\perp}\mathbb{E} \left[b(r,X(r)) b(r,X(r))^{\top}\right] U(r)^{\top}C^{-1}_{Y(r)}Y(r) \mathrm{d}r|^2] \\
				&+ 4 \mathbb{E}[\sup\limits_{0 \leq s \leq t} |\int_{0}^{s}b(r,X^{\mathrm{true}}(r))-P_{U(r)} b(r,X(r))\mathrm{d}W_r|^2].
			\end{aligned}
		\end{equation*}
		Via adding and subtracting the drift $a(r,X(r))$ and the diffusion $b(r,X(r))$, Lipschitzianity of $a$ and $b$, Jensen's inequality, Doob's inequality and using \eqref{eps bound projection}, one obtains
			\begin{equation*}
			\begin{aligned}
				&\mathbb{E}[\sup\limits_{0 \leq s \leq t} | X^{\mathrm{true}}(s) - X(s) |^2]\\
				\leq &  4 \mathbb{E}[|X^{\mathrm{true}}_0 - X_0 |^2]  + 8C_{\mathrm{Lip}} (T+4)T \int_{0}^{T}\mathbb{E}[\sup\limits_{0 \leq p \leq r}  | X^{\mathrm{true}}(p) - X(p) |^2]  \mathrm{d}r \\
				& + \left(8(T+4)T + 8T^2 C_{\mathrm{lgb}}(1+K_4(T))\gamma^{-1} \right) \varepsilon^2 \\
				\leq & C_1(\gamma, T)\left( \mathbb{E}[|X^{\mathrm{true}}_0 - X_0 |^2] + \varepsilon^2\right),
			\end{aligned}
		\end{equation*}
		where in the last line we use the Gronwall's lemma and $C_1(\gamma, T)$ is a positive constant.
		
		Concerning the error $\sqrt{  \mathbb{E}[\max\limits_{0 \leq n \leq N} |X(t_n) - X_{n} |^2 ]}$, the procedure is very close to the proof of Theorem \ref{thm: convergence of DLR Euler-Maruyama}. We detail the extra difference term that appears here, where we use the notation $b_{n_r} = b(t_{n_r}, X_{n_r})$, where $n_r$ is defined as in \eqref{eq: n_s}:
			\begin{equation*}
			\begin{aligned}
				T_7:=&\mathbb{E}\left[\max\limits_{0 \leq n \leq N} \left|  \left(\int_{0}^{t_n} P_{U(r)}^{\perp}\mathbb{E} \left[b(r,X(r)) b(r,X(r))^{\top}\right] U(r)^{\top}C^{-1}_{Y(r)}Y(r) - P_{U_{n_r}}^{\perp} \mathbb{E}\left[ b_{n_r}  b_{n_r}^{\top}  \right]U_{n_r}^{\top}C^{-1}_{\tilde{Y}_{n_r+1}}\tilde{Y}_{n_r+1}  \mathrm{d}r \right) \right|^2 \right] \\
			 \leq & \mathbb{E}\left[\sup\limits_{0 \leq n \leq N} t_n \int_{0}^{t_n} \left| P_{U(r)}^{\perp}\mathbb{E} \left[b(r,X(r)) b(r,X(r))^{\top}\right] U(r)^{\top}C^{-1}_{Y(r)}Y(r)  - P_{U_{n_r}}^{\perp} \mathbb{E}\left[ b_{n_r}  b_{n_r}^{\top}  \right]U_{n_r}^{\top}C^{-1}_{\tilde{Y}_{n_r+1}}\tilde{Y}_{n_r+1} \right|^2 \mathrm{d}r \right] \\
				\leq & T\int_{0}^{T} \mathbb{E}\left[ \left| P_{U(r)}^{\perp}\mathbb{E} \left[b(r,X(r)) b(r,X(r))^{\top}\right] U(r)^{\top}C^{-1}_{Y(r)}Y(r)  - P_{U_{n_r}}^{\perp} \mathbb{E}\left[ b_{n_r}  b_{n_r}^{\top}  \right]U_{n_r}^{\top}C^{-1}_{\tilde{Y}_{n_r+1}}\tilde{Y}_{n_r+1} \right|^2 \right] \mathrm{d}r 
			\end{aligned}
		\end{equation*}
	  Adding and subtracting cross terms and employing usual inequalities and assumptions, we finally get
	  	\begin{equation*}
	  	\begin{aligned}
	  		T_7
	  		\leq & C_2(\gamma,k,T)\left( \left(\int_{0}^{T} \mathbb{E}\left[\max\limits_{0 \leq p \leq r} \left| X(t_{n_p}) - X_{n_p}\right|^2\right] \mathrm{d}r\right) + \varepsilon^2 + \Delta t + O(\Delta t^2)\right),\\
	  	\end{aligned}
	  \end{equation*}
	  for a positive constant $C_2(\gamma,k,T)$. Then, the conclusion follows applying Gronwall's lemma.
	\end{proof}
\end{Theorem}

\begin{Remark}[Comments on convergence in Theorem \ref{thm: Numerical Convergence Eva}]\label{rmk: conv PS EM}
	The presence of the full diffusion term in the expectation given by the $L^2$-projection onto the stochastic basis makes the numerical solution be non-adapted to the filtration, which adds more difficulties in proving the convergence of the DLR PS EM. In the special case of zero model error $$\sup_{\substack{t \in [0,T] \\ Z \in L^2(\Omega,\mathbb{R}^d) \text{ of rank } k \\
			\text{ with } ·\mathbb{E}[|Z|^2] \leq \tilde{K}}} \mathbb{E}[\| b(t, Z) - \mathcal{P}_{\mathcal{U}(Z)}b(t, Z)\|^2_{\mathrm{F}} ] = 0,$$
		 the DLRA equations described by \eqref{DLR-conditions} and \eqref{eq: DO cao-lu} coincide. Therefore, Algorithm \ref{alg: Eva Proj Splitt SDE algorithm} converges to the DLRA for SDEs defined by \eqref{DLR-conditions}, too. 
\end{Remark}
 
\subsection{Mean Square Stability of the DLR Projector Splitting for EM}\label{sec: KNV stab}
Finally, let us analyze the AMS stability of the DLR PS EM applied to the linear SDE \eqref{eq: moltiplic sdes}. Having the same projected nature of the DLR PS SDE turns out to be favorable in term of stability. Indeed, the following proposition shows that the DLR projector splitting scheme for EM is AMS stable under a condition that does not depend on the smallest singular value of the numerical DLR solution. 

\begin{Proposition}[Mean-square stability of the DLR Projector Splitting for EM]\label{prop: stab proj spli}
		Under condition \eqref{eq: stab em cond}, the DLR PS EM applied to \eqref{eq: moltiplic sdes} is AMS stable.
	\begin{proof}
		The proof follows similarly to the one of Proposition \ref{prop: PS SDE stab}.
	\end{proof}
\end{Proposition}

\section{Numerical Experiments}
\label{sec: numerical experiments}
In this section, we validate the results of Sections \ref{sec: DLR EM},  \ref{sec: convergence PS SDE}, and \ref{sec: convergence PS EM} via computational experiments. 
All the averages in the computations in Algorithms \ref{alg: DLR EM SDE algorithm}, \ref{alg: Stoch DLR Proj algorithm}, and \ref{alg: Eva Proj Splitt SDE algorithm} as well as those needed in computing the $L^2(\Omega)$-errors, are calculated using a Monte-Carlo method (sample averages) using independent Brownian motions. This effectively leads to a coupled particle system which is fully analyzed in our companion paper Part II. Here we assume that the particle system converges to its continuous version of Algorithms \ref{alg: DLR EM SDE algorithm}, \ref{alg: Stoch DLR Proj algorithm}, and \ref{alg: Eva Proj Splitt SDE algorithm} when the number of Monte Carlo samples go to $+\infty$.
The orthonormalization procedure done in the three aforementioned algorithms is always performed using a \texttt{QR} decomposition.

In all the simulations, we plot two types of errors for the three algorithms presented in Section~\ref{sec: discretization procedure}. 
The first is the strong error which is measured against the continuous DLRA defined by \eqref{DLR-conditions}. Since a closed-form of the continuous DLRA solution is not available, we approximate it using a DLR PS SDE on a very finer time grid, with the same number of particles as the numerical surrogate being tested. 
The reference DLRA is computed using the DLR PS SDE, as this is the only algorithm for which we have been able to prove convergence to the continuous DLRA with respect to the time discretization without any dependence on the smallest singular value of the Gramian (see Theorem~\ref{thm: convergence of Stoch DLR Proj}), as well as with respect to the stochastic discretization via Monte Carlo sampling (see \cite{kazashi2025dynamicalpartII} for more details). For the sake of notation, in the caption of the figures of this section sometimes we will denote the continuous DLRA as $DLR$, whereas $DLR_{n}$ will denote any of the proposed time discretization schemes. Furthermore, we highlight the fact that if the model error vanishes (i.e.\ the exact solution is low rank), then Algorithms \ref{alg: Stoch DLR Proj algorithm} and \ref{alg: Eva Proj Splitt SDE algorithm} coincide. Otherwise, we might observe that the error between the numerical solution of Algorithm \ref{alg: Eva Proj Splitt SDE algorithm} and the $DLR$ saturates, as $\Delta t \to 0$ since Algorithm \ref{alg: Eva Proj Splitt SDE algorithm} effectively approximates the alternative DLRA formulation \eqref{eq: DO cao-lu}.

The second type of error is also a strong error, but measured against the true solution $X^{\mathrm{true}}$ of the SDE under study. 
In all the simulations, we approximate the exact solution using a standard Euler-Maruyama method \cite{kloeden1992stochastic} with the same number of time steps and paths used to compute the reference ``continuous" DLR. We will refer to $L^2-\sup_{t \in [0,T]}$ as the following error between the two processes $X$, the continuous DLRA, and $X^{\mathrm{true}}$, the true solution,
\begin{equation}\label{eq: err-sup}
	\sqrt{\mathbb{E}[\sup_{t \in [0,T]}|X(t)-X^{\mathrm{true}}(t)|^2]} \approx 	\sqrt{\mathbb{E}[\max_{n \in [0,N]}|X_n-X^{\mathrm{true}}(t_n)|^2]}
\end{equation}
whereas the relative (rel.) $L^2-\sup_{t \in [0,T]}$ error will indicate
\begin{equation}\label{eq: rel. err-sup}
\sqrt{\frac{\mathbb{E}[\sup_{t \in [0,T]} |X(t)-X^{\mathrm{true}}(t)|^2]}{\mathbb{E}[\sup_{t \in [0,T]}|X^{\mathrm{true}}(t)|^2]}} \approx	\sqrt{\frac{\mathbb{E}[\max_{n \in [0,N]} |X_n-X^{\mathrm{true}}(t_n)|^2]}{\mathbb{E}[\max_{n \in [0,N]}|X^{\mathrm{true}}(t_n)|^2]}}.
\end{equation}
In the presented figures of this section, we will always consider \eqref{eq: rel. err-sup}.

Throughout this section, $\sigma^i(C)$ denotes
 the $i$-th largest singular value of a matrix $C \in \mathbb{R}^{n \times p}$. Moreover, for the sake of notation, in the legends of the presented figures, Algorithms \ref{alg: DLR EM SDE algorithm}, \ref{alg: Stoch DLR Proj algorithm}, and \ref{alg: Eva Proj Splitt SDE algorithm} will be denoted by DLR EM, DLR PS  SDE, and DLR PS EM, respectively.
\subsection{Basic SDE examples}\label{sec: basic sde}
First, we aim to corroborate the results of Propositions \ref{prop: 1 discrete covariance full-rank - EM}, \ref{prop: 2 discrete covariance full-rank - EM}, \ref{prop: discrete covariance full-rank - Stoch DLR Proj}, and \ref{prop: discrete covariance full-rank - KNV}, as well as numerically assess the convergence of the three algorithms. 
We simulate the following low-dimensional stochastic process 
\begin{equation}\label{ex: toy example}
	\begin{aligned}
	\hspace{-0.3cm}
	\begin{pmatrix}
		\mathrm{d}X^{\mathrm{true}}_1 (t) \\
		\mathrm{d}X^{\mathrm{true}}_2 (t) \\
		\mathrm{d}X^{\mathrm{true}}_3 (t)\\
	\end{pmatrix}
	=&
	\begin{pmatrix}
		-0.1 \cdot X^{\mathrm{true}}_1(t) &0.1 \cdot X^{\mathrm{true}}_2(t)&0.001 \cdot X^{\mathrm{true}}_3(t)\\
		-0.1 \cdot X^{\mathrm{true}}_1(t) &0.1 \cdot X^{\mathrm{true}}_2(t)&0.001 \cdot X^{\mathrm{true}}_3(t)\\
	-4 \cdot X^{\mathrm{true}}_1(t) &-4 \cdot X^{\mathrm{true}}_2(t)&-4 \cdot X^{\mathrm{true}}_3(t)\\
	\end{pmatrix}
	\mathrm{d}t\\
	&+ \sqrt{\sigma_B}
	\begin{bmatrix}
		1 + 6 \left(|X^{\mathrm{true}}_1(t)|+ |X^{\mathrm{true}}_2(t)|\right)& 0 & 0  \\
		0 & 	1 + 6 \left(|X^{\mathrm{true}}_1(t)|+ |X^{\mathrm{true}}_2(t)|\right) & 0  \\
		0 & 0 & 1 \\
	\end{bmatrix} \cdot
	\begin{pmatrix}
		\mathrm{d}W_1(t) \\
		\mathrm{d}W_2(t) \\
		\mathrm{d}W_3(t) \\
	\end{pmatrix}
	\end{aligned}
\end{equation}
with $t \in [0,10]$. 
The initial datum is given by the random vector of component $X^{\mathrm{true}}_i(0) = 0.1 - \text{Un}_{i}$, for $i=1,2$, where each  $\text{Un}_{i}$ is an independent uniform random variable taking values in the interval $[-0.0001,0.0001]$, whereas $X^{\mathrm{true}}_3(0)=0$. Therefore the initial condition $X^{\mathrm{true}}$ has rank equal to $2$. Moreover, we choose $\sigma_B=10^{-8}$.
For this numerical experiment we compute $M=10000$ paths and choose a rank $k=2$. 
	
To initialize all the DLRA algorithms, we employ a low-rank approximation of the ``discretized" initial datum. 
Specifically, the initial condition of Algorithms \ref{alg: DLR EM SDE algorithm}, \ref{alg: Stoch DLR Proj algorithm}, and \ref{alg: Eva Proj Splitt SDE algorithm}, is the rank-$k$ best approximation with respect to the Frobenius norm of $[X^{\mathrm{true}}_{1}(0), \dots, X^{\mathrm{true}}_{M}(0)] \in \mathbb{R}^{d \times M}$, namely the rank-$k$ truncated SVD of the matrix whose columns are the $M$ realizations of $X^{\mathrm{true}}_0$. 
In \eqref{ex: toy example}, we remark that the drift and the diffusion coefficients for the first two components are the same, although the Brownian motions differ. Moreover, the diffusion coefficient is constructed so that $\sigma_B$ is actually the lower-bound quantity appearing in Assumption \ref{ass: diff}. Then, we expect that for all the DLRA algorithms of rank $k=2$ we obtain a good approximation of the full-order solution of \eqref{ex: toy example} over time discretization and that the covariance of the stochastic basis is always strictly positive definite, according to results in Sections \ref{sec: DLR EM}, \ref{sec: convergence PS SDE}, and \ref{sec: convergence PS EM}.

Figure \ref{fig:toy example sig values} presents the smallest singular value $\sigma^k$ over time for all three algorithms, across varying time step-sizes. These plots also include the corresponding two quantities \eqref{eq: cov 1 EM} and \eqref{eq: cov 2 EM}. 
We observe that in all cases, the smallest singular value $\sigma^k$ of the dicretized DLR solutions is for above the lower bounds given by \eqref{eq: cov 1 EM} and \eqref{eq: cov 2 EM} (where $K_1(T)$ is estimate using the Monte Carlo estimator); this behavior supports Propositions~\ref{prop: 1 discrete covariance full-rank - EM},
\ref{prop: 2 discrete covariance full-rank - EM}, \ref{prop: discrete covariance full-rank - Stoch DLR Proj}, and \ref{prop: discrete covariance full-rank - KNV}.

\begin{figure}[!h]
	\centering
		\includegraphics[scale=0.16]{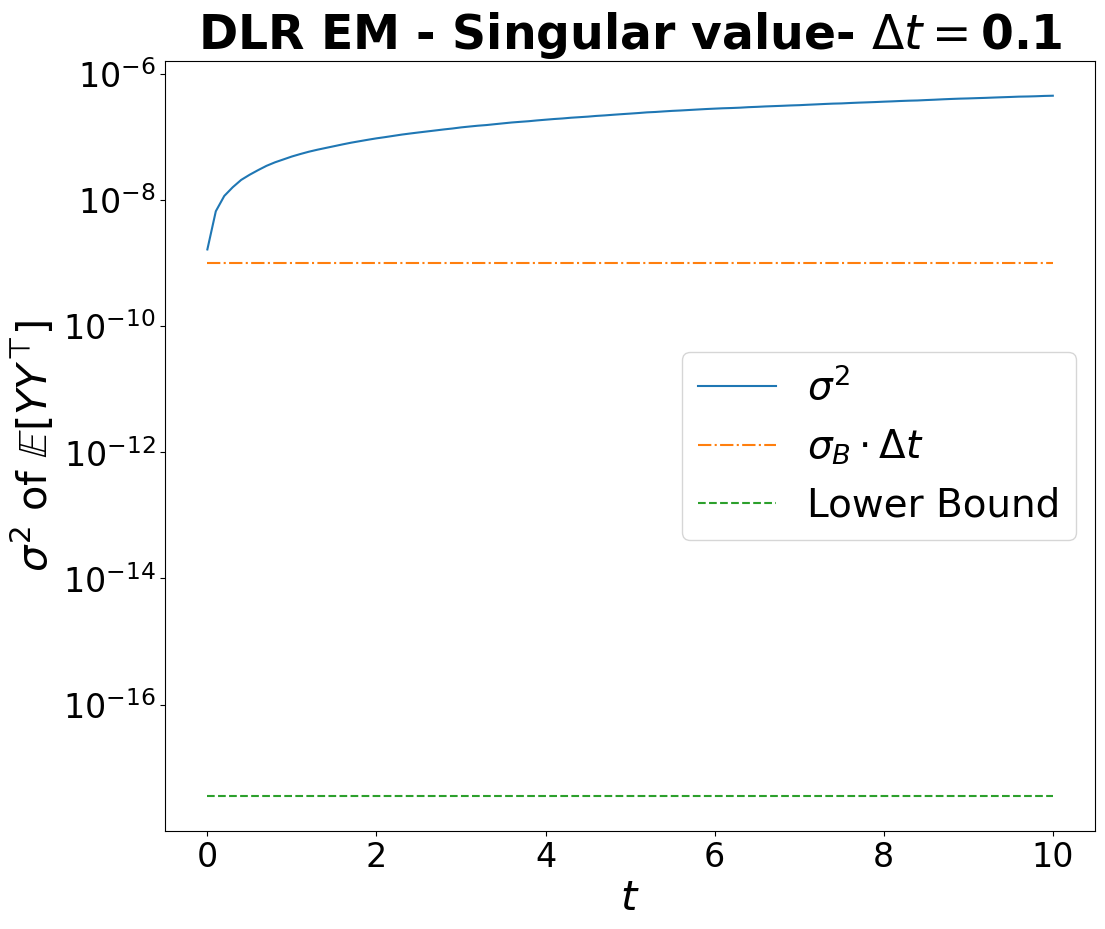}
	\includegraphics[scale=0.16]{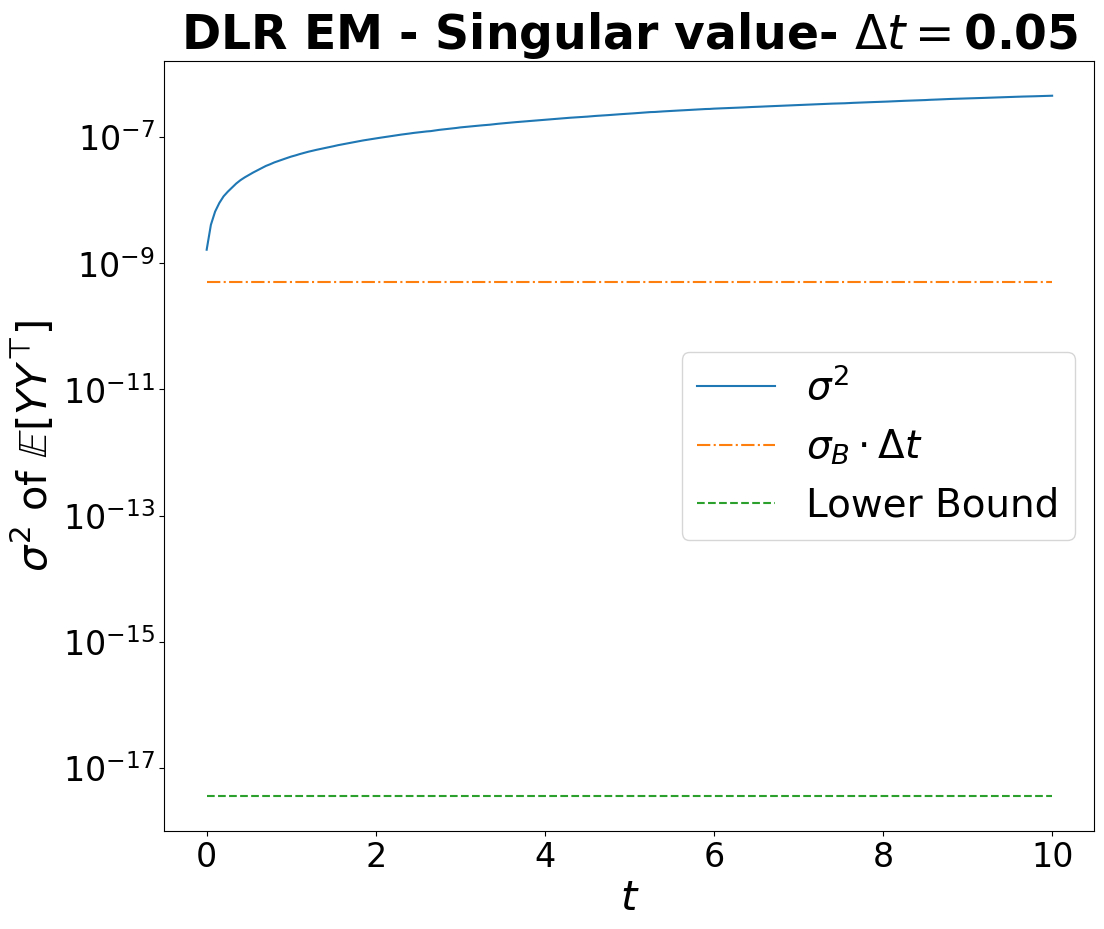}
	\includegraphics[scale=0.16]{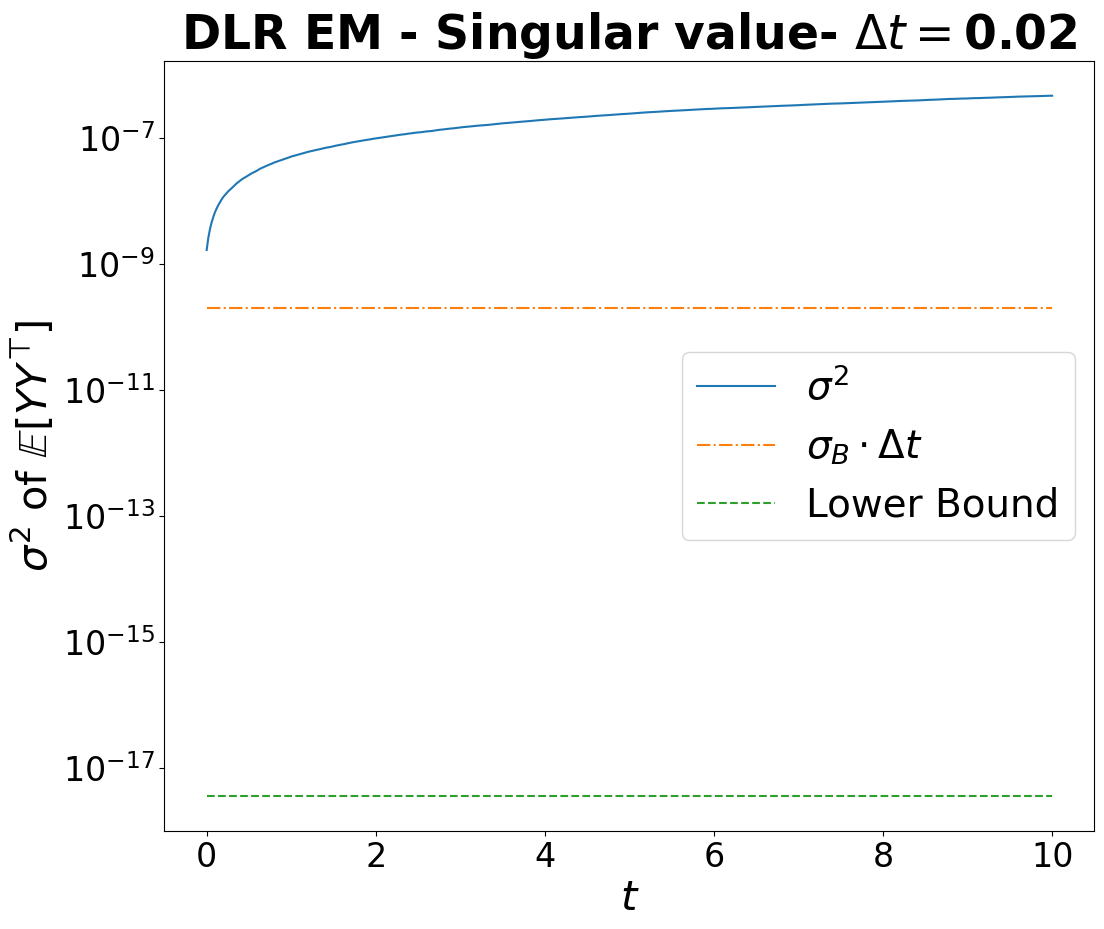}\\
	\includegraphics[scale=0.16]{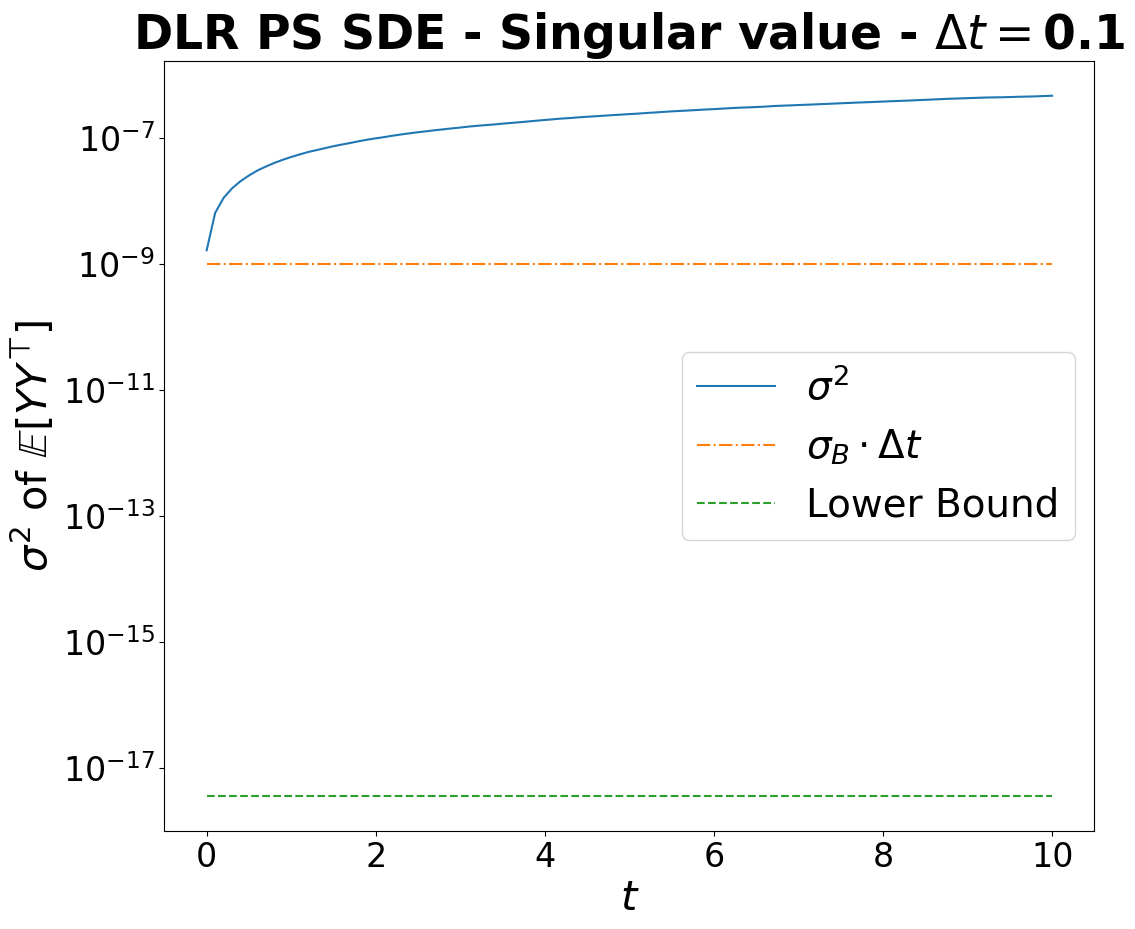}
	\includegraphics[scale=0.16]{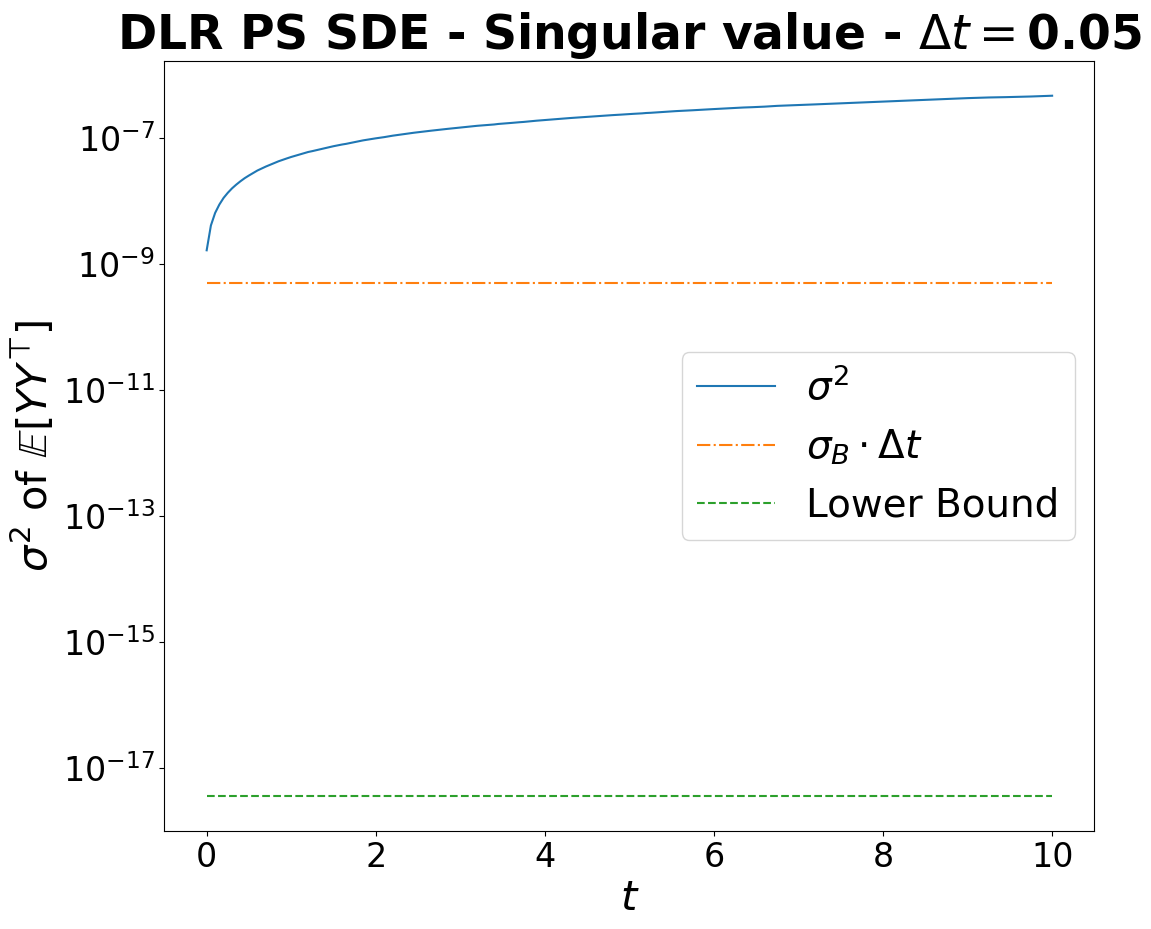}
	\includegraphics[scale=0.16]{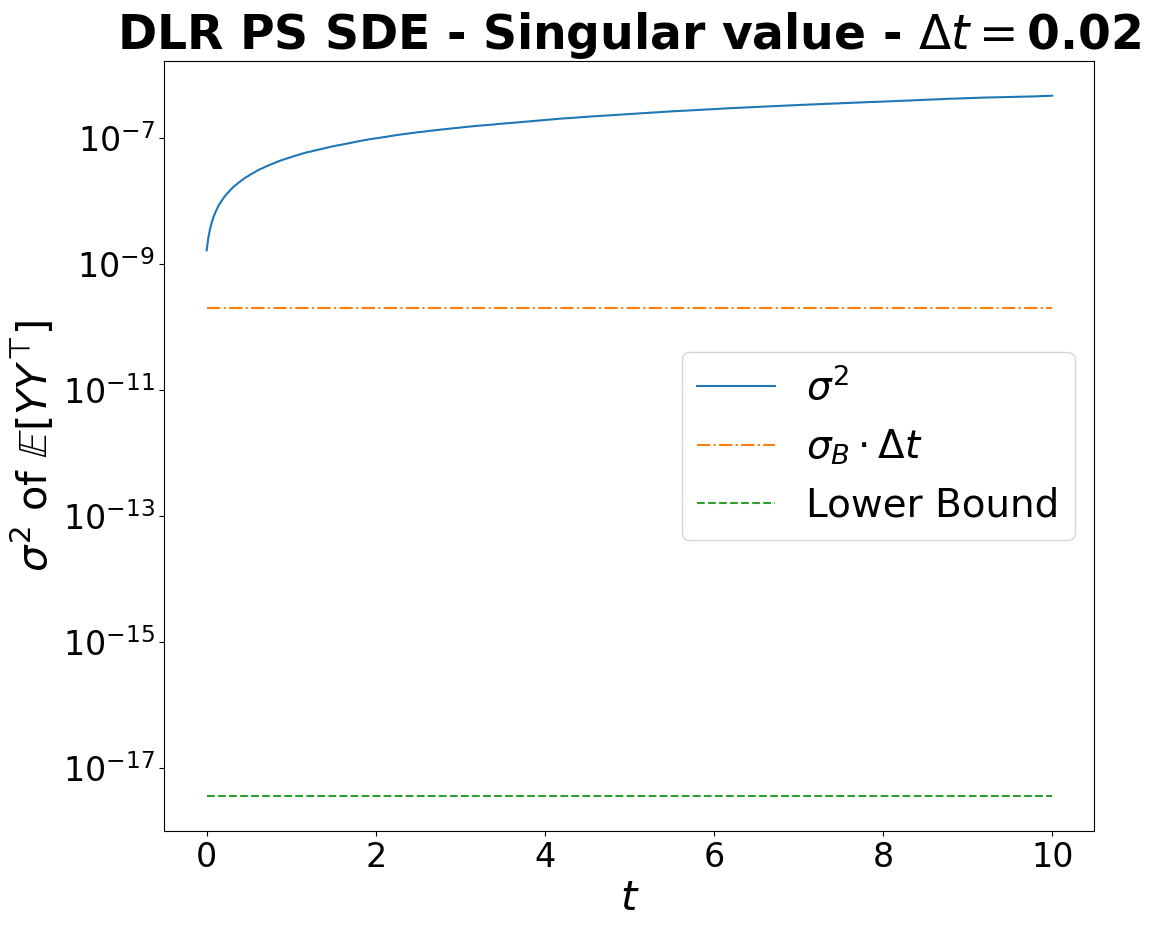}\\
	\includegraphics[scale=0.16]{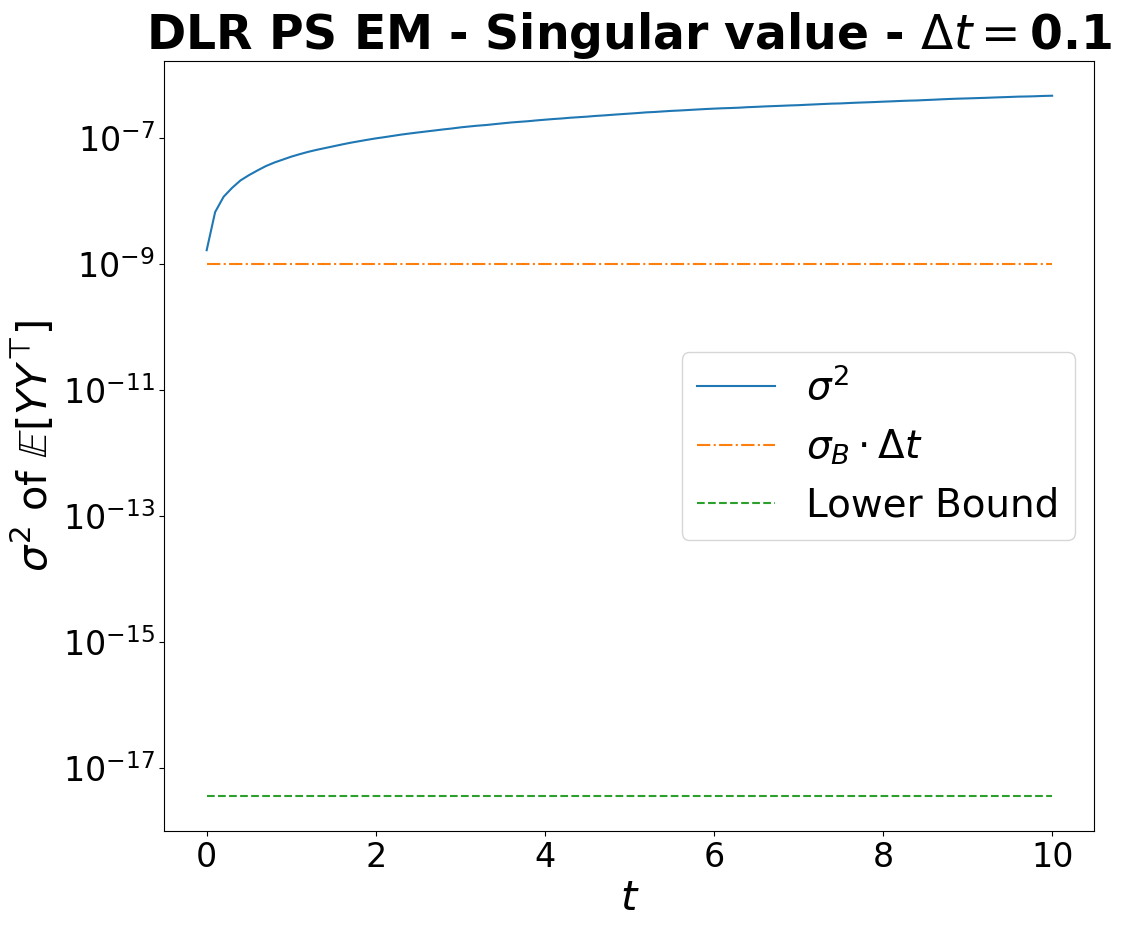}
	\includegraphics[scale=0.16]{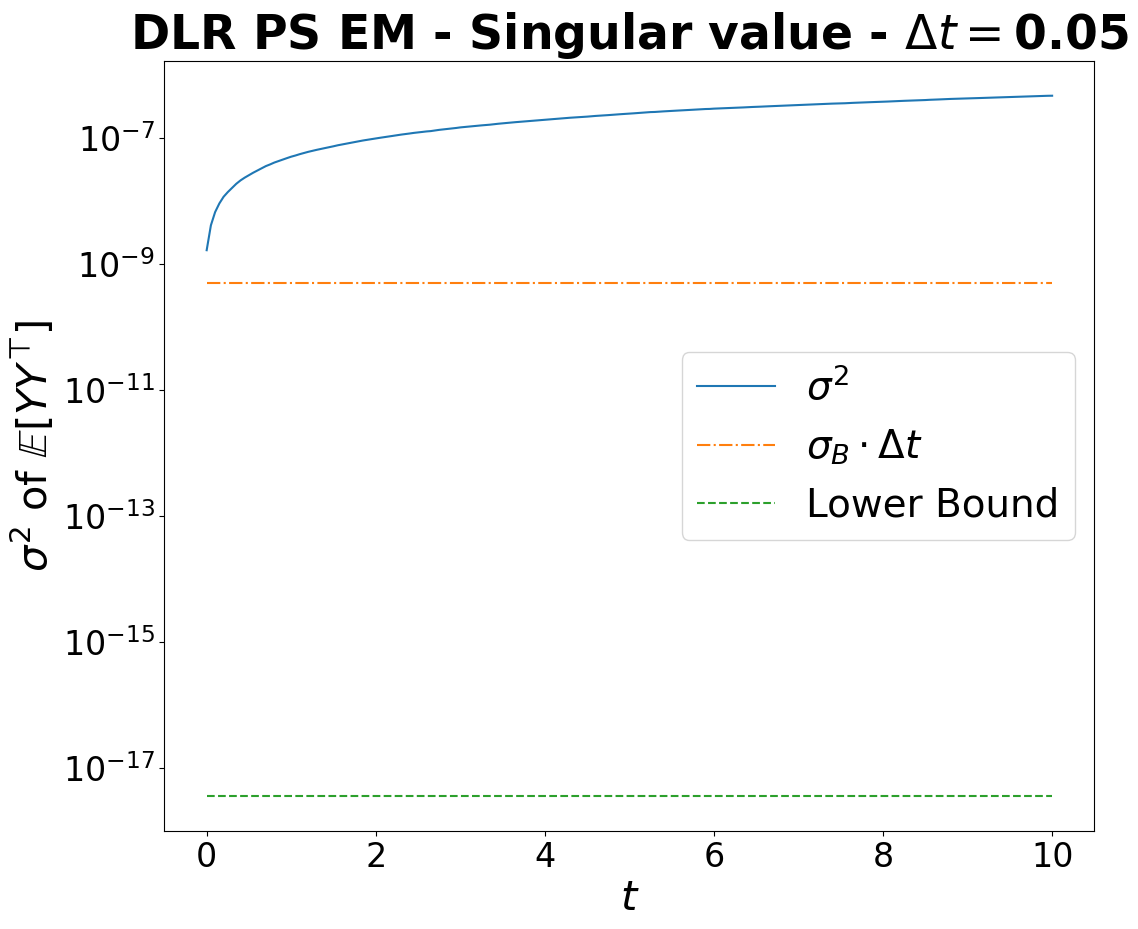}
	\includegraphics[scale=0.16]{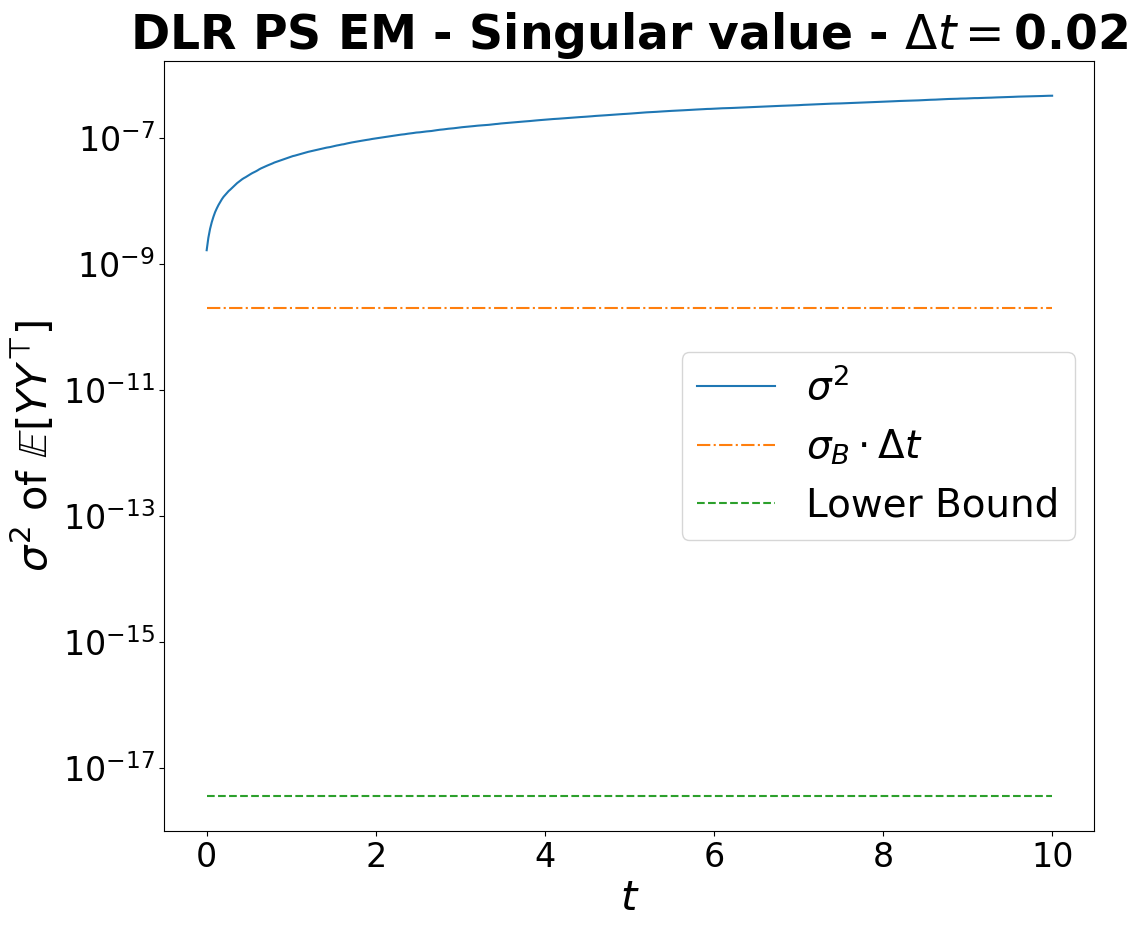}\\
	\caption{For $\Delta t=0.1,0.05,0.02$, $k=2$, $M=10000$, $\sigma_{B}=10^{-8}$ for \eqref{ex: toy example}. (Top) Smallest singular values of $\mathbb{E}[Y_nY_n^{\top}]$ for the DLR Euler-Maruyama, (Middle) for the DLR Projector Splitting for SDEs, (Bottom) for the DLR Projector Splitting for EM. Lower bounds of $\sigma^k$ refer to \eqref{eq: cov 2 EM} (and, hence, to \eqref{eq: cov 2 Stoch DLR Proj}, and \eqref{eq: cov 2 KNV}, too), for all the three algorithms, respectively.}
	\label{fig:toy example sig values}
\end{figure}

On the left panel of Figure \ref{fig:toy example error}, the singular values of the true solution $X^{\mathrm{true}}$ are shown, while
its right panel illustrates the $L^2$ strong errors between each discretization method of Sections \ref{sec: discretization procedure} and the exact solution $X^{\mathrm{true}}$ or its continuous DLRA. The reference $X^{\mathrm{true}}$ and the continuous DLR solutions have been computed with $\Delta t = 1 \cdot 10^{-3}$. 
We notice that $\sigma^{3}(\mathbb{E}[X^{\mathrm{true}}(t)(X^{\mathrm{true}})^{\top}(t)])$ approaches $10^{-9}$ almost immediately and stays nearly constant afterward, whereas $\sigma^{2}(\mathbb{E}[X^{\mathrm{true}}(t)(X^{\mathrm{true}})^{\top}(t)])$ slowly increases over time.
Consequently, the decision to employ DLRA algorithms with rank $k=2$ is confirmed by numerical evidence, too. In Figure \ref{fig:toy example error} we notice that the rank-$2$ DLRA discretization still manages to retrieve the strong convergence $O(\Delta t)$ for all the algorithms, with the Projector Splitting methods exhibiting a smaller constant than the DLR Euler-Maruyama. This empirical observation is consistent with Theorems~\ref{thm: convergence of DLR Euler-Maruyama}, \ref{thm: convergence of Stoch DLR Proj}, \ref{thm: Numerical Convergence Eva}. Moreover, notice that the DLR PS EM shows a slightly better accuracy with respect to the true solution $X^{\mathrm{true}}$ than DLR PS SDE for small $\Delta t$.
\begin{figure}[!h]
	\centering
	\includegraphics[scale=0.24]{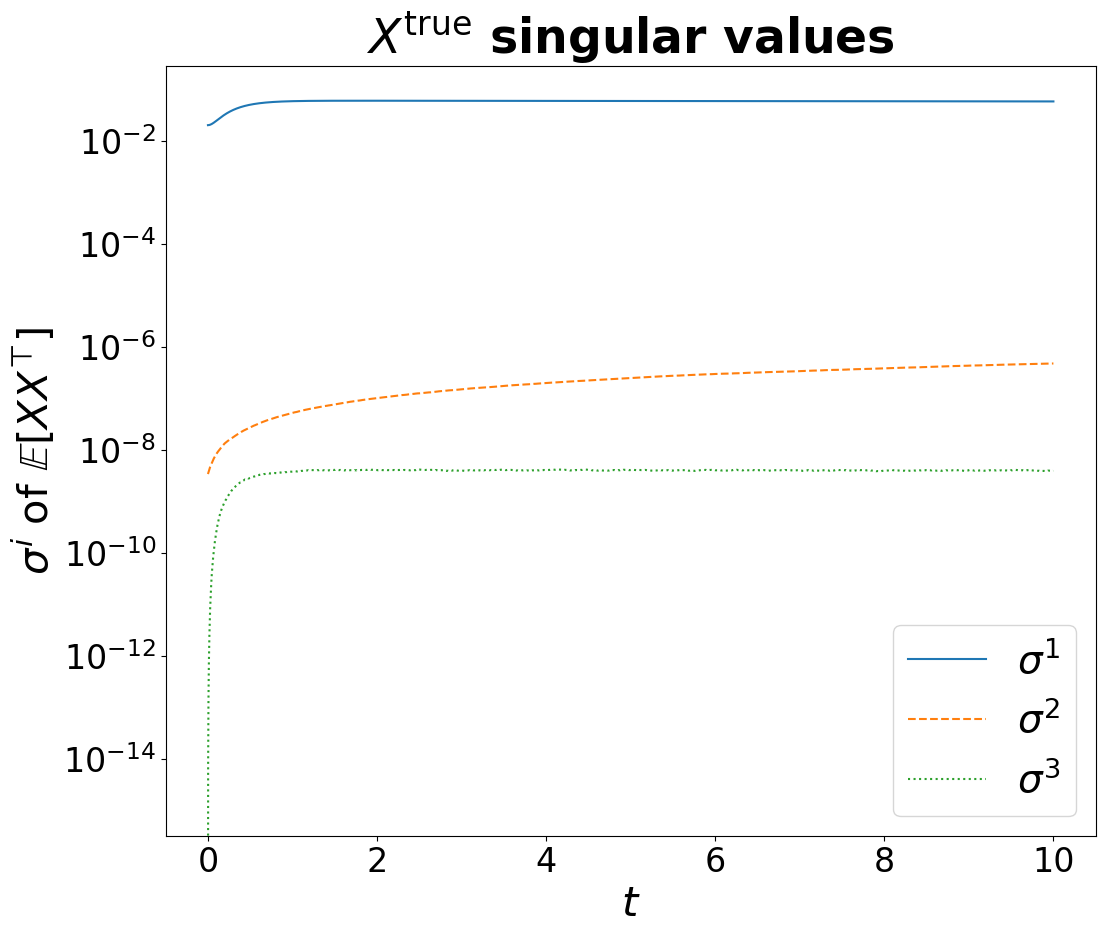}
	\includegraphics[scale=0.29]{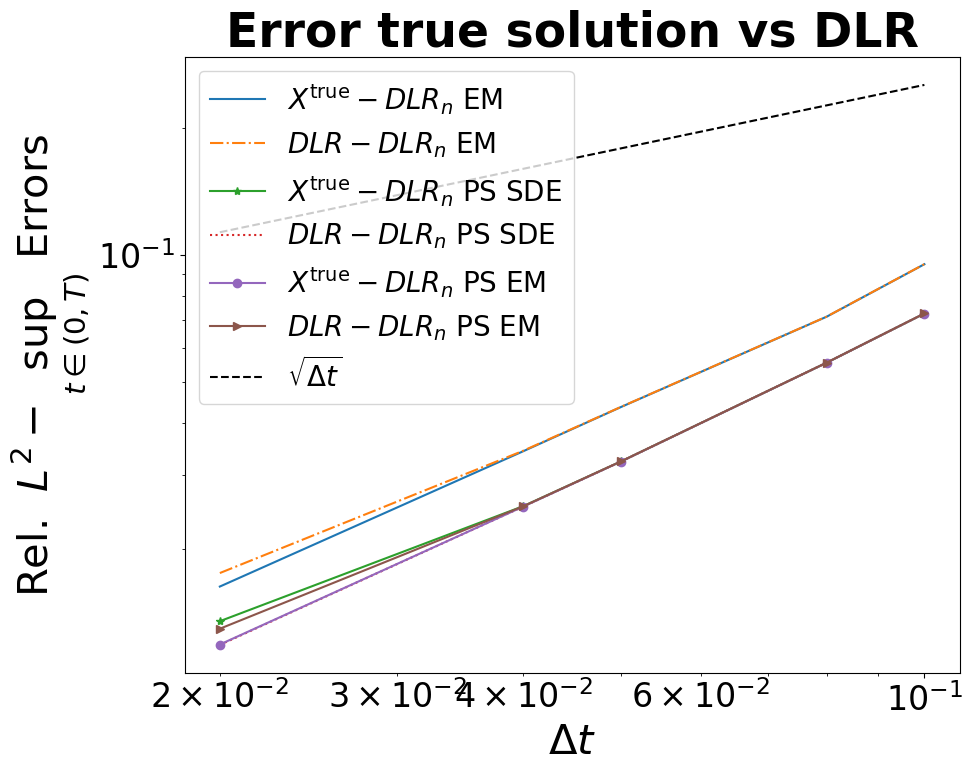}
	\caption{(Left) Singular values for the process $X^{\mathrm{true}}(t)$, solution of equation \eqref{ex: toy example}. (Right) Relative errors for the DLR Euler-Maruyama, DLR Projector Splitting for SDEs, and the DLR Projector Splitting for EM, with respect to the true solution $X^{\mathrm{true}}$ and the continuous DLRA of rank $k=2$. $M=10000$ paths are used in all methods to approximate expectations.}
	\label{fig:toy example error}
\end{figure}

It is worth noticing that Equation \eqref{ex: toy example} has a linear deterministic drift. In this case, the deterministic modes of the DLR Euler-Maruyama evolve independently of the Gramian $C_{Y_n}$ (see \eqref{eq: linear drift}). Therefore, in this setting the DLR Euler-Maruyama is not affected by the smallest singular value. To provide numerical evidence of this property let us consider a slight modification of the problem, given by
\begin{equation}\label{ex: toy example 2}
	\begin{aligned}
		\hspace{-0.3cm}
		\begin{pmatrix}
			\mathrm{d}X^{\mathrm{true}}_1 (t) \\
			\mathrm{d}X^{\mathrm{true}}_2 (t) \\
			\mathrm{d}X^{\mathrm{true}}_3 (t)\\
		\end{pmatrix}
		=&
		\begin{pmatrix}
			-0.1 \cdot X^{\mathrm{true}}_1(t) &0.1 \cdot X^{\mathrm{true}}_2(t)&0.001 \cdot X^{\mathrm{true}}_3(t)\\
			-0.1 \cdot X^{\mathrm{true}}_1(t) &0.1 \cdot X^{\mathrm{true}}_2(t)&0.001 \cdot X^{\mathrm{true}}_3(t)\\
			-4 \cdot X^{\mathrm{true}}_1(t) &-4 \cdot X^{\mathrm{true}}_2(t)&-4 \cdot X^{\mathrm{true}}_3(t)\\
		\end{pmatrix}
		\mathrm{d}t\\
		&+ \sqrt{\sigma_B}
		\begin{bmatrix}
			1 & 0 & 0  \\
			0 & 	1& 0  \\
			0 & 0 & 0\\
		\end{bmatrix} \cdot
		\begin{pmatrix}
			\mathrm{d}W_1(t) \\
			\mathrm{d}W_2(t) \\
			\mathrm{d}W_3(t) \\
		\end{pmatrix}
	\end{aligned}
\end{equation}
with $t \in [0,10]$.
The initial datum is given by the random vector of components $X^{\mathrm{true}}_1(0) = 0.1 - \text{Un}_{1}$, where $\text{Un}_{1}$ is a uniform random variable taking values in the interval $[-0.0001,0.0001]$, whereas $X^{\mathrm{true}}_2(0) = 0.1 - \text{Un}_{2}$ with $\text{Un}_{2}$ is a uniform random variable, independent of $\text{Un}_{1}$, taking values in the interval $[-10^{-9},10^{-9}]$, and $X^{\mathrm{true}}_3(0)=0$. Therefore, the initial condition $X^{\mathrm{true}}$ has rank equal to $2$, but is close to be rank deficient. Furthermore, we choose $\sigma_B=10^{-16}$, which implies the addition of noise close to the zero machine in absolute value. With these properties, \eqref{ex: toy example 2} is close to be a rank-$1$ dynamics. Indeed, in Figure \ref{fig:toy example sig values 2} we illustrate the smallest singular value $\sigma^2$ over time for all three algorithms, across varying time step-sizes, and we notice that their value is close to the zero machine. 
\begin{figure}[!h]
	\centering
	\includegraphics[scale=0.16]{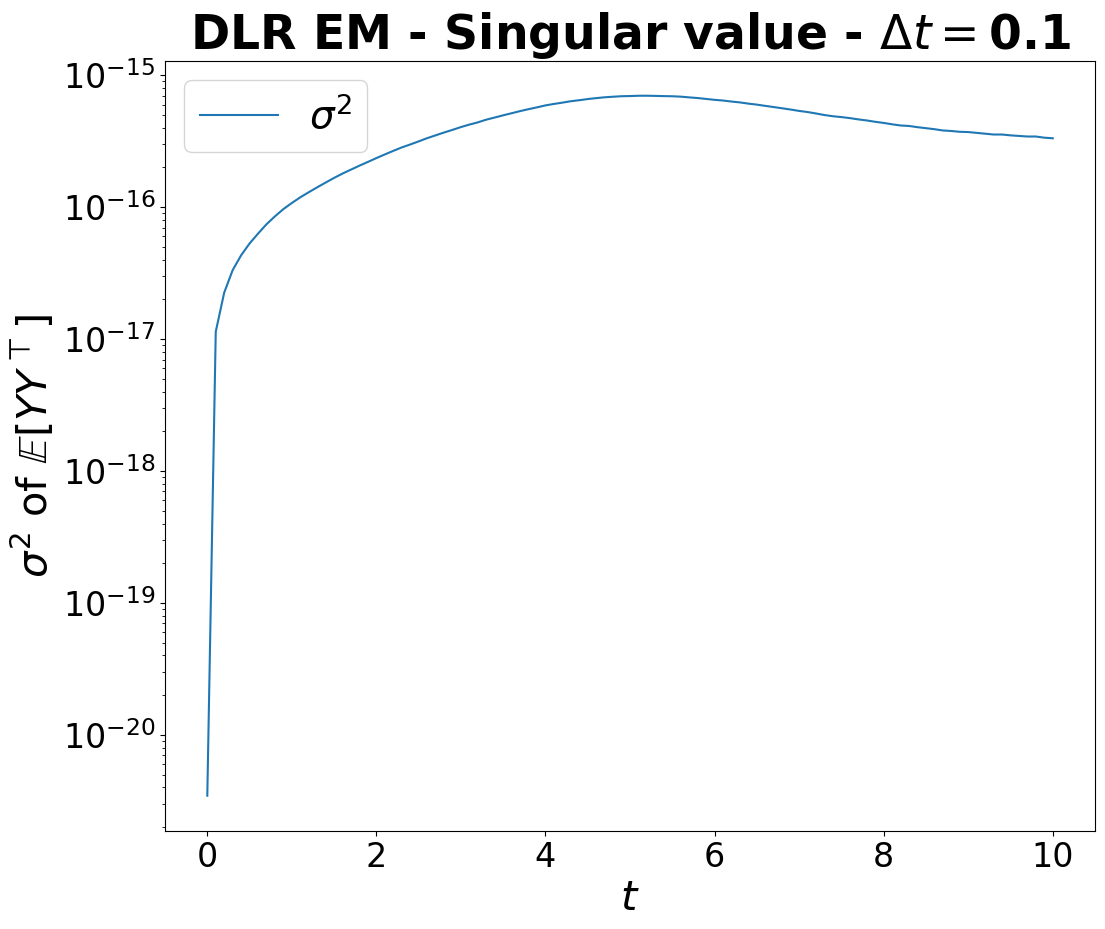}
	\includegraphics[scale=0.16]{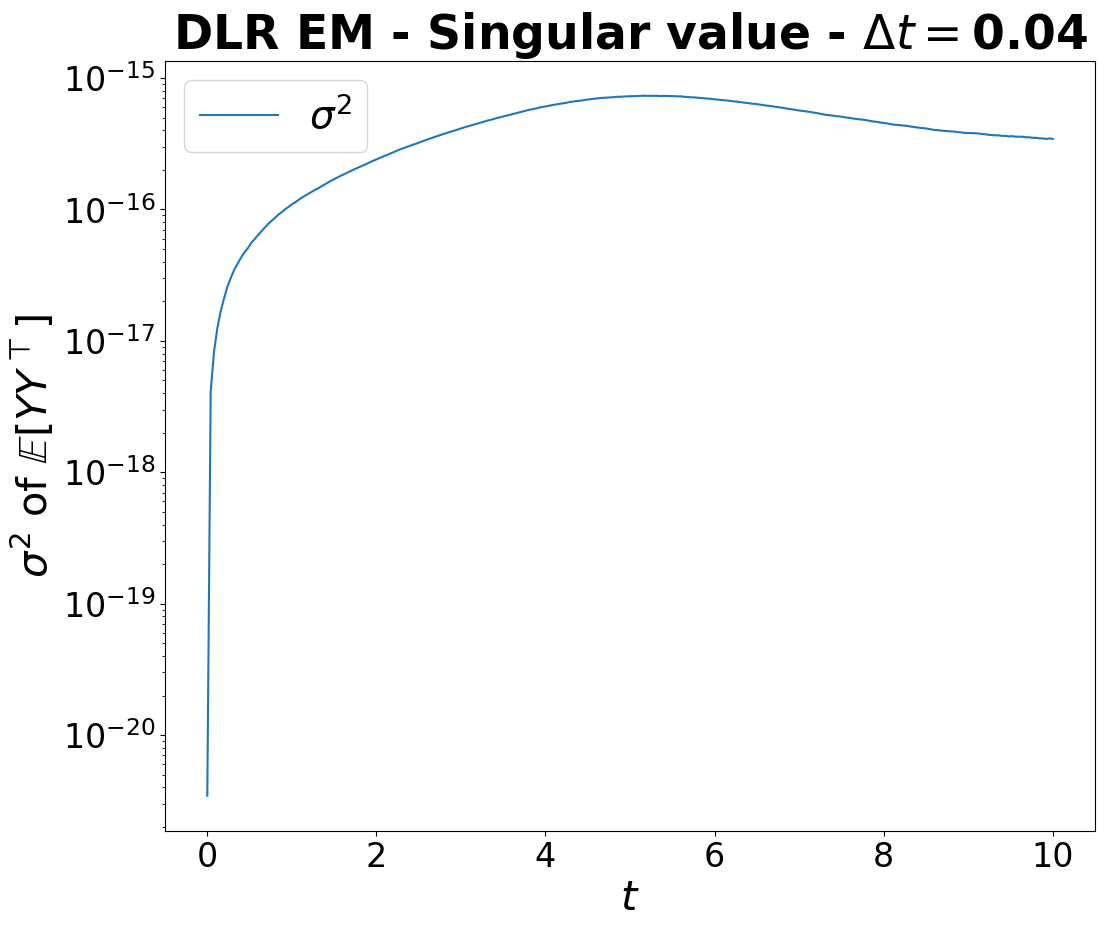}
	\includegraphics[scale=0.16]{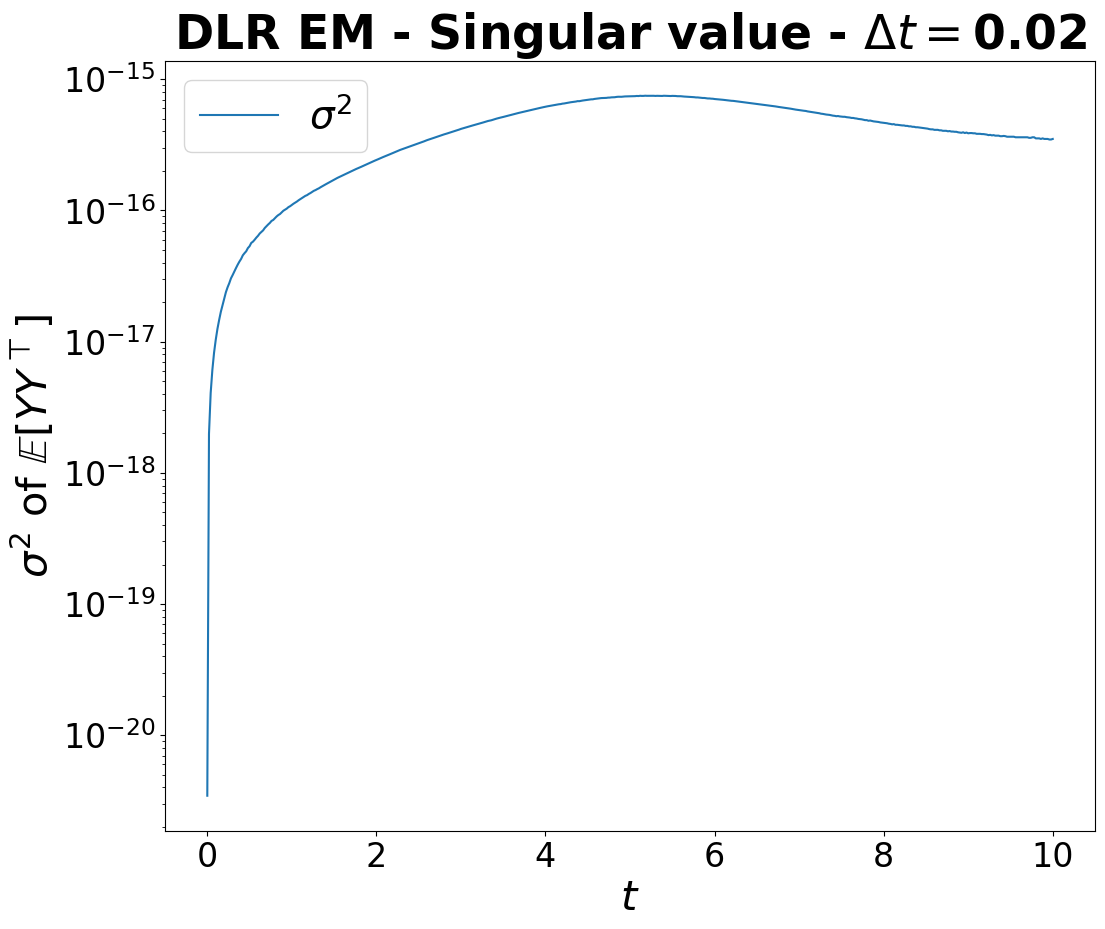}\\
	\includegraphics[scale=0.16]{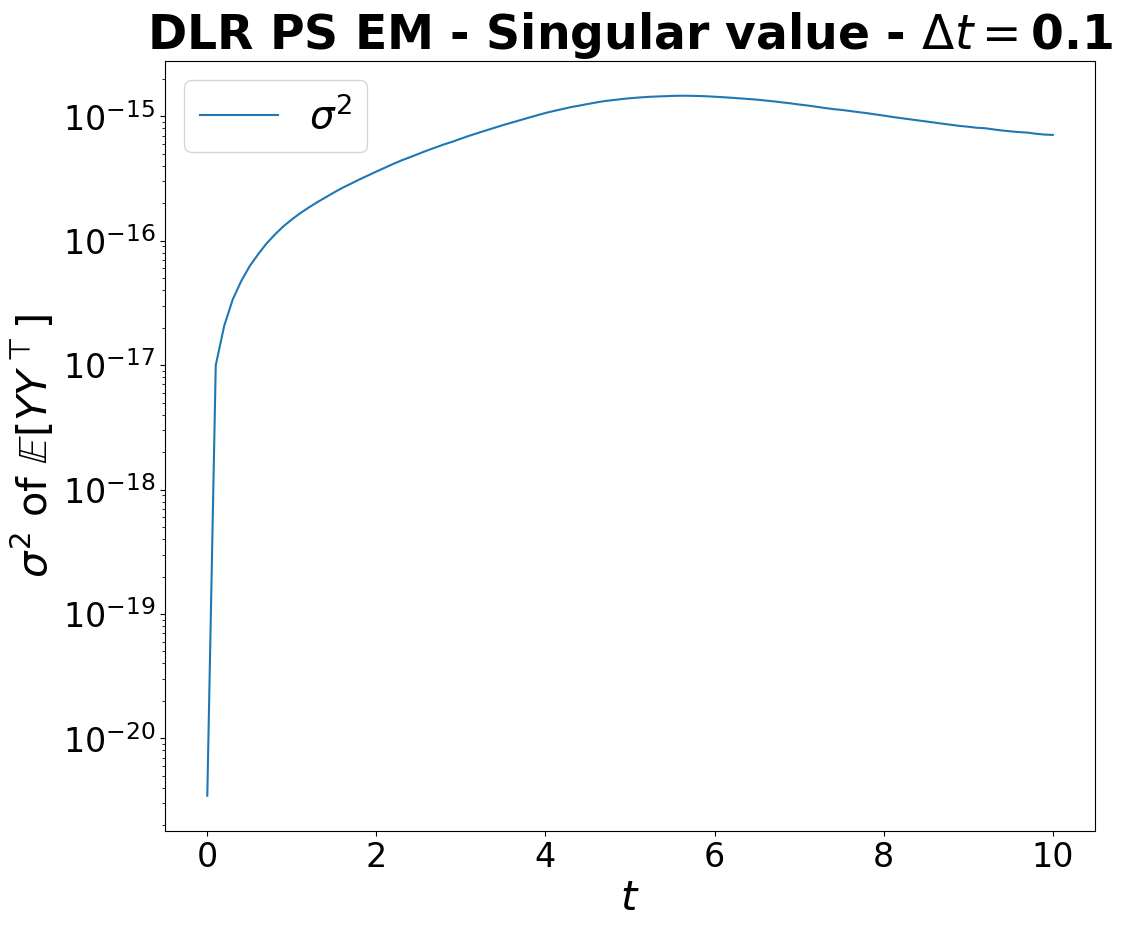}
	\includegraphics[scale=0.16]{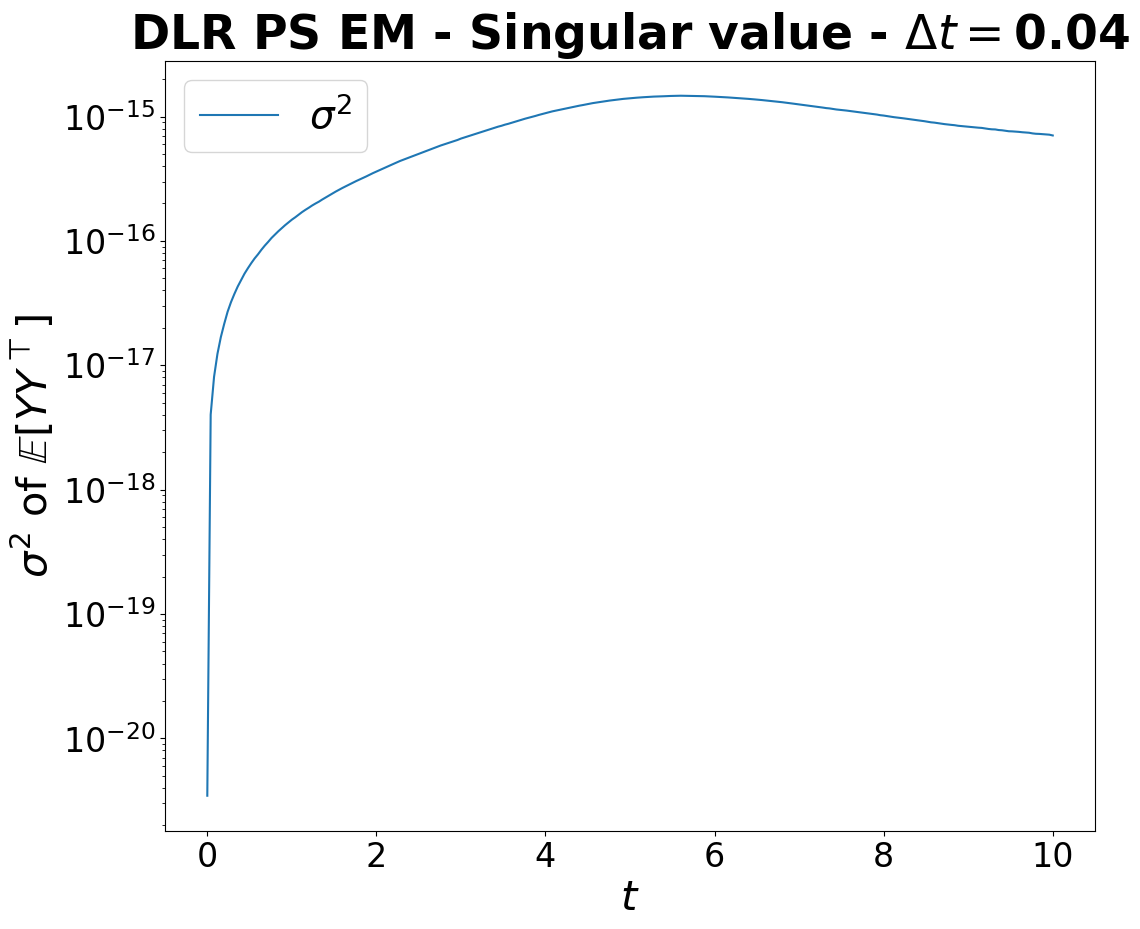}
	\includegraphics[scale=0.16]{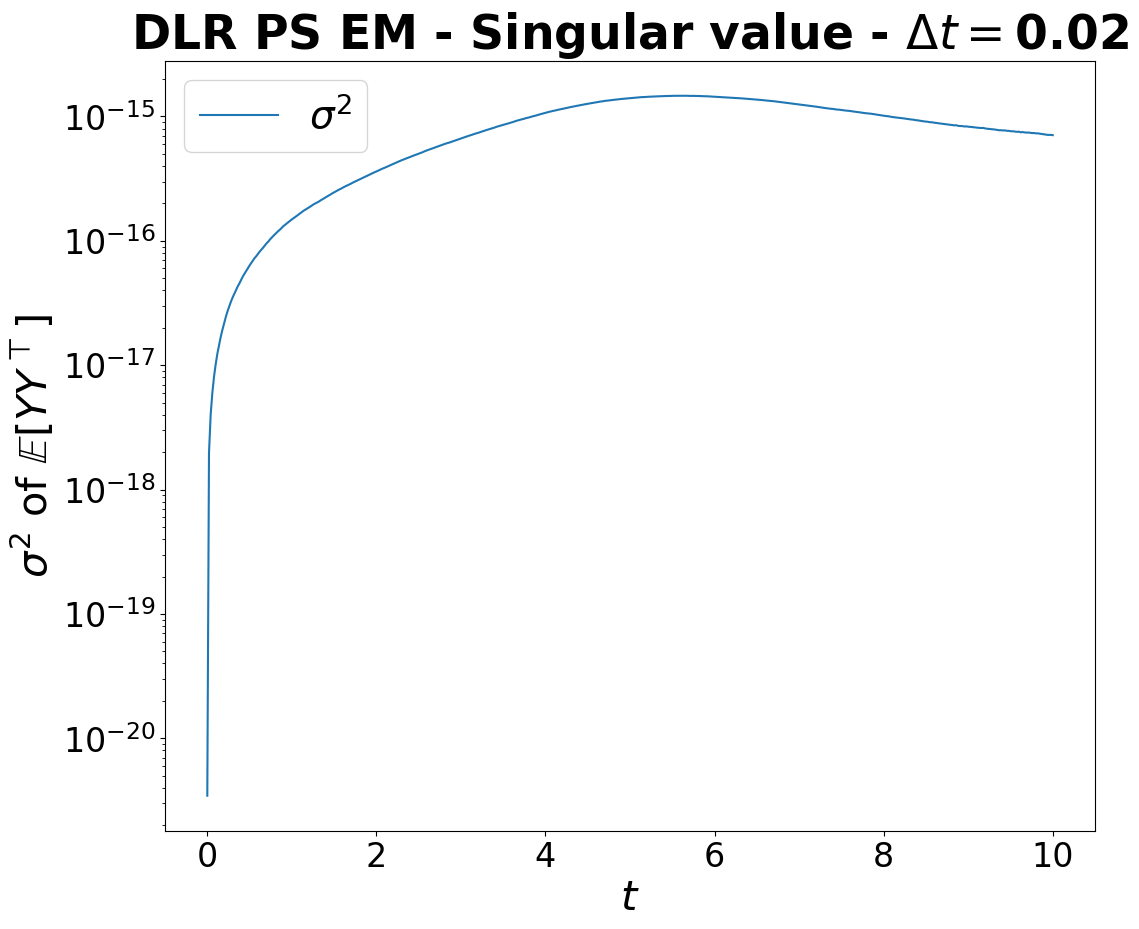}\\
	\includegraphics[scale=0.16]{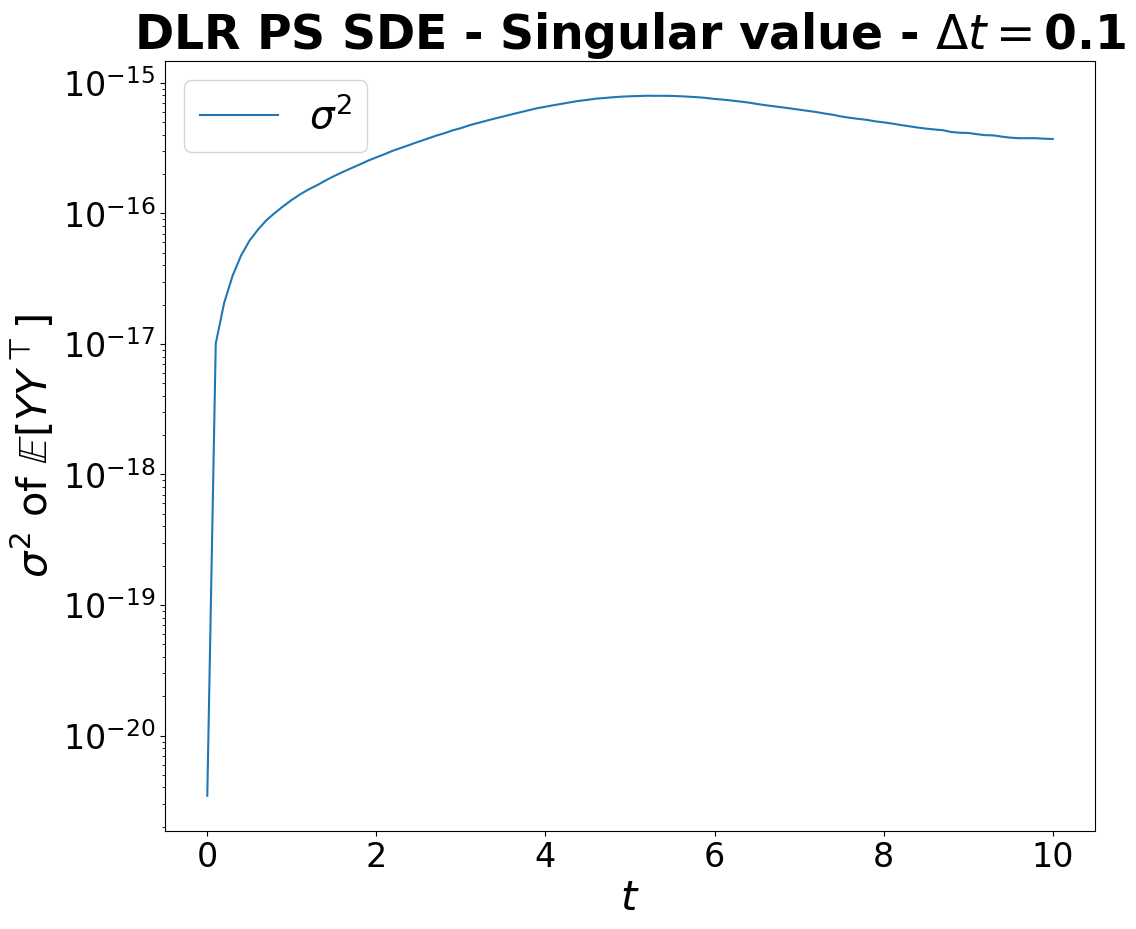}
	\includegraphics[scale=0.16]{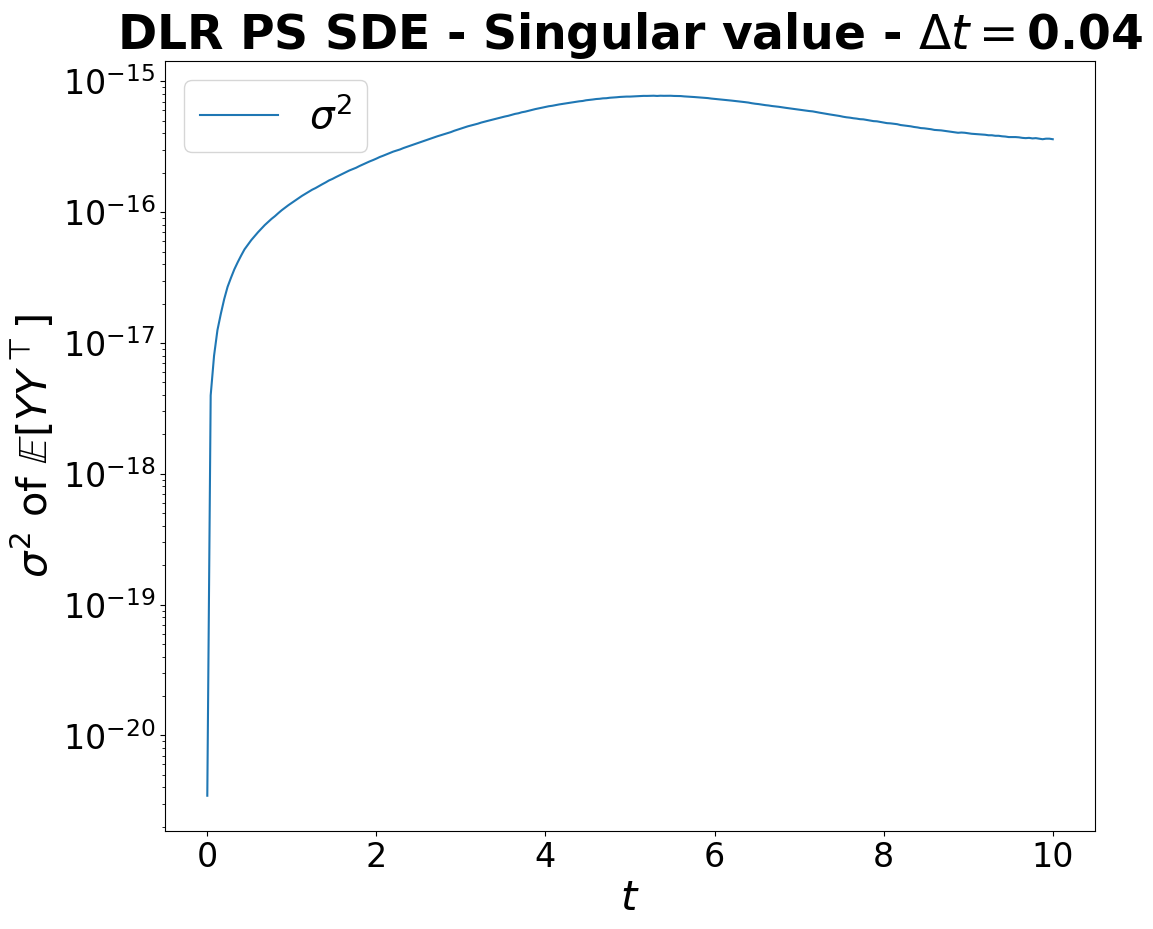}
	\includegraphics[scale=0.16]{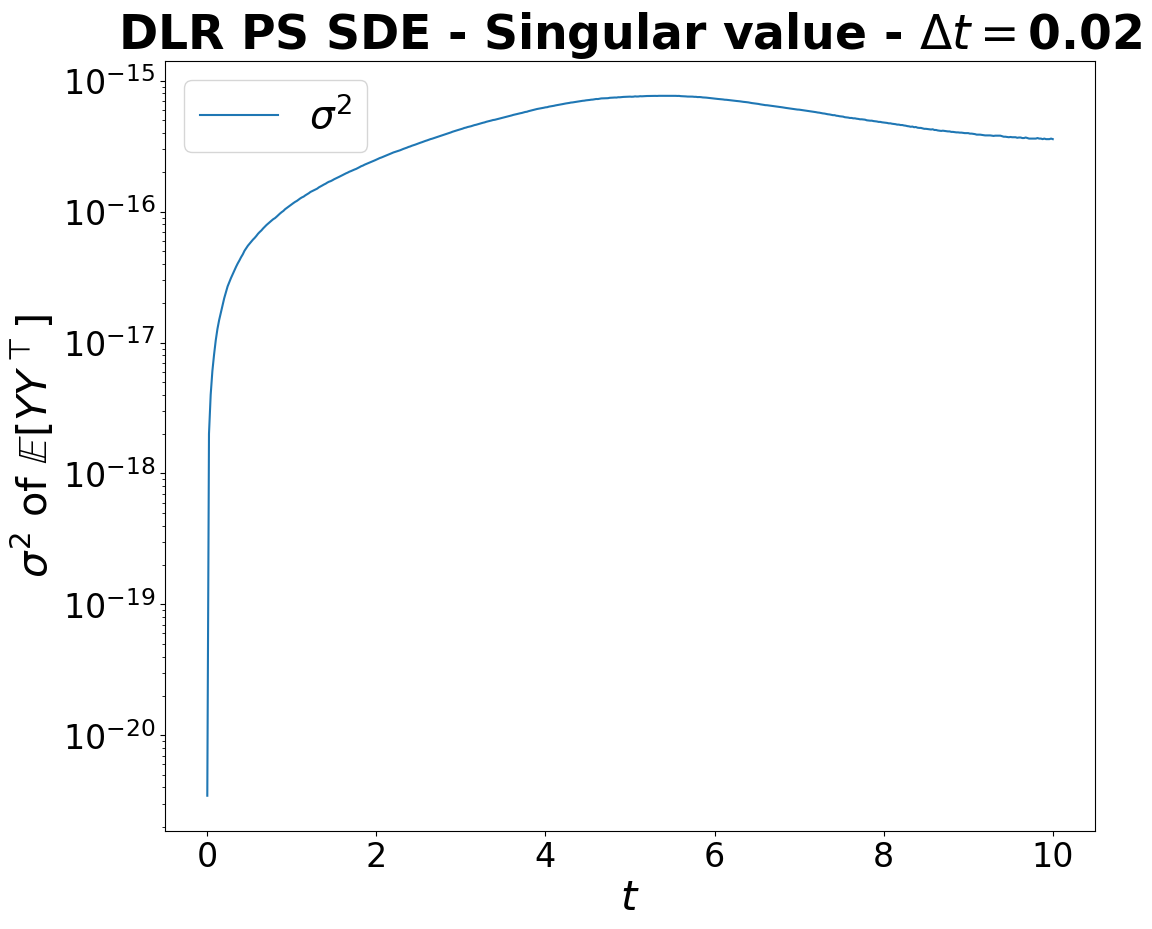}\\
	\caption{For $\Delta t=0.1,0.05,0.02$, $k=2$, $M=10000$, $\sigma_{B}=10^{-16}$. (Top) Smallest singular values of $\mathbb{E}[Y_nY_n^{\top}]$ for the DLR Euler-Maruyama, (Middle) for the DLR Projector Splitting for EM, (Bottom) for the DLR Projector Splitting for SDEs for Problem \eqref{ex: toy example 2}.}
	\label{fig:toy example sig values 2}
\end{figure}

However, one can see from the error plots (Figure \ref{fig:toy example error 2}) that the behavior of the smallest singular values of the stochastic basis does not affect the convergence of any of the algorithms. Moreover, one can notice a convergence in time of order $O(\Delta t)$; this numerical trend is in compliance with usual accuracy of the standard Euler-Maruyama method for additive noise \cite{milstein2004stochastic}.
\begin{figure}[!h]
	\centering
	\includegraphics[scale=0.29]{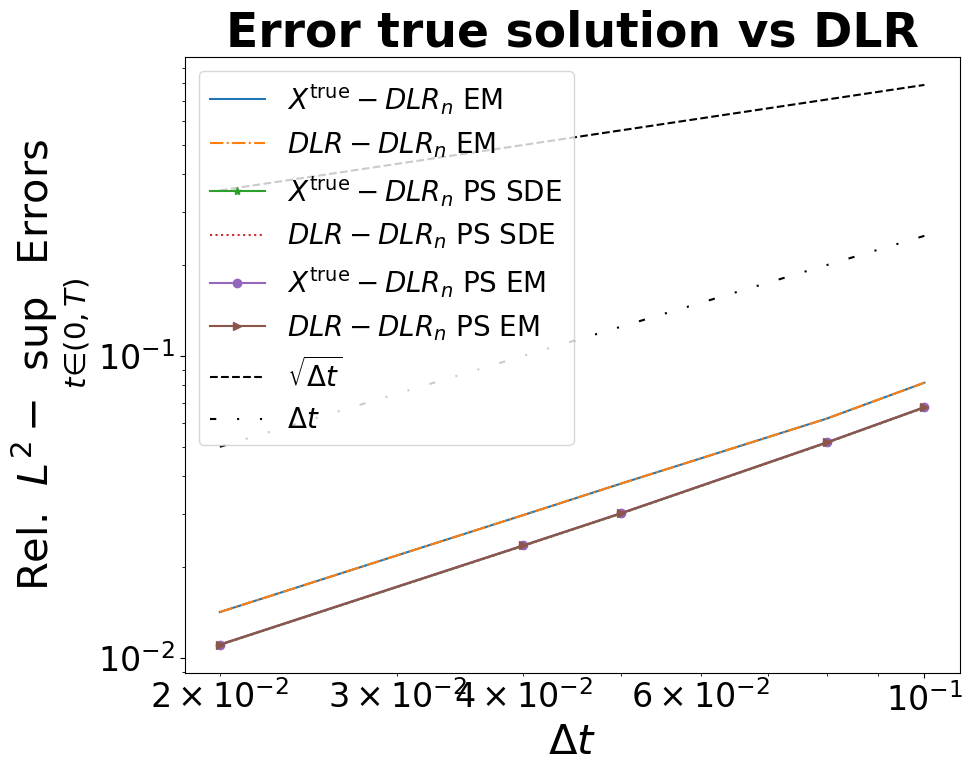}
	\caption{Relative errors for the DLR Euler-Maruyama, for the DLR Projector Splitting for SDEs, and the DLR Projector Splitting for EM, with respect to the true solution $X^{\mathrm{true}}$ and the continuous DLRA of rank $k=2$ for Problem \eqref{ex: toy example 2}. Notice that the DLR PS EM and DLR PS SDE present quite identically accuracy and their error line is overlapped. $M=10000$ paths are used in all methods to approximate expectations.}
	\label{fig:toy example error 2}
\end{figure}

In the last example of this section, we consider a slight modification of the drift in  \eqref{ex: toy example 2} by introducing some nonlinearity. In this case, the evolution of the deterministic basis $U$ of the DLR Euler-Maruyama is coupled with that of the stochastic basis $Y$, leading to possible instabilities in case of nearly singular Gramian $C_{Y_n}$. We consider the following system
\begin{equation}\label{ex: toy example 3}
	\begin{aligned}
		\hspace{-0.3cm}
		\begin{pmatrix}
			\mathrm{d}X^{\mathrm{true}}_1 (t) \\
			\mathrm{d}X^{\mathrm{true}}_2 (t) \\
			\mathrm{d}X^{\mathrm{true}}_3 (t)\\
		\end{pmatrix}
		=&
		\begin{pmatrix}
			-4 \cdot \sin{(\pi  X^{\mathrm{true}}_1(t))} &0.1 \cdot X^{\mathrm{true}}_2(t)&0.001 \cdot X^{\mathrm{true}}_3(t)\\
			-4 \cdot \sin{(\pi X^{\mathrm{true}}_1(t))} &0.1 \cdot X^{\mathrm{true}}_2(t)&0.001 \cdot X^{\mathrm{true}}_3(t)\\
			-4 \cdot \sin{(\pi X^{\mathrm{true}}_1(t))} &-4 \cdot X^{\mathrm{true}}_2(t) &-4 \cdot X^{\mathrm{true}}_3(t)\\
		\end{pmatrix}
		\mathrm{d}t\\
		&+ \sqrt{\sigma_B}
		\begin{bmatrix}
			1 & 0 & 0  \\
			0 & 	1& 0  \\
			0 & 0 & 0\\
		\end{bmatrix} \cdot
		\begin{pmatrix}
			\mathrm{d}W_1(t) \\
			\mathrm{d}W_2(t) \\
			\mathrm{d}W_3(t) \\
		\end{pmatrix}
	\end{aligned}
\end{equation}
for the exact same initial condition and parameters as in \eqref{ex: toy example 2}. In Figure \ref{fig:toy example sig values 3} we observe that the smallest singular value $\sigma^2$ remains close to the zero machine for all the three algorithms, for various values of the time step-sizes. For this setting, we expect the convergence of the Euler-Maruyama to be badly affected by the inverse of $\sigma^2$ (see Theorem \ref{thm: convergence of DLR Euler-Maruyama}), unlike the DLR PS SDE algorithm (see Theorems~\ref{thm: convergence of Stoch DLR Proj}). 
\begin{figure}[!h]
	\centering
	\includegraphics[scale=0.16]{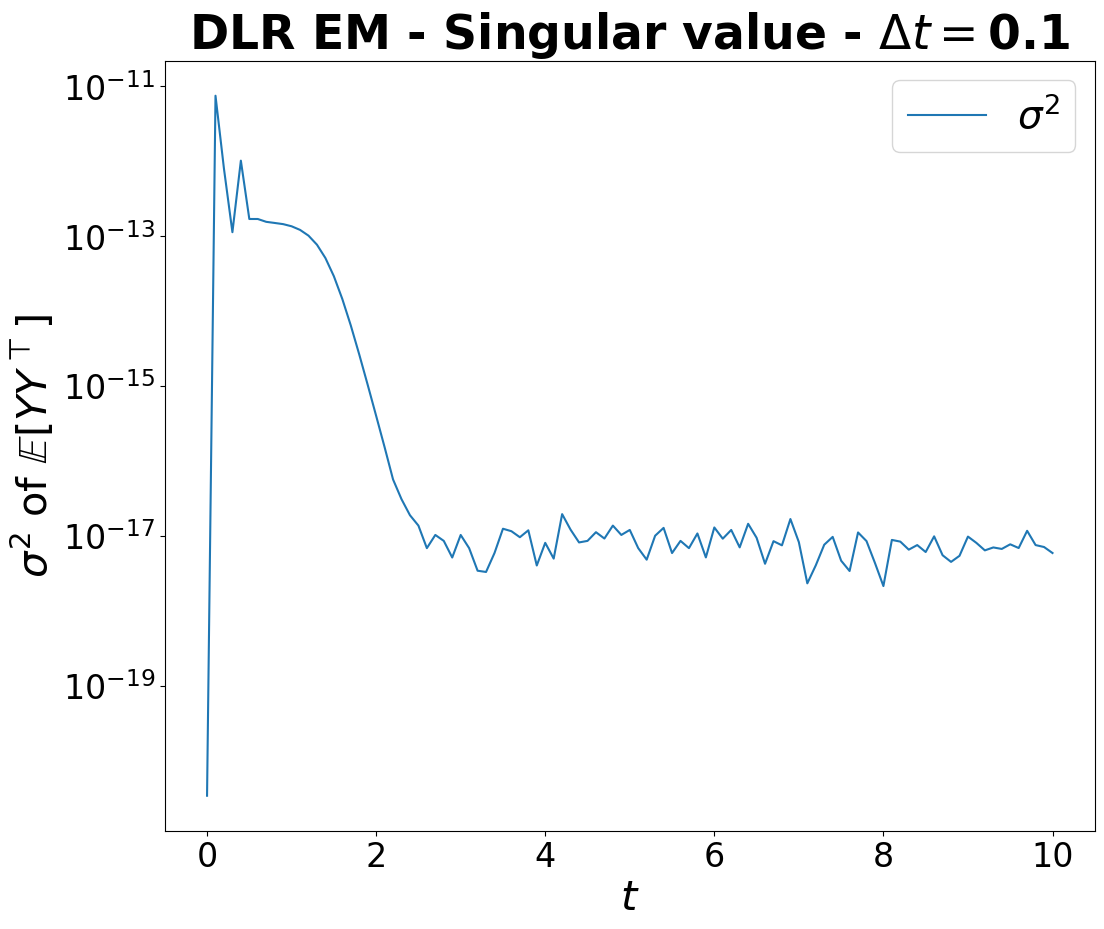}
	\includegraphics[scale=0.16]{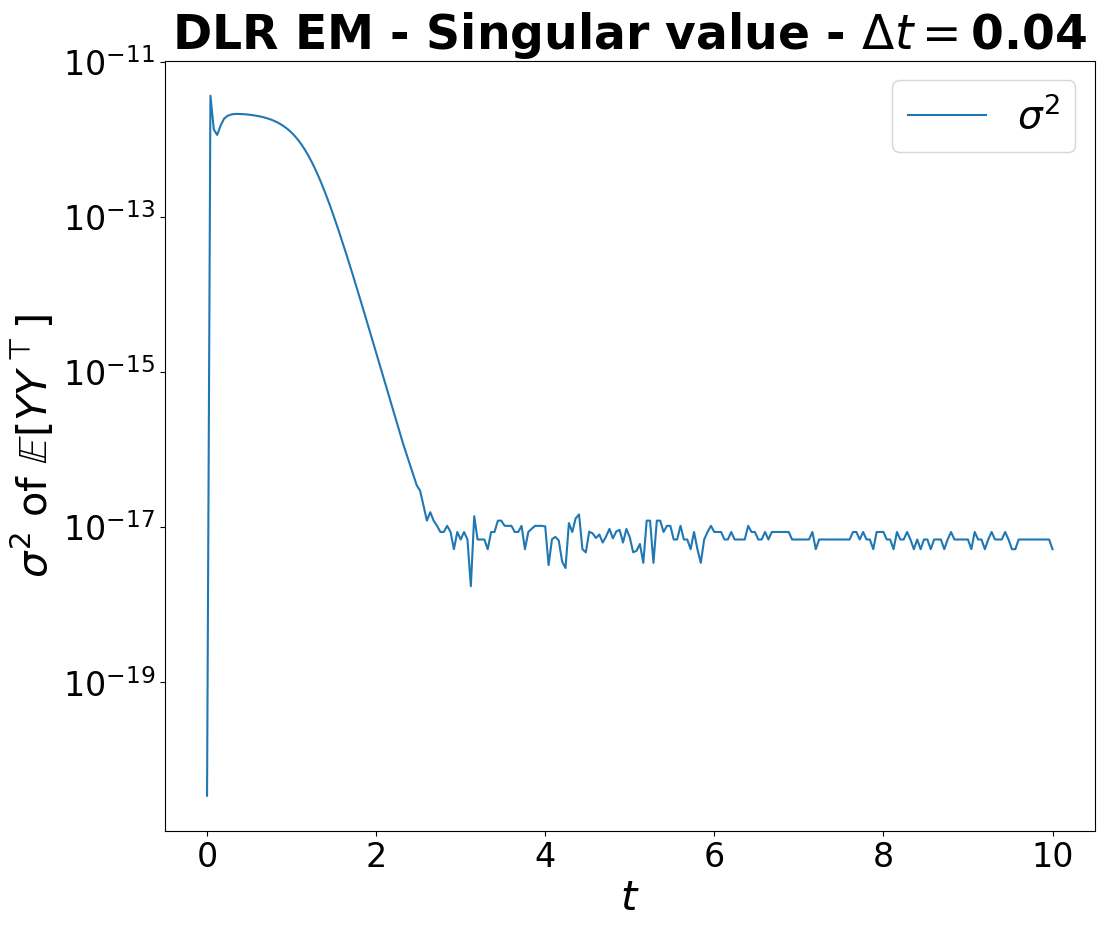}
	\includegraphics[scale=0.16]{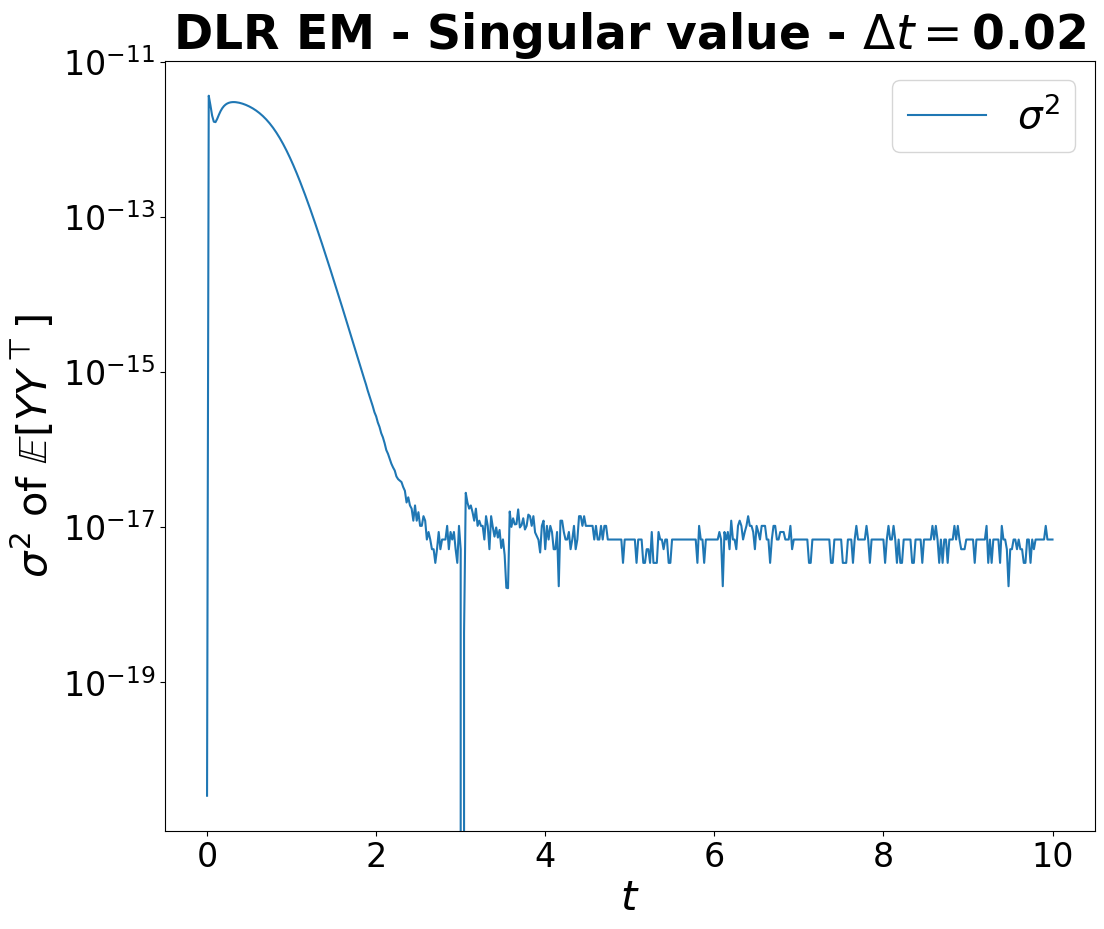}\\
	\includegraphics[scale=0.16]{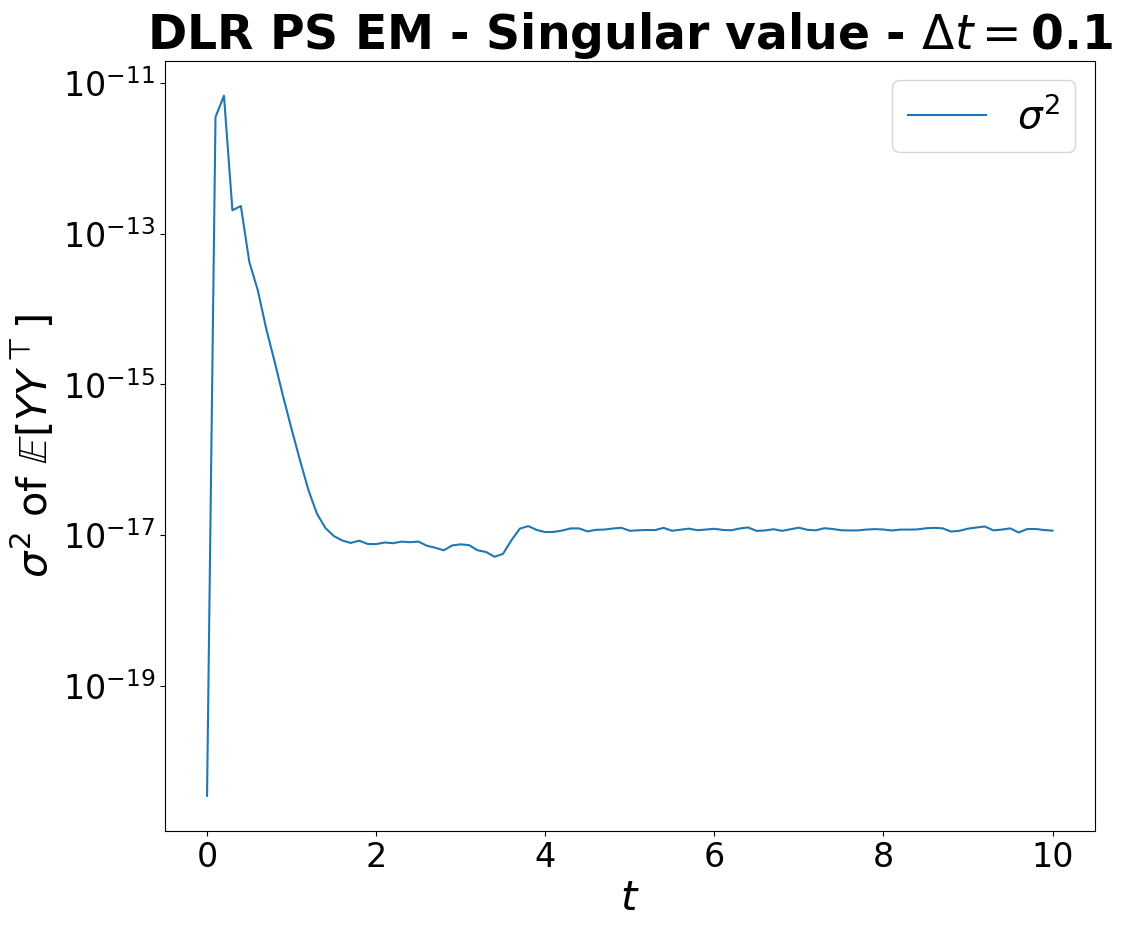}
	\includegraphics[scale=0.16]{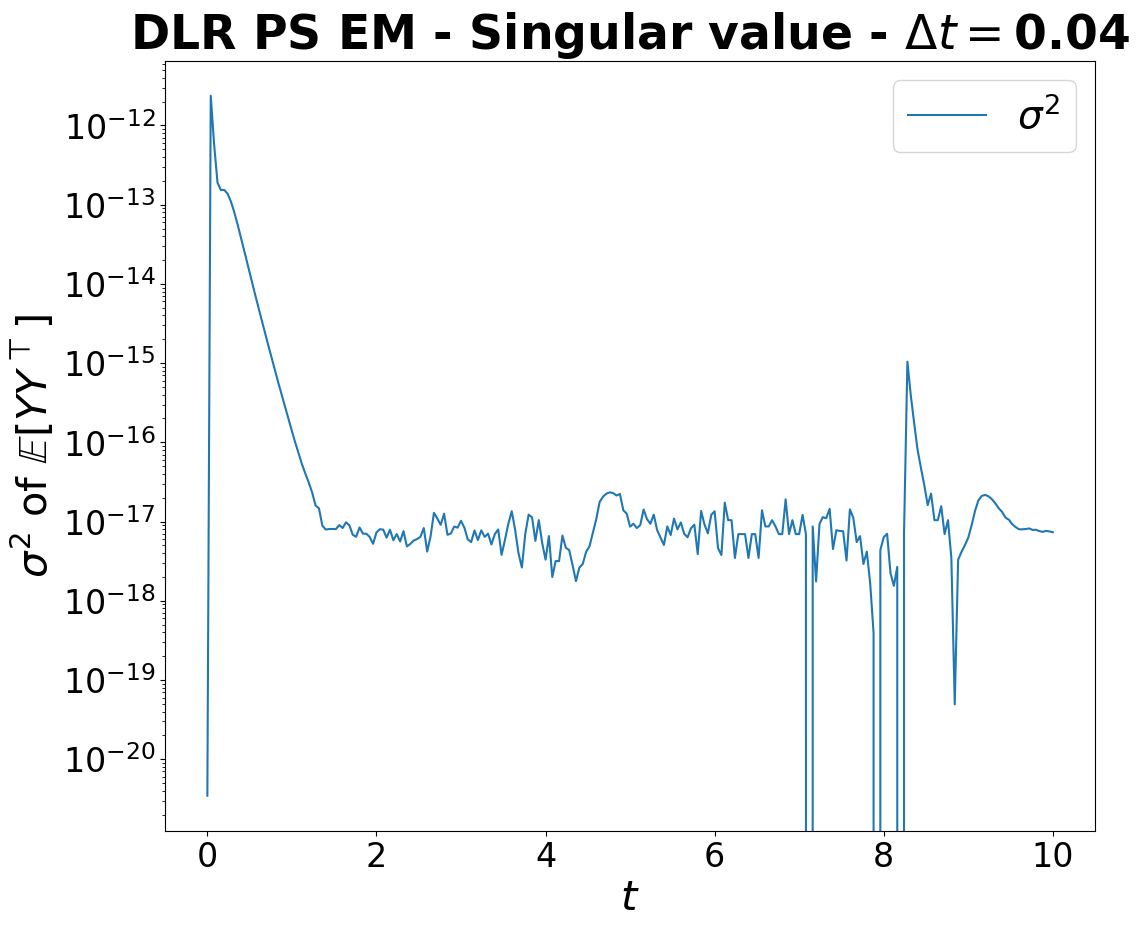}
	\includegraphics[scale=0.16]{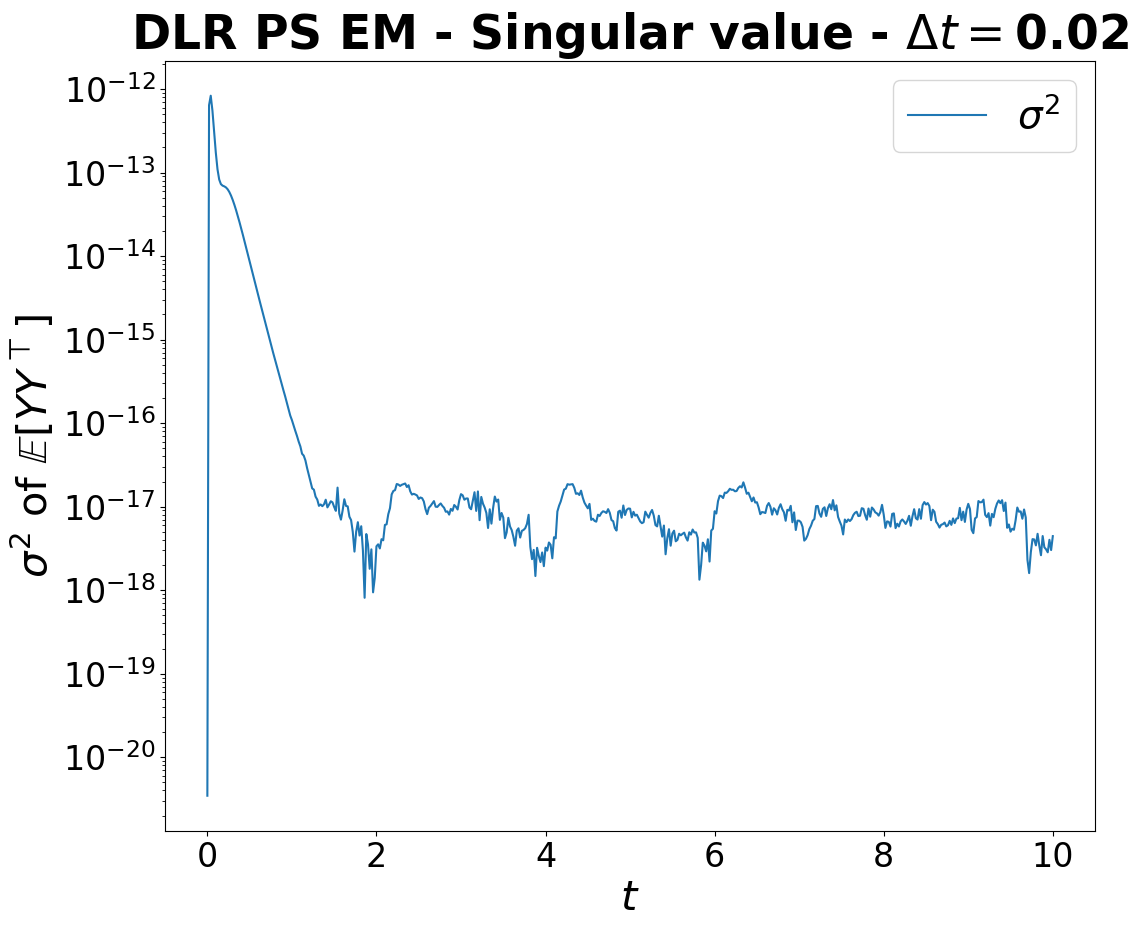}\\
	\includegraphics[scale=0.16]{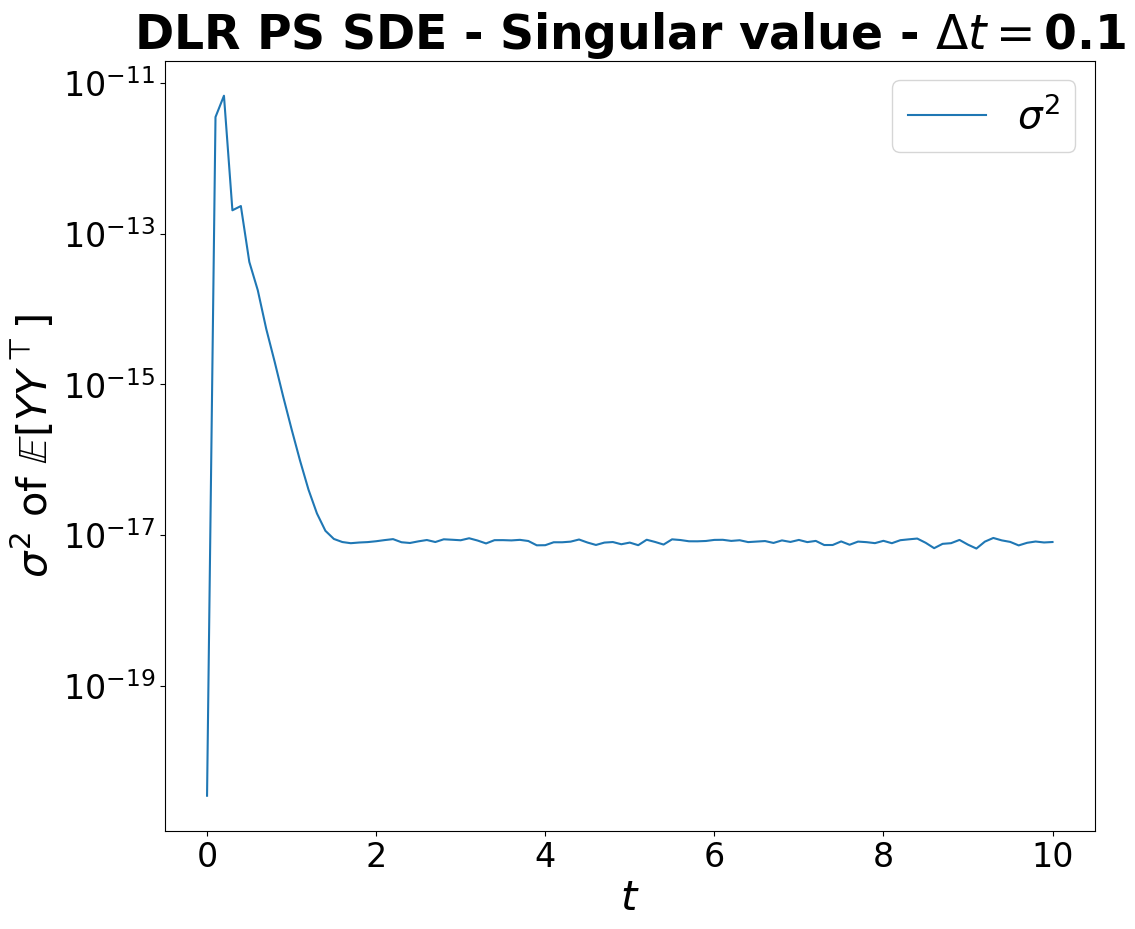}
	\includegraphics[scale=0.16]{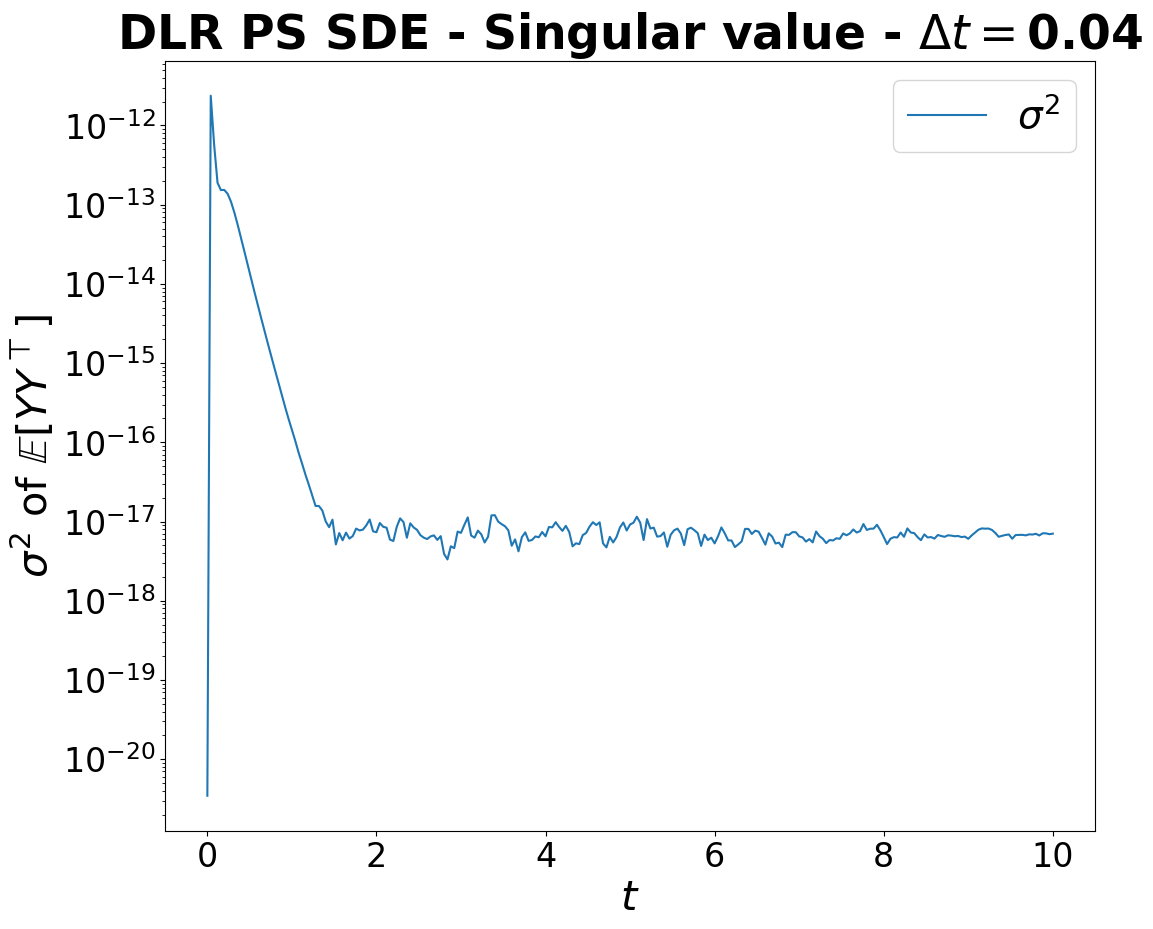}
	\includegraphics[scale=0.16]{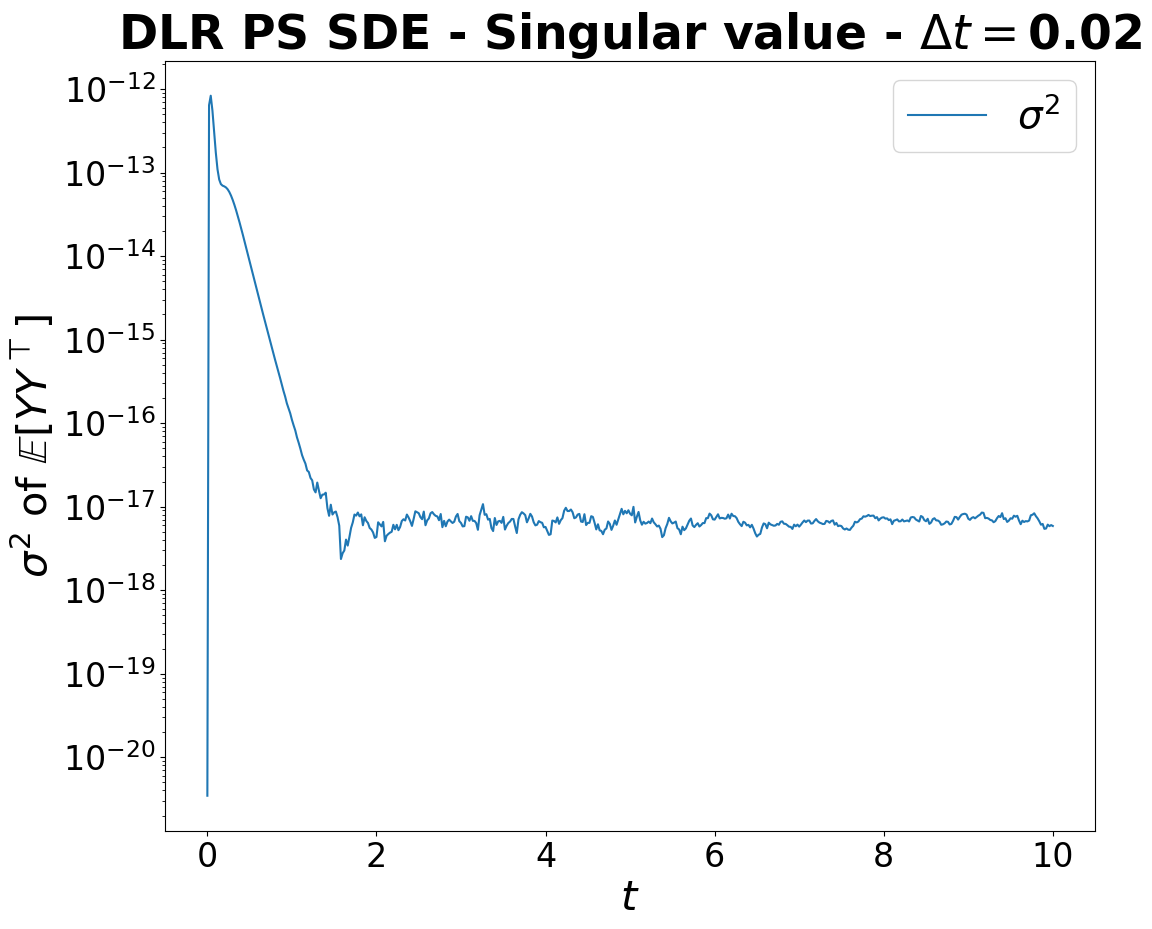}\\
	\caption{For $\Delta t=0.1,0.04,0.02$, $k=2$, $M=10000$, $\sigma_{B}=10^{-16}$. For Problem \eqref{ex: toy example 3} (Top) smallest singular values of $\mathbb{E}[Y_nY_n^{\top}]$ for the DLR Euler-Maruyama, (Middle) for the DLR Projector Splitting for SDEs, (Bottom) for the DLR Projector Splitting for EM.}
	\label{fig:toy example sig values 3}
\end{figure}
Figure \ref{fig:toy example error 3} provides numerical evidence for this behavior. Indeed, the accuracy of the DLR EM is not acceptable, as it is showing convergence over time but with a huge constant. On the other hand, the accuracy of DLR Projector Splitting for SDEs and for EM is unaffected by the trend of the smallest singular value $\sigma^{2}$.
\begin{figure}[!h]
	\centering
	\includegraphics[scale=0.29]{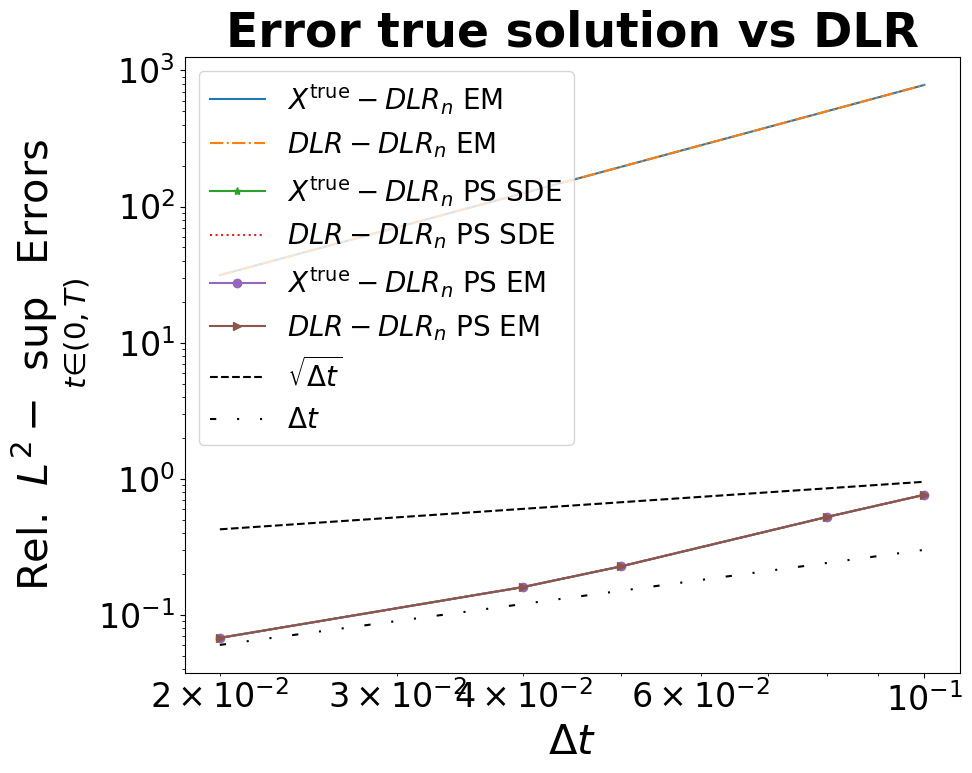}
	\caption{Relative errors for the DLR Euler-Maruyama, DLR Projector Splitting for SDEs, and the DLR Projector Splitting for EM, with respect to the true solution $X^{\mathrm{true}}$ and the continuous DLRA of rank $k=2$ for equation \eqref{ex: toy example 3}. $M=10000$ paths are used in all methods to approximate expectations. Again, the DLR PS EM and DLR PS SDE present quite identically accuracy and their error line is overlapped}.
	\label{fig:toy example error 3}
\end{figure}

\subsection{Mean-square stability}
In this section, we verify that the conditions found in Sections \ref{sec: Stoc Proj Stab} and \ref{sec: KNV stab} are sufficient for the mean-square stability of the studied system. Furthermore, we see that for linear systems, mean-square stability is independent of the smallest singular value of the initial condition and of the ill-conditioning of the Gramian of the stochastic basis.
We consider a process $X^{\mathrm{true}}(t,\omega)\in \mathbb{R}^{10}$ for all $\omega \in \Omega$ and $t \in [0,+\infty)$, whose $i$-th component $X^{\mathrm{true}}_i(t)$ satisfies the SDE
\begin{equation}\label{eq: sde stab}
	\mathrm{d}X^{\mathrm{true}}_i(t) = a_{ii}(t) X^{\mathrm{true}}_i(t)\mathrm{d}t + B_{i}X^{\mathrm{true}}_i(t) \mathrm{d}W^{i}_t,
\end{equation}
where  $A(t) = (a_{ij}(t))_{ij} \in \mathbb{R}^{d \times d}$, with $a_{ii}(t) =-22$ if $i=1,2,3$, else  $a_{ii}(t) =-22+\sin(3\pi t)$ for $i=4,\dots,10$, and $a_{ij}(t)=0$ for $i \neq j$, whereas $B_{i} = (b_{ij})_{ij}$ with $b_{11} =b_{22} =0.1 \sqrt{5}$, $b_{ii} = 10^{-16}$ for $i>2$, otherwise $b_{ij}=0$. $\{W^{i}_t\}_i$ are independent Brownian motions, also independent of the initial condition $X^{\mathrm{true}}(0)$. The initial datum is $X^{\mathrm{true}}_i(0) = 1 + 0.005 \sum_{i=1}^{5} \sin(\pi \frac{i}{10})\mathcal{N}_i(0,1)$, where $(\mathcal{N}_i(0,1))_{i=1,\dots,5}$ are independent standard normal random variables. Notice that the drift is linear, hence in this case the equation \eqref{eq: DLR EM 1v} does simplify to \eqref{eq: DLR EM 3v} and, hence, does not depend on the Gramian $C_{Y_n}$. We consider an approximation of rank $k=5$. 
Then, the stability condition on $\Delta t$ for the (standard) Euler-Maruyama for SDEs, the DLR Projector Splitting for EM and the DLR Projector Splitting for SDEs applied to \eqref{eq: sde stab} is 
\begin{equation}\label{eq: stab cond}
	\begin{aligned}
	\Delta t &\leq \frac{-\lambda_{\max}(A(t_n) + A^{\top}(t_n) ) - \sum_{k=1}^{m}|B_k(t_n)|^2 }{\sigma_{\max}(A(t_n))^2}  \approx 0.0907
	\end{aligned}
\end{equation}
We initialize all the DLR algorithms as described in Section \ref{sec: basic sde}. 
Figure \ref{fig:mean square norm 1} shows the time evolution of the second moment of the solution for $\Delta t = 0.0907$, $\Delta t= 0.0909$, $\Delta t=0.0911$. One notice that the second moment remains numerically stable as expected, when the chosen $\Delta t$ satisfies \eqref{eq: stab cond}. On the other hand, it is illustrated that the three algorithms are not stable when \eqref{eq: stab cond} is violated. Furthermore, Figure \ref{fig:mean square norm 1} confirm us that \eqref{eq: stab cond} is pretty sharp, as for $\Delta t=0.0911$, whose value is higher than \eqref{eq: stab cond}, all algorithms do not converge to the null solution.
\begin{figure}[!h]
\includegraphics[scale=0.135]{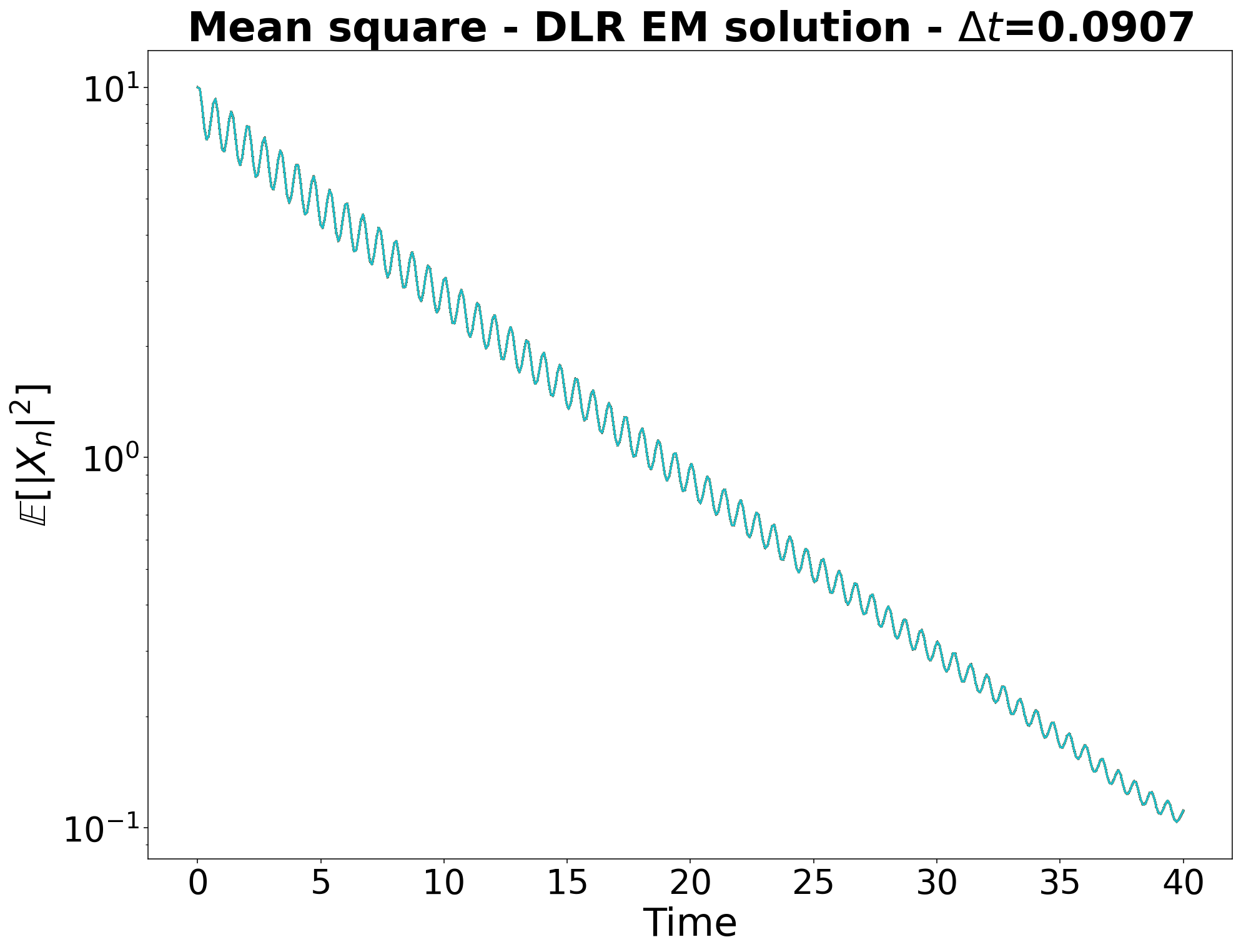}
	\includegraphics[scale=0.135]{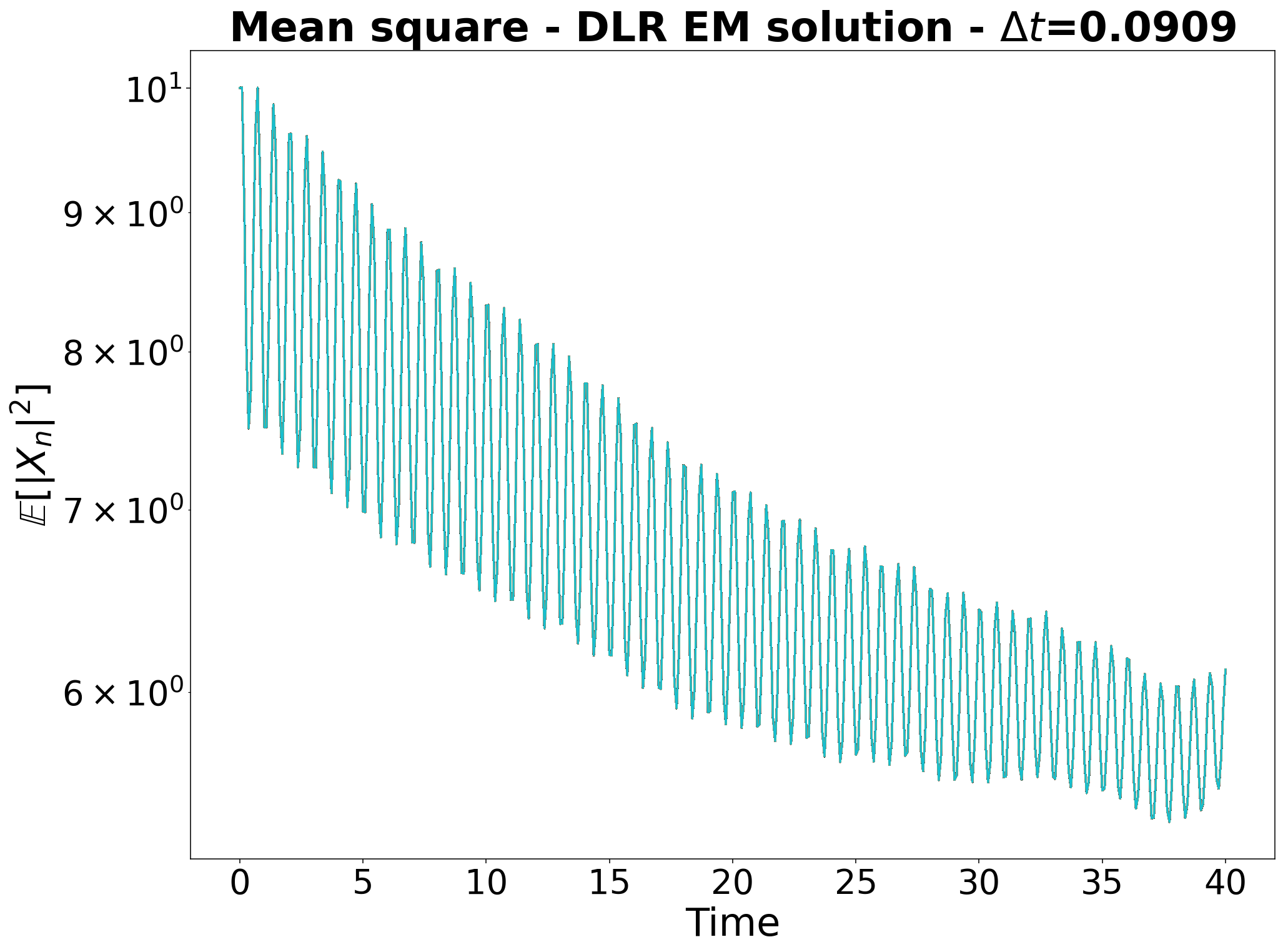}
		\includegraphics[scale=0.135]{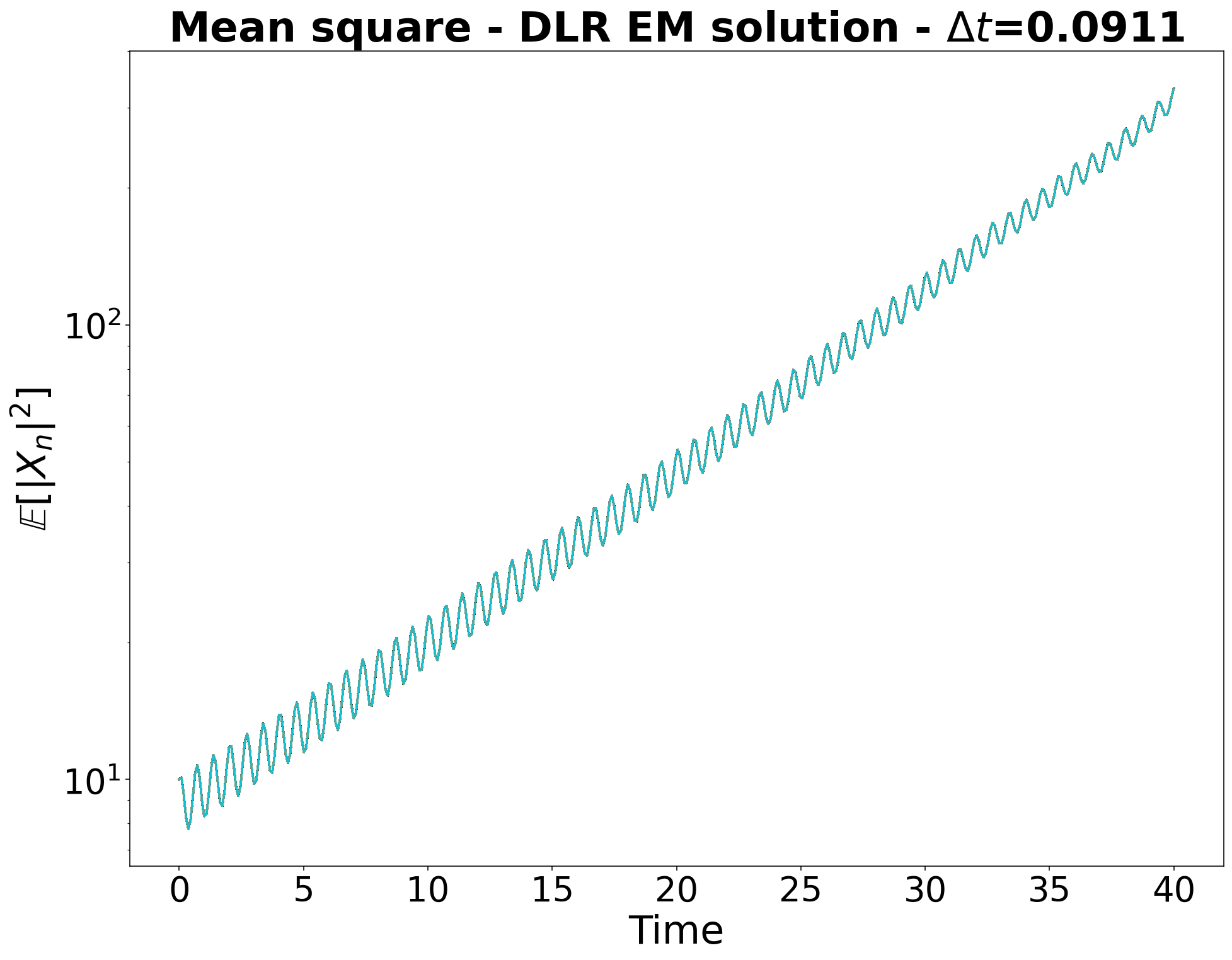}\\
\includegraphics[scale=0.135]{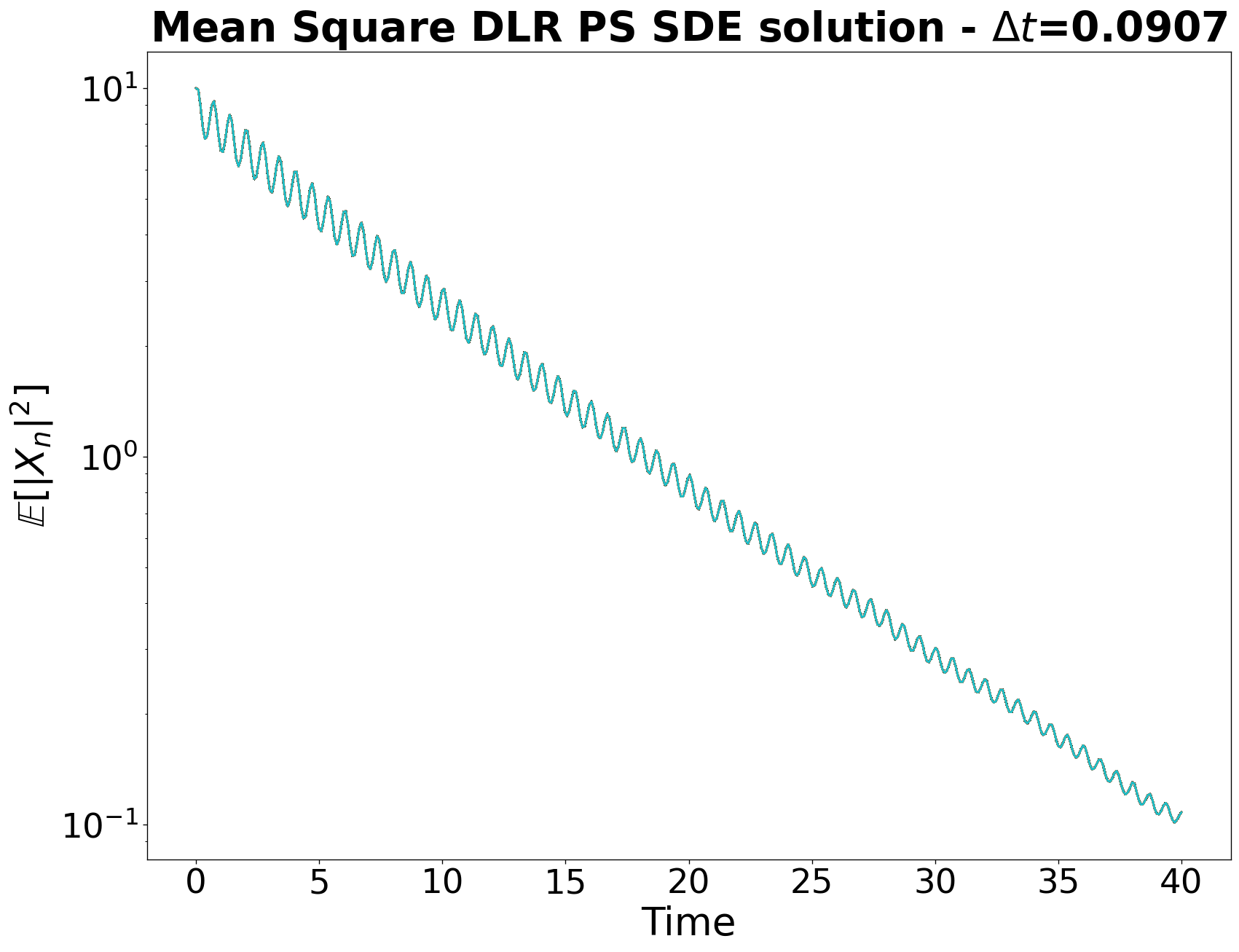}
\includegraphics[scale=0.135]{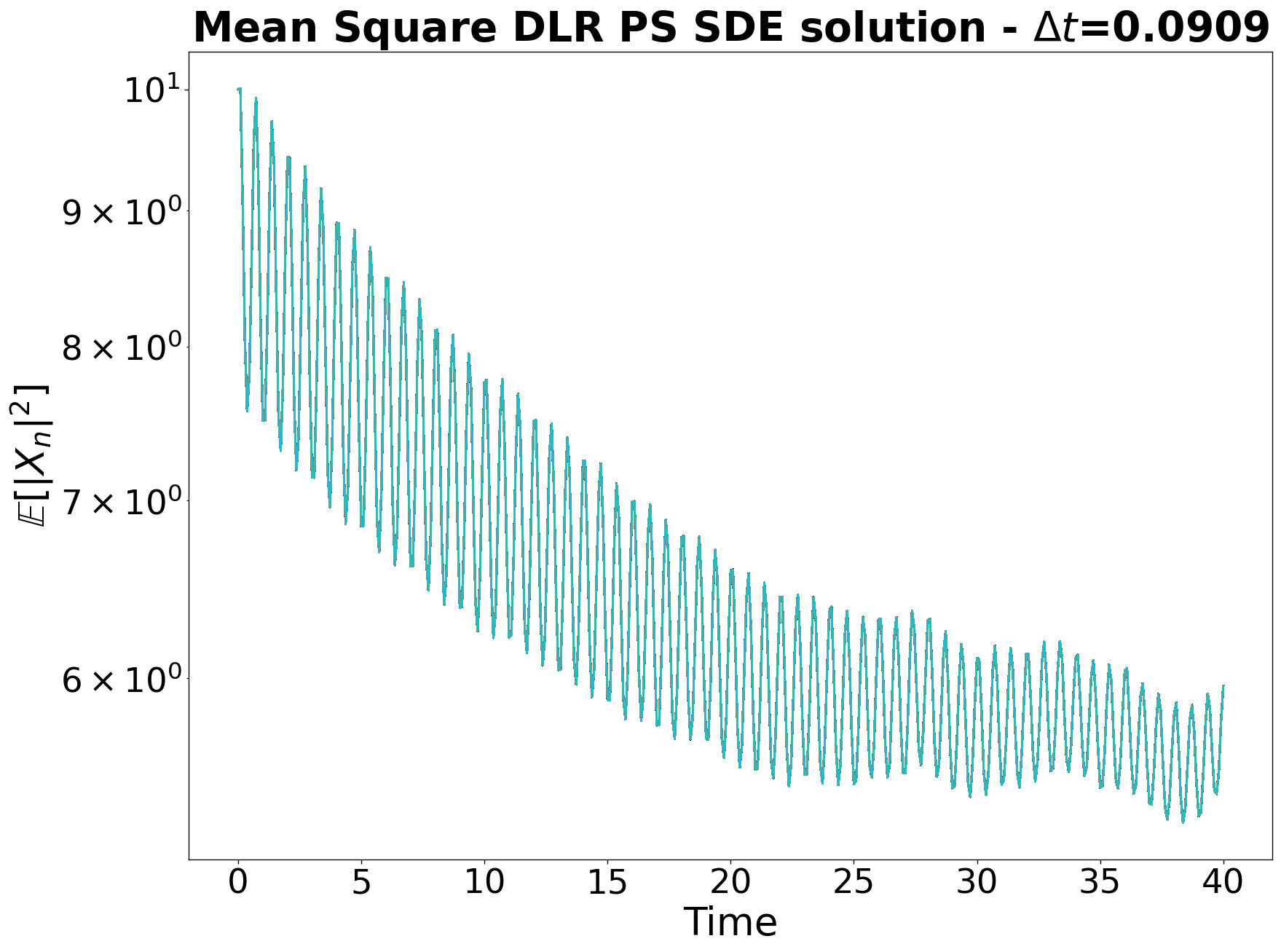}
\includegraphics[scale=0.135]{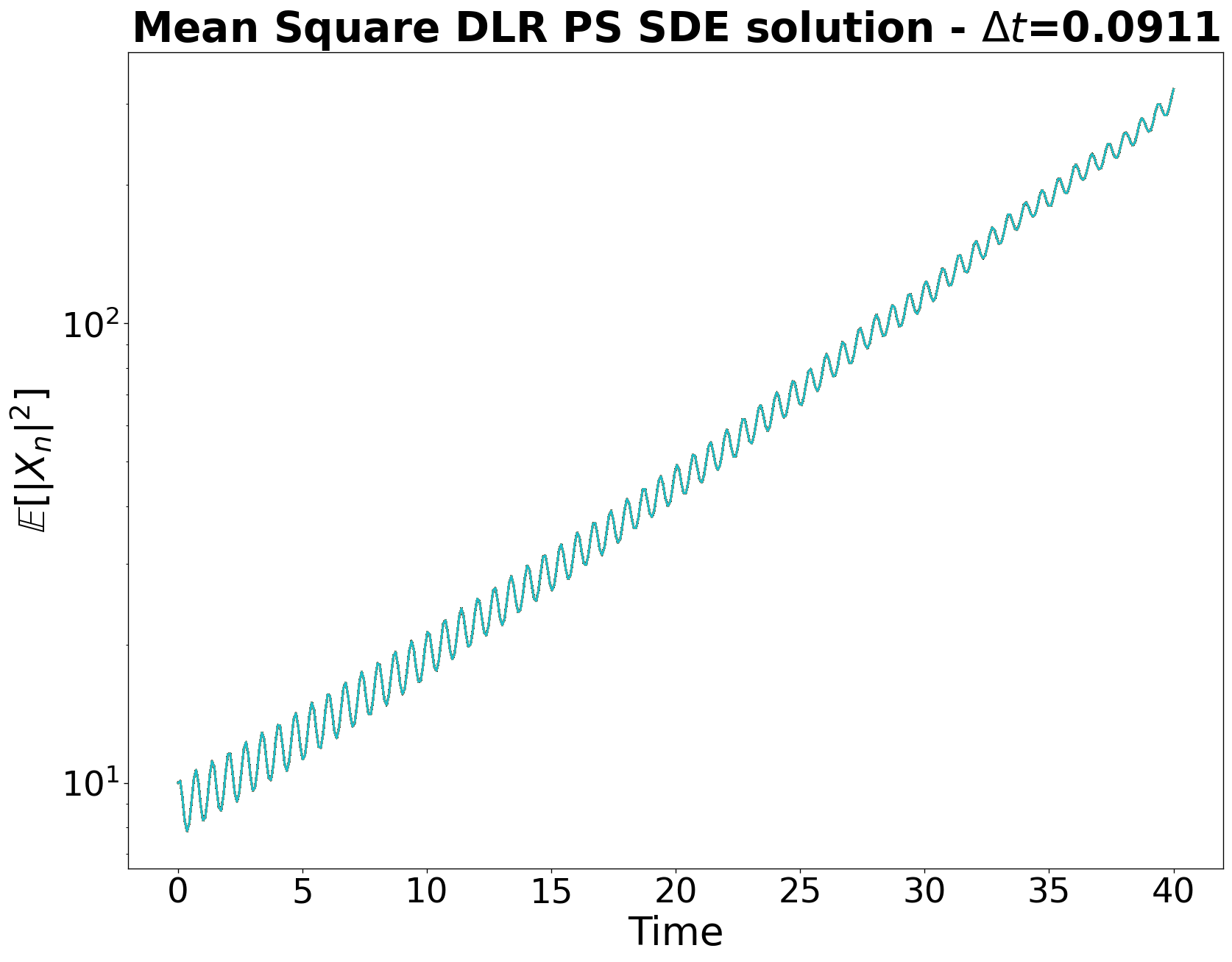}\\
\includegraphics[scale=0.137]{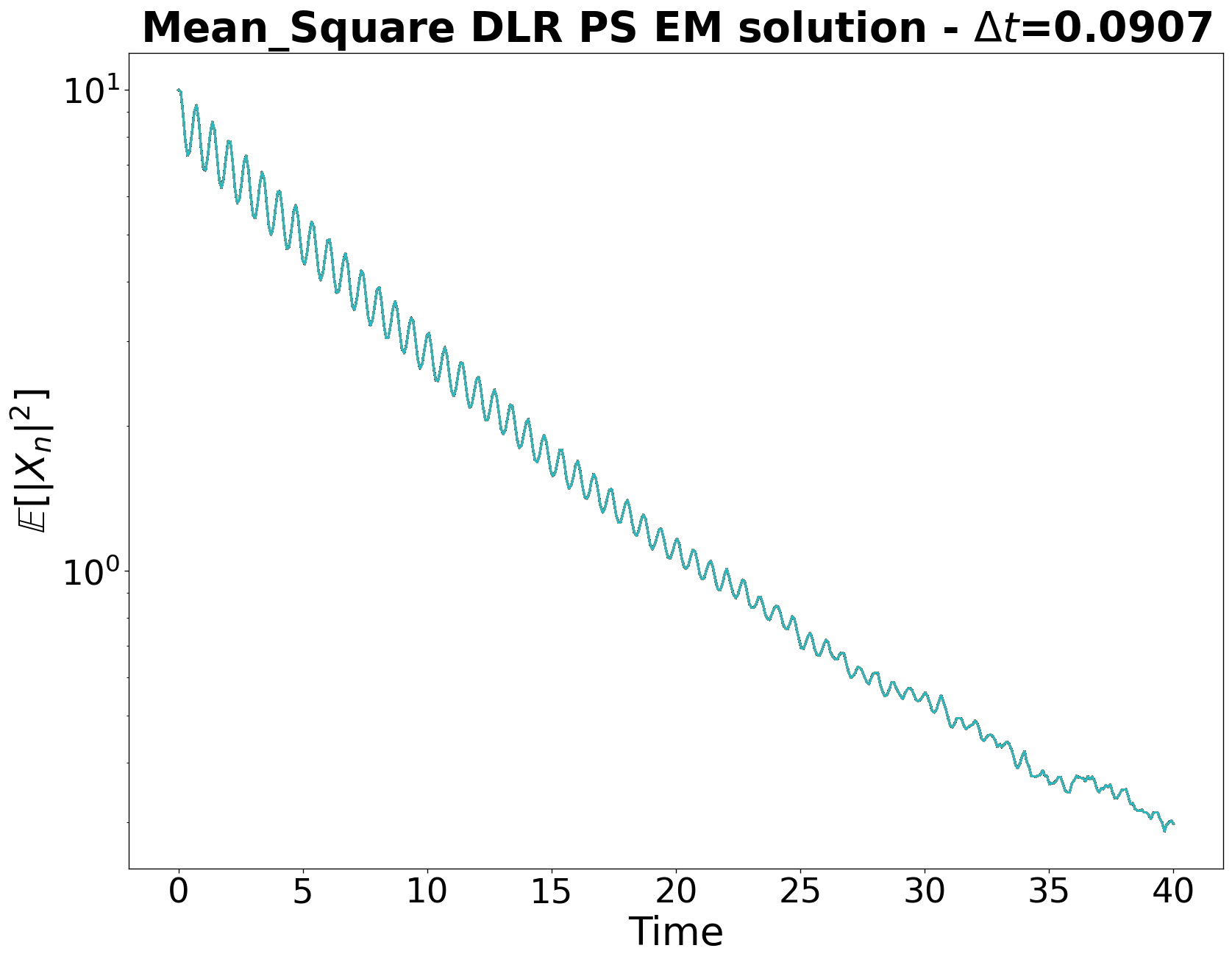}
	\includegraphics[scale=0.137]{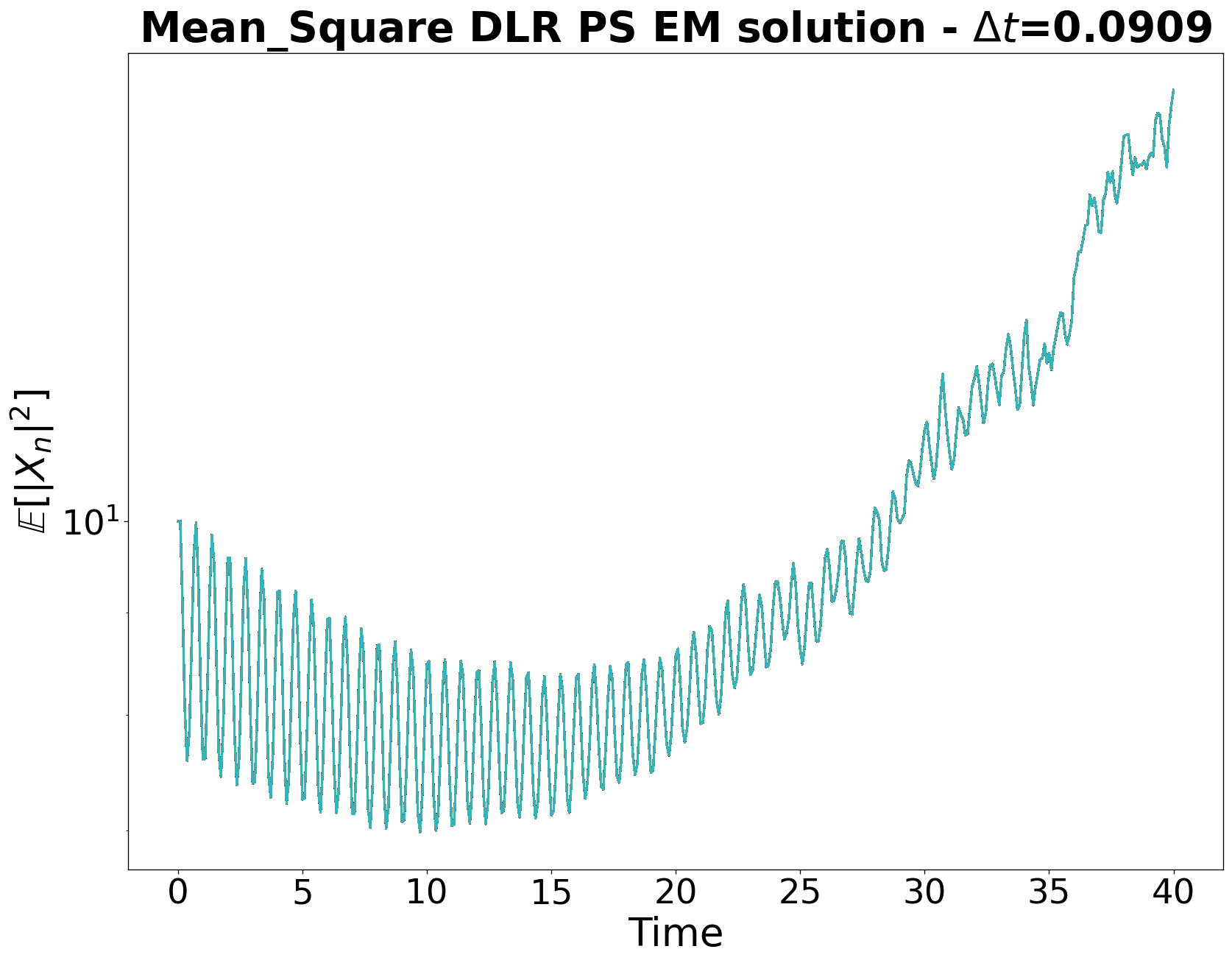}
\includegraphics[scale=0.137]{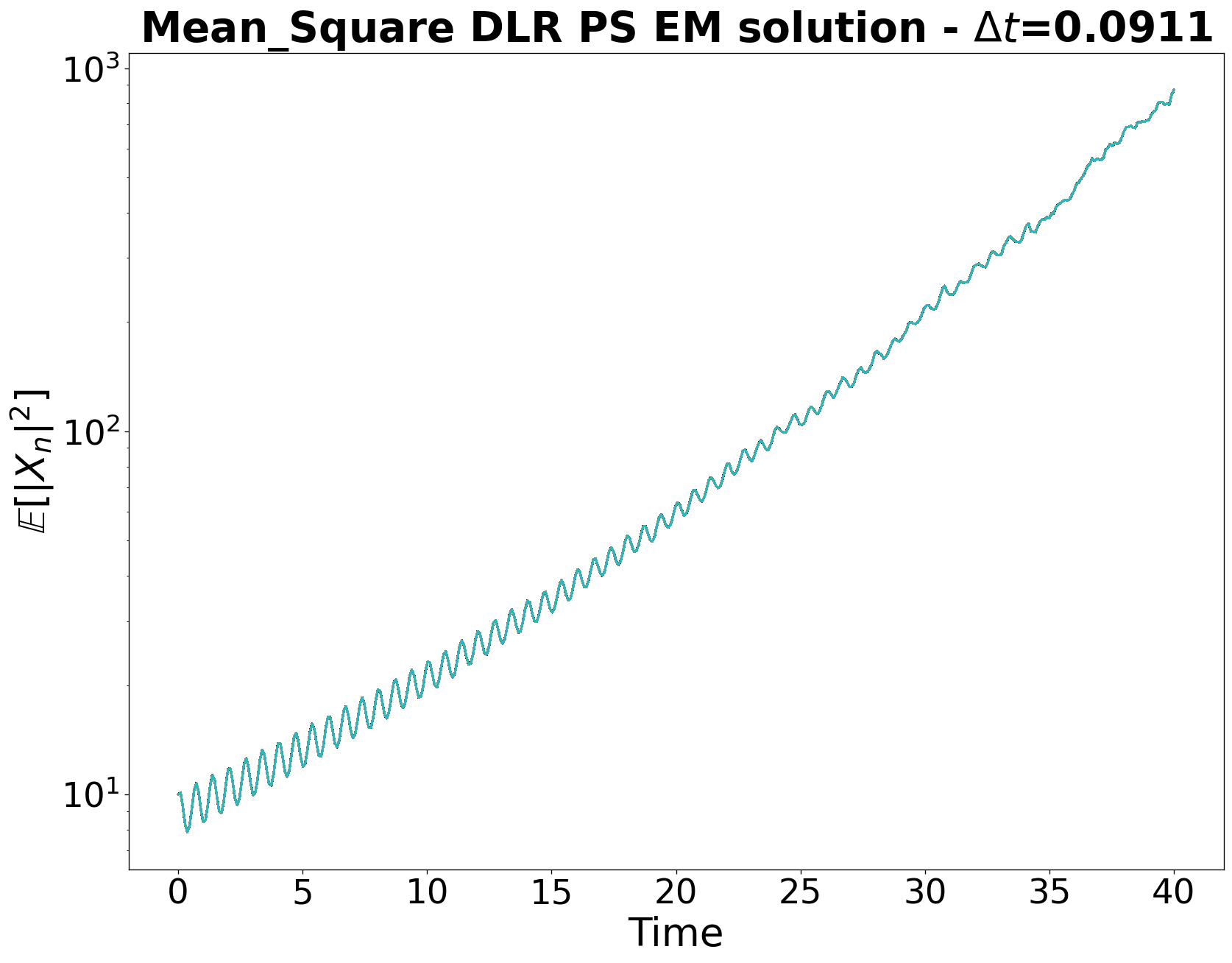}\\
\caption{For $\Delta t=0.0907, 0.909, 0.0911$, and equation \eqref{eq: sde stab}:  Mean-square norm of the numerical solution $X_n$, i.e. $\mathbb{E}[|X_n|^2]$, (Top) for the DLR Euler-Maruyama, (Middle) for the DLR Projector Splitting for SDEs, (Bottom) for the DLR Projector Splitting for EM, $M=2000$ paths.}
\label{fig:mean square norm 1}
\end{figure}

We aim to verify that the time step condition for mean square stability does not depend on the smallest singular value of the solution (at least fro the DLR PS SDE and DLR PS EM schemes).
As \eqref{eq: sde stab} is a system characterized by a rank-deficient diffusion matrix, to obtain a nearly rank-deficient dynamics one can just simulate \eqref{eq: sde stab} with a nearly rank-deficient initial condition of the following form
\begin{equation}\label{eq: 2 init cond stab}
	X^{\mathrm{true}}_i(0) = 1+5 \sum_{i=1}^{5} 10^{-15} \sin\left(\frac{ \pi  i x }{10}\right) \text{Un}_{i},
\end{equation}
where $(\text{Un}_{i})_{i=1,\dots,5}$ are independent uniform random variables with values in $[0,1]$. In Figure \ref{fig:mean square norm 1 - overapp} we show that the asymptotic stability behavior follows a similar trend to the one of the previous example, corroborating our thesis. Indeed, we see that the smallest singular value of the DLR EM is always approaching numerical error, also for the time steps which guarantees the numerical stability (see Figure \ref{fig:stability sig values DLR EM}). Despite this behaviour, when $\Delta t$ satisfies \eqref{eq: stab cond} the DLR EM is still asymptotically stable.

\begin{figure}[!h]
	\includegraphics[scale=0.135]{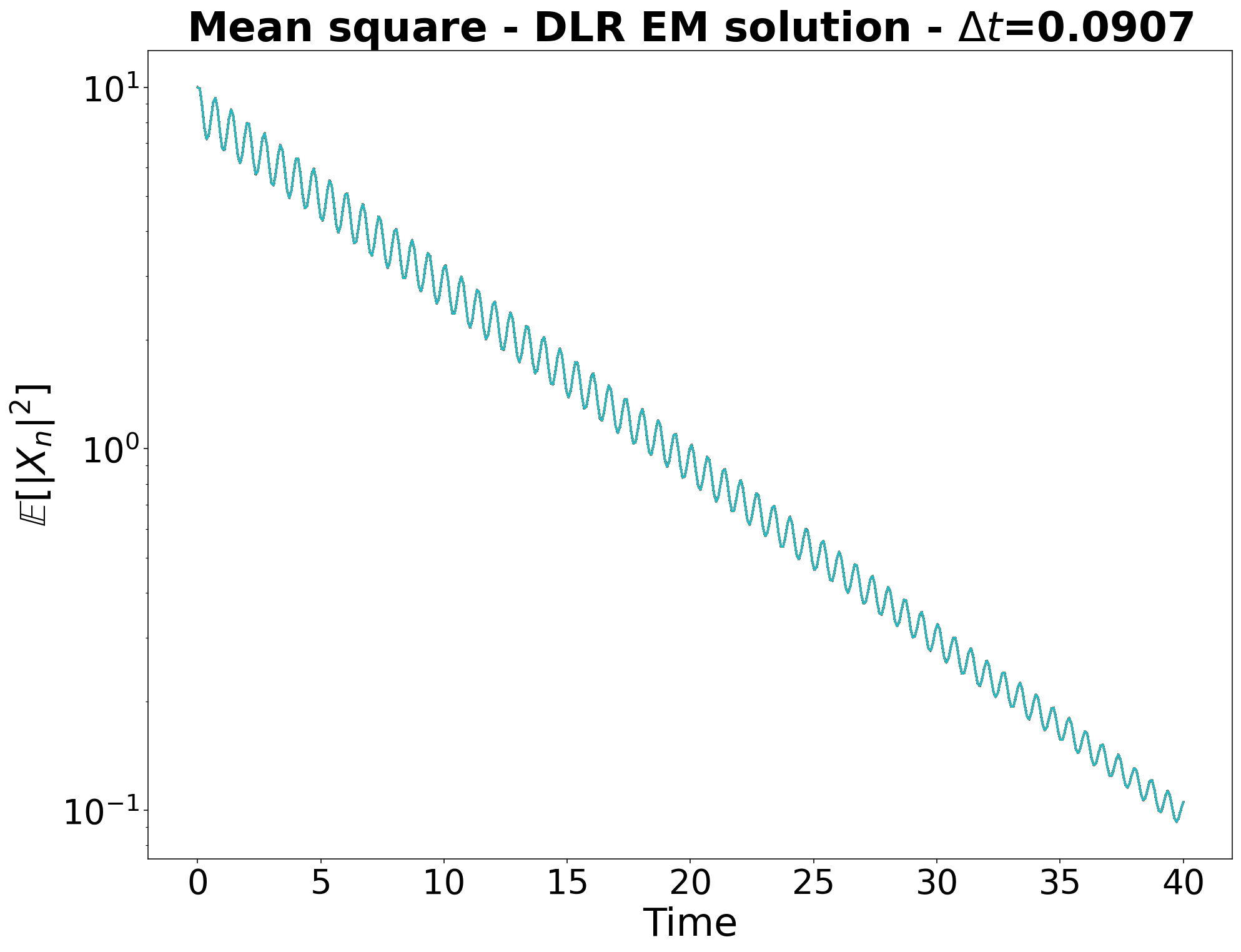}
		\includegraphics[scale=0.137]{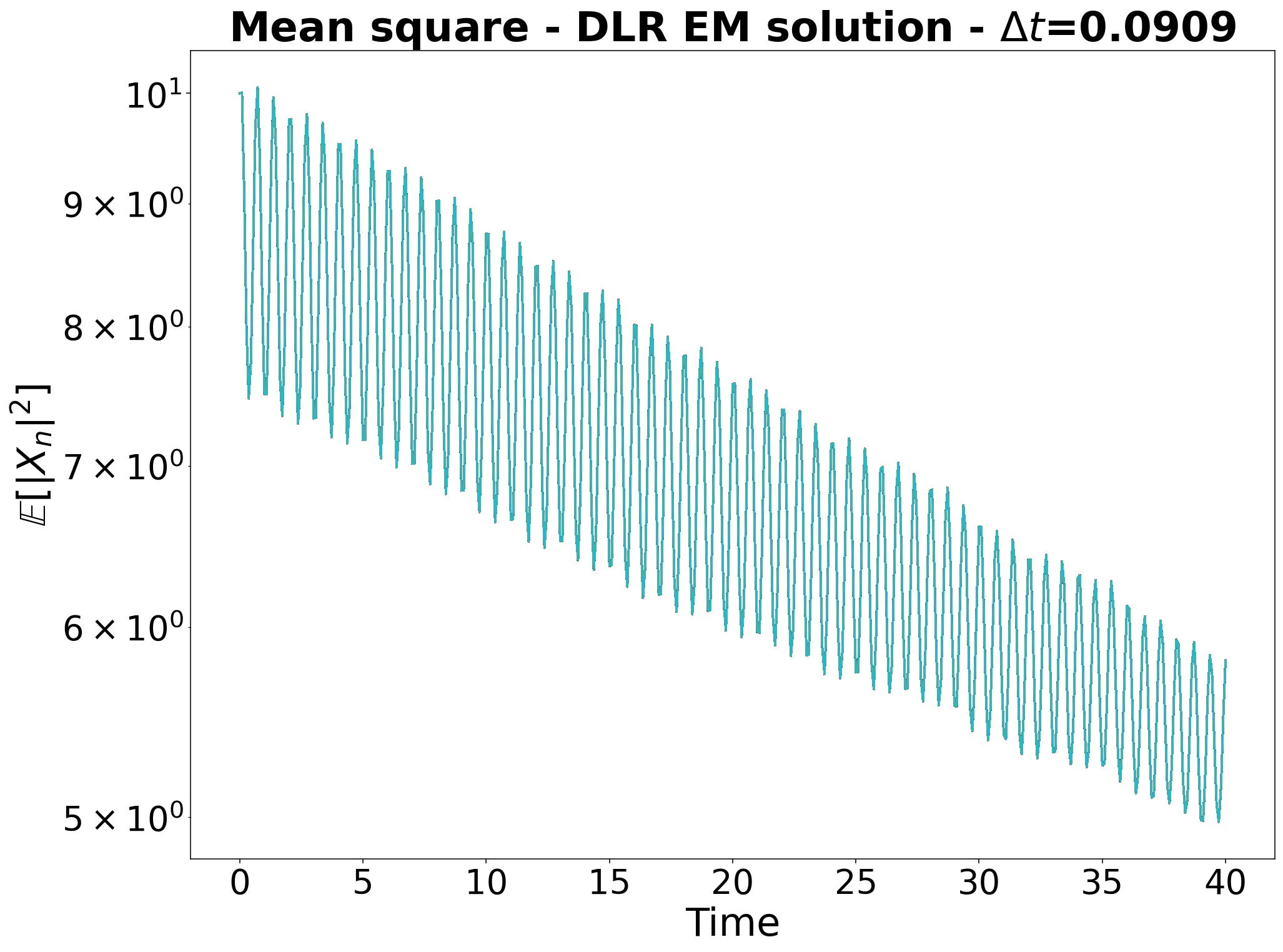}
			\includegraphics[scale=0.137]{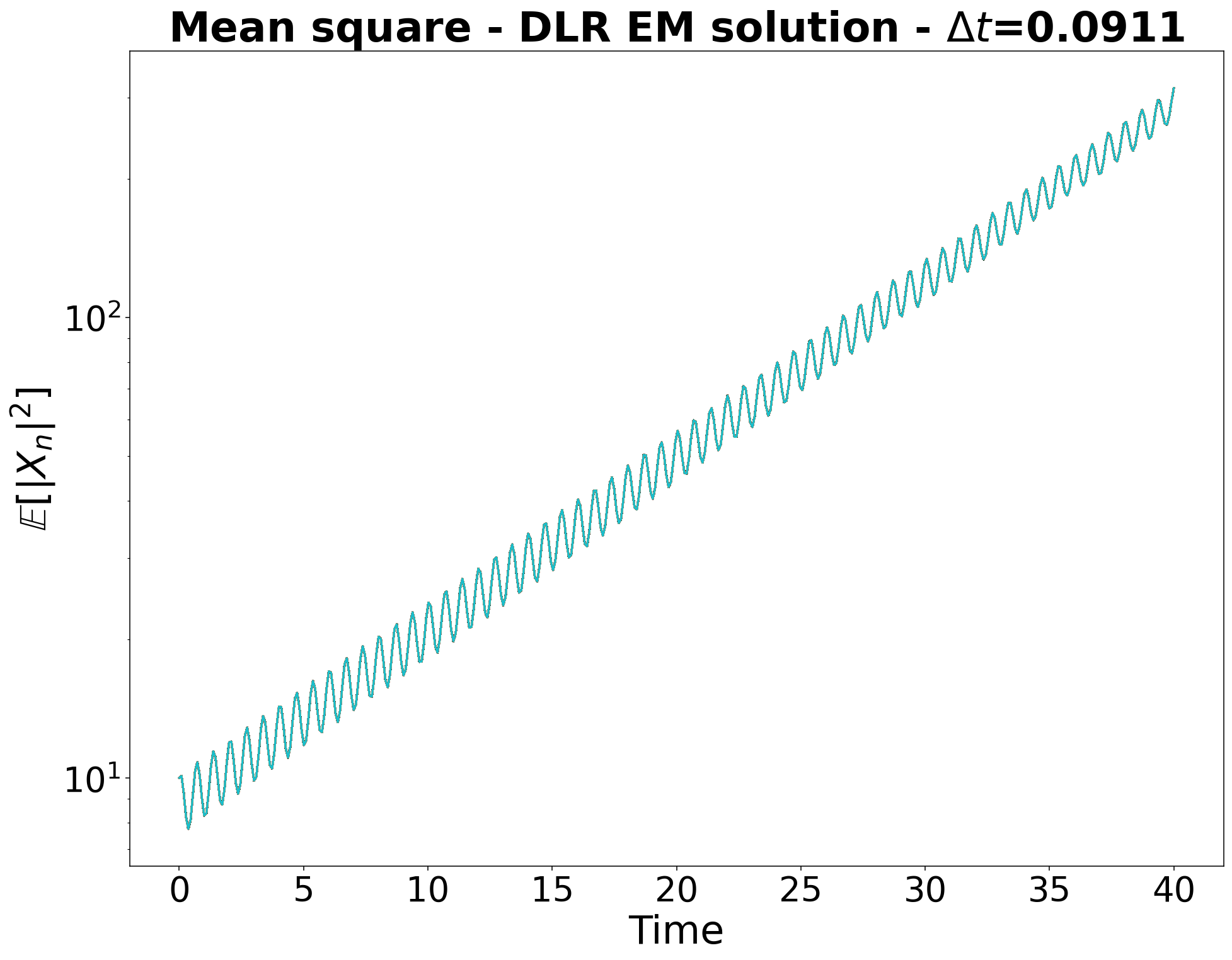}\\
	\includegraphics[scale=0.14]{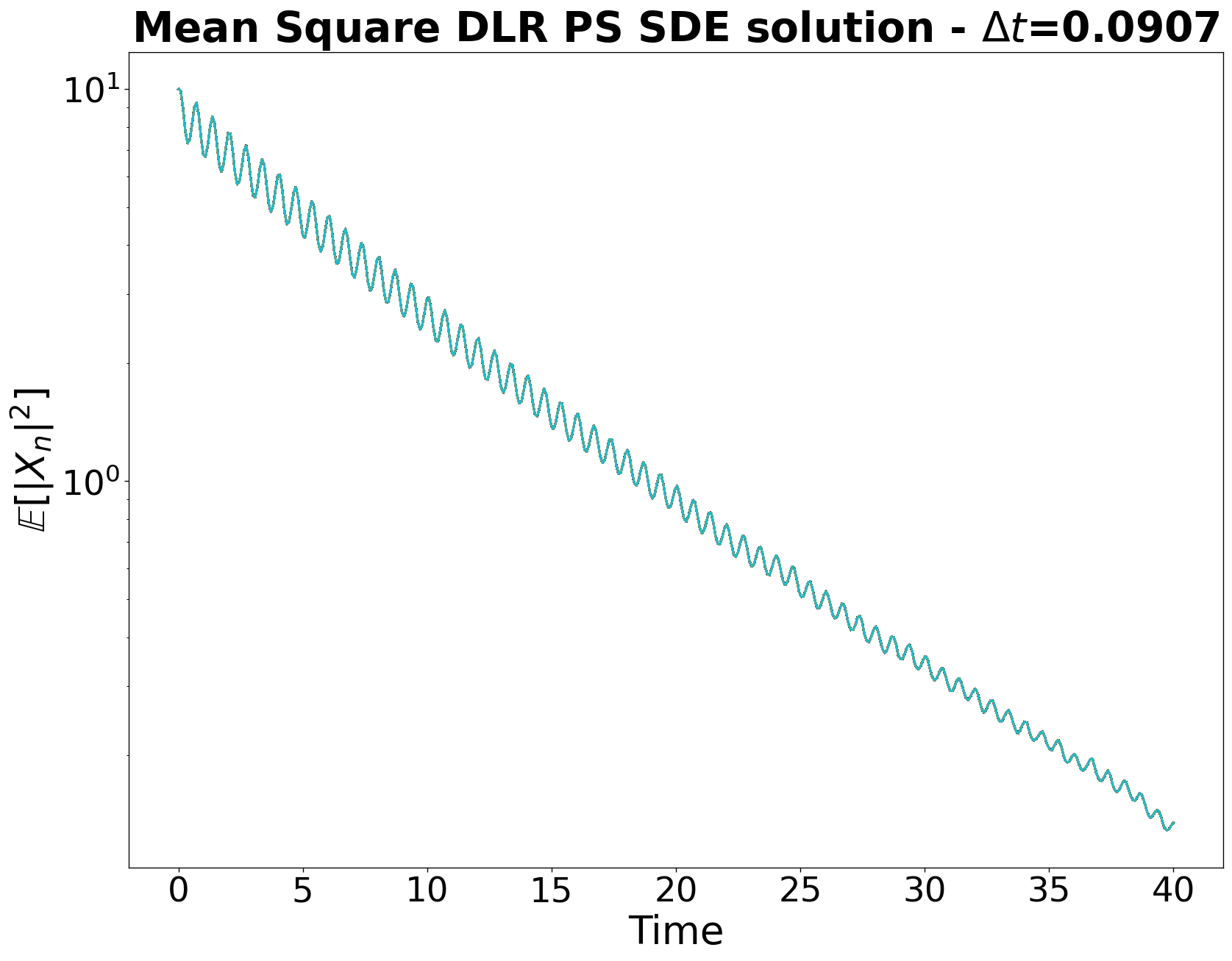}
	\includegraphics[scale=0.136]{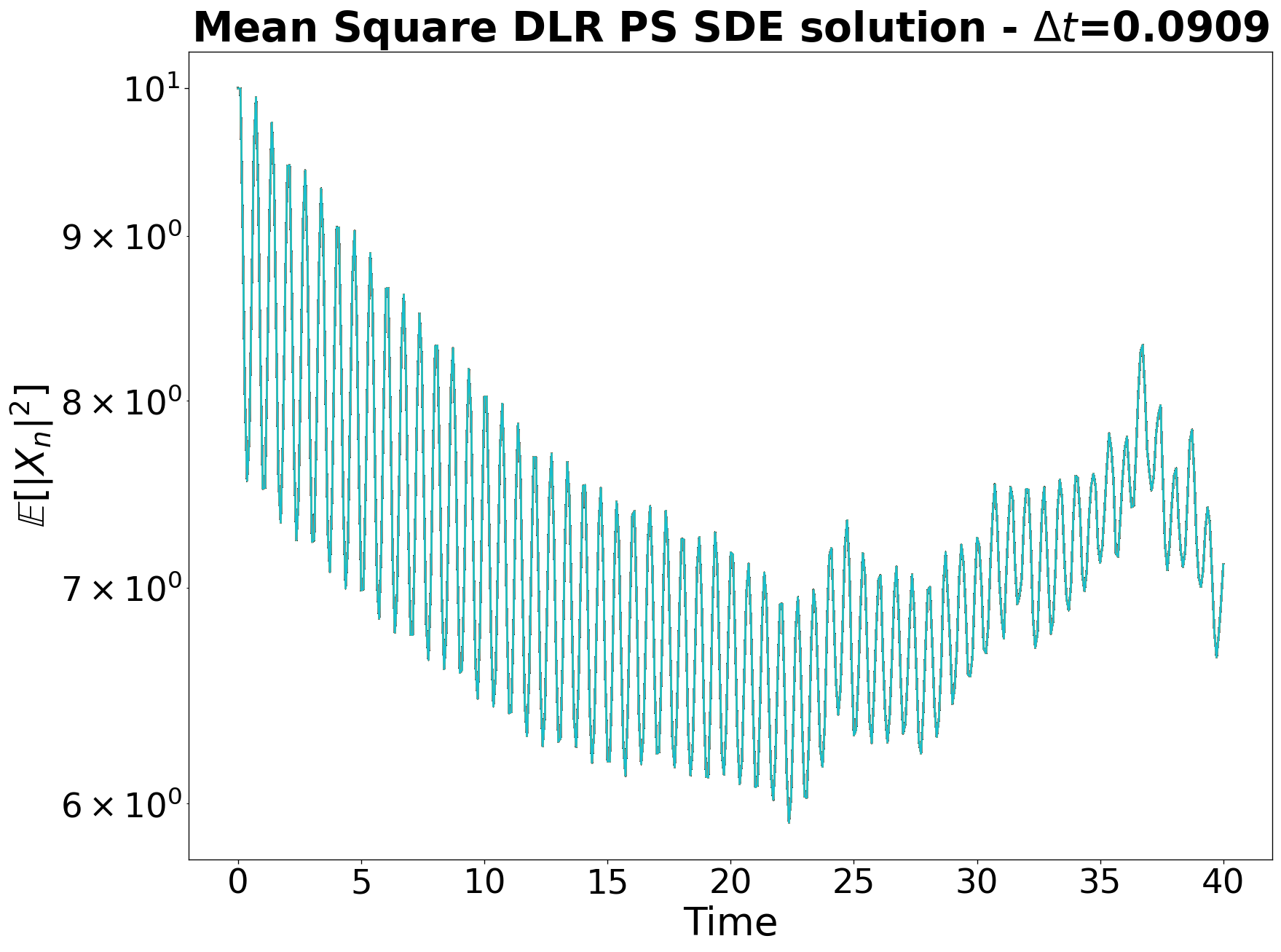}
	\includegraphics[scale=0.136]{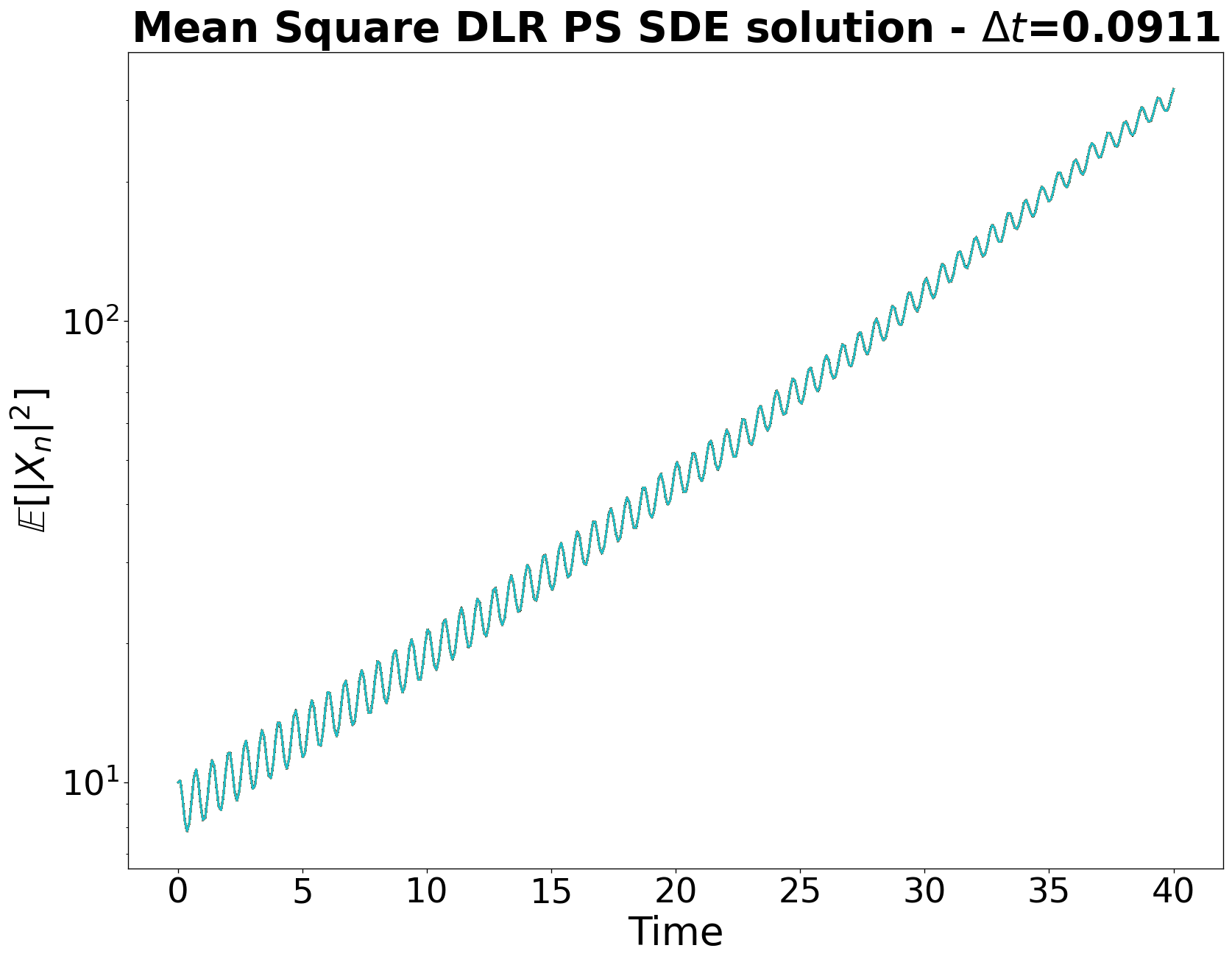}\\
	\includegraphics[scale=0.14]{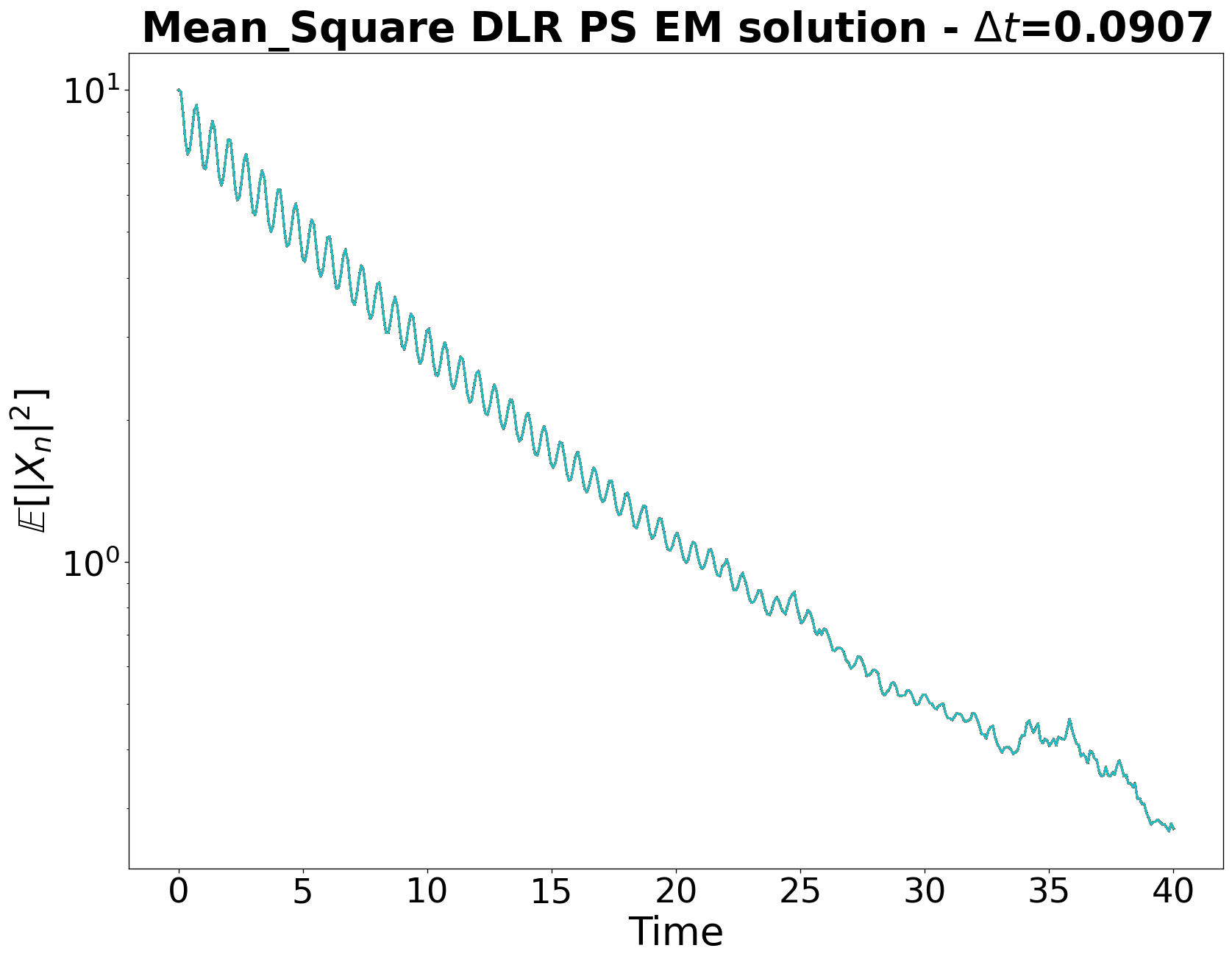}
		\includegraphics[scale=0.138]{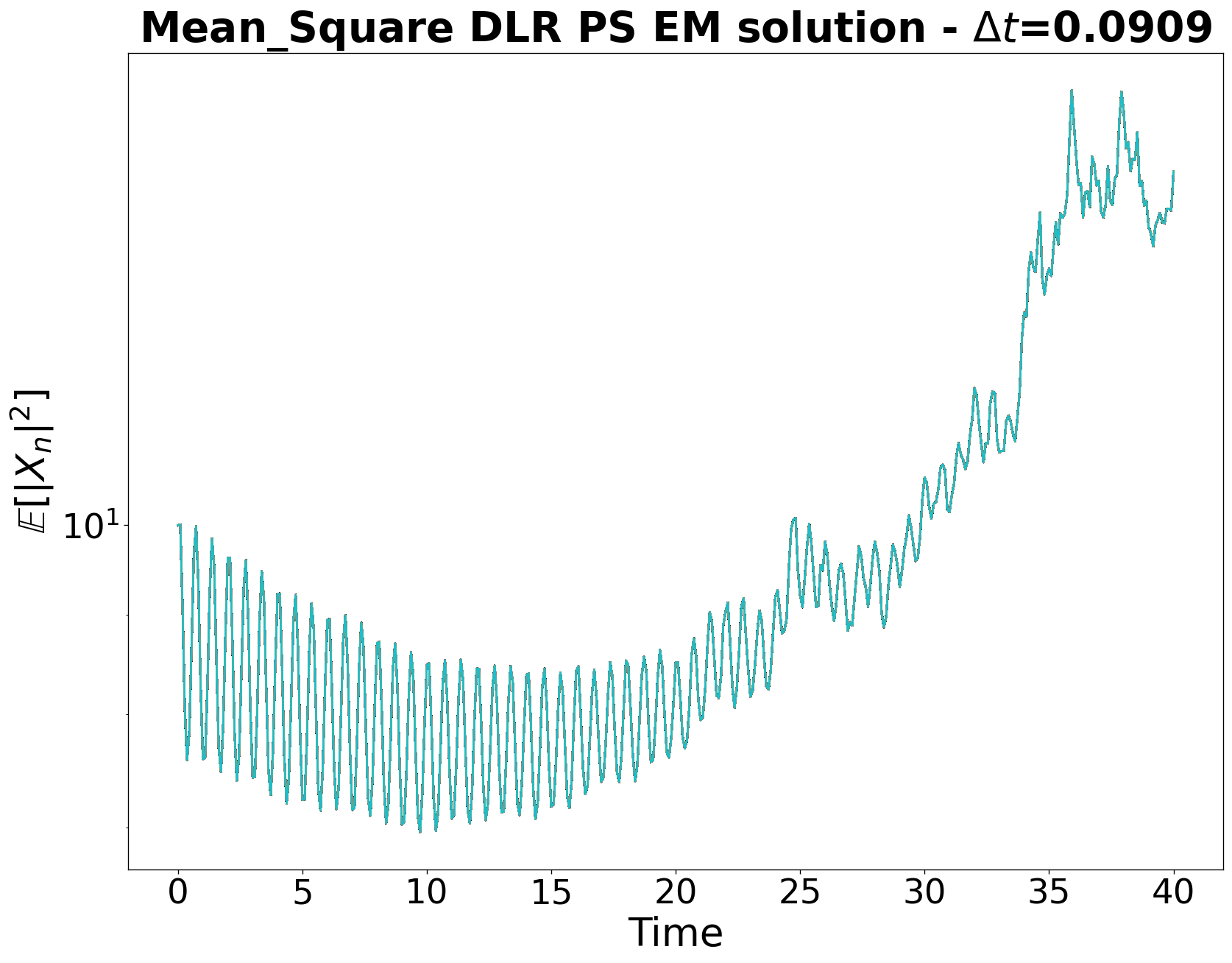}
		\includegraphics[scale=0.138]{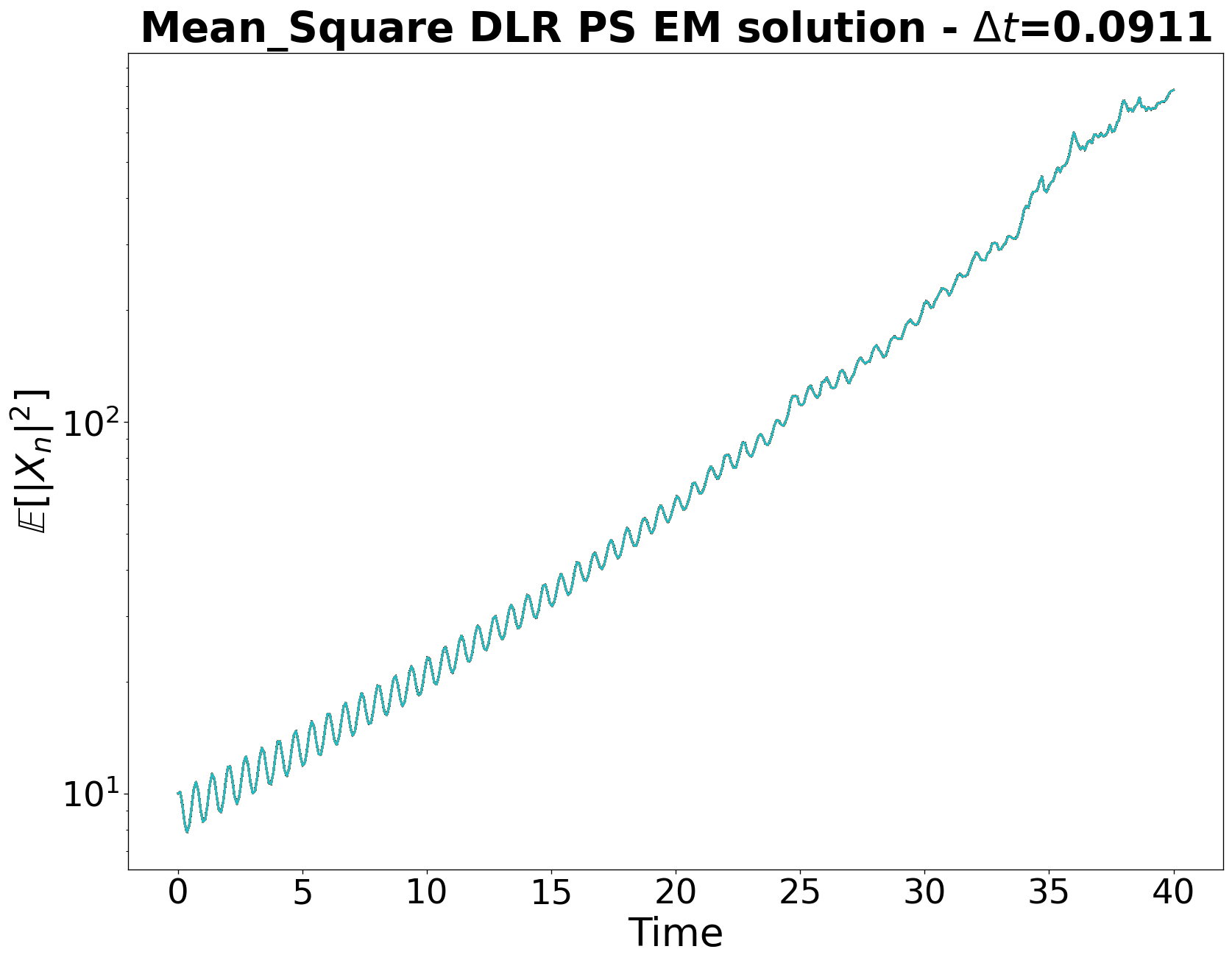}\\
	\caption{For $\Delta t=0.0907, 0.909, 0.0911$, and equation \eqref{eq: sde stab}: Mean-square norm of the numerical solution $X_n$, i.e. $\mathbb{E}[|X_n|^2]$, (Top) for the DLR Euler-Maruyama, (Middle) for the DLR Projector Splitting for SDEs, (Bottom) for the DLR Projector Splitting for EM, $M=2000$ paths, initial condition $X^{\mathrm{true}}(0)$ \eqref{eq: 2 init cond stab}.}
	\label{fig:mean square norm 1 - overapp}
\end{figure}

\begin{figure}[!h]
	\centering
	\includegraphics[scale=0.16]{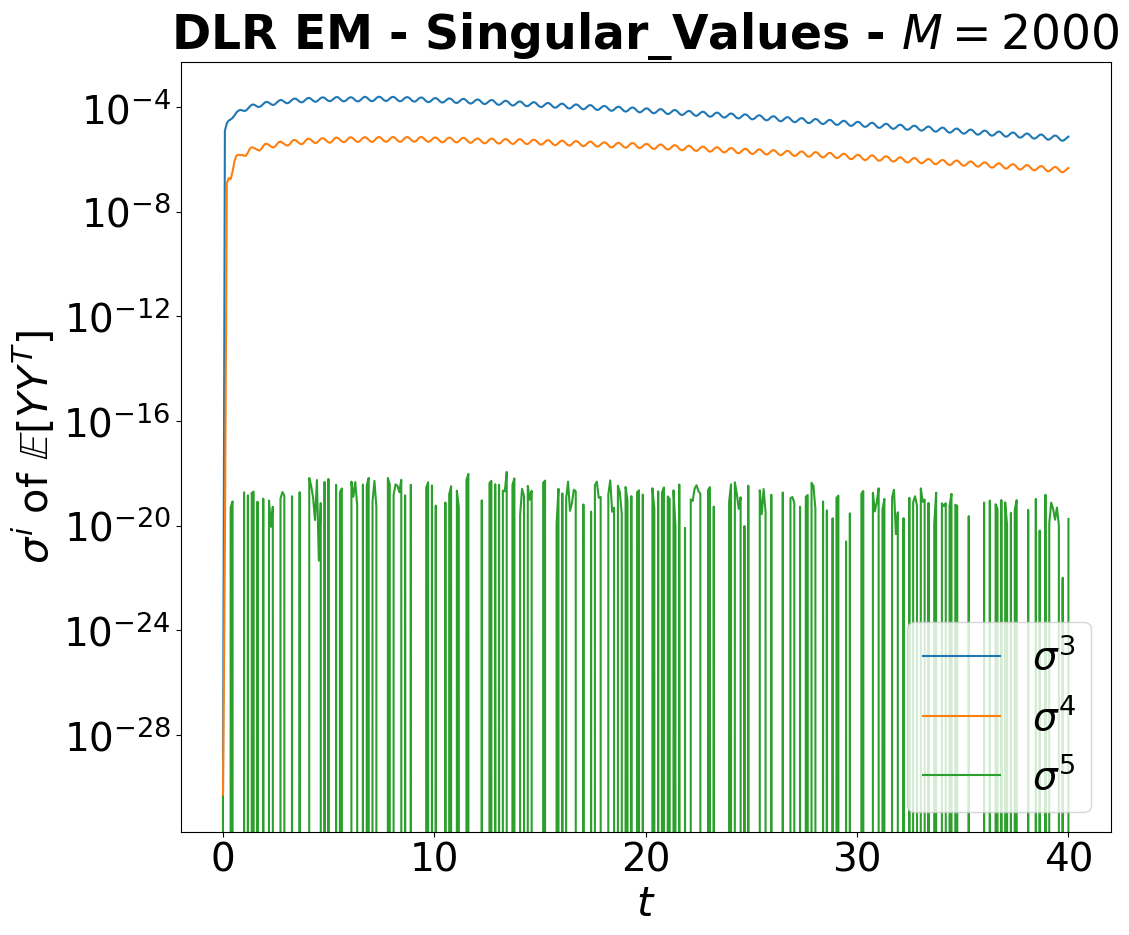}
	\includegraphics[scale=0.16]{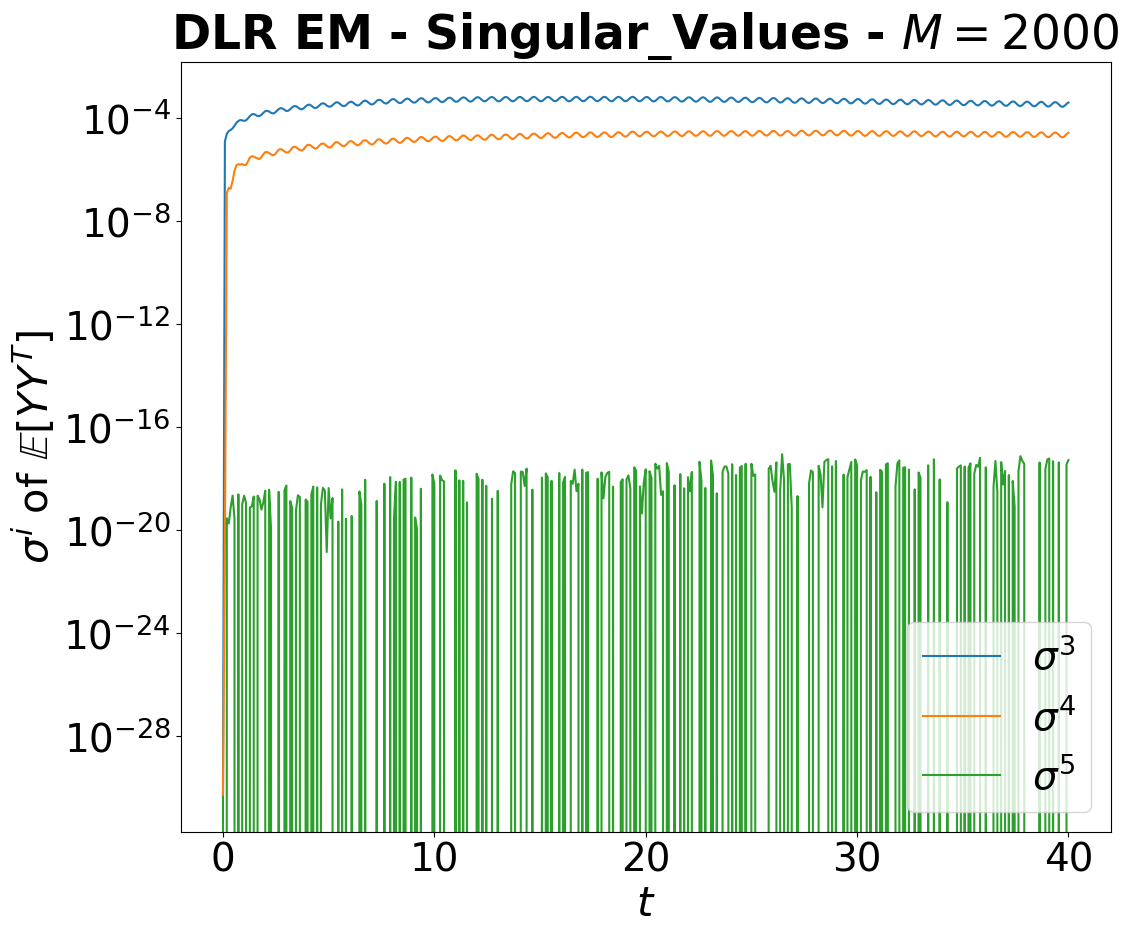}
	\includegraphics[scale=0.16]{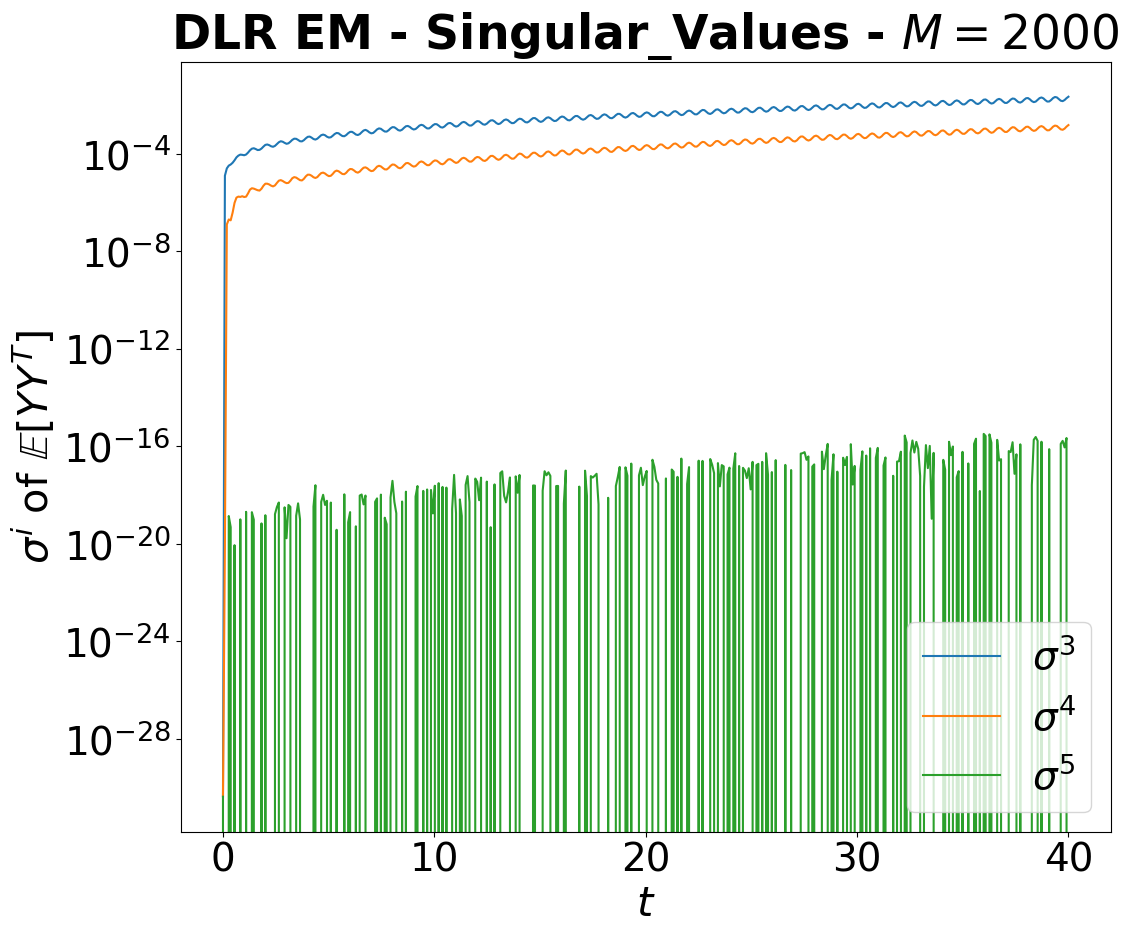}\\
	\caption{For $\Delta t=0.0907,0.0909,0.0911$, $k=3$, $M=2000$ smallest singular values of $\mathbb{E}[Y_nY_n^{\top}]$ for the DLR Euler-Maruyama of Problem \eqref{eq: stab cond} with initial condition $X^{\mathrm{true}}(0)$ \eqref{eq: 2 init cond stab}.}
	\label{fig:stability sig values DLR EM}
\end{figure}

\subsection{Additive Noise: Stochastic Advection-Diffusion-Reaction PDE in 1D}
In this section, we provide a numerical example consisting of a one-dimensional advection-diffusion-reaction system under low-rank additive diffusion suitably discretized in space. We aim to test the numerical convergence of the three algorithms under suitable conditions on the time step based on the smallest singular values of the Gramian of the stochastic basis $Y_n$. The spatial domain is $[0,L]$ with $L=1$, where we consider Neumann boundary conditions, while the temporal domain is $[0,T]$, with $T=10$. The studied equation is the following
\begin{equation}\label{ex: SADR}
	\begin{cases}
		\begin{aligned}
		\partial_t u(x,t,\omega)=& L u(x,t,\omega) + \sum_{i=1}^{m}\phi_i(x) \dot{W}_i(\omega), \quad (x,t,\omega) \in [0,L] \times [0,T] \times \Omega, \\
		u_{0}(x,\omega)=&  \sum_{i = 1}^m \frac{1}{(2 \pi  i)^2}\cos\left( \frac{\pi i x}{ L} \right)  \left(0.5 -\text{Un}_{i}(-10^{-4},10^{-4})\right) \\
		&+  \sum_{i = m+1}^R 10^{-15}\cos\left( \frac{\pi i x}{ L} \right)  \left(\text{Un}_{i}(-0.1,0.1)\right) , \quad (x,\omega) \in [0,L] \times \Omega\\
		\partial_x u(0,t, \omega)=& \partial_x u(1,t,\omega) = 0, \quad (t,\omega) \in [0,T] \times \Omega,
		\end{aligned}
	\end{cases}
\end{equation}
where in \eqref{ex: SADR} the operator $L$ is given by $L u(x,t,\omega) = a \partial_x^2 u(x,t,\omega) - v \partial_x u(x,t,\omega) + r \sin(u(x,t,\omega))$, which is not linear in $u$,  $\{\text{Un}_{i}(-10^{-4},10^{-4})\}_{i=1,\dots,m}$ and $\{\text{Un}_{i}(-0.1,0.1)\}_{i=m+1,\dots,R}$ are i.i.d.\ uniform random variables independent of the one-dimensional Brownian motions $(W_i)_{i=1,\dots,m}$, $a=0.005$, $v= 0.3$, and $r = 0.1$. Concerning the diffusion term, we set $\phi_{i}(x) = 10^{-5} \cos(\frac{i \pi x}{L})$ for all $i=1,\cdots, m$. We discretize in space \eqref{ex: SADR} by second order centered finite differences, with a first order upwinding treatment of the advection term, using a uniform grid with mesh size $\mathrm{d}x = 0.04$. Therefore, the physical dimension is $d=25$. The time discretization for the reference true solution $u$ of \eqref{ex: SADR} is given by the forward Euler-Maruyama method with mesh size $\Delta t=2\cdot 10^{-3}$. The number of paths simulated is $M=4000$. We choose $m=5$ and we consider a DLR approximation of rank $k=18$. Moreover, we initiate all the DLRA algorithms as described in Section \ref{sec: basic sde}. Notice that with this choice of the rank we are overapproximating the initial condition and by the noise term, that affect the system: indeed we have 13 modes very close to the zero machine. We point out that the operator $L$ has a non-linear reaction term. Hence, the evolution of the deterministic modes and the DLR-EM scheme is governed by \eqref{eq: DLR EM 1v}, rather than \eqref{eq: linear drift} and we do expect the performance of the scheme to be affected by the smallest singular value of $\mathbb{E}[Y_nY_n^{\top}]$. According to Theorem \ref{thm: convergence of DLR Euler-Maruyama}, we may not observe any convergence for $\Delta t$ not satisfying \eqref{eq: DLR EM sde stab}, and for $\Delta t \to 0$ the $O(\sqrt{\Delta t})$ convergence may feature a constant that blows up as the smallest singular value goes to zero. In contrast, Theorems \ref{thm: Numerical Convergence Stoch DLR Proj - DLRA} and \ref{thm: Numerical Convergence Eva} do not require any restriction on $\Delta t$ for the DLR Projector Splitting schemes and the former predicts a constant independent of the smallest singular value. We aim to illustrate numerically these effects.

To this end, in Figures \ref{fig: SADR_M_4000 singular values DLR EM}, \ref{fig: SADR_M_4000 singular values Stoch Proj}, and  \ref{fig: SADR_M_4000 singular values DLR KNV}, we plot the following quantity related to \eqref{eq: dt sup n cond}
\begin{equation}\label{eq: delta t stab 2}	
	\widehat{\Delta t_n}^{j} := \frac{ \sqrt{\sigma_n^j}}{ \sqrt{C_{\mathrm{lgb}}} \sqrt{(1+\max\limits_{n \in \{1,\dots,N\}}\mathbb{E}[|X_{n}|^2] )}},
\end{equation}
where we recall that $\sigma_{n}^j$ is the $j$-th largest singular value of the Gramian $C_{Y_n}$. Notice that the constraint $\Delta t_n \leq \widehat{\Delta t_n}^k$ is less restrictive than \eqref{eq: dt sup n cond} which uses, instead an a priori upper bound on $\max\limits_{n \in \{1,\dots,N\}} \mathbb{E}[|X_n|^2]$.

\begin{figure}[!h]
	\centering
	\includegraphics[scale=0.16]{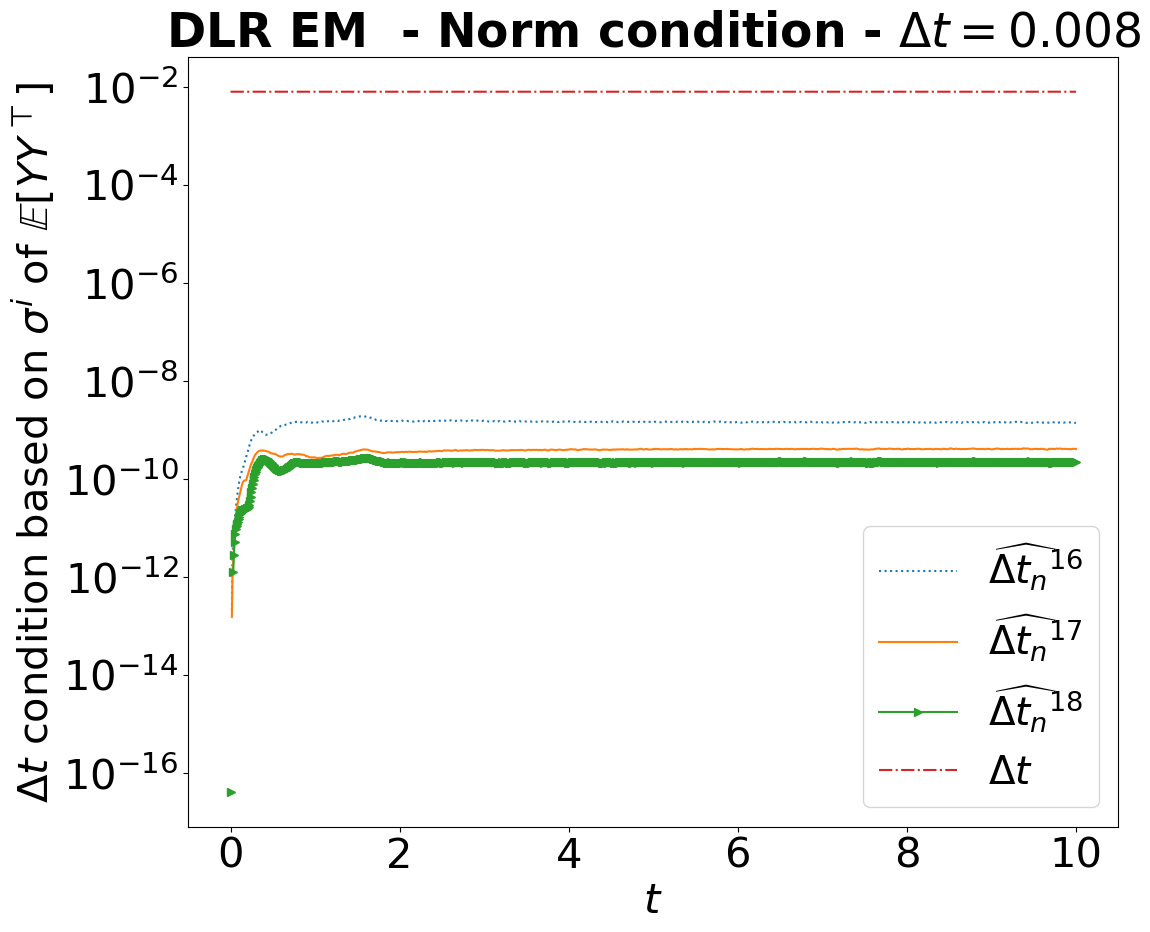}
	\includegraphics[scale=0.16]{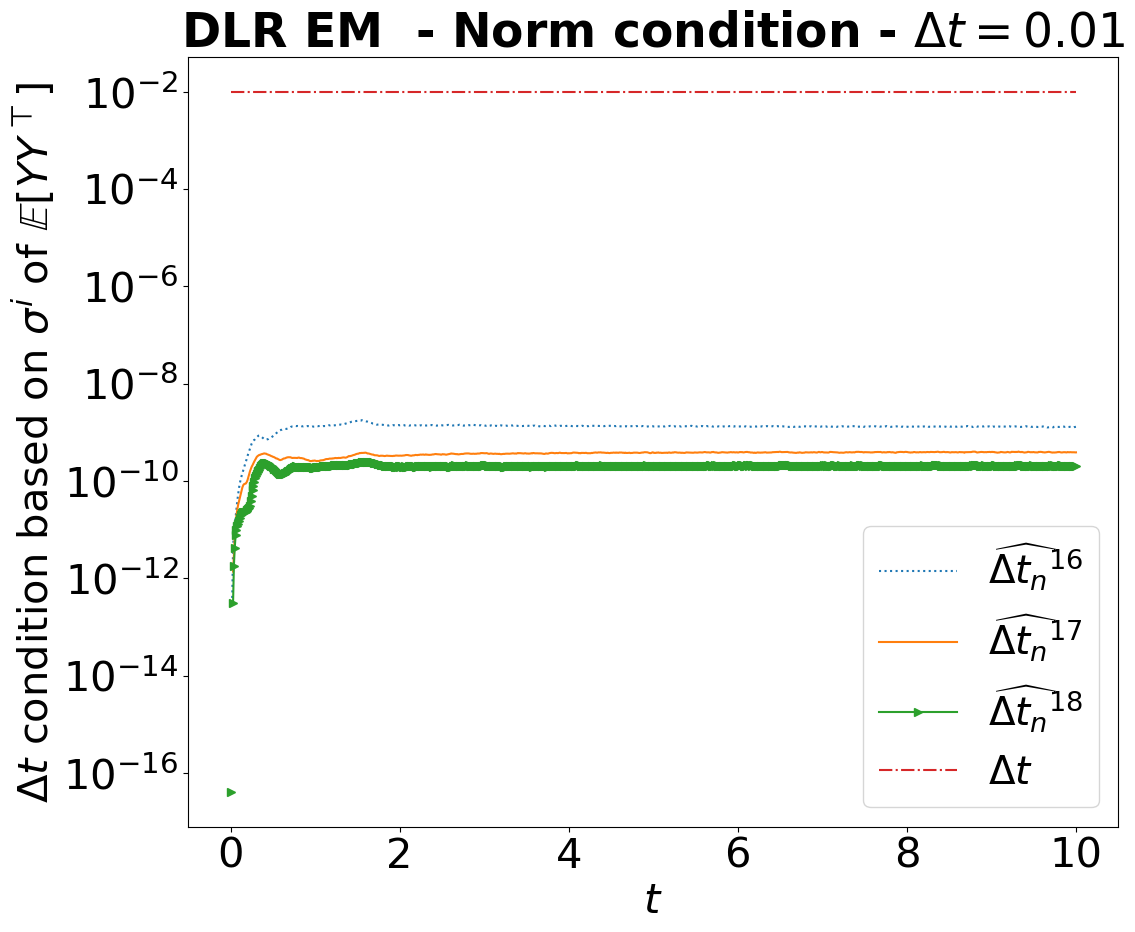}
		\includegraphics[scale=0.16]{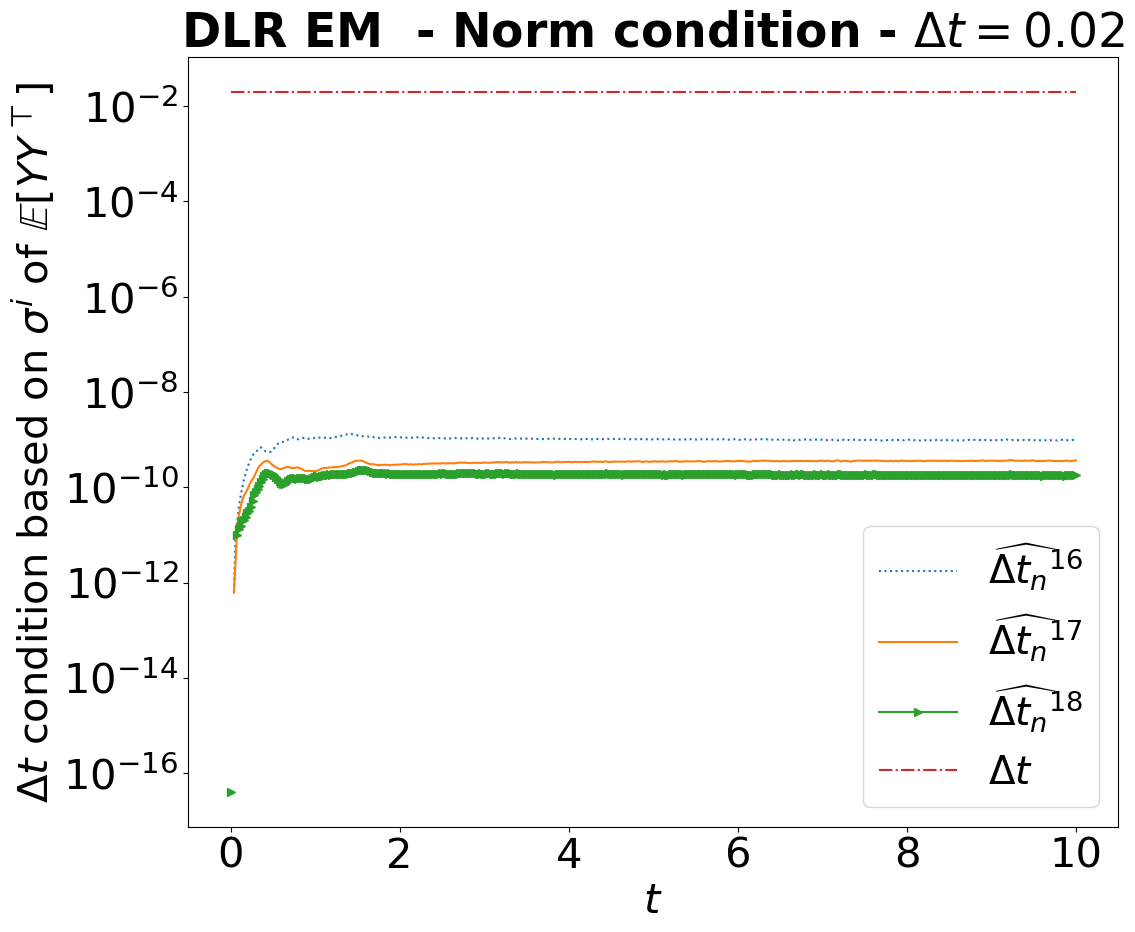}
	\caption{Condition \eqref{eq: delta t stab 2} related to the square root of the singular values $\sigma^i$ of the Gramian $\mathbb{E}[X_nX^{\top}_n]$ for DLR Euler-Maruyama with $\Delta t = 0.008,0.01,0.02$ for equation \eqref{ex: SADR}.}
	\label{fig: SADR_M_4000 singular values DLR EM}
\end{figure}

\begin{figure}[!h]
	\centering
	\includegraphics[scale=0.16]{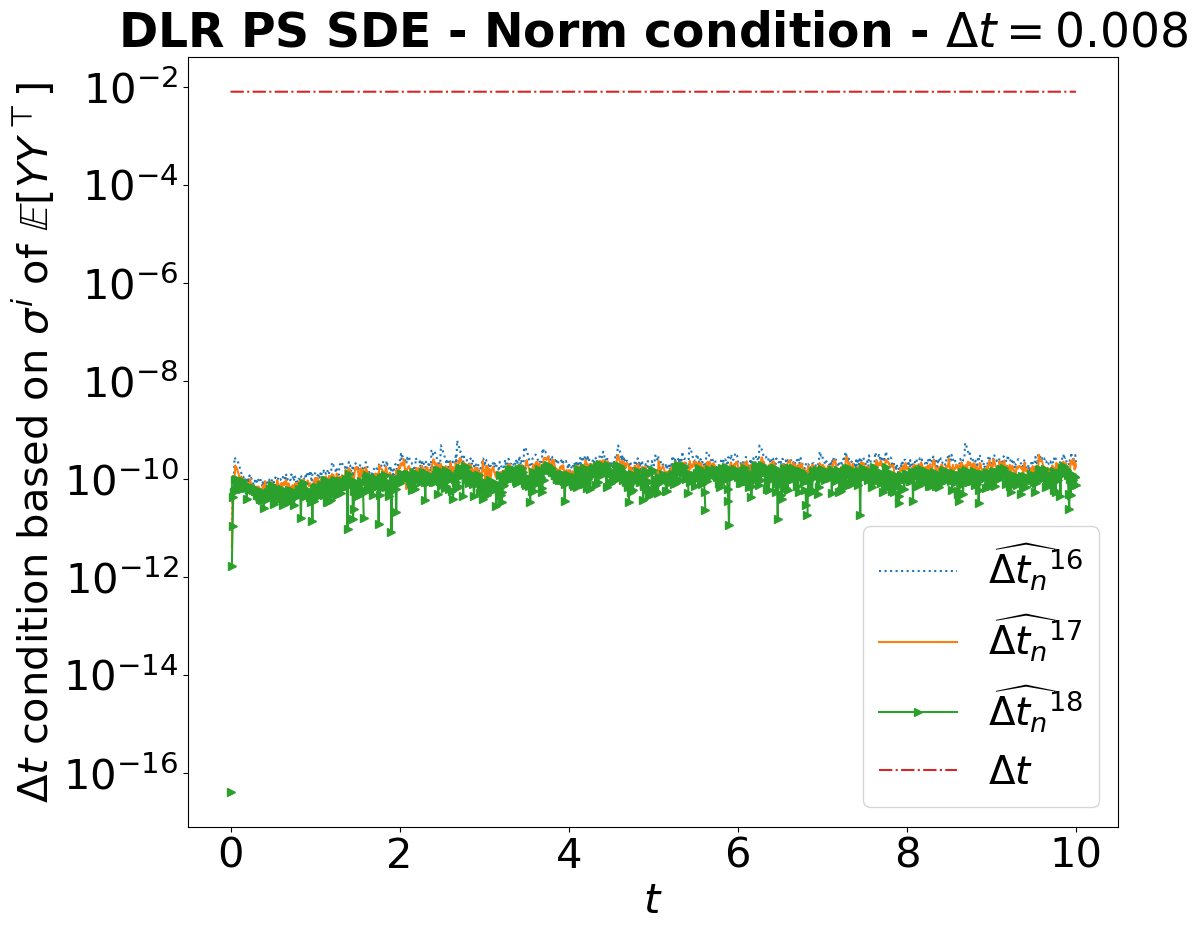}
	\includegraphics[scale=0.16]{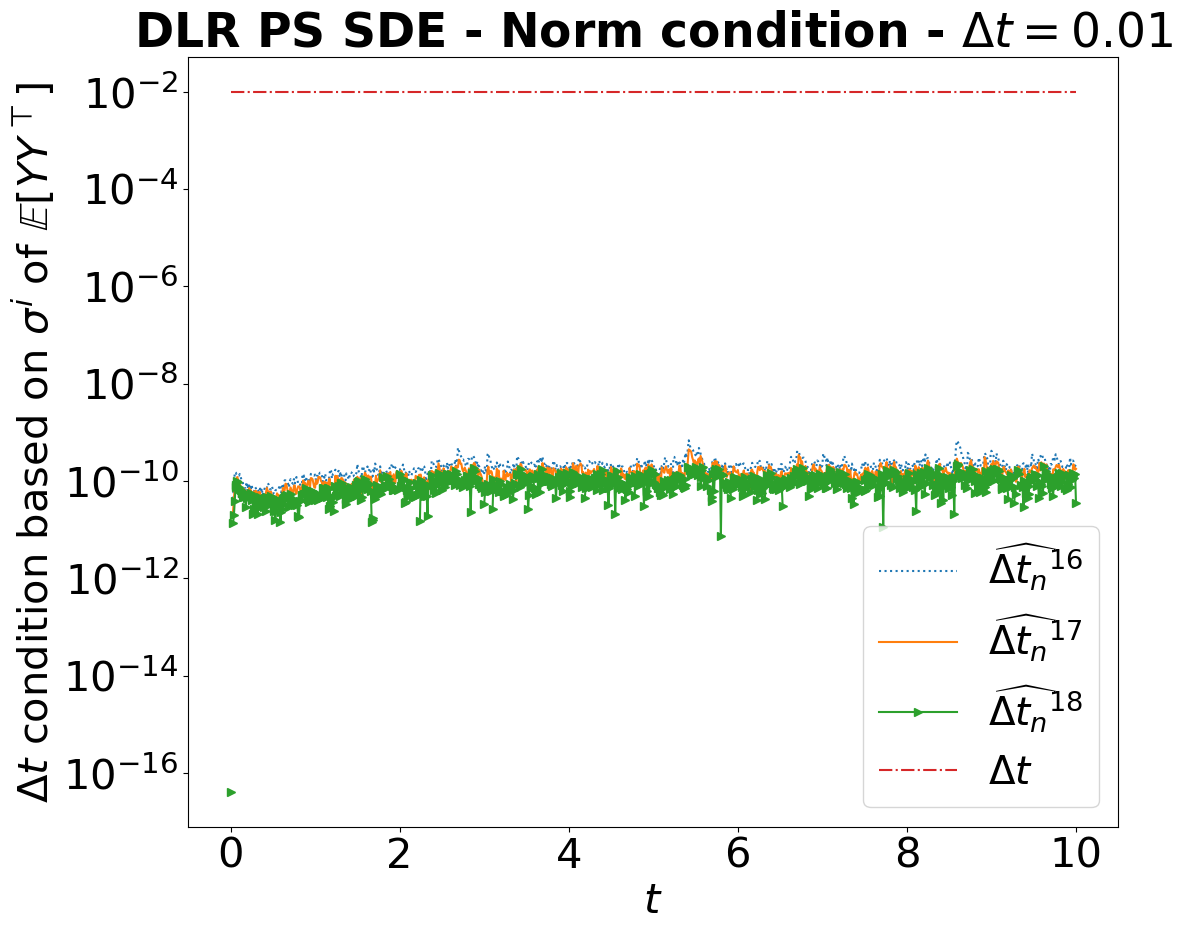}
	\includegraphics[scale=0.16]{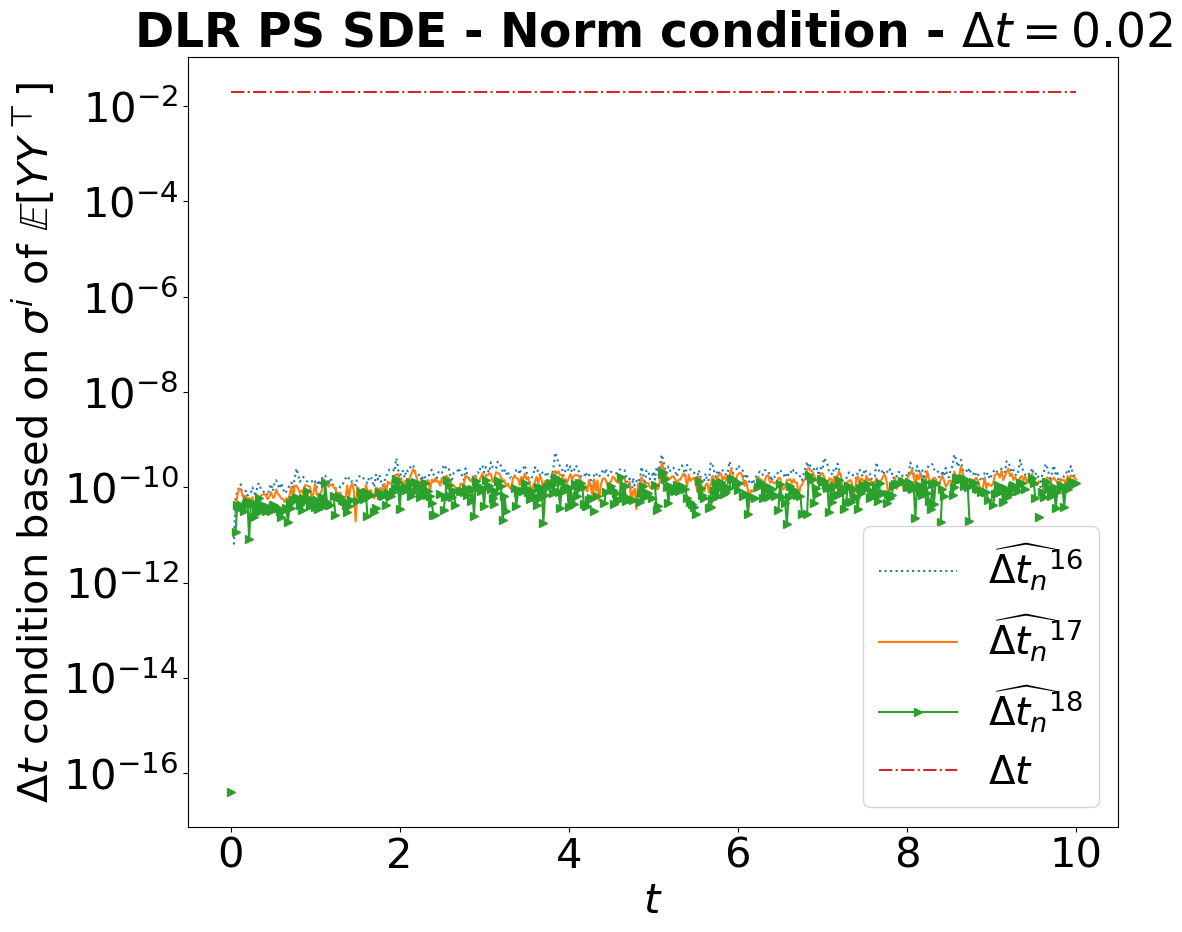}
	\caption{Condition \eqref{eq: delta t stab 2} related to the square root of the singular values $\sigma^i$ of the Gramian $\mathbb{E}[X_nX^{\top}_n]$ for DLR Projector Splitting for SDEs with $\Delta t = 0.008,0.01,0.02$ for equation \eqref{ex: SADR}.}
	\label{fig: SADR_M_4000 singular values Stoch Proj}
\end{figure}

\begin{figure}[!h]
	\centering
	\includegraphics[scale=0.16]{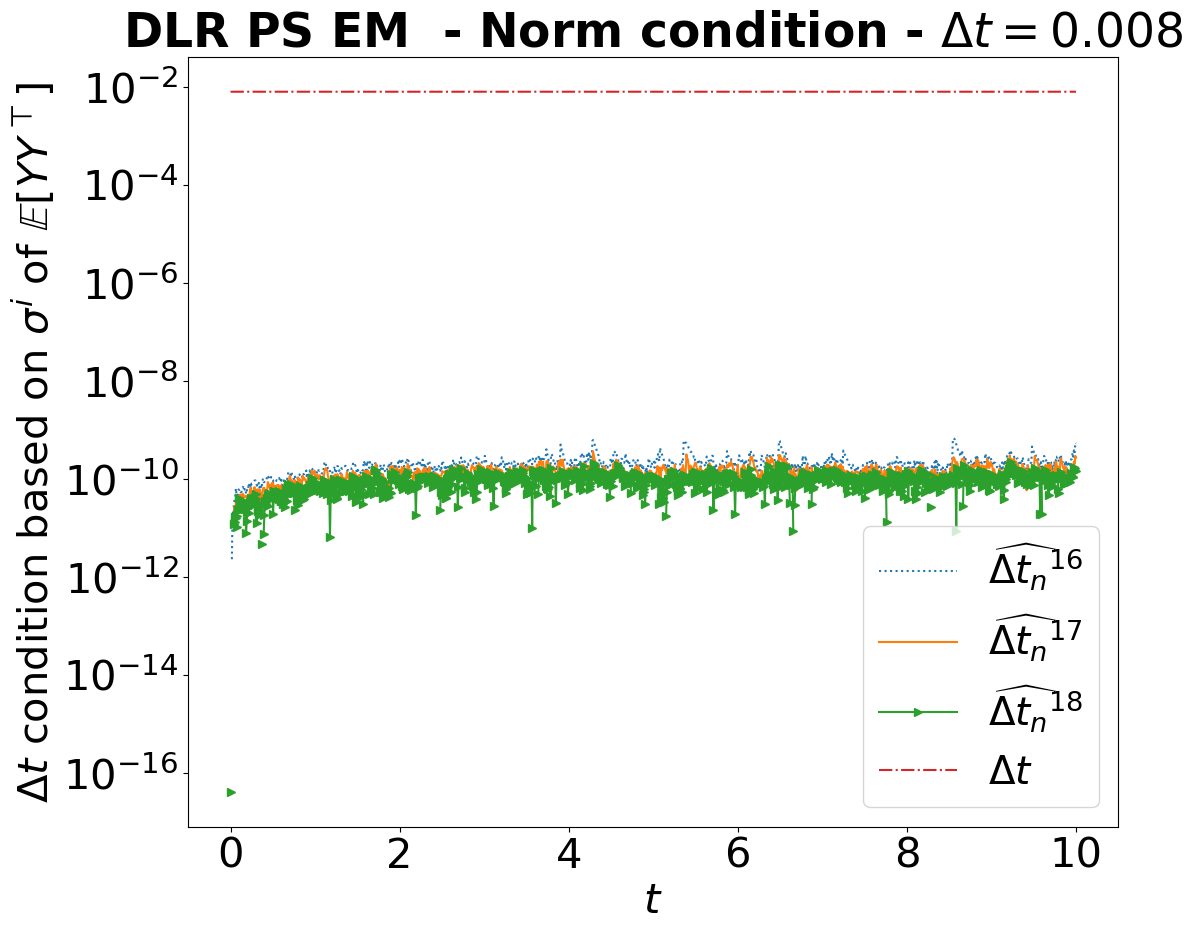}
	\includegraphics[scale=0.16]{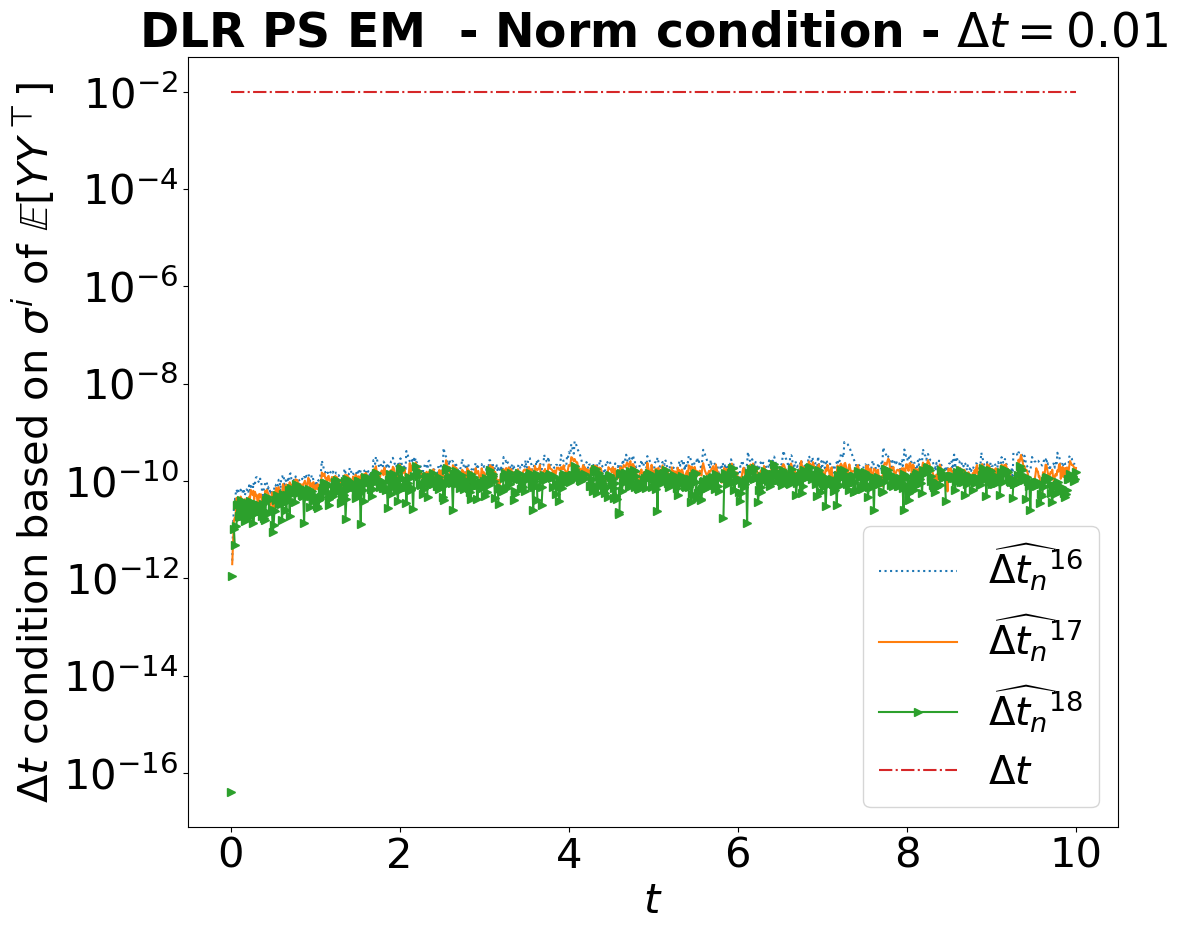}
	\includegraphics[scale=0.16]{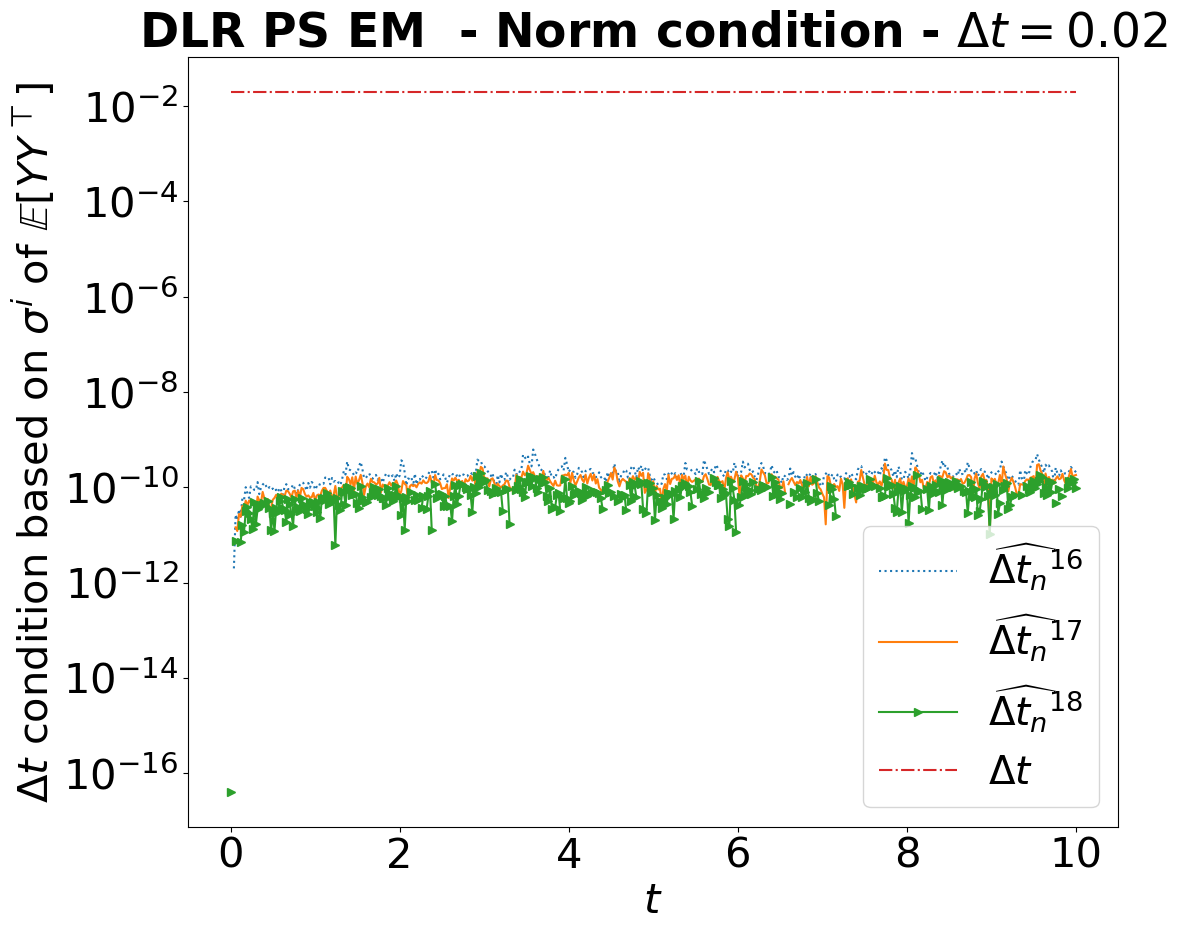}
	\caption{Condition \eqref{eq: delta t stab 2} related to the square root of the singular values $\sigma^i$ of the Gramian $\mathbb{E}[X_nX^{\top}_n]$ for DLR Projector Splitting for EM with $\Delta t = 0.008,0.01,0.02$ for equation \eqref{ex: SADR}.}
	\label{fig: SADR_M_4000 singular values DLR KNV}
\end{figure}

Figures \ref{fig: SADR_M_4000 singular values DLR EM}, \ref{fig: SADR_M_4000 singular values Stoch Proj} and \ref{fig: SADR_M_4000 singular values DLR KNV} show how the quantity in \eqref{eq: delta t stab 2}, related to the square root of the $i$-th largest singular value $\sigma^i$ of the Gramian $\mathbb{E}[Y_nY_n^{\top}]$, evolves over time for the three algorithms, with $\Delta t$ held fixed in each case, but varying across the plots. Figure \ref{fig: SADR_M_4000 singular values DLR EM} corresponds to the DLR EM method, Figure \ref{fig: SADR_M_4000 singular values Stoch Proj} to the DLR Projector Splitting for SDEs, and Figure \ref{fig: SADR_M_4000 singular values DLR KNV} to the DLR Projector Splitting for EM.
In all cases, we see that the time-step never satisfies \eqref{eq: delta t stab 2} in $[0,T]$, which we interpret as a violation of the condition \eqref{eq: dt sup n cond}. Therefore, we expect that the DLR EM method loses accuracy, unlike the two other projector-splitting methods, which are theoretically unaffected by condition \eqref{eq: dt sup n cond}. Furthermore, we see in Figure  \ref{fig:SADR L2 norm} that the norm of the solution of the DLR EM does not resemble the one of the $X^{\mathrm{true}}$, showing again that the condition  \eqref{eq: dt sup n cond} is necessary in Lemma \ref{lem: sup n yn}.

\begin{figure}[!h]
	\centering
		\includegraphics[scale=0.16]{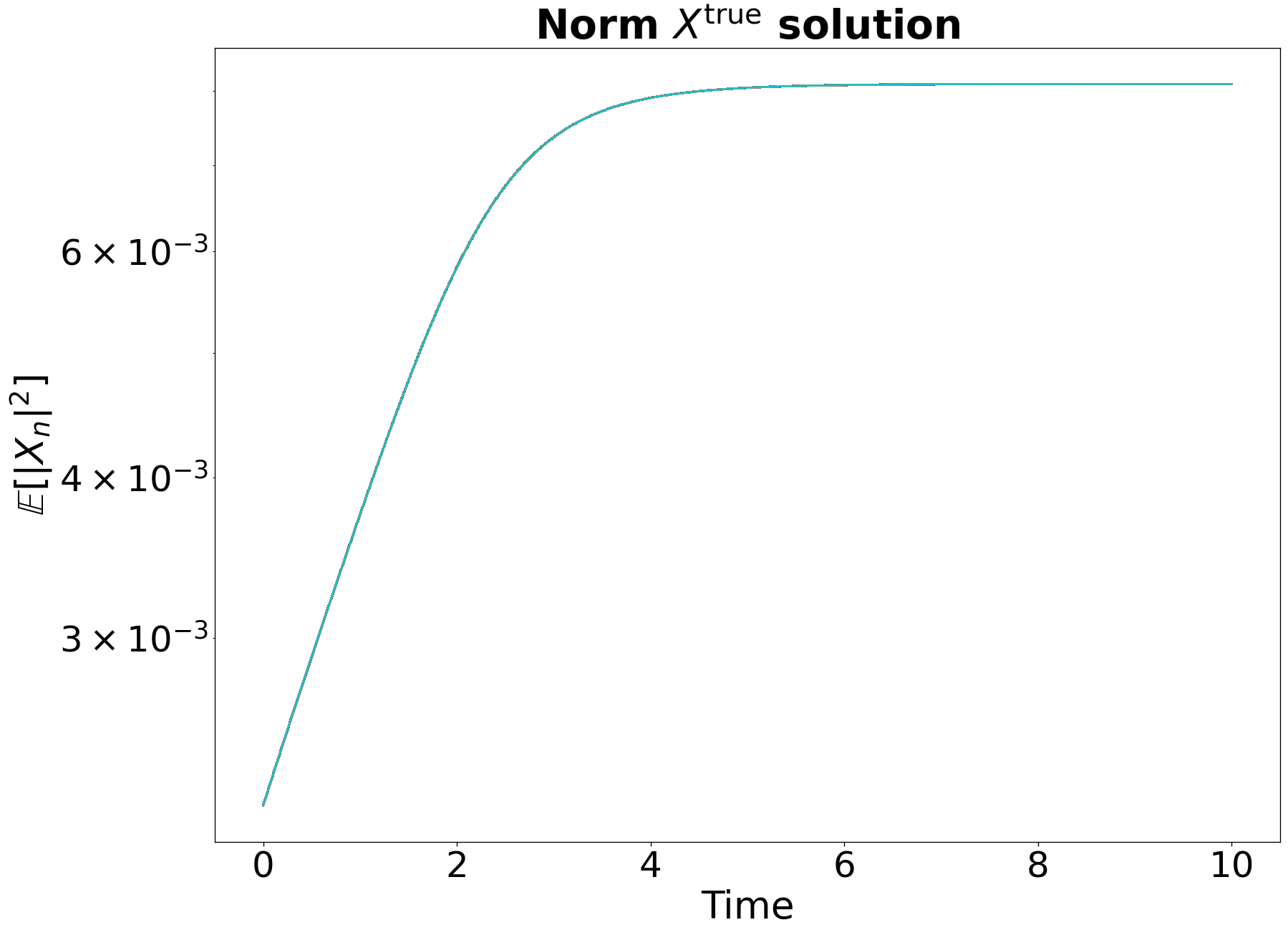}
	\includegraphics[scale=0.16]{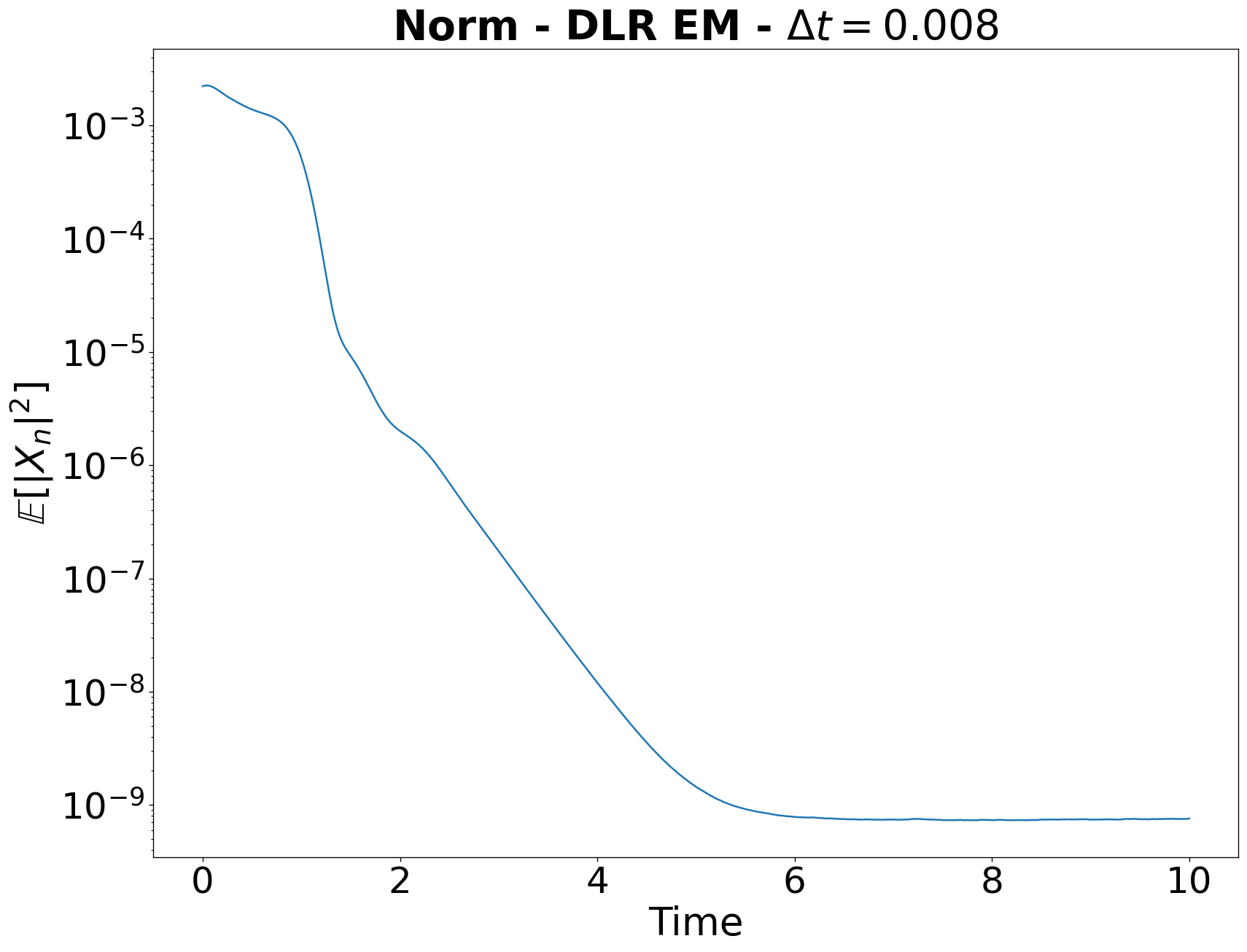}
		\includegraphics[scale=0.16]{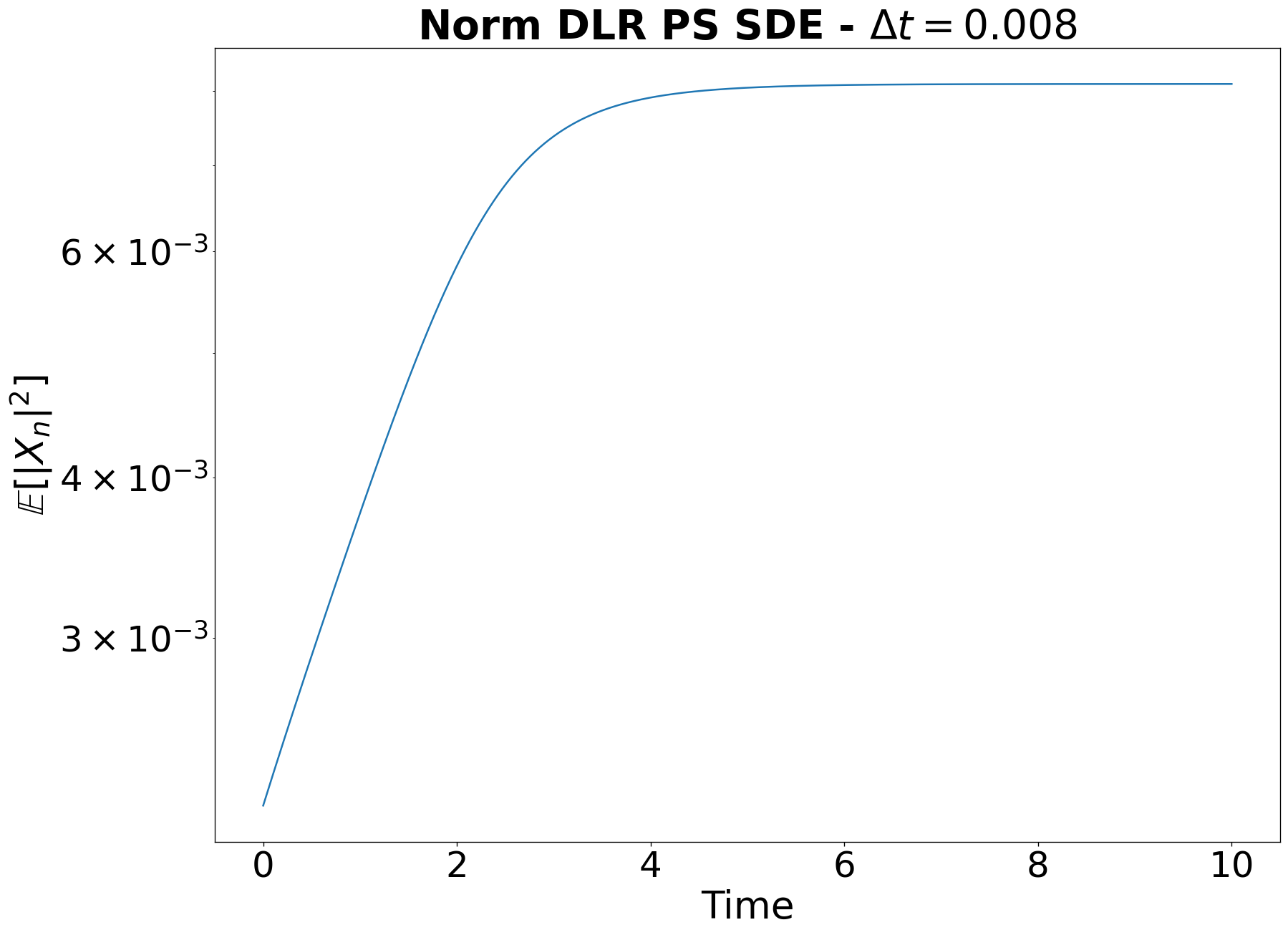}
		\includegraphics[scale=0.16]{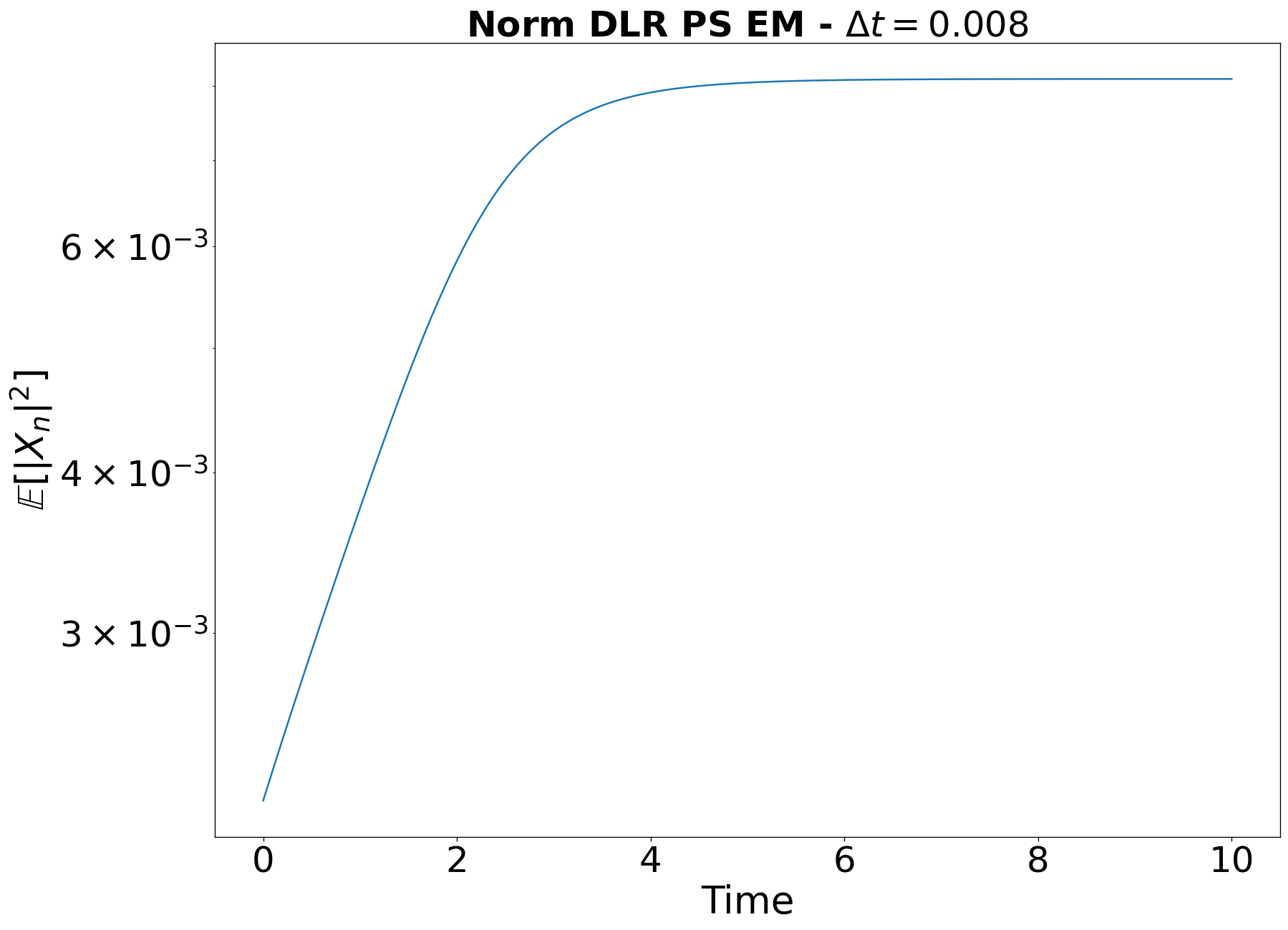}
	\caption{$L^{2}$ norm for equation \eqref{ex: SADR} for the  $X^{\mathrm{true}}$ and DLRA algorithms, $k=18$, $M=4000$ paths, $\Delta t=0.008$.}
	\label{fig:SADR L2 norm}
\end{figure}

In Figure \ref{fig:SADR example errors}, we show the relative errors between the DLR algorithms and the continuous DLRA or the true solution for different values of $\Delta t$. 
Projection-based methods, Algorithms \ref{alg: Stoch DLR Proj algorithm} and \ref{alg: Eva Proj Splitt SDE algorithm}, appear to be more accurate than the DLR EM scheme. This trend supports Theorems~\ref{thm: convergence of DLR Euler-Maruyama}, \ref{thm: Numerical Convergence Stoch DLR Proj - DLRA}, and \ref{thm: Numerical Convergence Eva}.
Moreover, we notice that Algorithm \ref{alg: Eva Proj Splitt SDE algorithm} has smaller error with respect to the true solution than the DLR Projector Splitting for SDEs. This trend may motivate further investigation of the theoretical convergence of Algorithm \ref{alg: Eva Proj Splitt SDE algorithm}, removing the dependence of the constant on the smallest singular value. Furthermore, Figure \ref{fig:SADR example errors} shows that the convergence rate of the projector-splitting methods stagnates for very small time steps. In this regime, the time discretization error is no longer the dominant source of error. Instead, the error is governed by the approximability assumption $\varepsilon$ defined in \eqref{eps bound projection}, which is independent of the time step.

\begin{figure}[!h]
	\centering
	\includegraphics[scale=0.26]{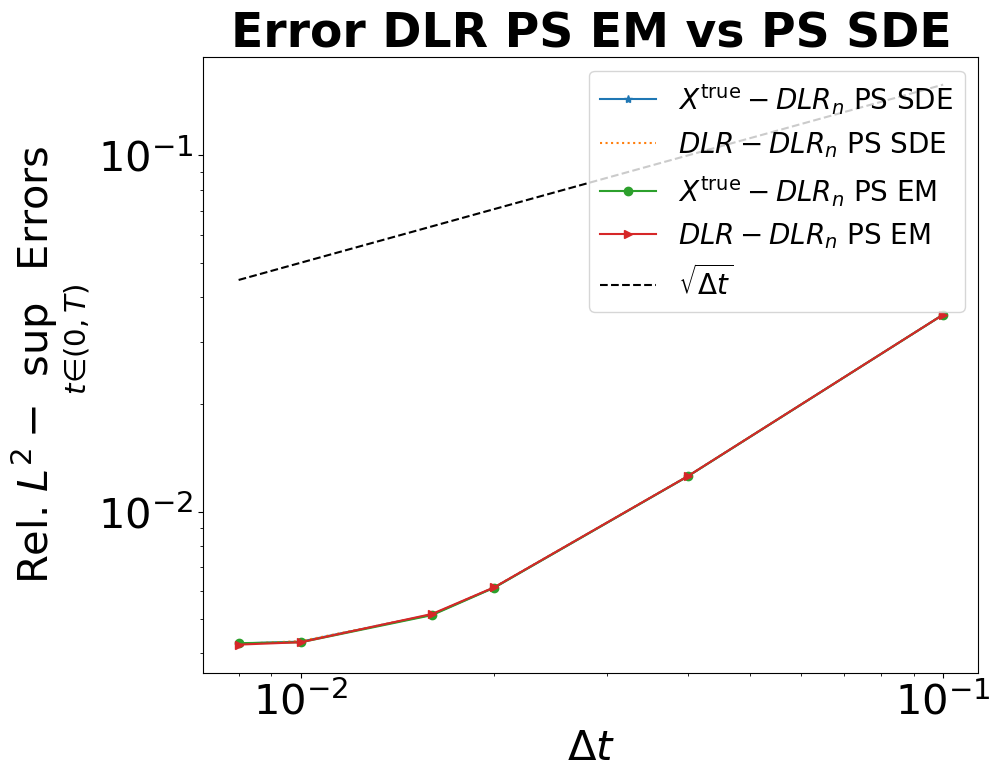}
	\includegraphics[scale=0.26]{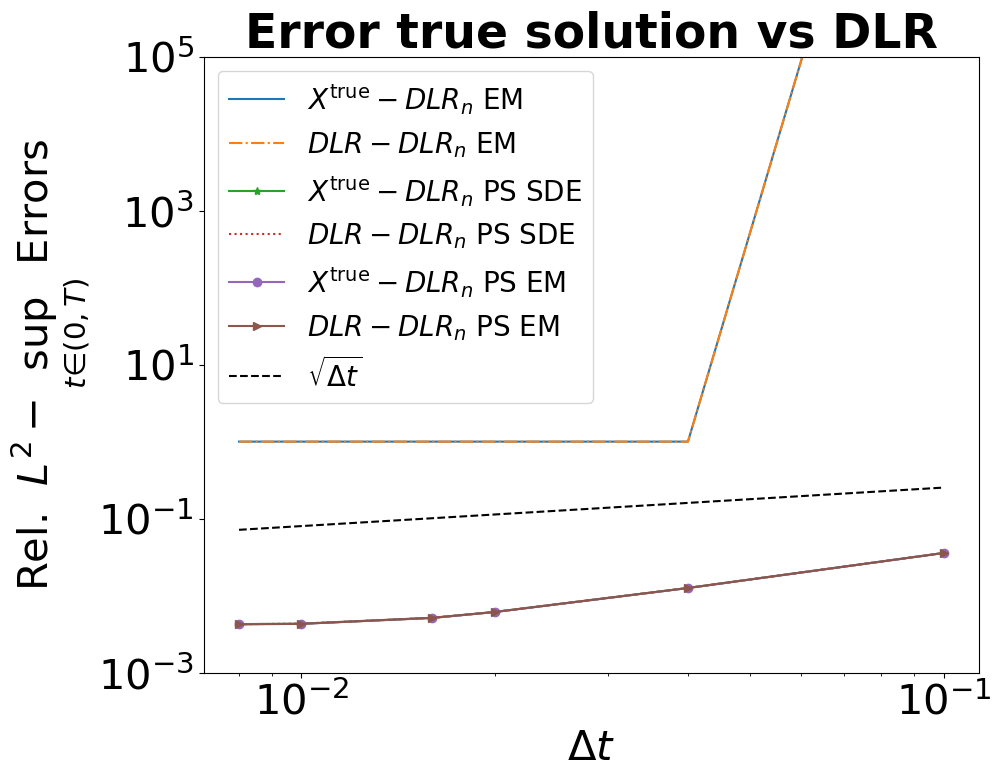}
	\caption{Rel. errors in $L^{2}-\sup\limits_{t \in [0,T]}$ norm for equation \eqref{ex: SADR}, $k=18$, $M=4000$ paths, (\underline{Blue} $(-)$) error between the true solution and the DLRA algorithms and between the continuous DLR and the DLRA algorithms}
	\label{fig:SADR example errors}
\end{figure}
\subsection{Multiplicative noise: Stochastic Laplacian in 1D}
In this numerical example we consider a stochastic Laplacian problem in 1D with Dirichlet boundary conditions under multiplicative colored noise. Like in the previous section, we want to test the numerical convergence of the three algorithms and their dependence on the smallest singular values of the Gramian of the stochastic basis. We will see that the numerical results validate once again the theoretical statements (Theorems~\ref{thm: convergence of DLR Euler-Maruyama}, \ref{thm: Numerical Convergence Stoch DLR Proj - True}, \ref{thm: Numerical Convergence Eva}) in the case of multiplicative noise. The spatial domain is $[0,1]$, while the temporal domain is $[0,T]$, with $T=5$. More precisely, we aim to find the numerical solution of the following system:
	\begin{equation}\label{ex: laplacian}
	\begin{cases}
		\partial_t u(x,t,\omega)&= 0.001 \cdot \Delta u(x,t,\omega) + F(x,t) + G(u,x)\dot{W}(\omega), \quad (x,t,\omega) \in [0,1] \times [0,5] \times \Omega, \\
		u_{0}(x,\omega)&=   \sum_{\ell = 1}^{13} \sin(\pi \ell x)\cdot \mathcal{N}_\ell(0,1)(\omega) +  8 \cdot 10^{-7} \sin(8\pi  x)\cdot \mathcal{N}_{14}(0,1)(\omega),  \quad  (x,\omega) \in [0,1] \times \Omega\\
		u(0,t, \omega)&= u(1,t,\omega) = 0, \quad (t,\omega) \in [0,5] \times \Omega.
	\end{cases}
\end{equation}
In \eqref{ex: laplacian},  $\{\mathcal{N}_\ell(0,1)\}_{\ell=1,\dots,14},$ are i.i.d.\ normal random variables, 
while $G\dot{W}$ represents a colored noise in space, which is rough in time, defined as
\begin{equation*}
	G(u,x)\dot{W}(\omega) = \sum\limits_{\ell=1}^{N} \gamma_{\ell} \langle u(x,t,\omega), \cos(2\pi \ell x)+ \sin(2\pi \ell x)\rangle_{L^2(0,1)} \dot{W}^{\ell}_t(\omega),
\end{equation*}
where $W^{\ell}_t$, for $\ell\in \{1,\dots, N\}$, are independent Brownian motions, also independent of the initial condition $u_0$, 
with $\gamma_\ell = \frac{1}{2 \pi \ell } \exp^{-2\pi \ell}$ and $N=26$. The deterministic force term $F(t)$ is a ``moving window" defined as
\begin{equation}
	F(x,t) = \begin{cases}
		3, \quad& x \in \ ]vt,l+vt[, \quad \text{if} \ t \leq \frac{1-l}{v},\\
		0, \quad& x \notin \  ]vt,l+vt[, \quad \text{if} \  t \leq \frac{1-l}{v},	\end{cases}
\end{equation}
with $l= 0.12$, $v=0.4$, which is reflected symmetrically at the boundary and it is repeated periodically in time with period $\bar{T} = 2(\frac{1-l}{v})=4.4$.
To approximate $u$, we consider a DLR solution $X$ of rank $k=14$, simulating $M=2200$ paths. This choice of the rank has been made considering the rank of the initial condition. We highlight that in $u_0$ the term $8 \cdot 10^{-7} \sin(8 \pi  x)\cdot \mathcal{N}_{14}(0,1)(\omega)$ is nearly negligible compared to the other 13 terms and it may cause problems of overapproximation. Moreover, notice that \eqref{ex: laplacian} is well posed in the sense of mild solutions \cite[Theorem 6.7]{da2014stochastic}.

We discretize the laplacian operator $\Delta$ using centered finite differences for each fixed path $\omega$ on a uniform mesh $\{ (ih); \ i \in \{0,\dots, N\} \ : \ Nh = 1 \}$ with mesh size $h=0.04$
\begin{equation*}
	\displaystyle
	\begin{aligned}
		  \Delta u(ih, \omega) \approx& \frac{ u((i-1)h,\omega)-2\cdot u(ih,\omega)+ u((i+1)h,\omega)}{h^{2}}. \\
	\end{aligned}
\end{equation*}
This leads to a SDE of effective dimension $d=24$, to which we apply the different DLRA algorithms. We initiate all the DLRA procedures as described in Section \ref{sec: basic sde}. The time discretization for the reference true solution $u$ of \eqref{ex: laplacian} and the continuous DLR are made with forward Euler-Maruyama and DLR Projector Splitting for SDEs, respectively, with mesh size $\Delta t=2 \cdot 10^{-3}$.  Notice that, even though the laplacian operator is linear and deterministic, unfortunately the presence of the forcing term $F(x,t)$ does not allow us to simplify the expression for the deterministic basis to \eqref{eq: linear drift}, as the stochastic basis is not assumed as a centered random variable. Therefore, we expect that the accuracy of the DLR EM will be affected by the smallest singular value of its stochastic basis.

Also in this example, we show in Figures \ref{fig: laplacian singular values DLR EM}, \ref{fig: laplacian singular values Stoch Proj}, and \ref{fig: laplacian singular values DLR KNV}, the quantity $\widehat{\Delta t_n}^{i}$ \eqref{eq: delta t stab 2} related to the $i$-th largest singular values $\sigma^i$ of the Gramian $\mathbb{E}[Y_nY_n^{\top}]$ for all the three algorithms over time. We consider these plots to understand if the condition on $\Delta t$ guaranteeing boundedness of the numerical solution is satisfied and if the accuracy remains unaffected by the smallest singular values. 

\begin{figure}[!h]
	\centering
	\includegraphics[scale=0.163]{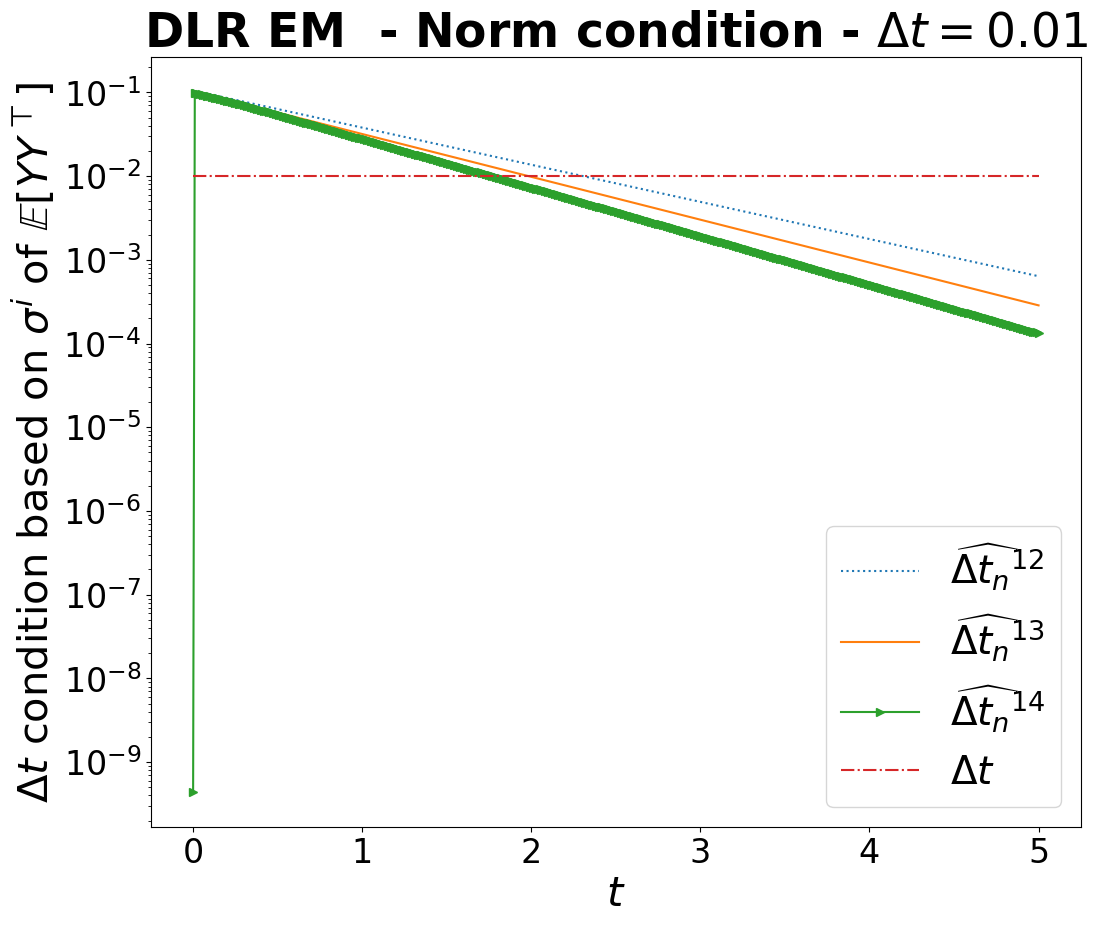}
	\includegraphics[scale=0.163]{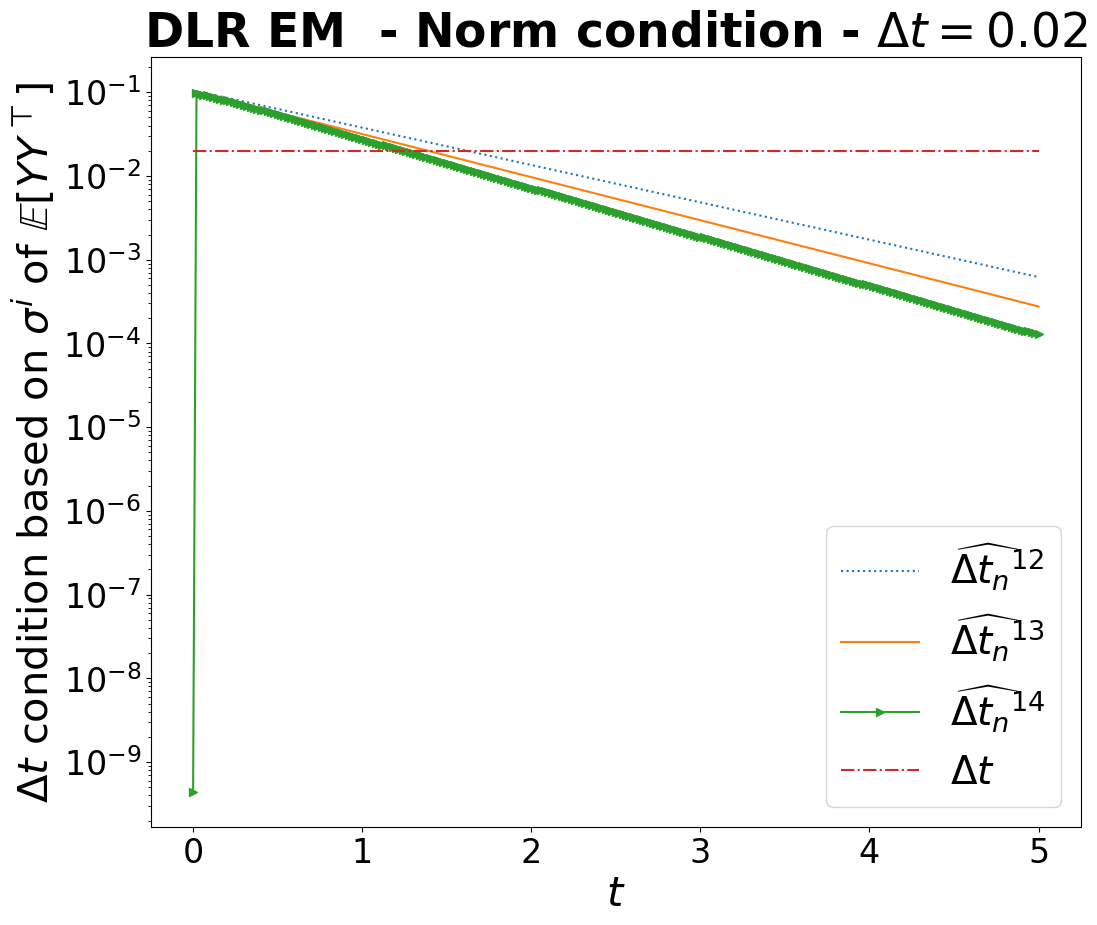}\\
	\includegraphics[scale=0.165]{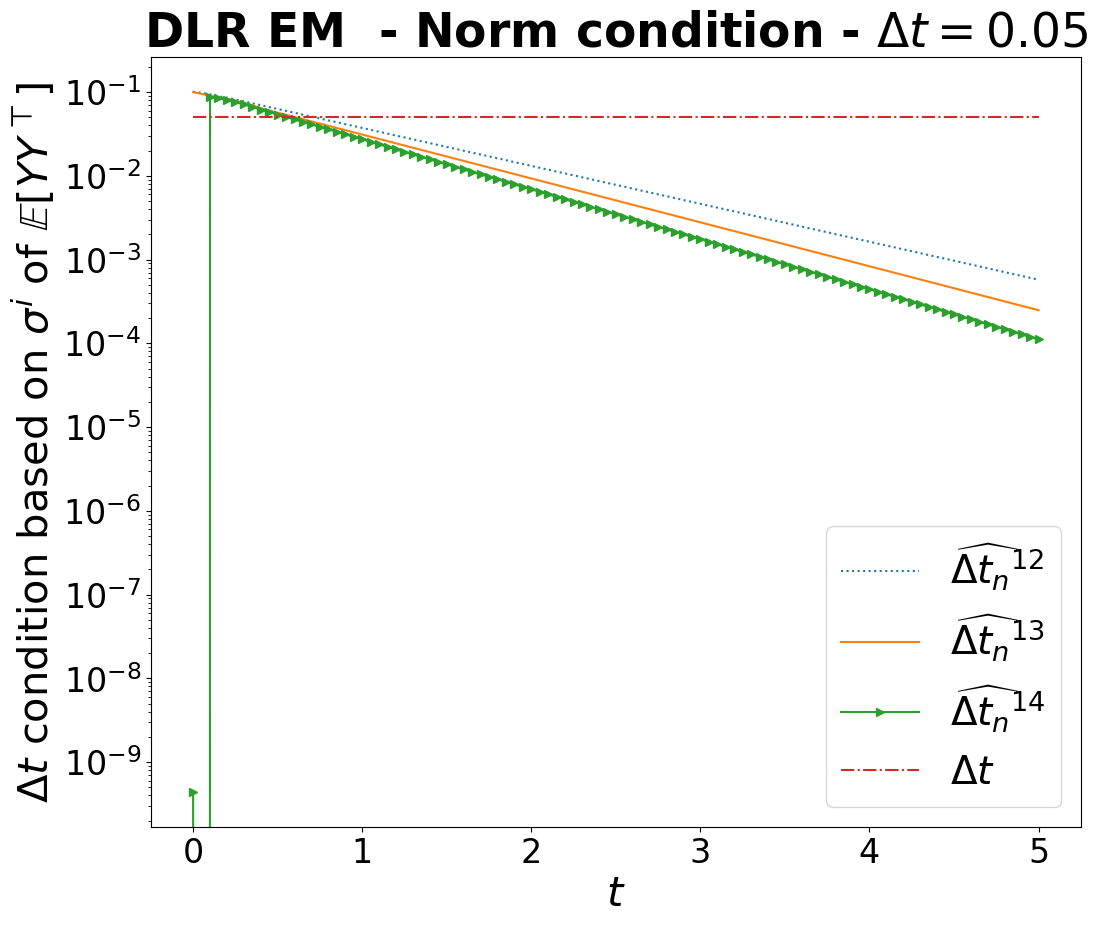}
	\includegraphics[scale=0.165]{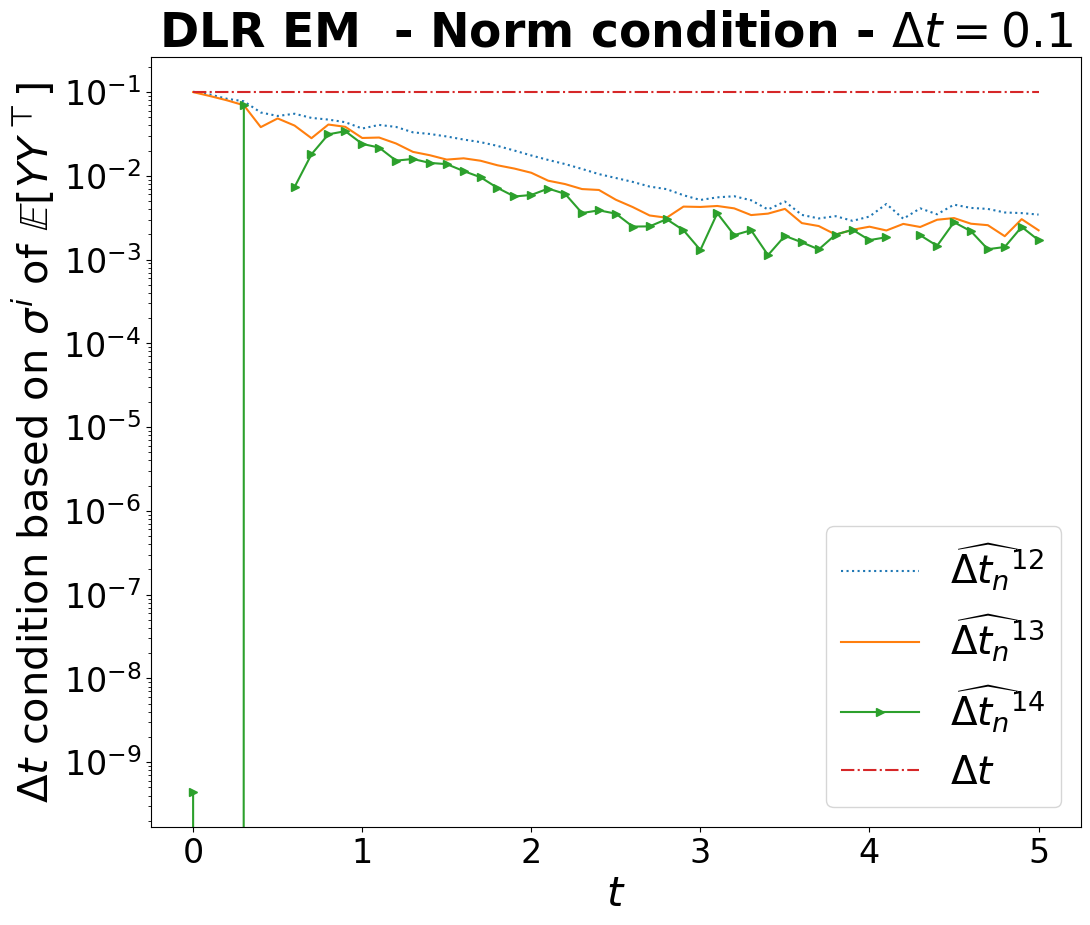}
	\caption{Condition \eqref{eq: delta t stab 2} related to the square root of the singular values $\sigma^i$ of the Gramian $\mathbb{E}[X_nX^{\top}_n]$ for DLR Euler-Maruyama with $\Delta t = 0.01,0.02,0.05,0.1$ for Problem \eqref{ex: laplacian}.}
	\label{fig: laplacian singular values DLR EM}
\end{figure}

\begin{figure}[!h]
	\centering
	\includegraphics[scale=0.165]{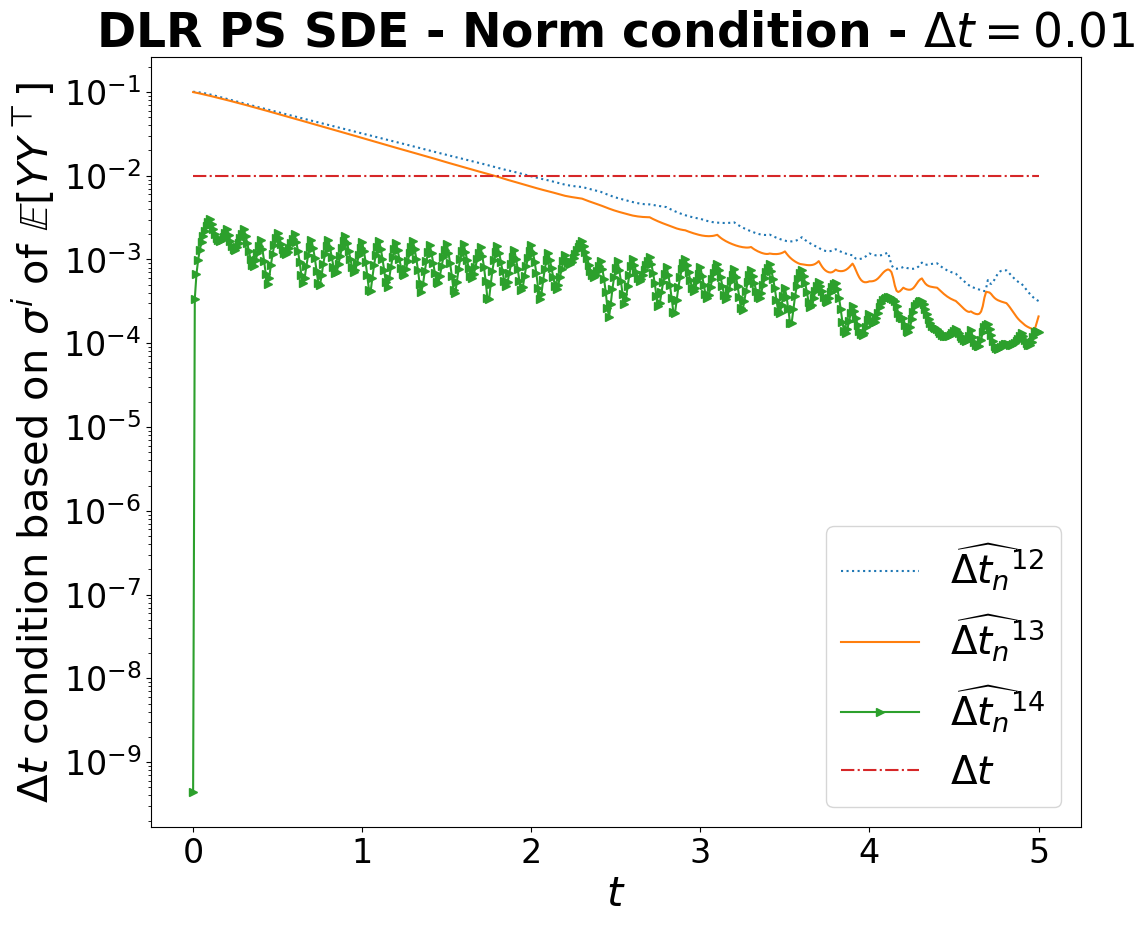}
	\includegraphics[scale=0.165]{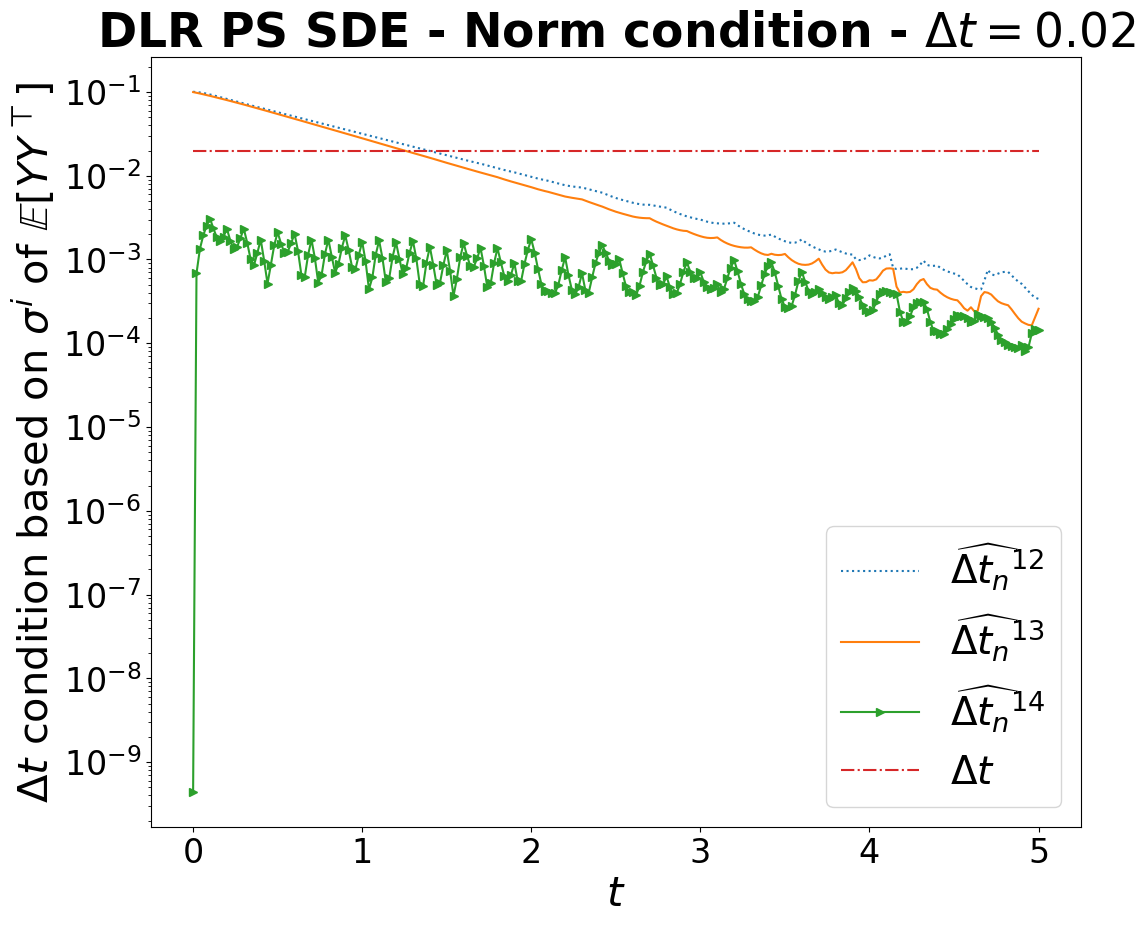}\\
	\includegraphics[scale=0.16]{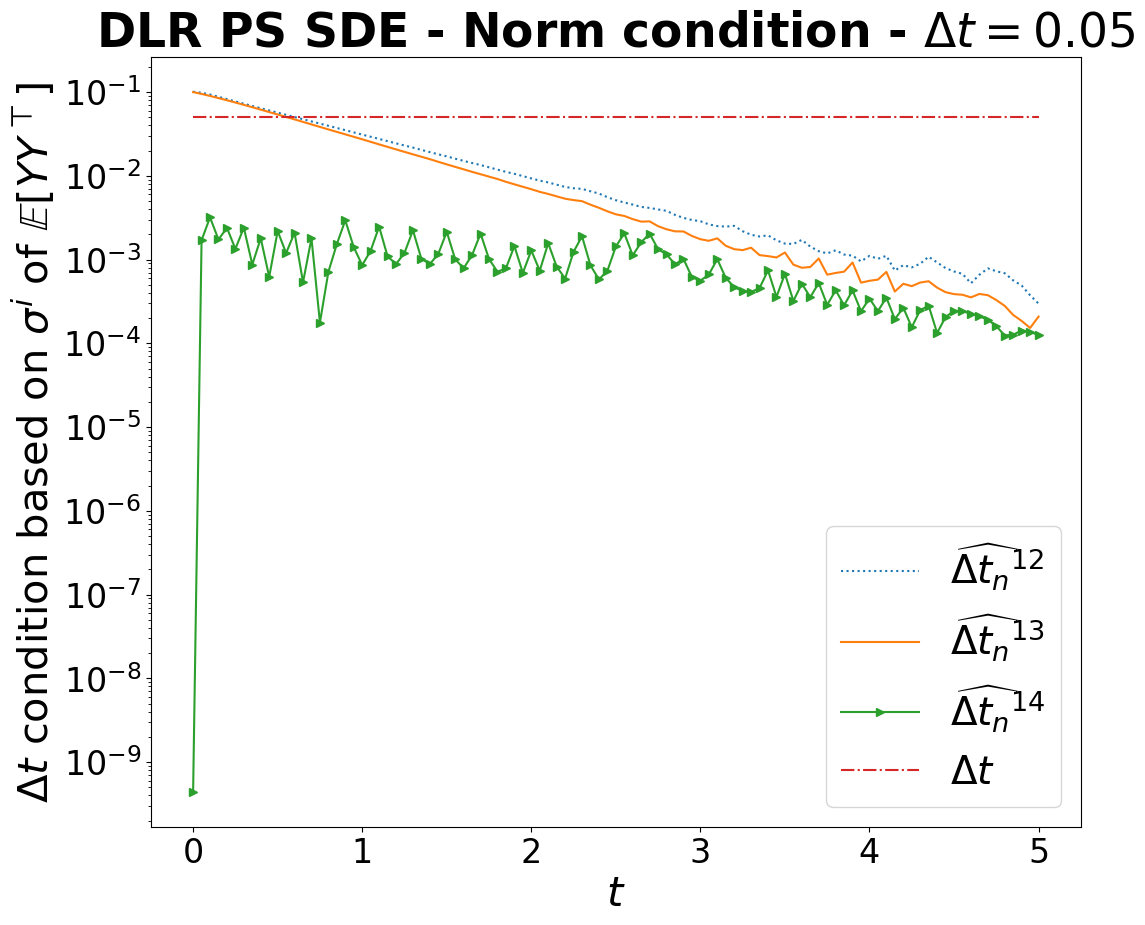}
	\includegraphics[scale=0.16]{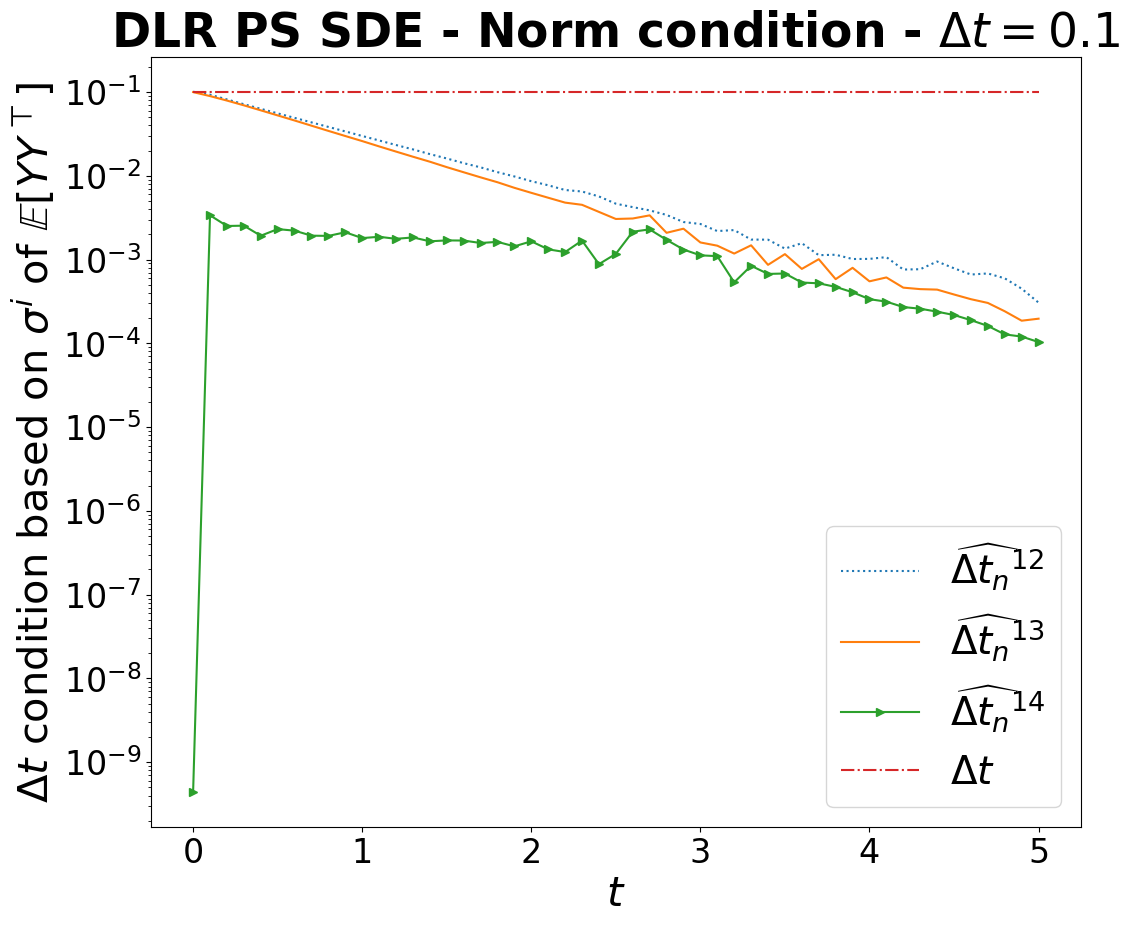}
	\caption{Condition \eqref{eq: delta t stab 2} related to the square root of the singular values $\sigma^i$ of the Gramian $\mathbb{E}[X_nX^{\top}_n]$ for DLR Projector Splitting for SDEs with $\Delta t = 0.01,0.02,0.05,0.1$ for the Problem \eqref{ex: laplacian}.}
	\label{fig: laplacian singular values Stoch Proj}
\end{figure}

\begin{figure}[!h]
	\centering
	\includegraphics[scale=0.165]{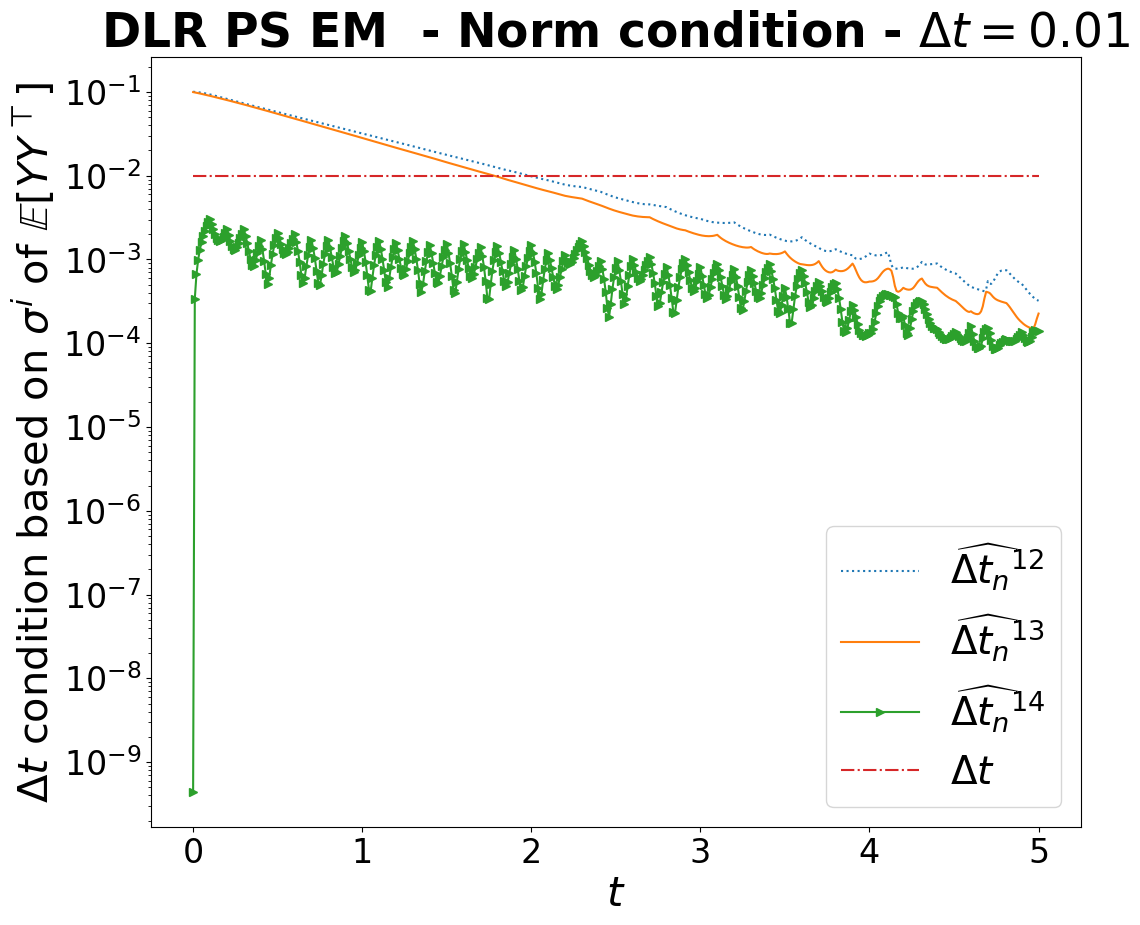}
	\includegraphics[scale=0.165]{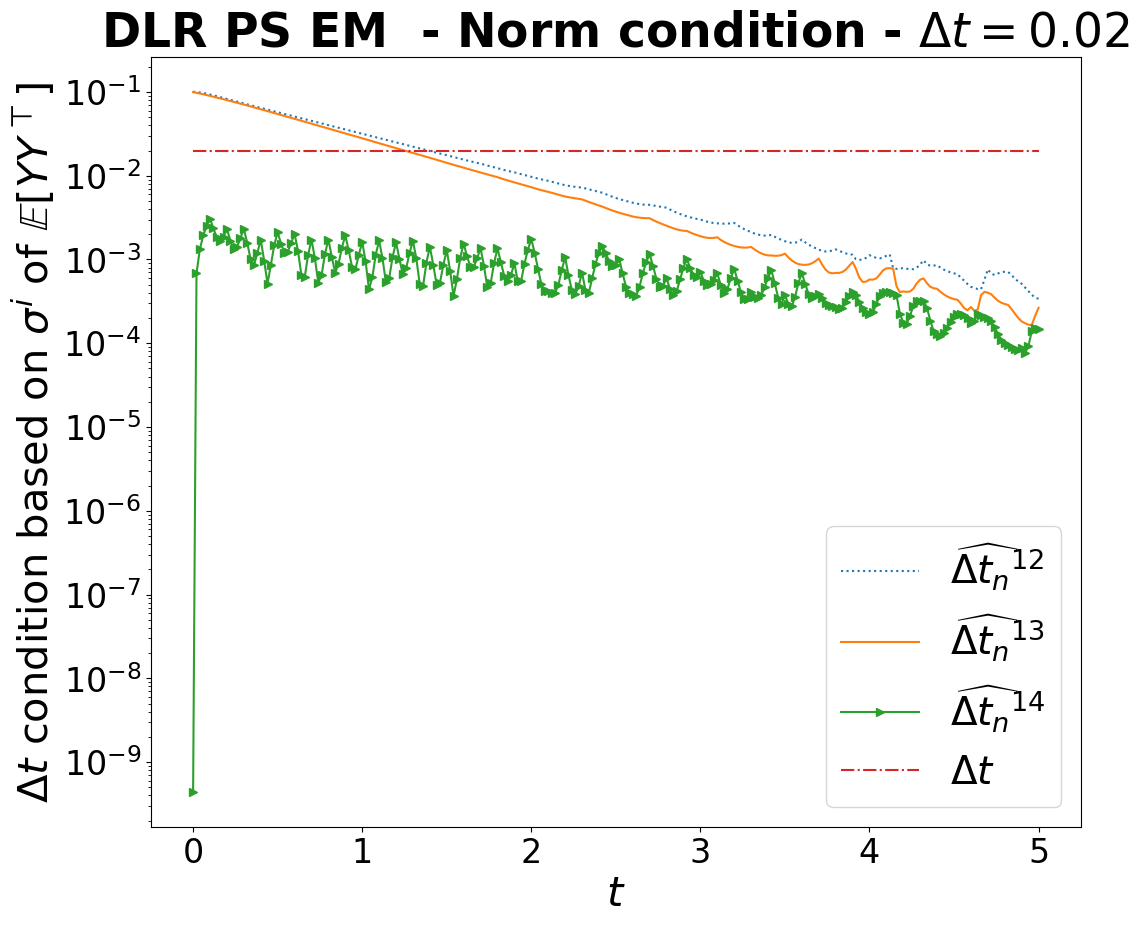}\\
	\includegraphics[scale=0.16]{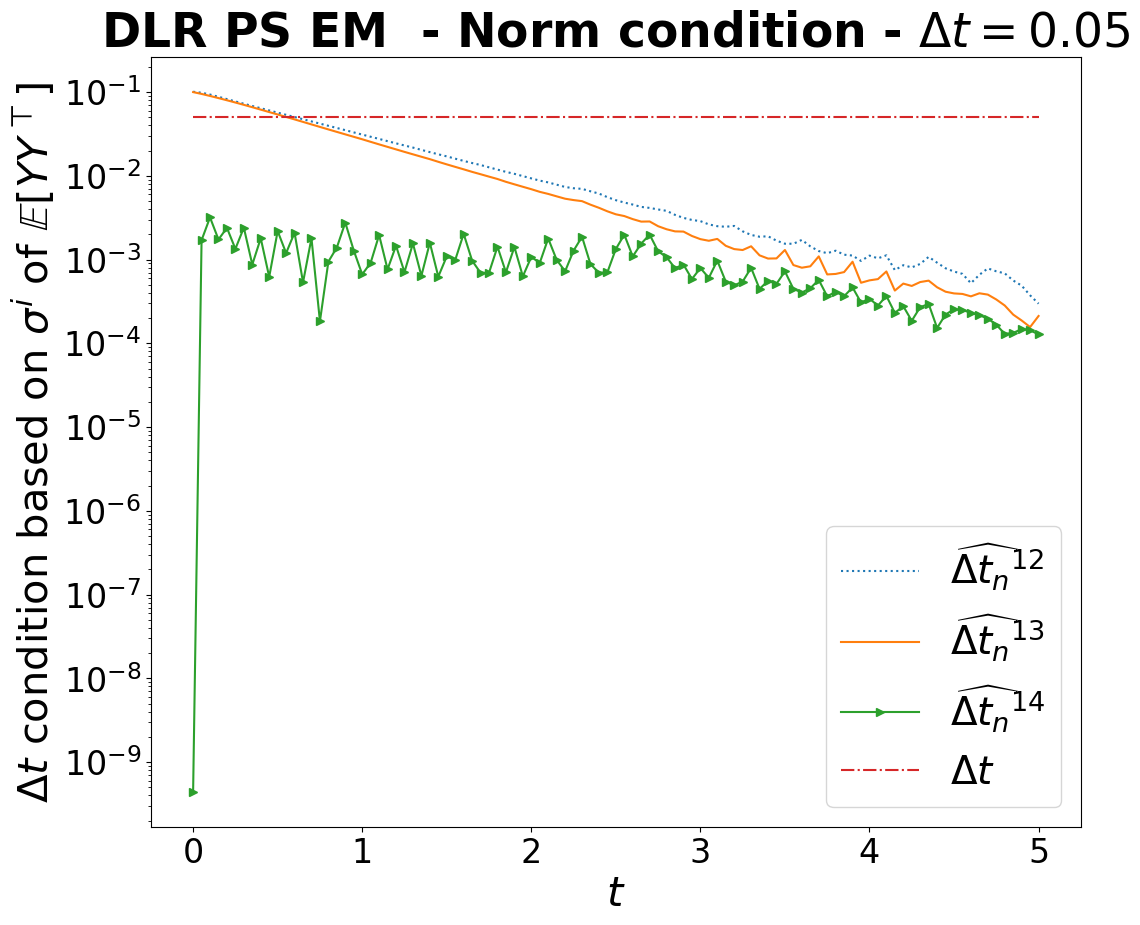}
	\includegraphics[scale=0.16]{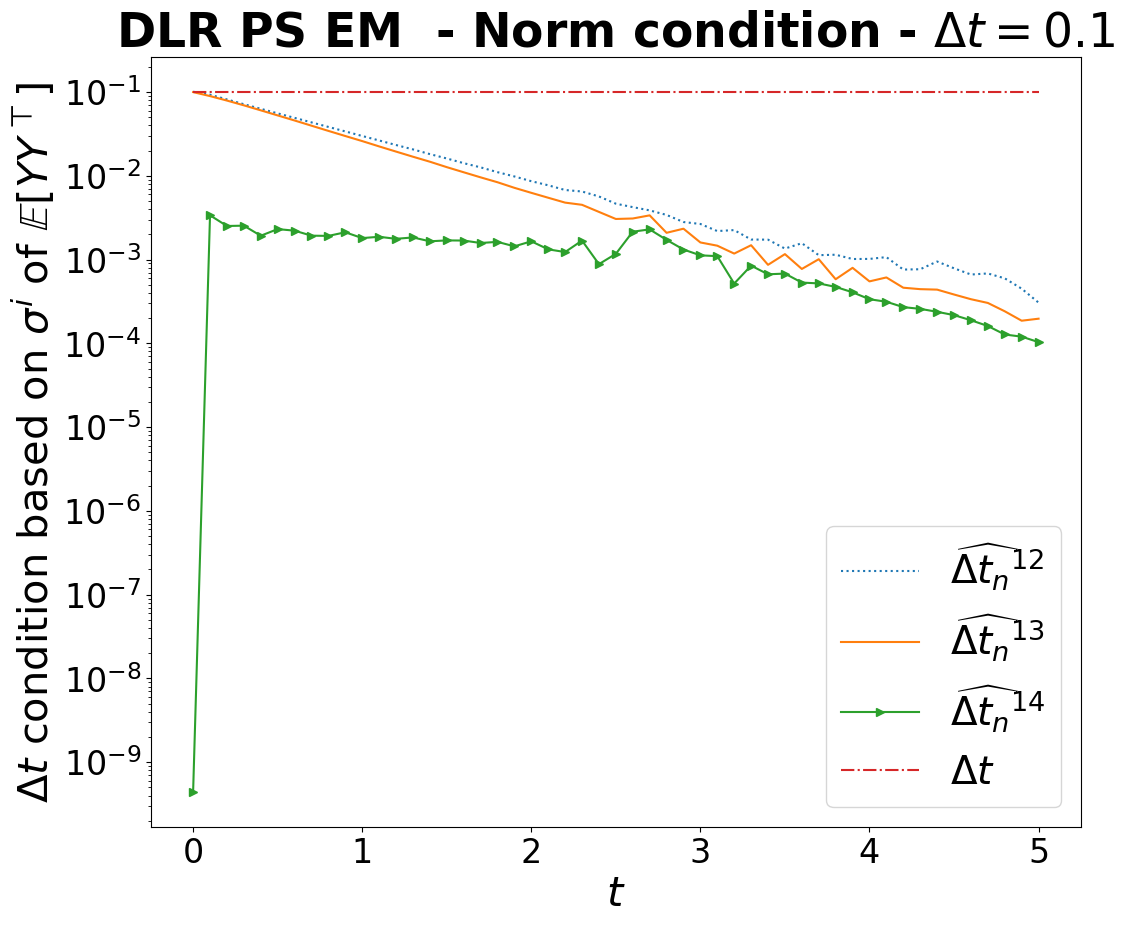}
	\caption{Condition \eqref{eq: delta t stab 2} related to the square root of the singular values $\sigma^i$ of the Gramian $\mathbb{E}[X_nX^{\top}_n]$ for DLR Projector Splitting for EM with $\Delta t = 0.01,0.02,0.05,0.1$ for the Problem \eqref{ex: laplacian}.}
	\label{fig: laplacian singular values DLR KNV}
\end{figure}

Figures \ref{fig: laplacian singular values DLR EM}, \ref{fig: laplacian singular values Stoch Proj}, and \ref{fig: laplacian singular values DLR KNV} illustrate that the time step does not always satisfy \eqref{eq: delta t stab 2} in $[0,T]$. Again, we judge this behavior as a violation of the condition \eqref{eq: dt sup n cond} and we expect that the DLR Euler-Maruyama method does not present error decay over time for large $\Delta t$, unlike the two Projector Splitting methods.
	
Figure \ref{fig:laplacian example errors} presents relative errors between the DLRA algorithms and the continuous DLRA and between the DLRA algorithms and the true solution for different values of $\Delta t$. 
We observe that the projector-splitting methods, Algorithms \ref{alg: Stoch DLR Proj algorithm} and \ref{alg: Eva Proj Splitt SDE algorithm}, are unaffected by condition \eqref{eq: dt sup n cond} and their convergence to the true solution $X^{\mathrm{true}}$ is not influenced by the smallest singular value of the Gramian matrix of the stochastic basis, as stated in Theorems~\ref{thm: Numerical Convergence Stoch DLR Proj - True} and \ref{thm: Numerical Convergence Eva}. On the other hand, the DLR Euler-Maruyama presents a huge constant multipliying the convergence rate in time. This numerical behavior supports Theorem~\ref{thm: convergence of DLR Euler-Maruyama}. 

\begin{figure}[!h]
	\centering
\includegraphics[scale=0.28]{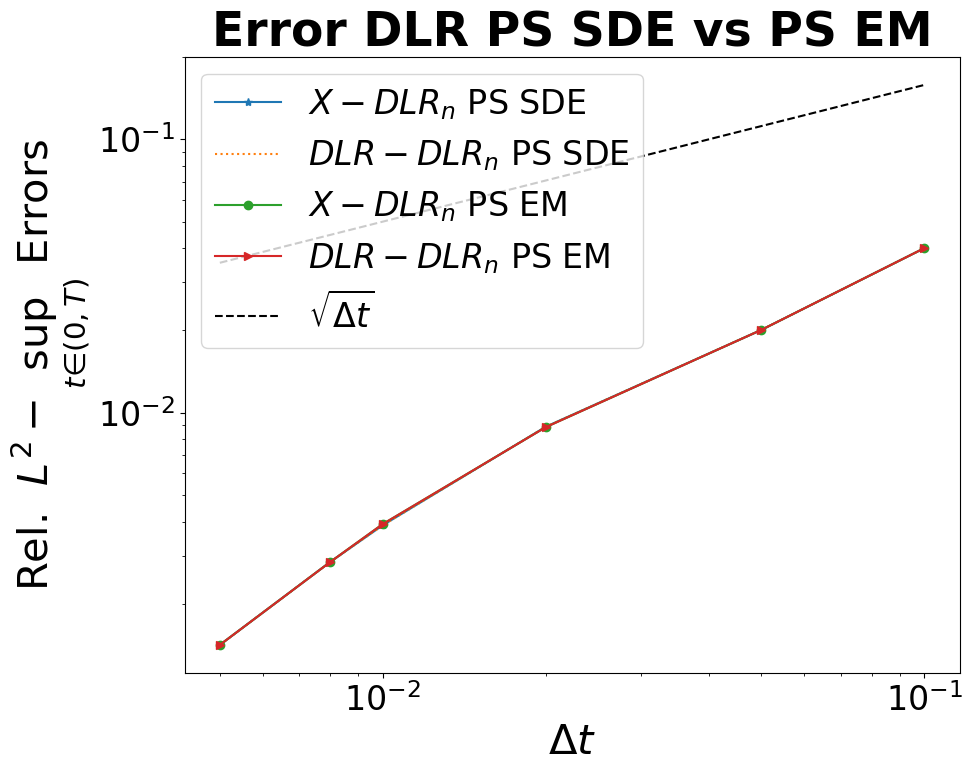}
\includegraphics[scale=0.28]{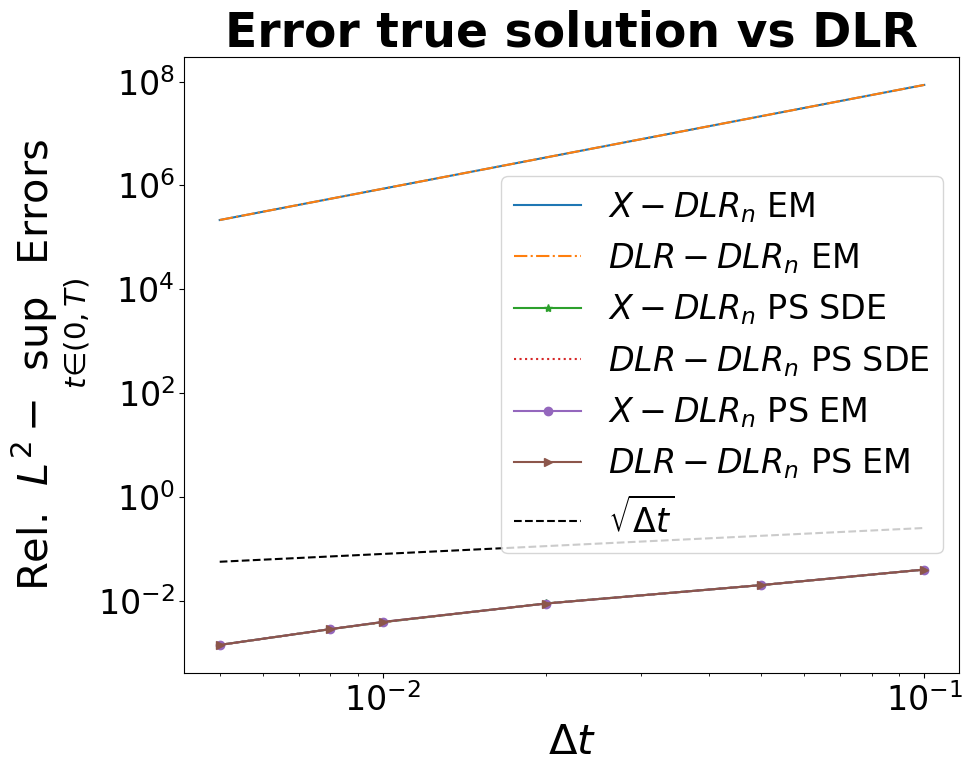}
	\caption{Rel. errors in $L^{2}-\sup\limits_{t \in [0,T]}$ norm for the Problem \eqref{ex: laplacian}, $k=14$, $M=2200$ paths, (\underline{Blue} $(-)$) error between the true solution and the DLRA algorithms and between the continuous DLRA and the DLRA algorithms}
	\label{fig:laplacian example errors}
\end{figure}

\section{Conclusion}
\label{sec: conclusion}
In this work, we have proposed three numerical schemes to discretize DLRA for SDEs. The first scheme was built by discretizing both the deterministic and stochastic modes forward in time, followed by an
 orthonormalization procedure applied to the deterministic modes. 
 Despite this final nonlinear operation, it turned out that this algorithm is convergent. 
 However, its convergence requires a time-step restriction involving the smallest singular value of the Gramian matrix of the stochastic modes.
 If this condition is not met, the numerical solution may not be accurate, as our numerical experiments show.
 Under the same condition, in a bounded interval of time, we proved that the Gramian of the stochastic modes is always lower-bounded in a matrix sense by a strictly positive constant if the diffusion matrix is strictly positive. 
   This result is consistent with the theory of DLRA for SDEs developed in \cite{kazashi2025dynamical}, which provides analagous lower bound on the Gramian under the same condition on the diffusion. 

The second and the third discretization procedures consist in evolving the stochastic modes forward in time via a standard Euler-Maruyama method (as in the previous scheme), whereas the deterministic modes are computed using the new stochastic components before orthonormalization. 
These algorithms differ in the arguments inside the expectation present in the equation of the modes: the former involves only the drift, the latter both drift and diffusion increments. This distinction characterizes the DLRA-type equations that they approximate: the \textit{Projector Splitting method for SDEs} converges to the DLRA for SDEs defined in \cite{kazashi2025dynamical}, whereas the \textit{Projector Splitting method for Euler-Maruyama} converges to the DO equations defined in \cite{cao2018stochastic}.
We have shown that both methods converge to the respective continuous DLR solution without any restriction on the time step, and satisfy the same lower bound on the Gramian as Algorithm \ref{alg: DLR EM SDE algorithm} in case of strictly positive diffusion. In particular, our error estimates for Algorithm \ref{alg: Stoch DLR Proj algorithm} are unaffected by the smallest singular value of the Gramian. This is unfortunately not the case for Algorithm \ref{alg: Eva Proj Splitt SDE algorithm}. However, both algorithms show similar convergence behavior in time in our numerical simulations, evidencing a possible limitation of our theoretical analysis of Algorithm \ref{alg: Eva Proj Splitt SDE algorithm}.

Among possible future directions of research, an in-depth theoretical analysis of the mean-square stability of the DLR EM scheme for Equation \eqref{eq: moltiplic sdes} can be of interest. 
Indeed, in this case the method takes the form
\begin{equation}\label{eq: DLR EM sde stab}
	\begin{aligned}
		X_{n+1} = & X_n +  A(t_n) X_n \Delta t_n + U^{\top}_n U_n \sum_{k=1}^{m} B_k(t_n) X_n \Delta W^{k}_n,  \\
		& + \left(I_{d \times d} - U_n^{\top}U_n \right)A(t_n) U^{\top}_n U_{n}A (t)X_n (\Delta t_n)^2 \\
		&+  \left(I_{d \times d} - U_n^{\top}U_n \right)A (t)U^{\top}_n U_{n} \left(\sum_{k=1}^{m} B_k(t_n) X_n \Delta W^{k}_n\right) \Delta t_n, \\
	\end{aligned}
\end{equation}
which is not affected by the smallest singular value (see \eqref{eq: linear drift}) and whose second moment can be bounded in the following way
\begin{equation}\label{eq: DLR EM stability cond}
	\begin{aligned}
		\mathbb{E}[|X_{n+1}|^2] \leq  \mathbb{E}[|X_n|^2] \Big(& 1+ \lambda_{\max}(A(t_n) + A^{\top}(t_n) ) \Delta t_n + \sigma^2_{\max}(A(t_n)) (\Delta t_n)^2  \\
		& + \lambda_{\max}\Big(A^{\top}(t_n) \left( I - U^{\top}_n U_{n}\right) A(t_n) U^{\top}_n U_{n} A(t_n) \\
		& \ + A^{\top}(t_n) U^{\top}_n U_{n} A^{\top}(t_n) \left( I - U^{\top}_n U_{n}\right) A(t_n) \Big) (\Delta t_n)^3 \\
		& + \sigma_{\max}(A(t_n))^4 (\Delta t_n)^4 + \sigma_{\max}^2(A(t))\sum\limits_{k=1}^{m} |B_k(t_n)|^2(\Delta t_n)^3 +  \sum\limits_{k=1}^{m}  |B_k(t_n)|^2 \Delta t_n \Big).
	\end{aligned}
\end{equation}
However, it is not immediately clear how to interpret \eqref{eq: DLR EM stability cond} and how to compare it with the stability conditions of the other two schemes. In addition, further investigation should be conducted to obtain convergence results of the DLR PS EM scheme independent of the smallest singular value.

To conclude this paper, we note that all the analyses done in this first part always assumes exact expectations for Algorithms \ref{alg: DLR EM SDE algorithm},  \ref{alg: Stoch DLR Proj algorithm}, and \ref{alg: Eva Proj Splitt SDE algorithm}.
	In the second part of this work \cite{kazashi2025dynamicalpartII} these three procedures will be analyzed with a focus on discretizing the probability measure using a Monte Carlo method.

\section*{Acknowledgements}
FN and FZ were supported by the Swiss National Science Foundation under the
Project n. 200518 “Dynamical low rank methods for uncertainty quantification and data assimilation”.

\printbibliography

\appendix

\section{Auxiliary results}\label{app:aux}
In this section, we collect some auxiliary results used throughout the paper, as well as all the proofs deferred from the main article.

\begin{Lemma}[Analyticity of the QR decomposition of U]\label{lem: analyticity of U}
		Consider $U_n \in \mathbb{R}^{k \times d}$ with orthogonal rows and $A \in \mathbb{R}^{k \times d}$. Define
		\begin{equation*}
			\begin{aligned}
				U(s) := U_n + A P_{U_n}^\perp s, \quad \text{where } 	\qquad
				P_{U_n}^\perp = I - U_n^{\top}U_n.
			\end{aligned}
		\end{equation*}
		Let \texttt{QR} denote either the QR decomposition based on Cholesky factorization or Gram-Schmidt orthonormalization \cite{golub2013matrix}. Then the two functions $Q : [0, \infty) \to \mathbb{R}^{d \times k}$ and $R : [0, \infty) \to \mathbb{R}^{k \times k}$ defined as
		\begin{equation}\label{eq: def Q R}
			\begin{aligned}
				Q(0) = U_n^{\top}, \quad R(0) = I_{k \times k}, \quad \texttt{QR}(U^{\top}(s)) = Q(s) R(s), \\
			\end{aligned}
		\end{equation} 
		where $Q(s)$ has orthogonal columns and $R(s)$ is upper triangular, are well-defined and analytic in $s$ on $[0, \infty)$. Moreover $\left. \frac{d R}{d s} \right|_{s = 0} =0$ and for $s\in [0,T]$ it holds that
	\begin{equation}\label{eq: error in R}
		\| R(s) - I_{k \times k} \|_{\mathrm{F}} \leq  \frac{s^2 }{2 } \sqrt{2(1+T^2|A|^2)} \bigl(\|A\|^2_{\mathrm{F}}+2T^2\|A\|_{\mathrm{F}}^4\bigr).
	\end{equation}
	\begin{proof}
		We split the proof into several steps. First we show that $U(s)U(s)^{\top}$ is always of full rank, which implies that the QR is always well defined and unique. Then, we show that the \texttt{QR} based on the Cholesky factorization is made by only analytical operations, as well as the one built on Gram-Schmidt orthonormalization.
	
		By orthogonality of the rows of $U_n$, one has
		\begin{equation}\label{eq: QR}
			\begin{aligned}
				U(s)U(s)^\top 
				&=
				(U_n + A P_{U_n}^\perp s)(U_n + A P_{U_n}^\perp s)^\top  \\
				&=
				U_n U_n^\top 
				+U_n P_{U_n}^\perp A^\top s
				+ A P_{U_n}^\perp U_n^\top s
				+ A P_{U_n}^\perp P_{U_n}^\perp A^\top s^2 \\
				& = I_{k \times k} + A P_{U_n}^\perp A^\top s^2.
			\end{aligned}
		\end{equation}
	  As $A P_{U_n}^\perp A^\top s^2$ is symmetric positive semidefinite, then $U(s) U(s)^\top$ is symmetric positive definite for every $s \in \mathbb{R}$, and therefore $U(s)^{\top}$ has full column rank for every $s \in \mathbb{R}$. Therefore $U(s)$ admits a reduced QR factorization
		\begin{equation*}
			\begin{aligned}
				U(s)^{\top}=Q(s)R(s),
			\end{aligned}
		\end{equation*}
		where $Q(s) :=[q_1(s) \dots q_k(s)]$ with $q_i(s) \in \mathbb{R}^d$ for $i=1,\dots,k$ and $Q(s)^\top Q(s)=I_{k\times k}$, and $R(s)=(r_{ij}(s))_{ij}$ is upper triangular. Moreover, if 
		\begin{equation*}
			\begin{aligned}
				r_{ii}(s)>0
				\qquad
				\text{for all } i=1,\dots,k,
			\end{aligned}
		\end{equation*}
		then the factorization is unique.
		
		Let us first show the analyticity of $Q(s)$ and $R(s)$ via Cholesky QR.
		Define the Gram matrix
		\begin{equation*}
			\begin{aligned}
				G(s):=U(s) U(s)^\top =I_{k \times k}+ A P_{U_n}^\perp A^\top s^2.
			\end{aligned}
		\end{equation*}
		Since $G(s)=(g_{ij}(s))_{ij}$ is symmetric positive definite for all $s$, it admits a unique Cholesky factorization $G(s)=R(s)^\top R(s)$, where $R(s)$ is upper triangular with only positive elements in the diagonal.
	    We now show that the entries of $R(s)$ are analytic. The Cholesky recursion is given by the following relation
		\begin{equation*}
			\begin{aligned}
				r_{ii}(s)&=\sqrt{g_{ii}(s)-\sum\limits_{\ell=1}^{i-1} r_{\ell i}(s)^2},
				\quad r_{ij}(s)=\frac{g_{ij}(s)-\sum\limits_{\ell=1}^{i-1} r_{\ell i}(s)r_{\ell j}(s)}{r_{ii}(s)},
				\ 1 \le i< j \le k,
			\end{aligned}
		\end{equation*}
		and, of course, $r_{ij}=0$ for $i>j$. We prove by induction on $k$ that all entries $r_{ij}(s)$ are analytic.
		
		For $i=1$, $r_{11}(s)=\sqrt{g_{11}(s)}$ and since $g_{11}(s)$ is quadratic in s and strictly positive by $G$ being symmetric positive definite, $r_{11}(s)$ is analytic.
		Assume now that all entries computed before step $i$ are analytic. Then the function
		\begin{equation*}
			\begin{aligned}
				h_i(s):=g_{ii}(s)-\sum_{\ell=1}^{i-1} r_{\ell i}(s)^2
			\end{aligned}
		\end{equation*}
		is analytic, as it is obtained from analytic functions by additions and multiplications. Moreover, by the Cholesky construction, $h_i(s)=r_{ii}(s)^2>0.$ Since the square root is analytic on $(0,\infty)$, it follows that $r_{ii}(s)=\sqrt{h_i(s)}$ is analytic. Then for each $i>j$, the numerator
		\begin{equation*}
			\begin{aligned}
				g_{ij}(s)-\sum_{\ell=1}^{i-1} r_{\ell i}(s)r_{\ell j}(s)
			\end{aligned}
		\end{equation*}
		is analytic, and the denominator $r_{ii}(s)$ is analytic and strictly positive, implying that the QR is unique and $R(s)$ is analytic in $s$.
		
		Now define $Q(s):=U(s)R(s)^{-1}$.
		Since matrix inversion is analytic on the set of invertible matrices and $R(s)$ is invertible for all $s$, it follows that $R(s)^{-1}$ is analytic, hence $Q(s)$ is analytic, too. 
		
		We briefly show the properties of the analyticity of $Q$, $R$ obtained via classical Gram-Schmidt orthonormalization.
		Let $u_1(s),\dots,u_k(s)$ denote the columns of $U(s)^{\top}$. Since $U(s)^{\top}$ depends affinely on $s$, each column $u_i(s)$ is analytic in $s$. The Gram-Schmidt procedure reads as
		\begin{equation*}
			\begin{aligned}
				v_1(s)&:=u_1(s), \qquad	r_{11}(s):=|v_1(s)|, \qquad q_1(s):=\frac{v_1(s)}{r_{11}(s)},
			\end{aligned}
		\end{equation*}
		and for $j \ge 2$,
		\begin{equation*}
			\begin{aligned}
				r_{ij}(s)&:=q_i(s)^\top u_j(s),
				\ i=1,\dots,j-1,
				\quad
				v_j(s):=u_j(s)-\sum_{i=1}^{j-1} q_i(s)r_{i j}(s),\\
				\
				r_{jj}(s)&:=|v_j(s)|,
				\quad
				q_j(s):=\frac{v_j(s)}{r_{jj}(s)},
			\end{aligned}
		\end{equation*}
		and $r_{ij}=0$ for $i>j$. Suppose by contradiction that for some $i$ and some $s$ one had $v_i(s)=0$. Then $u_j(s)$ would belong to the span of $u_1(s),\dots,u_{j-1}(s)$, so the first $j$ columns of $U(s)$ would be linearly dependent. This contradicts the full-column-rank property of $U(s)$. Therefore
		\begin{equation*}
			\begin{aligned}
				v_j(s)\neq 0\qquad \text{for all } j=1,\dots,k \text{ and for all } s\in[0, \infty),
			\end{aligned}
		\end{equation*}
		and hence $r_{kk}(s)=|v_k(s)|>0$, and the QR decomposition is well-defined.
		We now prove analyticity by induction on $i$. For $i=1$, $v_1(s)=u_1(s)$ is analytic, and $r_{11}(s)=\sqrt{v_1(s)^\top v_1(s)}$ is analytic because $v_1(s)^\top v_1(s)$ is analytic and strictly positive. Thus $q_1(s)=\frac{v_1(s)}{r_{11}(s)}$ is analytic.
		
		Assume now that $q_1(s),\dots,q_{j-1}(s)$ are analytic. Then each $r_{ij}(s)=q_i(s)^\top u_j(s)$ is analytic and, hence,
		\begin{equation*}
			\begin{aligned}
				v_j(s)=u_j(s)-\sum_{i=1}^{j-1} q_i(s)r_{i j}(s)
			\end{aligned}
		\end{equation*}
		is analytic. Since $v_j(s)\neq 0$ for all $s$, also $r_{jj}(s)=\sqrt{v_j(s)^\top v_j(s)}$ is analytic, and, hence, $q_j(s)=\frac{v_j(s)}{r_{jj}(s)}$ is analytic, too, implying the thesis by induction. 
		
		To prove the final bound in \eqref{eq: error in R}, we want to find an explicit expression of the derivatives of $R$ and then bound them.
		Notice that from \eqref{eq: QR}, one has
		\begin{equation}\label{eq: R.TR }
			R(s)^\top R(s)=I_{k \times k}+s^2M,
		\end{equation}
		with $M=A P_{U_n}^\perp A^\top$. 
		On the other hand, by analyticity of $s \mapsto R(s)$ and from \eqref{eq: R.TR } one has $R(s)=I_{k \times k} + R_1 s + R_2 s^2 + ...$, for reasonable upper triangular matrices $R_i \in \mathbb{R}^{k \times k}$, implying
		\begin{equation*}
			R(\Delta t)^{\top}R(\Delta t) = I_{k \times k} + \left( R_1 + R_1^{\top} \right) \Delta t + ( R_1^{\top} R_1 +  R_2 + R_2^{\top}) (\Delta t)^2 + O(\Delta t^3).
		\end{equation*}
		These relations and the fact that $R$ is upper triangular by construction imply that $R_1 = 0$, and therefore by analyticity of $R$ we can write		
		\begin{equation}\label{eq: R taylor}
			R(s) = I_{k \times k} + \int_{0}^{s} r \frac{\mathrm{d}^2 R(r)}{\mathrm{d} r^2} \mathrm{d} r.
		\end{equation}
	
		Now, we want to find a uniform bound on the second derivative of $R$. Via differentiating the relation \eqref{eq: R.TR } we find that
		\begin{equation*}
			\frac{\mathrm{d} R(s)^\top}{\mathrm{d} s} R(s)+R(s)^\top \frac{\mathrm{d} R(s)}{\mathrm{d} s}=2sM,
		\end{equation*}
		and
		\begin{equation*}
			\frac{\mathrm{d}^2 R(s)^\top}{\mathrm{d} s^2} R(s)+2\frac{\mathrm{d} R(s)^\top}{\mathrm{d} s} \frac{\mathrm{d} R(s)}{\mathrm{d} s}+R(s)^\top \frac{\mathrm{d} ^2 R(s)}{\mathrm{d} s^2}=2M.
		\end{equation*}
		Now, as $R(s)^{-1}$ is well-defined, then we can define
		\begin{equation*}
		  D(s):=\frac{\mathrm{d} R(s)}{\mathrm{d} s}R(s)^{-1},
			\qquad
			E(s):=\frac{\mathrm{d}^2 R(s)}{\mathrm{d} s^2}R(s)^{-1},
		\end{equation*}
		where $D$ and $E$ are still upper-triangular. Then, the following relations hold
		\begin{equation*}
			D(s)^\top+D(s)=2sR(s)^{-\top}MR(s)^{-1},
		\end{equation*}
		and
		\begin{equation*}
			E(s)^\top+E(s)=2R(s)^{-\top}MR(s)^{-1}-2D(s)^\top D(s).
		\end{equation*}
		By means of the upper-triangular structure, it holds that
		\begin{equation*}
			\left\| D(s) \right\|_{\mathrm{F}} \leq \sqrt{2} \left\| \frac{D(s) + D(s)^{\top}}{2}\right\|_{\mathrm{F}},\quad \left\| E(s) \right\|_{\mathrm{F}} \leq \sqrt{2} \left\| \frac{E(s) + E(s)^{\top}}{2}\right\|_{\mathrm{F}},
		\end{equation*}
		whereas by relation \eqref{eq: R.TR }, one has $R(s)^\top R(s) \succeq I_{k \times k},$ which implies $|R(s)^{-1}| \leq 1$
	and, hence, 
		\begin{equation*}
			\|D(s)\|_{\mathrm{F}}\le \sqrt{2}\,s\,\|M\|_{\mathrm{F}}, \quad 
			\|E(s)\|_{\mathrm{F}}\le \sqrt{2}\bigl(\|M\|_{\mathrm{F}}+2s^2\|M\|_{\mathrm{F}}^2\bigr).
		\end{equation*}
		By means of putting together previous computations and relation \eqref{eq: R.TR }, one obtains
		\begin{equation*}
			\begin{aligned}
			\left\|\frac{\mathrm{d}^2 R(s)}{\mathrm{d} s^2}\right\|_{\mathrm{F}}\le & \|E(s)\|_{\mathrm{F}}|R(s)|
			\le	\sqrt{2(1+s^2|M|)} \bigl(\|M\|_{\mathrm{F}}+2s^2\|M\|_{\mathrm{F}}^2\bigr).
			\end{aligned}
		\end{equation*}
		Finally, from \eqref{eq: R taylor} we can write
			\begin{equation*}
				\begin{aligned}
			\| R(s) - I_{k \times k} \|_{\mathrm{F}} = \left\| \int_{0}^{s} r \frac{\mathrm{d}^2 R(r)}{\mathrm{d} r^2} \mathrm{d} r \right\|_{\mathrm{F}} &\leq  \frac{s^2 }{2 }\max\limits_{s \in [0,T]} 	\left\|\frac{\mathrm{d}^2 R(s)}{\mathrm{d} s^2}\right\|_{\mathrm{F}} \\
			&\leq  \frac{s^2 }{2 } \sqrt{2(1+s^2|M|)} \bigl(\|M\|_{\mathrm{F}}+2s^2\|M\|_{\mathrm{F}}^2\bigr) \\
			&\leq  \frac{s^2 }{2 } \sqrt{2(1+T^2|A|^2)} \bigl(\|A\|^2_{\mathrm{F}}+2T^2\|A\|_{\mathrm{F}}^4\bigr).
			\end{aligned}
		\end{equation*}
\end{proof}
\end{Lemma}

\section{Convergence of the DLR Euler-Maruyama}\label{app:DLR EM}

\begin{proof}[\bfseries Proof of Lemma~\ref{lem: sup n yn}]\label{proof: lem: sup n yn}
	To prove \eqref{eq: sup n Y_n}, we first note that the orthogonality of the rows of $U_n$ implies
	\begin{equation*}
		\mathbb{E}[\max\limits_{0 \leq k \leq n}|Y_{k}|^{2}]= \mathbb{E}[\max\limits_{0 \leq k \leq n}|U_{k}^{\top}Y_{k}|^{2}] = 
		\mathbb{E}[\max\limits_{0 \leq k \leq n}|X_{k}|^2].
	\end{equation*}
	First, we want to retrieve a recursion on $\mathbb{E}[\max\limits_{0 \leq k \leq n}|X_{k}|^2]$ via using \eqref{Euler--Maruyama eq}, then we will bound the terms of this relation in order to use a discrete Gronwall's lemma to prove the statement. 
	
	From \eqref{Euler--Maruyama eq} one has that
	\begin{equation*}
		\begin{aligned}
			X_{n+1}
			=& U_{n+1}^{\top} Y_{n+1} \\
			=& X_n
			+ P_{U_n}^{\perp} P_{Y_n} a_n \Delta t
			+ P_{U_n} a_n \Delta t
			+ P_{U_n} b_n \Delta W_n
			+ P_{U_n}^{\perp} \mathbb{E}[a_n Y_n^{\top}] C_{Y_n}^{-1} U_n
			\left[ a_n \Delta t + b_n \Delta W_n \right] \Delta t \\
			=& X_0
			+ \sum_{j=0}^{n} \left( P_{U_j}^{\perp} P_{Y_j} + P_{U_j} \right) a_j \Delta t
			+ \sum_{j=0}^{n} P_{U_j} b_j \Delta W_j 
			+ \sum_{j=0}^{n} P_{U_j}^{\perp} \mathbb{E}[a_j Y_j^{\top}]
			C_{Y_j}^{-1} U_j a_j (\Delta t)^2 \\
			&
			+ \sum_{j=0}^{n} P_{U_j}^{\perp} \mathbb{E}[a_j Y_j^{\top}]
			C_{Y_j}^{-1} U_j b_j \Delta W_j \Delta t .
		\end{aligned}
	\end{equation*}
	Via passing to square norm, using basic norm inequalities, first passing to the sup, and then applying the expectation to both members, one gets	
	\begin{equation*}
		\begin{aligned}
			\mathbb{E}[\max_{0 \leq k\leq n+1} |X_{k}|^2]
			\le& 5 \mathbb{E}\left[|X_0|^2 \right]
			+ 5\underbrace{\mathbb{E}\left[ \max\limits_{0 \leq k \leq n} \left| \sum_{j=0}^{k}
				\left( P_{U_j}^{\perp} P_{Y_j} + P_{U_j} \right) a_j \Delta t  \right|^2\right] }_{T_1}
			+ 5 \underbrace{\mathbb{E}\left[  \max\limits_{0 \leq k \leq n} \left|\sum_{j=0}^{k}
				P_{U_j} b_j \Delta W_j \right|^2\right]}_{T_2} \\
			&
			+ 5 \underbrace{\mathbb{E}\left[ \max\limits_{0 \leq k \leq n} \left|\sum_{j=0}^{k}
				P_{U_j}^{\perp} \mathbb{E}[a_j Y_j^{\top}]
				C_{Y_j}^{-1} U_j a_j (\Delta t)^2 \right|^2\right]}_{T_3} \\
			&+ 5 \underbrace{\mathbb{E}\left[ \max\limits_{0 \leq k \leq n} \left|  \sum_{j=0}^{k}
				P_{U_j}^{\perp} \mathbb{E}[a_j Y_j^{\top}]
				C_{Y_j}^{-1} U_j b_j \Delta W_j \Delta t  \right|^2\right]}_{T_4}
		\end{aligned}
	\end{equation*}
	
	Now, we bound each of the terms $\mathrm{T}_1$-$\mathrm{T}_4$ in a suitable manner. By means of exploiting the linearity of the expectation, basic norm inequalities, the fact that $P_{X_n}$ is an orthogonal projector in $L^2(\Omega, \mathbb{R}^d)$, the linear-growth bound of the drift, and properties of the supremum, for the first component one has that
	\begin{equation*}
		\begin{aligned}
			\mathrm{T}_1 =	\mathbb{E}\left[
			\max\limits_{0 \leq k \leq n}
			\left| \sum_{j=0}^{k} P_{X_j} a_j \Delta t  \right|^2
			\right]
			&\le
			\mathbb{E}\left[
			\max\limits_{0 \leq k \leq n}
			\left(\sum_{j=0}^{k} |P_{X_j} a_j| \Delta t  \right)^2
			\right] \leq  	\mathbb{E}\left[ \max\limits_{0 \leq k \leq n} (k+1)
			\sum_{j=0}^{k} |P_{X_j} a_j|^2 \Delta t^2 
			\right] \\
			&\le
			\mathbb{E}\left[
			(n+1)\sum_{j=0}^{n}
			|P_{X_j} a_j|^2 \Delta t^2
			\right] \\
			&\le
			T \sum_{j=0}^{n}
			\mathbb{E}\left[ |P_{X_j} a_j|^2 \right] \Delta t \\
			&\le
			T \sum_{j=0}^{n}
			\mathbb{E}\left[ |a_j|^2 \right] \Delta t
			\le
			T C_{\mathrm{lgb}}
			\sum_{j=0}^{n}
			\left( 1 + \mathbb{E}[|X_j|^2] \right) \Delta t \\
			&\le
			T C_{\mathrm{lgb}}
			\sum_{j=0}^{n}
			\left( 1 + \mathbb{E}\left[\max\limits_{0 \leq k \leq j} |X_k|^2 \right] \right)
			\Delta t
		\end{aligned}
	\end{equation*}
	
	We notice that the term $\mathrm{T}_2$ is a discrete martingale, hence through \cite[Theorem 4.4.4]{durrett2019probability} one has that
	\begin{equation*}
		\begin{aligned}
			\mathrm{T}_2 := \mathbb{E}\left[
			\max\limits_{0 \leq k \leq n}
			\left| \sum_{j=0}^{k}
			P_{U_j} b_j \Delta W_j
			\right|^2
			\right]
			&\le
			4\,
			\mathbb{E}\left[
			\left|
			\sum_{j=0}^{n}
			P_{U_j} b_j \Delta W_j
			\right|^2
			\right],
		\end{aligned}
	\end{equation*}
	and, hence, via independence of Brownian increments, properties of orthogonal projectors, linear-growth bound of the diffusion, linearity of the expectation, and properties of the supremum, one finds that		
	\begin{equation*}
		\begin{aligned}
			\mathrm{T}_2 \leq & 4 \sum_{j=0}^{n}
			\mathbb{E}\left[ |P_{U_j} b_j \Delta W_j|^2 \right] \\
			\leq  & 4 \sum_{j=0}^{n}
			\mathbb{E}\left[ |b_j|^2 \right] \Delta t
			\le
			4 \sum_{j=0}^{n}
			C_{\mathrm{lgb}} \left( 1 + \mathbb{E}[|X_j|^2] \right) \Delta t \\
			\leq &
			4 \sum_{j=0}^{n}
			C_{\mathrm{lgb}}
			\left( 1 + \mathbb{E}\left[\max\limits_{0 \leq k \leq j} |X_k|^2 \right] \right)
			\Delta t.
		\end{aligned}
	\end{equation*}
	
	Concerning the term $\mathrm{T}_3$, similarly to the treatment of $\mathrm{T}_1$, one gets
	\begin{equation*}
		\begin{aligned}
			\mathbb{E}\left[
			\max\limits_{0 \leq k \leq n}\left|\sum_{j=0}^{k}P_{U_j}^{\perp}
			\mathbb{E}[a_j Y_j^{\top}]C_{Y_j}^{-1}U_j a_j\right|^2\Delta t^4\right]
			&\le
			\mathbb{E}\left[
			(n+1)
			\sum_{j=0}^{n}
			\left|
			P_{U_j}^{\perp}
			\mathbb{E}[a_j Y_j^{\top}]
			C_{Y_j}^{-1}
			U_j a_j
			\right|^2
			\Delta t^4
			\right] \\
			&\le T
			\mathbb{E}\left[
			\sum_{j=0}^{n}
			\left|
			\mathbb{E}[a_j Y_j^{\top}C_{Y_j}^{-\frac{1}{2}}]
			C_{Y_j}^{-\frac{1}{2}} 
			U_j a_j
			\right|^2
			\Delta t^3
			\right] \\
			&\le 
			\frac{T}{\sigma_{n}^k}
			\sum_{j=0}^{n}
			\mathbb{E}\left[ |a_j|^2 \right]
			\mathbb{E}\left[ |U_j a_j|^2 \right]
			\Delta t^3 \\
			&\le
			\frac{T}{\sigma_{n}^k}
			\sum_{j=0}^{n}
			C_{\mathrm{lgb}}^2
			\left( 1 + \mathbb{E}[|X_j|^2] \right)^2
			\Delta t^3 \\
			&\le
			\frac{T}{\sigma_{n}^k}
			\sum_{j=0}^{n}
			C_{\mathrm{lgb}}^2
			\left( 1 + \mathbb{E}\left[\max\limits_{0 \leq k \leq j} |X_k|^2 \right] \right)^2
			\Delta t^3.
		\end{aligned}
	\end{equation*}
	
	Finally, for the term $\mathrm{T}_4$, similarly to the development for $\mathrm{T}_2$, we can exploit the fact that it is a martingale, and via \cite[Theorem 4.4.4]{durrett2019probability}, independence of the Brownian increments, and similar computations one gets
	\begin{equation*}
		\begin{aligned}
			\mathbb{E}\left[
			\max\limits_{1 \leq k\le n}
			\left|
			\sum_{j=0}^{k}
			P_{U_j}^{\perp}
			\mathbb{E}[a_j Y_j^{\top}]
			C_{Y_j}^{-1}
			U_j b_j
			\Delta W_j \Delta t
			\right|^2
			\right]
			&\le
			4\,\mathbb{E}\left[
			\left|
			\sum_{j=0}^{n}
			P_{U_j}^{\perp}
			\mathbb{E}[a_j Y_j^{\top}]
			C_{Y_j}^{-1}
			U_j b_j
			\Delta W_j
			\right|^2
			\Delta t^2
			\right] \\
			&\le
			4 \Delta t^2  \sum_{j=0}^{n}
			\left|
			P_{U_j}^{\perp}
			\mathbb{E}[a_j Y_j^{\top}C_{Y_j}^{-\frac{1}{2}}]
			C_{Y_j}^{-\frac{1}{2}}
			U_j b_j \right|^2
			\Delta t  \\
			&\le
			4 \sum_{j=0}^{n} \frac{1}{\sigma_{n}^k}
			\mathbb{E}[|a_j|^2] \mathbb{E}[|b_j|^2]
			\Delta t^3 \\
			&\le 4 \sum_{j=0}^{n}	\frac{1}{\sigma_{n}^k}
			C_{\mathrm{lgb}}^2
			\left( 1 + \mathbb{E}[|X_j|^2] \right)^2
			\Delta t^3 \\
			&\le 4 \sum_{j=0}^{n} \frac{1}{\sigma_{n}^k} C_{\mathrm{lgb}}^2
			\left( 1 + \mathbb{E}\left[\max\limits_{0 \leq k \leq j} |X_k|^2 \right] \right)^2
			\Delta t^3.
		\end{aligned}
	\end{equation*}
	
	Putting all together, one gets
	\begin{equation}\label{eq: last X imp calc}
		\begin{aligned}
			\mathbb{E}[\max\limits_{0 \leq k \leq n+1} |X_{k}|^2]
			\le& 5 \mathbb{E}\left[|X_0|^2 \right] + 5(T+4) C_{\mathrm{lgb}} \sum_{j=0}^{n}
			\left( 1 + \mathbb{E}\left[\max\limits_{0 \leq k \leq j} |X_k|^2 \right] \right)
			\Delta t \\
			& + 5(T+4) 	\frac{1}{\sigma_{n}^k} C_{\mathrm{lgb}}^2 \sum_{j=0}^{n}
			\left( 1 + \mathbb{E}\left[\max\limits_{0 \leq k \leq j} |X_k|^2 \right] \right)^2
			\Delta t^3
		\end{aligned}
	\end{equation}
	Now, to obtain the thesis we proceed via induction. For the sake of notation let us write $Z_n = 5(1 + \mathbb{E}[\max\limits_{0 \leq k \leq n} |X_{k}|^2])$ . The base step of \eqref{eq: sup n Y_n} for $n=0$ is trivially true. Assume \eqref{eq: sup n Y_n} is true for $n$ and choose $\Delta t \leq \frac{ \sqrt{ \sigma_{n}^k} }{ \sqrt{ C_{\mathrm{lgb}}} \sqrt{ 5(1+\mathbb{E}\left[|X_0|^2 \right]) \exp\left(  C_{\mathrm{lgb}} (2T^2+8T) \right)} }$. Then, from \eqref{eq: last X imp calc} one gets
	\begin{equation*}
		\begin{aligned}
			Z_{n+1} =	5 \left(1+ \mathbb{E}\left[
			\max_{0 \leq k\le n+1}
			|X_k|^2
			\right]\right)
			\le&
			5(1+ \mathbb{E}\left[|X_0|^2 \right]) + (T+4) C_{\mathrm{lgb}} \sum_{j=0}^{n}
			5\left( 1 + \mathbb{E}\left[\max\limits_{0 \leq k \leq j} |X_k|^2 \right] \right)
			\Delta t \\
			& +  (T+4) 	\frac{1}{\sigma_{n}^k} C_{\mathrm{lgb}}^2 \sum_{j=0}^{n}
			5 \left( 1 + \mathbb{E}\left[\max\limits_{0 \leq k \leq j} |X_k|^2 \right] \right)^2
			\Delta t^3 \\
			=& Z_0 + (T+4) C_{\mathrm{lgb}} \sum_{j=0}^{n}
			Z_j 
			\Delta t \\
			& + (T+4) 	\frac{1}{\sigma_{n}^k} C_{\mathrm{lgb}}^2 \sum_{j=0}^{n}
			 \left( 1 + \mathbb{E}\left[\max\limits_{0 \leq k \leq j} |X_k|^2 \right] \right) Z_j 
			\Delta t^3 \\
			\leq& Z_0 + 2(T+4) C_{\mathrm{lgb}} \sum_{j=0}^{n}
			Z_j
			\Delta t \\
		\end{aligned}
	\end{equation*}
	and the bound on the right-hand side in \eqref{eq: sup n Y_n} follows by Gronwall's lemma. 
	
	To obtain the first bound in \eqref{eq: sup n Y_n} $\mathbb{E}[\max\limits_{0 \leq k \leq n}|\tilde{Y}_{k}|^{2}] \leq	\mathbb{E}[\max\limits_{0 \leq k \leq n}|Y_{k}|^{2}] $, notice that
	\begin{equation*}
		\begin{aligned}
			|Y_{k}|^{2} &= Y_{k}^{\top} Y_{k} \\
			& = \tilde{Y}_{k}^{\top} R_k^{\top} R_k \tilde{Y}_{k} \\
			&= \tilde{Y}_{k}^{\top} R_k^{\top} U_k U_k^{\top} R_k \tilde{Y}_{k} = \tilde{Y}_{k}^{\top} \tilde{U}_k \tilde{U}_k^{\top} \tilde{Y}_{k}= \tilde{Y}_{k}^{\top} \left(I_{k \times k} + C^{\top}C (\Delta t_n)^2 \right) \tilde{Y}_{k} \\
			& \geq \tilde{Y}_{k}^{\top} \tilde{Y}_{k} = |\tilde{Y}_{k}|^2
		\end{aligned}
	\end{equation*}
	where we used relation \eqref{eq: tilde U tilde U.T} and the fact that $C^{\top}C (\Delta t_n)^2$ is a symmetric positive semidefinite matrix to obtain the thesis.
\end{proof}

\section{Convergence of the DLR Projector-Splitting for SDEs}\label{app:DLR PS SDEs}

\begin{Lemma}[Auxiliary result for DLR PS computations]\label{lem: technical PS}
	Let $X_n= U_n^{\top}Y_n$, $n=0,\dots,N$ be the solution produced by
	Algorithm \ref{alg: Stoch DLR Proj algorithm} with an arbitrary sequence $\{\Delta t_n\}_n$ of step-sizes $\Delta t_n$ such that $\sum_{n=0}^{N-1} \Delta t_n=  T$. Then, the following relations hold:
	\begin{equation*}
		\begin{aligned}
			\mathbb{E}[|X_{n+1}|^2] \leq &	\mathbb{E}[|X_{n}|^2] + 2\mathbb{E}[X^{\top}_n a_n] \Delta t_n + \mathbb{E}\left[|a_n|^2\right](\Delta t_n)^2 + \mathbb{E}\left[\|b_n \|^2_{\mathrm{F}}\right] \Delta t \\
		\end{aligned}
	\end{equation*}
	and
	\begin{equation*}
		\begin{aligned}
			\mathbb{E}[|X_{n+1} - X_n|^2] \leq & \mathbb{E}\left[|a_n|^2\right](\Delta t_n)^2 + \mathbb{E}\left[\|b_n \|^2_{\mathrm{F}}\right] \Delta t.\\
		\end{aligned}
	\end{equation*}	
	\begin{proof}
		First, let us recall that $P_{U_n^{\top}\tilde{Y}_{n+1}} : L^2(\Omega, \mathbb{R}^d) \to L^2(\Omega, \mathbb{R}^d)$, defined in \eqref{eq: proj interm}, is an orthogonal projector for all $n$.
		Via expression \eqref{eq: proj interm}, one has that
		\begin{equation*}
			\begin{aligned}
				X_{n+1} =& X_n + P_{{U}^{\top}_n\tilde{Y}_{n+1}}[ a_n ] \Delta t_n+ P_{U_n} b_n \Delta W_n\\
				=&X_n + \left(I_{d \times d} - P_{U_n} \right) \mathbb{E}\left[ a_n \tilde{Y}_{n+1}^{\top}\right]C^{-1}_{\tilde{Y}_{n+1}}\tilde{Y}_{n+1} \Delta t_n \\
				&+ P_{U_n} a_n  \Delta t_n + P_{U_n} b_n \Delta W_n.
			\end{aligned}
		\end{equation*}
		Therefore the second moment of $X_n$ reads as
		\begin{equation*}
			\begin{aligned}
				\mathbb{E}[|X_{n+1}|^2] = & \mathbb{E}\big[\big| X_n + \left(I_{d \times d} - P_{U_n} \right) \mathbb{E}\left[a_n \tilde{Y}_{n+1}^{\top}\right]C^{-1}_{\widetilde{Y}_{n+1}}\widetilde{Y}_{n+1} \Delta t_n  + P_{U_n} a_n  \Delta t_n + P_{U_n} b_n \Delta W_n \big|^{2}\big] \\
				= &  \mathbb{E}[|X_{n}|^2] + 2\mathbb{E}[X^{\top}_n a_n] \Delta t_n + 2\mathbb{E}[X^{\top}_n b_n \Delta W_n]  +\mathbb{E}\big[\big|  P_{U_n^{\top}\widetilde{Y}_{n+1}} \left[ a_n \right]\big|^2 \big]\Delta t_n^2 + \mathbb{E}[ |P_{U_n} b_n \Delta W_n|^2] \\
				= &  \mathbb{E}[|X_{n}|^2] + 2\mathbb{E}[X^{\top}_n a_n] \Delta t_n  +\mathbb{E}\big[\big|  P_{U_n^{\top}\widetilde{Y}_{n+1}} \left[ a_n \right]\big|^2 \big]\Delta t_n^2+ \mathbb{E}[ \|P_{U_n} b_n\|^2_{\mathrm{F}}] \Delta t_n,\\
			\end{aligned}
		\end{equation*}
		where in the second line we used the orthonormality of the columns of $U_n$ and in the last line we exploited the properties of Brownian increments.
		
		Using the property of the orthogonal projector of $P_{U_n^{\top}\tilde{Y}_{n+1}}$ in $L^2(\Omega, \mathbb{R}^d)$ and of $P_{U_n}$, as well as the properties of Brownian increments, finally one has
		\begin{equation*}
			\begin{aligned}
				\mathbb{E}[|X_{n+1}|^2] \leq & \mathbb{E}[|X_{n}|^2] + 2\mathbb{E}[X^{\top}_n a_n] \Delta t_n + \mathbb{E}\left[|a_n|^2\right]\Delta t_n^2 + \mathbb{E}\left[\|b_n\|^2_{\mathrm{F}}\right] \Delta t_n. \\
			\end{aligned}
		\end{equation*}
		
		Concerning the second bound, likewise one has that
		\begin{equation*}
			\begin{aligned}
				\mathbb{E}[|X_{n+1}-X_n|^2]
				=& \mathbb{E}\big[\big|  P_{U_n^{\top}\widetilde{Y}_{n+1}} \left[a_n\right] \Delta t_n + P_{U_n} b_n \Delta W_n \big|^2 \big] \\
				\leq &  \mathbb{E}\left[|a_n|^2\right](\Delta t_n)^2 + \mathbb{E}\left[\|b_n\|^2_{\mathrm{F}}\right] \Delta t_n.\\ 		
			\end{aligned}
		\end{equation*}
	\end{proof}
\end{Lemma}

\begin{proof}[\bfseries Proof of Lemma \ref{lem: bound stoch dlr proj}] \label{proof: lem: bound stoch dlr proj}
	Thanks to Lemma \ref{lem: technical PS} we have that
	\begin{equation*}
		\begin{aligned}
			\mathbb{E}[|X_{n+1}|^2] \leq &	\mathbb{E}[|X_{n}|^2] + 2\mathbb{E}[X^{\top}_n a_n] \Delta t_n + \mathbb{E}\left[|a_n|^2\right](\Delta t_n)^2 + \mathbb{E}\left[\|b_n\|^2_{\mathrm{F}}\right] \Delta t 
		\end{aligned}
	\end{equation*}
	Then, via Assumption \ref{linear-growth-bound}, Young's inequality, Cauchy-Schwarz inequality, properties of orthogonal projectors, and properties of the Brownian increments, one gets
	\begin{equation}\label{eq: inter norm X_n+1 PS}
		\begin{aligned}
			\mathbb{E}[|X_{n+1}|^2] 
			\leq &  \mathbb{E}[|X_{n}|^2] + \mathbb{E}[|X_{n}|^2] \Delta t_n  + C_{\mathrm{lgb}}(1+ \mathbb{E}[|X_{n}|^2] )\Delta t_n + C_{\mathrm{lgb}}(1+ \mathbb{E}[|X_{n}|^2] )\Delta t_n^2 \\
			& + C_{\mathrm{lgb}}(1+ \mathbb{E}[|X_{n}|^2] )\Delta t_n \\
			= &   \mathbb{E}[|X_{n}|^2] + \mathbb{E}[|X_{n}|^2] \Delta t_n  +2C_{\mathrm{lgb}}(1+ \mathbb{E}[|X_{n}|^2] )\Delta t_n + C_{\mathrm{lgb}}(1+ \mathbb{E}[|X_{n}|^2] )\Delta t_n^2 \\
			\leq &   \mathbb{E}[|X_{n}|^2] + \mathbb{E}[|X_{n}|^2] \Delta t_n  + C_{\mathrm{lgb}}(2+ T)(1+ \mathbb{E}[|X_{n}|^2] )\Delta t_n, \\
		\end{aligned}
	\end{equation}
	where in the last line we used the trivial fact that $\Delta t_n \leq T$.
	
	For the ease of notation, define 
	$Z_n =1 + \mathbb{E}[|X_n|^2]$. The result follows by induction;
	we claim that
	\begin{equation}\label{eq: dlr-ps-ind}
		Z_n \leq Z_0 e^{( 1+ C_{\mathrm{lgb}}(2+ T))t_n}, \quad \forall n.
	\end{equation}
	By construction, the induction hypothesis holds for $n=0$. On the other hand, from \eqref{eq: inter norm X_n+1 PS} by adding $1$ on the left-hand side and $1+ \Delta t_n$ on the right-hand side one has that 
	\begin{equation*}
		Z_{n+1} \leq Z_n + ( 1+ C_{\mathrm{lgb}}(2+ T))Z_n \Delta t_n.
	\end{equation*}
	Then, using \eqref{eq: dlr-ps-ind} one gets
	\begin{equation*}
		\begin{aligned}
			Z_{n+1} \leq & Z_n \left(1+ ( 1+ C_{\mathrm{lgb}}(2+ T)) \Delta t_n \right)\\
			\leq &Z_0 e^{( 1+ C_{\mathrm{lgb}}(2+ T))t_n}\left(1+ ( 1+ C_{\mathrm{lgb}}(2+ T)) \Delta t_n \right)\\
			\leq &Z_0 e^{( 1+ C_{\mathrm{lgb}}(2+ T))t_n}e^{( 1+ C_{\mathrm{lgb}}(2+ T)) \Delta t_n }\\
			= & Z_0 e^{( 1+ C_{\mathrm{lgb}}(2+ T))t_{n+1}},
		\end{aligned}
	\end{equation*} 
	which concludes the proof.
\end{proof}

\begin{proof}[\bfseries Proof of Theorem \ref{thm: convergence of Stoch DLR Proj}]\label{proof: thm: convergence of Stoch DLR Proj}
	The proof follows very closely the one of Theorem \ref{thm: convergence of DLR Euler-Maruyama}, hence we will present only the key steps here.
	Fix $n$. Suppose that we have just completed Step 6 of Algorithm \ref{alg: Stoch DLR Proj algorithm}; then we know
	\begin{equation*}
		X_{n+1} = U_{n+1}^{\top} Y_{n+1} = \tilde{U}_{n+1}^{\top} \tilde{Y}_{n+1},
	\end{equation*}
	where $\tilde{U}_{n+1},\tilde{Y}_{n+1}$ are the updates of the deterministic and stochastic modes before the \texttt{QR} decomposition in Step 6 of Algorithm \ref{alg: Stoch DLR Proj algorithm}, respectively. 
	Then, one has
	\begin{equation*}
		\begin{aligned}
			X_{n+1}& = X_{n} + P_{U_n^{\top}\tilde{Y}_{n+1} } \left[ a_n \right]\Delta t  + P_{U_n} b_n \Delta W_n \\
		\end{aligned}
	\end{equation*}
	
	We recall that $n_r = \max\{n = 0,1, \dots, N : t_n \leq r\}$. We estimate the error in the following way
	\begin{equation*}
		\begin{aligned}
			\mathbb{E}\left[\max\limits_{0 \leq n \leq N} |X(t_n)-X_{n}|^2\right]
			\leq & 3\mathbb{E}\left[ \max\limits_{0 \leq n\leq N}   |  \sum_{\ell = 0}^{n-1} \int_{t_\ell}^{t_{\ell+1}} \left(P_{X(r) } \left[a(r,X(r)) \right]  - P_{U_\ell ^{\top}\tilde{Y}_{\ell +1} } \left[a_\ell  \right] \right) \mathrm{d}r|^2\right ] \\
			&+  3 \mathbb{E}\left[  \max\limits_{0 \leq n\leq N}  | \sum_{\ell = 0}^{n-1} \int_{t_\ell}^{t_{\ell+1}} \left( P_{U_r}  [ b(r,X(r))]- P_{U_\ell }   \left[ b_\ell \right]\right) \mathrm{d}W_r |^2\right] \\
			\leq & 3\mathbb{E}\left[   N \sum_{\ell = 0}^{N-1} |   \int_{t_\ell}^{t_{\ell+1}} \left( P_{X(r) } \left[a(r,X(r)) \right]  - P_{U_\ell ^{\top}\tilde{Y}_{\ell +1} } \left[a_\ell  \right]\right) \mathrm{d}r|^2\right ] \\
			&+  12 \mathbb{E}\left[  | \sum_{\ell = 0}^{N-1} \int_{t_\ell}^{t_{\ell+1}} \left( P_{U_r}  [ b(r,X(r))]- P_{U_\ell }   \left[ b_\ell \right] \right) \mathrm{d}W_r |^2\right] \\
			\leq & 3T  \int_{0}^{T} \mathbb{E}\left[ |   P_{X(r) } \left[a(r,X(r)) \right]  - P_{U_{n_r}^{\top}\tilde{Y}_{{n_r}+1} } \left[a_{n_r} \right] |^2 \right ]\mathrm{d}r  \\
			&+  12   \int_{0}^{T}  \mathbb{E}\left[  | P_{U_r}  [ b(r,X(r))]- P_{U_{n_r}}   \left[ b_{n_r}\right]  |^2\right] \mathrm{d}r \\
		\end{aligned}
	\end{equation*}	
	where in the second line we use the Jensen's inequality and \cite[Theorem 4.4.4]{durrett2019probability}, in the third line we use the Itô's isometry. \\			
	For the first term on the right-hand side, using relations \eqref{eq:lip}, \eqref{eq:lin-growth}, \eqref{polynomial growth in t} \cite[Proposition A.3]{kazashi2025dynamical}, properties of Itô's integral, and Lemma \ref{lem: bound stoch dlr proj}, one can easily get that
	\begin{equation}\label{eq: inter thm conv do}
		\begin{aligned}
			&\mathbb{E}\left[  |P_{X(r) } \left[a(r,X(r)) \right]  - P_{U_{n_r}^{\top}\tilde{Y}_{{n_r}+1} } \left[a_{n_r} \right] |^2 \right]\\
			= &   \mathbb{E}\Big[  |P_{X(r) } \left[a(r,X(r)) \right] - P_{X(r) } \left[a(t_{n_r},X(r)) \right] + P_{X(r) } \left[a(t_{n_r},X(r)) \right]   -P_{X(r) } \left[a_{n_r} \right] \\
			&+ P_{X(r) } \left[a_{n_r} \right]  - P_{U_{n_r}^{\top}\tilde{Y}_{{n_r}+1} } \left[a_{n_r} \right] |^2 \Big]\\
			\leq &  3 C_{\mathrm{na}} (1+K_2(T)) (\Delta t)^{2\alpha}  +3 \mathbb{E}\left[  |P_{X(r) }\left[a(t_{n_r},X(r))-a_n \right]  |^2 \right] +3\mathbb{E}\left[    |(P_{X(r) }  - P_{U_{n_r}^{\top}\tilde{Y}_{{n_r}+1} } ) \left[a_{n_r} \right] |^2 \right] \\
			\leq & 3 C_{\mathrm{na}} (1+K_2(T)) (\Delta t)^{2\alpha} + 3C_{\mathrm{Lip}} \mathbb{E}\left[  |X(r) -X_{n_r}   |^2 \right] + 6C_{\mathrm{lgb}} (1+K_1(T)) \frac{1}{\gamma} \mathbb{E} \left[  |X(r) -U_{n_r}^{\top}\tilde{Y}_{{n_r}+1}   |^2 \right]\\
			\leq & 3 C_{\mathrm{na}} (1+K_2(T)) (\Delta t)^{2\alpha} + 3C_{\mathrm{Lip}} \mathbb{E}\left[  |X(r) -X_{n_r}   |^2 \right] \\
			&+ 6C_{\mathrm{lgb}} (1+K_3(T)) \frac{1}{\gamma} \mathbb{E} \left[  |X(r) -X_{n_r} - U_{n_r}^{\top}U_{n_r} a_{n_r} \Delta t - U_{n_r}^{\top}U_{n_r}  b_{n_r} \Delta W_{n_r}   |^2 \right]\\
			\leq & 3 C_{\mathrm{na}} (1+K_2(T)) (\Delta t)^{2\alpha} + 3C_{\mathrm{Lip}} \mathbb{E}\left[  |X(t_{n_r}) + \int_{t_{n_r}}^{r} P_{X(r)}a(r)\mathrm{d} r +\int_{t_{n_r}}^{r} P_{U(r)}b(r)\mathrm{d} W_r -X_{n_r}   |^2 \right] \\
			&+ 6C_{\mathrm{lgb}} (1+K_2(T)) \frac{1}{\gamma} \cdot \\
			& \quad \mathbb{E} \left[  |X(t_{n_r}) -X_{n_r} + \int_{t_{n_r}}^{r} P_{X(r)}a(r)\mathrm{d} r +\int_{t_{n_r}}^{r} P_{U(r)}b(r)\mathrm{d} W_r - U_{n_r}^{\top}U_{n_r} a_{n_r} \Delta t - U_{n_r}^{\top}U_{n_r}  b_{n_r} \Delta W_{n_r}   |^2 \right]\\
			\leq & 3 C_{\mathrm{na}} (1+K_2(T)) (\Delta t)^{2\alpha} + 6C_{\mathrm{Lip}} \mathbb{E}\left[  |X(t_{n_r}) -X_{n_r}   |^2 \right] + 6C_{\mathrm{Lip}}  C_{\mathrm{lgb}}(1 + K_2(T)) ((\Delta t)^2 + \Delta t)\\
			&+ 18C_{\mathrm{lgb}} (1+K_2(T)) \frac{1}{\gamma} \cdot \left( \mathbb{E} \left[  |X(t_{n_r}) -X_{n_r} |^2 \right] + C_{\mathrm{lgb}}(2 + K_3(T) + K_2(T)) ((\Delta t)^2 + \Delta t) \right)\\
			\leq & 3 C_{\mathrm{na}} (1+K_2(T)) (\Delta t)^{2\alpha} + \left(6C_{\mathrm{Lip}}  + 18C_{\mathrm{lgb}} (1+K_2(T)) \frac{1}{\gamma}\right)\mathbb{E}\left[  \max_{0 \le n_p \le n_r}|X(t_{n_p}) -X_{n_p}   |^2 \right] \\
			&+ \left[6C_{\mathrm{Lip}}  C_{\mathrm{lgb}}(1 + K_2(T)) + 18C_{\mathrm{Lip}}  C_{\mathrm{lgb}}(2 + K_3(T) + K_2(T)) \right] ((\Delta t)^2 + \Delta t).\\
		\end{aligned}
	\end{equation}	
	Similarly, after having applied Jensen's and Doob's inequalities, and Itô's isometry, via using the linear-growth bound and Lemma \ref{lem: bound stoch dlr proj} one can get that
	\begin{equation*}
		\begin{aligned}
			&\mathbb{E}\left[  |P_{U_r}  [ b(r,X(r))]- P_{U_{n_r}}   \left[ b_{n_r}\right]|^2 \right]\\
			\leq & 3 C_{\mathrm{na}} (1+K_2(T)) (\Delta t)^{2\alpha} + \left(3 C_{\mathrm{Lip}} + 18 C_{\mathrm{lgb}} (1+K_3(T)) \frac{1}{\gamma} \right) \mathbb{E}\left[  \max_{0 \le n_p \le n_r}|X(t_{n_p}) -X_{n_p}   |^2 \right]  \\
			& +  \left(6 C_{\mathrm{Lip}} + 18 C_{\mathrm{lgb}} (1+K_3(T)) \frac{1}{\gamma} \right)(1+K_2(T)) ((\Delta t)^2 + \Delta t),\\
		\end{aligned}
	\end{equation*}	
 where we recall that $K_2(T)$ is a positive constant bounding on the second moment of $Y(t)$ \cite[Lemma 2.7]{kazashi2025dynamical}.
	Using these relations in \eqref{eq: inter thm conv do} and Gronwall's lemma, we finally get the thesis.
\end{proof}

\begin{proof}[\bfseries Proof of Proposition \ref{prop: stab DLRA}]\label{proof: prop: stab DLRA}
	The DO equations for \eqref{eq: moltiplic sdes} read as follows
	\begin{equation*}
		\begin{aligned}
			\dot{U}(t) &  = U(t)A^{\top}(t)\left(I_{d \times d} - P_{U(t)} \right), \\
			\mathrm{d}Y(t) & = U(t)A(t)U(t)^{\top}Y(t)\mathrm{d}t + U(t)\sum_{k=1}^{m} B_k(t) U(t)^{\top}Y(t)\mathrm{d}W^{k}_t,
		\end{aligned}
	\end{equation*}
	and, hence, our DLR approximation is expressed via
	\begin{equation} \label{eq: DLR sde stab}
		\begin{aligned}
			\mathrm{d}X(t) & = A(t)U(t)^{\top}Y(t)\mathrm{d}t+U^{\top}(t)U(t)\sum_{k=1}^{m} B_k(t) U(t)^{\top}Y(t)\mathrm{d}W^{k}_t.
		\end{aligned}
	\end{equation}
	We compute an upper bound of the second moment of the solution $X(t)$ via It\^o's formula:
	\begin{equation*}
		\begin{aligned}
			\mathrm{d}\mathbb{E}[X(t)^{\top}X(t)]
			\leq & \lambda_{\max}(A(t)+ A^{\top}(t))\mathbb{E}[|X(t)|^2 ] \mathrm{d}t +  \mathbb{E}[|X(t)|^2]\sum\limits_{k=1}^{m}|U(t)B_k(t)|^2\mathrm{d}t.
		\end{aligned}
	\end{equation*}
	Therefore, one finally gets
	\begin{equation}\label{eq: stab2}
		\frac{\mathrm{d}\mathbb{E}[|X(t)|^2]}{\mathrm{d}t} \leq \left(\lambda_{\max}(A(t)+ A^{\top}(t)) + \sum\limits_{k=1}^{m}  |B_k(t)|^2 \right) \mathbb{E}[|X(t)|^2],
	\end{equation}
	and, hence, 
	\begin{equation*}
		\lim\limits_{t \to \infty} \mathbb{E}[|X(t)|^2] = 0,
	\end{equation*}
	under \eqref{eq: cond stab sde}.
\end{proof}

\end{document}